\newtheorem{theorem}{Theorem}[section]
\newtheorem{proposition}[theorem]{Proposition}
\newtheorem{lemma}[theorem]{Lemma}
\newtheorem{corollary}[theorem]{Corollary}
\newtheorem{conjecture}[theorem]{Conjecture}
\newtheorem{claim}[theorem]{Claim}
\theoremstyle{definition}
\newtheorem{definition}[theorem]{Definition}
\newtheorem{example}[theorem]{Example}
\theoremstyle{remark}
\newtheorem{remark}[theorem]{Remark}
\let\emph\relax 
\DeclareTextFontCommand{\emph}{\bfseries\em}
\DeclareMathOperator{\rank}{rank}
\DeclareMathOperator{\col}{col}
\DeclareMathOperator{\sgn}{sgn}
\DeclareMathOperator{\smooth}{smooth}
\DeclareMathOperator{\Isom}{Isom}
\definecolor{colR}{rgb}{.932,.172,.172}
\definecolor{colB}{rgb}{.255,.41,.884}
\definecolor{colG}{rgb}{0,0.7,0}
\tikzstyle{edge}=[line width=1.5pt]
\tikzstyle{dedge}=[edge,dashed,gray]
\tikzstyle{redge}=[edge,colR]
\tikzstyle{bedge}=[edge,colB]
\tikzstyle{gedge}=[edge,gray]
\tikzstyle{lnode}=[circle,white,draw, fill=black,inner sep=1pt, font=\scriptsize]
\tikzstyle{hollow}=[circle,gray,draw, thick, fill=gray,inner sep=0pt, minimum size=4pt]
\tikzstyle{vertex}=[fill=black,circle,inner sep=0pt, minimum size=4pt]
\title{Uniquely realisable graphs in polyhedral normed spaces}
\author{Sean Dewar\thanks{School of Mathematics, University of Bristol. E-mail: \texttt{sean.dewar@bristol.ac.uk}}
}
\begin{document}
\date{}
\maketitle

\begin{abstract}
	A framework (a straight-line embedding of a graph into a normed space allowing edges to cross) is globally rigid if any other framework with the same edge lengths with respect to the chosen norm is an isometric copy.
	We investigate global rigidity in polyhedral normed spaces: normed spaces where the unit ball is a polytope.
	We first provide a deterministic algorithm for checking whether or not a framework in a polyhedral normed space is globally rigid.
	After showing that determining if a framework is globally rigid is NP-Hard, we then provide necessary conditions for global rigidity for generic frameworks.
	We obtain stronger results for generic frameworks in $\ell_\infty^d$ (the vector space $\mathbb{R}^d$ equipped with the $\ell_\infty$ metric) 
	including an exact characterisation of global rigidity when $d=2$, and an easily-computable sufficient condition for global rigidity using edge colourings.
	Our 2-dimensional characterisation also has a surprising consequence: Hendrickson's global rigidity condition fails for generic frameworks in $\ell_\infty^2$.
\end{abstract}

{\small \noindent \textbf{MSC2020:} 52C25, 52A21, 05C10}

{\small \noindent \textbf{Keywords:} normed spaces, Chebyshev norm, taxicab norm, global rigidity, non-Euclidean distance, identifiability}

\section{Introduction}\label{sec:intro}

A \emph{framework} in a normed space $(\mathbb{R}^d,\|\cdot\|)$ is a pair $(G,p)$ where $G=(V,E)$ is a (finite simple) graph and $p:V \rightarrow \mathbb{R}^d$ is a map called a \emph{realisation} of $G$. 
We say the framework $(G,p)$ is \emph{globally rigid} if any other framework in $(\mathbb{R}^d,\|\cdot\|)$ with the same edge lengths (with respect to the norm $\|\cdot\|$) is an isometric copy of $(G,p)$. 
We can alternatively define this framework property using \emph{rigidity map}
\begin{equation*}
	f_G : (\mathbb{R}^d)^V \rightarrow \mathbb{R}^E, ~ p \mapsto \big( \|p(v) - p(w)\| \big)_{vw \in E}.
\end{equation*}
With this, a framework $(G,p)$ in $(\mathbb{R}^d,\|\cdot\|)$ is globally rigid if and only if for any other realisation $q \in (\mathbb{R}^d)^V$ satisfying $f_G(q) = f_G(p)$, there exists an isometry $h : \mathbb{R}^d \rightarrow \mathbb{R}^d$ such that $h \circ q = p$.
A slight weakening of global rigidity is to only require ``sufficiently close'' edge-length equivalent frameworks to be isometric copies.
Any framework satisfying this weaker property is said to be \emph{rigid}.
Both rigidity \cite{Dewar20,Kitson15,KP14,petty} and global rigidity \cite{DHN22,st24} have previously been studied in the general setting of normed spaces.

When the chosen norm is a Euclidean norm (i.e., formed from an inner product),
determining whether any given framework is globally rigid is an NP-hard problem \cite{saxe1980}. This complexity is reduced to polynomial-time if we restrict to \emph{generic} frameworks;
those where the $d|V|$ coordinates of $p$ are pair-wise distinct and algebraically independent over the field $\mathbb{Q}$.
With an understanding of the basic principals of rigidity theory,
it is easy to see that rigidity in this case is a generic property, in that a single generic framework $(G,p)$ in $d$-dimensional Euclidean space is rigid if and only all other generic frameworks $(G,q)$ in $d$-dimensional Euclidean space are rigid (see \cite{ar78}).
A significantly more difficult result was achieved by Gortler, Healy and Thurston \cite{ght10}, who proved that global rigidity in Euclidean spaces is also a generic property.

Much of the intuition that is built up when considering rigidity and global rigidity in Euclidean spaces vanishes when we turn to \emph{polyhedral normed spaces} (denoted by $(\mathbb{R}^d,\|\cdot\|_\mathcal{P})$) -- normed spaces whose unit ball is a polytope (denoted by $\mathcal{P}$).
For example, a framework $(G,p)$ in $(\mathbb{R}^d,\|\cdot\|_\mathcal{P})$ ($d \geq 2$) with $G=(\{a,b\},\{ab\})$ and $p(a)\neq p(b)$ is no longer rigid; a striking contrast to the Euclidean case where the same framework would be rigid.
To see this, if we pin the vertex $a$ at $p(a)$, the vertex $b$ can still traverse a sphere (with respect to the $\|\cdot\|_\mathcal{P}$ metric) centred around $p(a)$ in a faux-rotational movement; however, no such continuous family of isometries exists for $(\mathbb{R}^d,\|\cdot\|_\mathcal{P})$. See \Cref{fig:square} for an example of this framework when $\mathcal{P}$ is a square.
A major difficulty when working with polyhedral normed spaces is that rigidity is no longer a generic property.
This is as, unlike the Euclidean case, every graph will have a non-empty open set of flexible realisations (see \Cref{prop:genericflexible}).
For more on polyhedral normed space rigidity, see Kitson's foundational paper on the topic \cite{Kitson15}.

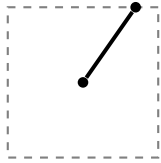
\begin{figure}[tp]
	\begin{center}
		\begin{tikzpicture}[scale=2]
			\draw[dashed,gray,thick] (0.5,0.5) -- (-0.5,0.5) -- (-0.5,-0.5) -- (0.5,-0.5) -- (0.5,0.5);
			
			\node[vertex] (1) at (0,0) {};
			\node[vertex] (2) at (0.35,0.5) {};
			
			\draw[edge] (1)edge(2);
		\end{tikzpicture}
	\end{center}
	\caption{A flexible framework in $\ell_\infty^2$ which is rigid under the standard Euclidean metric. This is solely due to the lack of isometry of $\ell_\infty^2$ which fixes the centre vertex and takes the upper vertex to any point on the dashed square.}\label{fig:square}
\end{figure}

The aim of this paper is to investigate the properties of global rigidity for polyhedral normed spaces, with a particular focus on $\ell_1^d= (\mathbb{R}^d,\|\cdot\|_1)$ and $\ell_\infty^d= (\mathbb{R}^d,\|\cdot\|_\infty)$,
the normed spaces whose unit balls correspond to the $d$-dimensional cross-polytope and the $d$-dimensional hypercube respectively.
These two particular norms have a wide variety of applications throughout mathematics;
specifically, the $\ell_1$ norm is vital to LASSO regression, an important tool in statistics \cite{tbi96}, and the space $\ell_\infty^d$ contains all metric spaces with $d$ points \cite{Frechet1910}.
Both $\ell_1$ and $\ell_\infty$ norms have also been investigated in the setting of distance geometry problems due to their computability \cite{crippen,leo17}.

\subsection{Main results}\label{subsec:main}

The main results of this paper can be split into three categories;
exact characterisations of global rigidity, necessary conditions for global rigidity, and sufficient conditions for global rigidity.

\subsubsection*{Exact conditions for global rigidity in general polyhedral normed spaces and $\ell_1^d$}

To better understand global rigidity in a polyhedral normed space $(\mathbb{R}^d,\|\cdot\|_{\mathcal{P}})$ for a given graph $G=(V,E)$, we introduce \emph{directed colourings};
vectors $(\phi(e))_{e \in E}$ where each $\phi(e)$ is a normal vector for a face of the polytope $\mathcal{P}$.
Here we denote the set of directed colourings of $G$ by $\Phi_G$.
Any framework $(G,p)$ naturally generates a set $\Phi_p$ of directed colourings:
assuming a fixed ordering of the vertices of $G$,
we choose $\phi(vw)$ to be the normal vector for a face of $\mathcal{P}$ containing the vector $\frac{p(v)-p(w)}{\|p(v)-p(w)\|_{\mathcal{P}}}$ (or the zero vector if $p(v)=p(w)$).
If the set $\Phi_p$ contains a single element, we say that the framework $(G,p)$ is \emph{well-positioned}, and we set $\phi_p$ to be the unique element in $\Phi_p$.
(These concepts are explained in more detail in \Cref{sec:poly}.)
Unlike properties such as genericity, checking if a framework is well-positioned in polyhedral normed space can be performed in polynomial time.
Similarly, rigidity can be checked in polynomial time for well-positioned frameworks in polyhedral normed spaces, since a well-positioned framework is rigid if and only if the matrix $M(G,\phi_p)$ (an analogue of the rigidity matrix defined in \Cref{subsec:normrigid}) has rank $d|V|-d$ (\Cref{t:dk}, originally proved in \cite{Kitson15}).

Our first main result of the paper links directed colourings and global rigidity for polyhedral normed spaces.

\begin{theorem}\label{thm:compute}
	Let $(G,p)$ be a well-positioned and rigid framework in a polyhedral normed space $(\mathbb{R}^d,\|\cdot \|_\mathcal{P})$.
	Then the following two properties are equivalent:
	\begin{enumerate}
		\item \label{thm:compute1} $(G,p)$ is globally rigid in $(\mathbb{R}^d,\|\cdot \|_\mathcal{P})$;
		\item \label{thm:compute2} for each $\phi \in \Phi_G$ where $f_G(p)$ is in the column space of $M(G,\phi)$,
		either $\phi$ is \emph{isometric} to $\phi_p$ -- in that there exists a linear isometry $T$ of $(\mathbb{R}^d,\|\cdot \|_\mathcal{P})$ where $\phi = T \circ \phi_p$ -- or
		\begin{equation*}
			f_G(p) \notin f_G\left(M(G,\phi)^{-1}(f_G(p)) \right).
		\end{equation*}
	\end{enumerate}
\end{theorem}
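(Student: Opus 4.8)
The plan is to analyse the structure of an arbitrary framework $(G,q)$ with $f_G(q) = f_G(p)$ and show that global rigidity is controlled by the finitely many directed colourings in $\Phi_G$. First I would observe that for any realisation $q$ with $f_G(q) = f_G(p)$, one may select a directed colouring $\phi \in \Phi_q \subseteq \Phi_G$ (so $q$ is not assumed well-positioned, but we pick a compatible element). Because $\phi(vw)$ is a face normal containing $\frac{q(v)-q(w)}{\|q(v)-q(w)\|_\mathcal{P}}$, the defining property of such face normals gives $\|q(v)-q(w)\|_\mathcal{P} = \phi(vw) \cdot (q(v)-q(w))$ for each edge $vw$; in matrix terms this says precisely that $q$ lies in the affine solution set $M(G,\phi)^{-1}(f_G(q)) = M(G,\phi)^{-1}(f_G(p))$. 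In particular this set is nonempty, so $f_G(p)$ lies in the column space of $M(G,\phi)$ (up to the affine shift by a particular solution — I would phrase the column-space condition exactly as in the statement, matching the convention of \Cref{subsec:normrigid}). Conversely, given any $\phi \in \Phi_G$ with $f_G(p)$ in the column space of $M(G,\phi)$, every $q \in M(G,\phi)^{-1}(f_G(p))$ that is additionally \emph{consistent with} $\phi$ (meaning $\frac{q(v)-q(w)}{\|q(v)-q(w)\|_\mathcal{P}}$ genuinely lies in the face with normal $\phi(vw)$) satisfies $f_G(q)=f_G(p)$; the subtlety, handled below, is that not every point of the affine fibre need be consistent.

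Next I would prove (i) $\Rightarrow$ (ii) by contraposition. Suppose (ii) fails for some $\phi \in \Phi_G$: so $f_G(p)$ is in the column space of $M(G,\phi)$, $\phi$ is not isometric to $\phi_p$, and $f_G(p) \in f_G\!\left(M(G,\phi)^{-1}(f_G(p))\right)$. Pick $q$ in that fibre with $f_G(q) = f_G(p)$. Then $(G,q)$ is edge-length equivalent to $(G,p)$, so if $(G,p)$ were globally rigid there would be a linear isometry (plus translation) $h$ with $h\circ q = p$; translating away the constant part, $h$ is a linear isometry of $(\mathbb{R}^d,\|\cdot\|_\mathcal{P})$, and one checks that conjugating the colouring by $h$ sends $\phi$ to $\phi_p$ up to sign (linear isometries permute the faces of $\mathcal{P}$, hence act on face normals), contradicting the non-isometry assumption. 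Here I must be slightly careful: $q$ in the fibre is consistent with $\phi$ only if it lies in the appropriate relatively open polyhedral region, but by rigidity of $(G,p)$ and the rank condition $\operatorname{rank} M(G,\phi_p) = d|V|-d$, together with a perturbation argument (if necessary scaling $q$ towards $p$ along the affine fibre), one can arrange $q$ to be well-positioned with $\phi_q = \phi$ — this is where I would lean on \Cref{t:dk} and the detailed face-geometry set up in \Cref{sec:poly}.

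For (ii) $\Rightarrow$ (i), take any $q$ with $f_G(q) = f_G(p)$ and choose $\phi \in \Phi_q$. By the first paragraph, $f_G(p) = f_G(q)$ lies in the column space of $M(G,\phi)$ and $q \in M(G,\phi)^{-1}(f_G(p))$, so $f_G(p) = f_G(q) \in f_G\!\left(M(G,\phi)^{-1}(f_G(p))\right)$; hence the second disjunct of (ii) fails, forcing $\phi$ to be isometric to $\phi_p$, say $\phi = T\circ\phi_p$ for a linear isometry $T$. Replacing $q$ by $T^{-1}\circ q$ (still edge-length equivalent, and an isometric copy iff $q$ is), we reduce to the case $\phi_q = \phi_p =: \phi$, i.e.\ both $p$ and $q$ lie in the same affine fibre $M(G,\phi)^{-1}(f_G(p))$ and are both consistent with $\phi$. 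Since $(G,p)$ is rigid and well-positioned, $\operatorname{rank} M(G,\phi) = d|V|-d$ by \Cref{t:dk}, so this affine fibre is a $d$-dimensional affine subspace; I would then identify it with the orbit of $p$ under translations plus the specific isometries of $(\mathbb{R}^d,\|\cdot\|_\mathcal{P})$ that act trivially on the relevant faces (the kernel of $M(G,\phi)$ being exactly the space of such infinitesimal trivial motions, as in the proof of \Cref{t:dk}), concluding that $q$ is an isometric copy of $p$.

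The main obstacle, and the step I expect to require the most care, is the interplay between the \emph{affine} condition ``$q$ solves $M(G,\phi)q = f_G(p)$'' and the \emph{polyhedral} condition ``$q$ is actually consistent with $\phi$, so that $M(G,\phi)q$ really equals $f_G(q)$''. The affine fibre can contain points whose edge vectors fall outside the prescribed faces, for which the matrix equation no longer computes the true norms; the condition $f_G(p)\in f_G(M(G,\phi)^{-1}(f_G(p)))$ is precisely engineered to detect when a genuinely edge-length-equivalent point exists in the fibre. Pinning down that this $f_G$-membership condition is exactly the right bookkeeping — neither too strong nor too weak — and that the rank/kernel description of trivial motions from \Cref{t:dk} transfers verbatim to each $M(G,\phi)$, is the crux; the rest is linear algebra and the standard fact that the linear isometry group of a polytope normed space is finite and permutes faces.
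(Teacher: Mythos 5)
Your proposal is correct and follows essentially the same route as the paper: both directions hinge on the fact that $M(G,\phi)q = f_G(q)$ is exactly the statement $\phi \in \Phi_q$, that a congruence carries $\Phi_q$ into $\Phi_p = \{\phi_p\}$, and that rigidity plus \Cref{t:dk} pins $\ker M(G,\phi_p)$ down to the translations. The only superfluous step is the perturbation/consistency worry in your forward direction: the membership $f_G(p) \in f_G\left(M(G,\phi)^{-1}(f_G(p))\right)$ already produces a $q$ with $M(G,\phi)q = f_G(p) = f_G(q)$, which is precisely the consistency condition $\phi \in \Phi_q$, so no rescaling of $q$ towards $p$ (nor well-positionedness of $q$, which need not be achievable) is required.
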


It follows from \Cref{thm:compute} that there exists a deterministic algorithm for checking if a framework is globally rigid in any given polyhedral normed space.
It is worth noting that this algorithm has the potential to be slow. This is solely due to the size of the set $\Phi_G$, which contains $(\# \textrm{ faces of } \mathcal{P})^{|E|}$ elements that need to be checked.
A polynomial-time algorithm is actually possible when checking global rigidity in $\ell_\infty^2$ for frameworks with a complete graph (\Cref{thm:fastalgcomplete}),
however we prove that determining if a framework is globally rigid in $\ell_\infty^d$ is NP-Hard (\Cref{thm:nphard}).


\subsubsection*{Necessary conditions for global rigidity in polyhedral normed spaces}

Hendrickson's condition for a normed space $(\mathbb{R}^d,\|\cdot\|)$ can be stated as follows: any generic globally rigid framework in $(\mathbb{R}^d,\|\cdot\|)$ is \emph{redundantly rigid}, in that the removal of any edge preserves rigidity.
(Here ``generic'' can be interpreted as whatever is suitable for the chosen normed space; see \Cref{subsec:generic} for the polyhedral norm variant.) 
The condition's namesake stems from the famous result of Hendrickson \cite{hendrickson1992}, where it was proven that Hendrickson's condition holds in Euclidean spaces.
It was later shown by the author, Hewetson and Nixon that Hendrickson's condition also holds for analytic normed spaces \cite[Theorem 3.8]{DHN22}.

As we discuss a little later, Hendrickson condition is not the correct necessary condition for polyhedral normed spaces.
Instead, we turn to a strengthened variant of Hendrickson's condition that was proven by Garamvölgyi, Gortler and Jordán \cite{ggj22}: if a graph is globally rigid in $d$-dimensional Euclidean space, then its edge set forms a connected set in the $d$-dimensional rigidity matroid.
We prove a similar result for polyhedral normed spaces,
with the $d$-dimensional rigidity matroid replaced with the \emph{$(d,d)$-sparsity matroid} (see \Cref{subsec:matroid} for more details).
Here, we say that a graph is \emph{$\mathcal{M}(d,d)$-connected} if its edge set is connected in the $(d,d)$-sparsity matroid.

\begin{theorem}\label{t:globness}
    Let $(G,p)$ be a generic globally rigid framework in $(\mathbb{R}^d,\|\cdot\|_{\mathcal{P}})$.
    Then $G$ is $\mathcal{M}(d,d)$-connected.
\end{theorem}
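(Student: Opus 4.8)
The plan is to follow the high–level strategy of the Euclidean theorem of Garamvölgyi, Gortler and Jordán, but to replace its stress–matrix input (which has no polyhedral counterpart) with the piecewise‑linear structure of $f_G$. Suppose $(G,p)$ is generic and globally rigid but, for contradiction, $G$ is not $\mathcal M(d,d)$-connected. Global rigidity implies rigidity, and a generic $p$ is well‑positioned, so by \Cref{t:dk} we have $\rank M(G,\phi_p)=d|V|-d$. The basic observation is that for any colouring $\psi$ and any $F\subseteq E$ the rows of $M(G,\psi)$ indexed by $F$ annihilate the $d$‑dimensional space of maps $V(F)\to\mathbb R^d$ that are constant on $V(F)$; hence every $M(G,\psi)$‑independent edge set is $(d,d)$-sparse, so $r_{M(G,\psi)}(F)\le r_{\mathcal M(d,d)}(F)\le d|V(F)|-d$. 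Taking $\psi=\phi_p$ and $F=E$ forces $r_{\mathcal M(d,d)}(E)=d|V|-d$, i.e. $G$ is $(d,d)$-rigid; in particular $G$ is connected and has no isolated vertices.

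Now fix a partition $E=E_1\sqcup E_2$ witnessing the disconnectedness, with $E_1$ a connected component of $\mathcal M(d,d)|_E$. Write $n_i=|V(E_i)|$, $W=V(E_1)\cap V(E_2)$, $G_i=(V(E_i),E_i)$ and $s_i=(dn_i-d)-r_{\mathcal M(d,d)}(E_i)\ge0$. Since $\mathcal M(d,d)|_E$ is the direct sum of its restrictions to $E_1$ and $E_2$, the chain
\[
d|V|-d=r_M(E)\le r_M(E_1)+r_M(E_2)\le r_{\mathcal M(d,d)}(E_1)+r_{\mathcal M(d,d)}(E_2)=d|V|-d
\]
(with $M=M(G,\phi_p)$) consists of equalities, so the row spaces of $M|_{E_1}$ and $M|_{E_2}$ meet only in $0$, $r_M(E_i)=r_{\mathcal M(d,d)}(E_i)$, and a short count using $n_1+n_2=|V|+|W|$ yields $s_1+s_2=d(|W|-1)$, whence $|W|\ge1$. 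As $\mathcal M(d,d)|_{E_1}$ is connected, either $E_1=\{e_0\}$ is a single edge with $s_1>0$ — a coloop, so $G-e_0$ is not $(d,d)$-rigid and $(G-e_0,p)$ is flexible, which forces $d\ge2$ and $e_0$ not to be a bridge — or $G_1$ is $(d,d)$-rigid, so $s_1=0$ and $s_2=d(|W|-1)$.

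In each case the goal is to produce a realisation $q$ with $f_G(q)=f_G(p)$ that is not an isometric copy of $p$; note that if $q$ agrees with $p$ on two vertices carrying generic coordinates then $q$ is isometric to $p$ only if $q=p$, since no non‑identity isometry of $(\mathbb R^d,\|\cdot\|_{\mathcal P})$ fixes two generic points. If $|W|=1$, say $W=\{w\}$, set $q:=p$ on $V(E_1)$ and $q:=A(\cdot-p(w))+p(w)$ on $V(E_2)$ for some $A\in\Isom(\mathcal P)\setminus\{I\}$ (e.g. $A=-I$): this is consistent at $w$, preserves all edge lengths, and moves $V(E_2)\setminus W$, so $(G,p)$ is not globally rigid. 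If $E_1=\{e_0=u_0v_0\}$ is a coloop, I would use a polyhedral analogue of Hendrickson's flip: the space $X$ of realisations of $G-e_0$ equivalent to $p$, taken modulo $\Isom(\mathbb R^d,\|\cdot\|_{\mathcal P})$, is a compact semialgebraic set (compact because $G-e_0$ is connected) of positive dimension $s_1$ on which the piecewise‑linear function $r\mapsto\|r(u_0)-r(v_0)\|_{\mathcal P}$ is a submersion near $p$ (adjoining $e_0$ rigidifies), so either a dimension count (if $s_1\ge2$) or a parity argument on a compact $1$-manifold (if $s_1=1$) produces $r\in X$ with $\|r(u_0)-r(v_0)\|_{\mathcal P}=\|p(u_0)-p(v_0)\|_{\mathcal P}$ and $r$ not isometric to $p$; this $r$ is equivalent to $(G,p)$. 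In the remaining case ($|W|\ge2$, $G_1$ rigid) one keeps $q:=p$ on $V(E_1)$ and must find $q_2\ne p|_{V(E_2)}$ with $q_2|_W=p|_W$ and $f_{E_2}(q_2)=f_{E_2}(p|_{V(E_2)})$; here the decisive point is that a sphere of $\|\cdot\|_{\mathcal P}$ is the boundary of a centrally symmetric polytope, so the admissible positions of a vertex pinned to the rest by the $E_2$-constraints form a union of intersections of such polytope boundaries, and a degree/parity count on the piecewise‑linear partial rigidity map for $G_2$ with $W$ pinned — which, by the rank identities above, is a proper map between spaces of equal dimension and a local diffeomorphism at $p$ — shows that $f_{E_2}(p|_{V(E_2)})$ has an even number of preimages, hence a second one, giving the required $q_2$.

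The main obstacle is precisely this last case: upgrading the locally rigid ``flexible side'' to a genuinely non‑unique pinned realisation. Two points need care: (i) the restriction of $\phi_p$ to $E_2$ need not a priori make the pinned partial rigidity map a local diffeomorphism, so one must invoke rigidity of the whole of $(G,p)$ to obtain this; and (ii) for the degree/parity count to be valid one must verify, using genericity of $p$, that this piecewise‑linear map is proper onto a neighbourhood of $f_{E_2}(p|_{V(E_2)})$ and that $f_{E_2}(p|_{V(E_2)})$ is a regular value lying off the image of the singular locus, so that the mod‑$2$ degree is well defined and equal to the (even) number of preimages. This piecewise‑linear counting argument is what replaces, in the polyhedral setting, the algebraic‑geometric and stress‑matrix techniques available over Euclidean space.
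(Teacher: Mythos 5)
Your overall strategy --- decompose $E$ into $\mathcal{M}(d,d)$-components and manufacture a second, non-congruent realisation --- is a direct transplant of the Hendrickson/Garamv\"olgyi--Gortler--Jord\'an argument, and it is genuinely different from the route taken in the paper, which approximates $\|\cdot\|_{\mathcal{P}}$ by analytic norms with the same isometry group (\Cref{lem:convergeanalyticnorms}), transfers infinitesimal and global rigidity to the approximating norms (\Cref{l:limit}), and then quotes the analytic-norm result (\Cref{t:analyticsuff}). The preparatory matroid bookkeeping in your first two paragraphs is fine, as is the $|W|=1$ reflection. The problem is that the two steps carrying all the weight --- the coloop ``flip'' and the pinned parity count --- are exactly the steps that break for polyhedral norms, and you flag them as ``points needing care'' without resolving them. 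For the flip: $f_{G-e_0}$ is piecewise linear, so its fibre through $p$ (even modulo isometries, even for generic $p$) is a semi-linear set, a union of polyhedral pieces of the affine spaces $q+\ker M(G-e_0,\phi)$ glued together wherever the active colouring changes; such a set can have branch points and boundary points and need not be a compact $1$-manifold, so neither a ``parity argument on a compact $1$-manifold'' nor the intermediate-value step (which needs the component of $[p]$ to be a circle) is available. This is precisely the failure mode identified in \Cref{subsec:main}: Hendrickson's proof needs the generic fibres of $f_G$ to be smooth manifolds, and for polyhedral norms they are not.

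The pinned case with $|W|\geq 2$ contains two further unjustified claims. First, the pinned partial rigidity map for $G_2$ lands in $\mathbb{R}^{E_2}$, so it is a map ``between spaces of equal dimension'' only when $E_2$ is independent in $\mathcal{M}(d,d)$; if $E_2$ contains a circuit you must pass to a basis of $E_2$ and then separately show that the redundant edges are still satisfied at any second preimage, which is not addressed. Second, and more seriously, there is no reason for the mod-$2$ degree of a proper piecewise-linear map to be even --- proper maps of degree one exist in abundance --- and in the Euclidean analogue the second preimage is extracted from stress-matrix and algebraic-geometric machinery for which you offer no polyhedral substitute. As written, the proposal is a plan whose unproven steps contain essentially the entire content of \Cref{t:globness}; the approximation argument of \Cref{sec:ness} exists precisely to sidestep them.
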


One important consequence of \Cref{t:globness} is the following: if a graph is minimally rigid in a polyhedral normed space,
then it is not globally rigid in the same polyhedral normed space.
While this seems intuitive, the usual tools used to prove this in Euclidean space fail when applied to polyhedral normed spaces.
Specifically, Hendrickson's proof relies on the generic fibres of each rigidity map $f_G$ being smooth manifolds for the Euclidean norm, which is no longer true for polyhedral norms.

\subsubsection*{Sufficient conditions for global rigidity in \texorpdfstring{$\ell^d_\infty$}{L-infinity}}

Our first sufficient condition links global rigidity in smooth $\ell_p$ spaces (i.e., where $p \in (1,\infty)$) and $\ell_\infty$.

\begin{theorem}\label{t:lpapprox}
	Let $(G,p)$ be a generic framework in $\mathbb{R}^d$. Suppose that:
	\begin{enumerate}
		\item $(G,p)$ is rigid in $\ell_\infty^d$;
		\item there exists an increasing  sequence $(s_n)_{n \in \mathbb{N}}$ of even integers such that $(G,p)$ is globally rigid in $\ell_{s_n}^d$ for each $n \in \mathbb{N}$.
	\end{enumerate}
	Then $(G,p)$ is globally rigid in $\ell_\infty^d$.
\end{theorem}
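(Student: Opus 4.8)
The plan is to argue by contradiction. Suppose $(G,p)$ is rigid but not globally rigid in $\ell_\infty^d$. Then there is a realisation $q$ with $f_G(q) = f_G(p)$ in $\ell_\infty^d$ that is not an isometric copy of $p$. Since $(G,p)$ is rigid in $\ell_\infty^d$, generic, and well-positioned (genericity should force well-positioning — if not, I would first record a lemma that a generic realisation in $\ell_\infty^d$ is well-positioned, using that the facets of the cube are coordinate hyperplanes and algebraic independence of the coordinates prevents any coordinate difference from vanishing or any two from being equal up to sign), the rigidity map $f_G$ restricted to a neighbourhood of $p$ is locally a homeomorphism onto its image in a suitable sense; in particular the "exotic" solution $q$ is bounded away from $p$ up to isometry. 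Concretely, I would fix a compact set $K \subseteq (\mathbb{R}^d)^V$ containing (representatives modulo isometry of) $p$, $q$, and — crucially — all their $\ell_{s_n}$-edge-length-equivalent realisations for large $n$; this requires a uniform boundedness argument since $\ell_{s_n} \to \ell_\infty$ uniformly on compacta.

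The key analytic step is a limiting argument. For each even $s_n$, global rigidity of $(G,p)$ in $\ell_{s_n}^d$ means the only $\ell_{s_n}$-edge-equivalent realisations of $p$ are isometric copies. I would approximate: since $\|\cdot\|_{s_n} \to \|\cdot\|_\infty$ uniformly on any bounded set, the $\ell_\infty$-solution $q$ is an approximate $\ell_{s_n}$-solution, i.e. $f_G^{s_n}(q) \to f_G^{\infty}(q) = f_G^\infty(p) = \lim f_G^{s_n}(p)$. One then wants to "project" $q$ onto an exact $\ell_{s_n}$-solution $q_n$ near $q$: for this I would use that $\ell_{s_n}^d$ is smooth and that $(G,p)$ (hence, by genericity-transfer for smooth $\ell_p$, $(G,q)$ if $q$ were generic — but $q$ need not be generic, so more care is needed) has a surjective differential of $f_G^{s_n}$, i.e. $(G,q)$ is infinitesimally rigid-ish, so the implicit function theorem yields $q_n$ with $f_G^{s_n}(q_n) = f_G^{s_n}(p)$ and $q_n \to q$. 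By global rigidity in $\ell_{s_n}^d$, each $q_n$ is an $\ell_{s_n}$-isometric copy of $p$. Passing to a subsequence, the corresponding isometries (elements of the compact group of linear $\ell_{s_n}$-isometries composed with translations, living in a fixed compact set) converge, and the limiting map is an $\ell_\infty$-isometry carrying $p$ to $q$ — contradicting the choice of $q$.

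The main obstacle is exactly the projection step: $q$ is not assumed generic, so I cannot directly invoke a clean transfer of rigidity/infinitesimal rigidity from $p$ to $q$ in $\ell_{s_n}^d$, and the implicit-function-theorem argument needs a full-rank statement at $q$, not $p$. I expect to handle this by working instead near $p$: rather than correcting $q$, correct the edge-length data. That is, consider the map $f_G^{s_n}$ near $p$; since $(G,p)$ is rigid in $\ell_\infty^d$ and $\ell_{s_n} \to \ell_\infty$, a degree-theoretic or compactness argument shows the $\ell_{s_n}$-fibre through $p$ converges (in Hausdorff distance on $K$, modulo isometry) to the $\ell_\infty$-fibre through $p$; global rigidity of each $\ell_{s_n}$-fibre (it is a single isometry-orbit) then forces the limit $\ell_\infty$-fibre to be a single isometry-orbit as well, which is precisely global rigidity of $(G,p)$ in $\ell_\infty^d$. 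Making "the fibres converge" precise — ruling out escape of solutions to infinity and ensuring no new components appear in the limit — is where the genuine work lies, and I would lean on the uniform convergence of the norms together with the rigidity of $(G,p)$ in $\ell_\infty^d$ (which gives local finiteness/isolation of the orbit) to close it.
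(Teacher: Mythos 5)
Your overall strategy --- approximate an exotic $\ell_\infty$-equivalent realisation by exact $\ell_{s_n}$-equivalent realisations, invoke global rigidity in $\ell_{s_n}^d$, and pass to the limit --- is the right one, and is essentially a reformulation of the paper's argument that the cardinality of the fibre of the rigidity map cannot jump up in the limit. However, the two obstacles you flag are exactly the substance of the proof, and neither of your proposed workarounds closes them. First, the full-rank statement at the exotic solution $q$: you say you cannot invoke a clean transfer of rigidity from $p$ to $q$, but this transfer is available and is the crux of the matter. Because $p$ is generic and $f_G(q)=f_G(p)$, the vector $f_G(p)$ lies in $\col M(G,\phi)$ for every $\phi\in\Phi_q$, and algebraic independence of the coordinates of $p$ then forces $\col M(G,\phi)=\col M(G,\phi_p)$ (\Cref{l:generic}) and forces $q$ to be well-positioned (\Cref{l:generic2}). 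This gives not only injectivity of the derivative at every point of the $\ell_\infty$-fibre but also equality of the cokernels there, and the cokernel condition is indispensable: to solve $f_{G,s_n}(\cdot)=f_{G,s_n}(p)$ near $q$ you must know that $f_{G,s_n}(p)$ lies in the local image of $f_{G,s_n}$ near $q$, and injectivity alone (which is all your outline asks for) does not give this, since $|E|$ may exceed $d|V|-d$ and the local image is then a proper submanifold of $\mathbb{R}^E$. Your fallback of leaning on the rigidity of $(G,p)$, ``which gives local finiteness/isolation of the orbit'', controls the fibre only near the orbit of $p$ itself, not near $q$, which is where the difficulty actually sits.

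Second, uniform convergence of the norms $\|\cdot\|_{s_n}\to\|\cdot\|_\infty$ is not sufficient to run an implicit-function-theorem, constant-rank, or degree-theoretic argument across the limit: you need the rigidity maps to converge in $C^1$, jointly in the realisation and in the parameter $1/s$. The paper devotes a nontrivial computation (\Cref{l:fmapdiff}) to showing that the map $(x,t)\mapsto\|x\|_{1/t}$, extended by $\|x\|_\infty$ for $t\leq 0$, is continuously differentiable on the set where the maximal coordinate of $x$ is attained uniquely, with vanishing $t$-derivative at $t=0$. Only with this joint $C^1$ structure does the constant rank theorem (packaged as \Cref{l:boundingfibres}) yield that the fibres of the combined map at parameters $1/s_n$ contain at least as many points as the fibre at $(p,0)$, whence the $\ell_\infty$-fibre of $f_G(p)$ is no larger than a single isometry orbit. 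Your proposal never mentions convergence of derivatives, only of the norms themselves, and without it the projection of $q$ onto an exact $\ell_{s_n}$-solution has no justification. So the plan is salvageable, but both of the steps you defer are genuine gaps, and both are filled by ingredients (the genericity transfer of column spaces, and the $C^1$-interpolation of the norm family) that do not appear in your outline.
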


\Cref{t:lpapprox} is highly applicable to the study of global rigidity in $\ell_\infty^d$.
One important application is the following.

\begin{theorem}\label{t:linfplane}
	Let $(G,p)$ be a generic rigid framework in $\ell_\infty^2$.
	Then $(G,p)$ is globally rigid in $\ell_\infty^2$ if and only if $G$ is $\mathcal{M}(2,2)$-connected.
\end{theorem}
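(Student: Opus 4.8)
I would prove the two implications separately. The ``only if'' direction is immediate: writing $\ell_\infty^2 = (\mathbb{R}^2,\|\cdot\|_{\mathcal{P}})$ with $\mathcal{P}$ the unit square, a generic globally rigid framework in $\ell_\infty^2$ has an $\mathcal{M}(2,2)$-connected graph by \Cref{t:globness} applied with $d=2$. Nothing more is needed here, and in fact this direction uses neither the standing hypothesis that $(G,p)$ is rigid in $\ell_\infty^2$ nor any two-dimensional input beyond $\mathcal{P}$ being the square. All the substance is therefore in the ``if'' direction, which I would deduce from the $\ell_p$-approximation result \Cref{t:lpapprox}.

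\noindent\textbf{Sufficiency.} Suppose $(G,p)$ is generic, rigid in $\ell_\infty^2$, and $G$ is $\mathcal{M}(2,2)$-connected. I want to verify the two hypotheses of \Cref{t:lpapprox} with $d=2$. Hypothesis (1), rigidity in $\ell_\infty^2$, holds by assumption. For hypothesis (2) I would invoke the characterisation of generic global rigidity in the smooth spaces $\ell_s^2$ (the $\ell_p$-plane analogue of the combinatorial characterisations of Euclidean-plane global rigidity, cf.\ \cite{ggj22}); concretely, the input I need is that $\mathcal{M}(2,2)$-connectedness of $G$ implies that every generic framework $(G,p)$ is globally rigid in $\ell_s^2$, at least for all even integers $s\geq 4$. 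Since genericity is a condition on the coordinates of a realisation over $\mathbb{Q}$ and is blind to the ambient norm, the same $p$ is generic in each $\ell_s^2$; hence $(G,p)$ is globally rigid in $\ell_s^2$ for every even integer $s\geq 4$, and choosing $s_n=2n+2$ supplies the increasing sequence of even integers demanded by hypothesis (2). \Cref{t:lpapprox} then yields that $(G,p)$ is globally rigid in $\ell_\infty^2$. (Along the way it is worth recording that $\mathcal{M}(2,2)$-connectedness forces the $(2,2)$-sparsity rank of $G$ to equal $2|V|-2$ — this is read off the rank formula for the $(2,2)$-sparsity matroid from \Cref{subsec:matroid} — so the frameworks $(G,p)$ in $\ell_s^2$ are in particular rigid, a prerequisite for their global rigidity.)

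\noindent\textbf{Main obstacle.} The only genuinely nontrivial ingredient is the sufficiency statement for smooth $\ell_s$ planes: that $\mathcal{M}(2,2)$-connectedness of $G$ implies generic global rigidity of $(G,p)$ in $\ell_s^2$. This is the smooth-$\ell_p$ analogue of the Jackson--Jordán theorem in the Euclidean plane and is far deeper than anything else in the argument; in a full write-up it would be cited from the literature on global rigidity in $\ell_p$ spaces (cf.\ \cite{DHN22,st24}) rather than reproved. Everything else is bookkeeping: the necessity direction is a one-line corollary of \Cref{t:globness}, the invariance of genericity under change of norm is immediate, and transporting global rigidity from the spaces $\ell_s^2$ to $\ell_\infty^2$ is precisely the role of \Cref{t:lpapprox}. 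For completeness I would also note why one does not attack \Cref{t:linfplane} directly via the algorithmic criterion of \Cref{thm:compute}: although in $\ell_\infty^2$ a directed colouring is just a labelling of the edges by the four edge-normals $(\pm1,0),(0,\pm1)$ of the square, ranging over all $4^{|E|}$ such colourings and checking the column-space and fibre conditions of \Cref{thm:compute} on each is combinatorially heavy, whereas the $\ell_p$-approximation route avoids it entirely.
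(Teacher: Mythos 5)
Your proposal is correct and follows essentially the same route as the paper: necessity from \Cref{t:globness}, and sufficiency by feeding the Sugiyama--Tanigawa characterisation of generic global rigidity in $\ell_q^2$ for even $q$ (\Cref{t:shin}, i.e.\ \cite[Theorem 5.8]{st24}) into the approximation result \Cref{t:lpapprox}. The one ingredient you flag as needing a citation is exactly the result the paper quotes, so there is no gap.
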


It was proven by Kitson and Power that every graph containing two edge-disjoint spanning trees has a generic rigid realisation in $\ell_\infty^2$ \cite[Theorem 4.10]{KP14}.
Because all $\mathcal{M}(2,2)$-connected graphs contain two edge-disjoint spanning trees, we immediately obtain the following corollary.

\begin{corollary}\label{cor:linfplane}
	 A graph has a generic globally rigid realisation in $\ell_\infty^2$ if and only if it is $\mathcal{M}(2,2)$-connected.
\end{corollary}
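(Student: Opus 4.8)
The plan is to obtain \Cref{cor:linfplane} as a short consequence of \Cref{t:linfplane}, \Cref{t:globness}, and the realisability theorem of Kitson and Power, together with one standard fact about the $(2,2)$-sparsity matroid. Since the statement is an equivalence about the \emph{existence} of a generic globally rigid realisation, I would split the proof into the two implications.

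For the ``only if'' direction I would argue as follows. The unit ball of $\ell_\infty^2$ is the square, which is a polytope, so $\ell_\infty^2$ is a polyhedral normed space with $d=2$. Hence if $G$ has a generic globally rigid realisation in $\ell_\infty^2$, then \Cref{t:globness} applies with $d=2$ and tells us directly that $G$ is $\mathcal{M}(2,2)$-connected. There is nothing else to check here.

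For the ``if'' direction, suppose $G$ is $\mathcal{M}(2,2)$-connected. First I would use the matroid fact that a $\mathcal{M}(2,2)$-connected graph has a spanning $(2,2)$-tight subgraph: connectivity of $\mathcal{M}(2,2)$ restricted to $E(G)$ rules out coloops and forces $E(G)$ to have full rank $2|V(G)|-2$, so any basis of this restricted matroid is such a subgraph. A $(2,2)$-tight graph decomposes into two edge-disjoint spanning trees (apply the matroid union theorem to two copies of the graphic matroid, equivalently Nash--Williams's tree-packing theorem), so $G$ contains two edge-disjoint spanning trees. By \cite[Theorem 4.10]{KP14}, $G$ therefore admits a generic realisation $(G,p)$ that is rigid in $\ell_\infty^2$. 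Since $(G,p)$ is generic and rigid and $G$ is $\mathcal{M}(2,2)$-connected, \Cref{t:linfplane} gives that $(G,p)$ is globally rigid in $\ell_\infty^2$, which is exactly the realisation we need.

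The argument is essentially bookkeeping, so I do not anticipate a genuine obstacle; the two points that require care are matching up the notion of ``generic'' used in \cite[Theorem 4.10]{KP14} with the one in \Cref{t:linfplane} (so that the realisation produced by Kitson--Power is simultaneously generic in our sense and $\ell_\infty^2$-rigid), and either citing or establishing the sparsity-matroid lemma that $\mathcal{M}(2,2)$-connectedness yields a $(2,2)$-tight spanning subgraph — this is the one place where the internal structure of the $(2,2)$-sparsity matroid is invoked rather than its connectivity being treated as a black box.
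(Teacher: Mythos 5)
Your proposal is correct and follows essentially the same route as the paper: the ``only if'' direction is \Cref{t:globness}, and the ``if'' direction combines the fact that $\mathcal{M}(2,2)$-connected graphs contain two edge-disjoint spanning trees with \cite[Theorem 4.10]{KP14} to get a generic rigid realisation, which \Cref{t:linfplane} then upgrades to globally rigid. The only difference is that you spell out the tree-decomposition step (full rank in $\mathcal{M}(2,2)$ plus Nash--Williams) which the paper leaves implicit.
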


(We remark here that, since $\ell_1^2$ and $\ell_\infty^2$ are isometrically isomorphic,
both \Cref{t:linfplane} and \Cref{cor:linfplane} are true if we replace $\ell_\infty^2$ with $\ell_1^2$.)

\Cref{cor:linfplane} has an additional surprising application.
It is a simple exercise to show that any graph that is redundantly rigid in $\ell_\infty^2$ can be decomposed into two edge-disjoint spanning subgraphs that are 2-edge-connected (see \Cref{t:dk}),
which in turn implies that the graph has at least twice as many edges as vertices.
However, there exist $\mathcal{M}(2,2)$-connected graphs that have  strictly less edges: for example, if $G$ is a 2-connected $(2,2)$-circuit (defined in \Cref{subsec:matroid}), then it is $\mathcal{M}(2,2)$-connected (and hence has a globally rigid realisation in $\ell_\infty^2$) but $|E| =2|V|-1 < 2|V|$ (and hence has no redundantly rigid realisation in $\ell_\infty^2$).
This observation can be rather pithily summarised as follows:

\begin{theorem}\label{t:hend}
	Hendrickson's condition is false in $\ell_\infty^2$ in a strong sense, in that there exist graphs with generic globally rigid realisations in $\ell_\infty^2$ but no redundantly rigid realisations in $\ell_\infty^2$. 
\end{theorem}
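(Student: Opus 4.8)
The plan is to read this off from \Cref{cor:linfplane} together with a crude edge count. By \Cref{cor:linfplane}, the graphs admitting a generic globally rigid realisation in $\ell_\infty^2$ are exactly the $\mathcal{M}(2,2)$-connected graphs, so it is enough to exhibit an $\mathcal{M}(2,2)$-connected graph $G$ that admits no redundantly rigid realisation in $\ell_\infty^2$ at all. This will follow from the inequality ``any graph with a redundantly rigid realisation in $\ell_\infty^2$ has at least $2|V|$ edges'', combined with the fact that $\mathcal{M}(2,2)$-connectedness permits strictly fewer.

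For the edge bound, consider first a well-positioned redundantly rigid framework $(G,q)$ in $\ell_\infty^2$. The facets of the unit square have outer normals $\pm e_1,\pm e_2$, so $\phi_q$ partitions $E$ into a horizontal class $E_1$ and a vertical class $E_2$, and, up to reordering rows and columns, $M(G,\phi_q)$ is block-diagonal with blocks a signed incidence matrix of $(V,E_1)$ and a signed incidence matrix of $(V,E_2)$. Hence $\rank M(G,\phi_q)=(|V|-c_1)+(|V|-c_2)$, where $c_i$ is the number of connected components of $(V,E_i)$. By \Cref{t:dk}, $(G,q)$ is rigid iff $c_1=c_2=1$, and (applying \Cref{t:dk} to each $G-e$, which is again well-positioned with the restriction of $\phi_q$) redundant rigidity forces in addition that no edge of $E_i$ is a bridge of $(V,E_i)$. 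Thus $(V,E_1)$ and $(V,E_2)$ are spanning $2$-edge-connected subgraphs, each with at least $|V|$ edges, so $|E|=|E_1|+|E_2|\ge 2|V|$. For a general, not necessarily well-positioned, redundantly rigid $(G,q)$ the same ``decomposes into two spanning $2$-edge-connected subgraphs'' conclusion holds: one picks a directed colouring $\phi\in\Phi_q$ and checks that a continuous flex of $(G,q)$ obstructed only at corners of the square still produces such a decomposition, as in the discussion surrounding \Cref{t:dk}. This extension past the well-positioned case is the only delicate point; everything else is bookkeeping. (Note that even the well-positioned case already suffices to refute Hendrickson's condition for generic frameworks, since a generic framework in $\ell_\infty^2$ is automatically well-positioned.)

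It remains to name the graph. Let $G$ be a $2$-connected $(2,2)$-circuit --- for instance $K_5$ with one edge deleted, which has $|V|=5$ and $|E|=9=2|V|-1$, is $3$-connected, and whose edge set is readily seen to be a circuit of the $(2,2)$-sparsity matroid (the only vertex subset exceeding the $2|V'|-2$ bound is all of $V$). Since a matroid circuit with at least two elements is a connected matroid (any two of its elements lie in a common circuit, namely itself), $G$ is $\mathcal{M}(2,2)$-connected, so by \Cref{cor:linfplane} it has a generic globally rigid realisation in $\ell_\infty^2$. But $|E|=2|V|-1<2|V|$, so by the edge bound no realisation of $G$ in $\ell_\infty^2$ is redundantly rigid. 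In particular a generic globally rigid realisation of $G$ witnesses the failure of Hendrickson's condition, and it fails in the strong sense claimed: not only is this generic framework not redundantly rigid, but $G$ has no redundantly rigid realisation in $\ell_\infty^2$ whatsoever. The main obstacle in the whole argument is the passage from well-positioned to arbitrary realisations in the edge bound; the rest is an immediate consequence of \Cref{cor:linfplane}.
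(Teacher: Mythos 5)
Your argument is correct and follows essentially the same route as the paper: redundant rigidity in $\ell_\infty^2$ forces a decomposition into two edge-disjoint spanning 2-edge-connected monochromatic subgraphs and hence $|E|\ge 2|V|$, while a 2-connected $(2,2)$-circuit such as $K_5-e$ is $\mathcal{M}(2,2)$-connected with only $2|V|-1$ edges, so \Cref{cor:linfplane} finishes the job. The only ``delicate point'' you flag --- extending the edge bound to realisations that are not well-positioned --- is a non-issue, since the paper defines redundant rigidity only for well-positioned frameworks.
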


Our last main sufficient condition for global rigidity involves a slightly  different approach. Here we instead focus on a special type of directed colouring that we call \emph{strong} (see \Cref{def:strong}).
Using this, we are able to prove the following sufficient condition for global rigidity in $\ell_\infty^d$.

\begin{theorem}\label{t:main2}
    Let $(G,p)$ be a generic framework in $\mathbb{R}^d$.
    Suppose that every monochromatic subgraph of $(G,\phi_p)$ is 2-connected.
    Then $(G,p)$ is globally rigid in $\ell_{\infty}^d$.
\end{theorem}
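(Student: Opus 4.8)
The plan is to use Theorem~\ref{t:lpapprox} as the engine: it suffices to show that (a)~$(G,p)$ is rigid in $\ell_\infty^d$, and (b)~there is an increasing sequence of even integers $s_n$ along which $(G,p)$ is globally rigid in $\ell_{s_n}^d$. Since $(G,p)$ is well-positioned with directed colouring $\phi_p$ (here we use that $p$ is generic, so $\phi_p$ is single-valued) and every monochromatic subgraph of $(G,\phi_p)$ is $2$-connected, in particular every monochromatic subgraph is connected and spans the whole vertex set; hence $G$ decomposes into $d$ spanning connected subgraphs, one per coordinate direction. A standard computation with the matrix $M(G,\phi_p)$ (whose rows, by the structure of $\|\cdot\|_\infty$, decouple along the $d$ colour classes) shows that $\rank M(G,\phi_p) = \sum_{i=1}^d (\rank \text{ of incidence matrix of colour class } i) = d(|V|-1) = d|V| - d$, so by Theorem~\ref{t:dk} the framework is rigid in $\ell_\infty^d$. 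This gives~(a).

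For~(b), I would exploit the fact that the colour classes of $\phi_p$ being $2$-connected is \emph{exactly} the kind of hypothesis that forces global rigidity in the smooth $\ell_s$ spaces. Concretely, $2$-edge-connectivity of each monochromatic subgraph means that deleting any single edge still leaves each colour class connected, i.e. $(G,p)$ stays rigid in $\ell_\infty^d$ after deleting any edge; one then wants to transfer this redundant-rigidity-type statement to $\ell_s^d$ for large even $s$. The natural route is: for generic $p$ and large even $s$, the $\ell_s^d$ rigidity matrix is a perturbation of $M(G,\phi_p)$ whose rank does not drop (rank is lower semicontinuous and the relevant minors are nonzero for generic $p$), so $(G,p)$ is rigid, indeed redundantly rigid, in $\ell_s^d$ for all sufficiently large even $s$. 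Then invoke the known sufficient condition for generic global rigidity in smooth $\ell_p$ spaces — redundant rigidity together with the connectivity/stress conditions available there (this is the analytic-norm theory underlying \cite{DHN22}, combined with the observation that a generic redundantly rigid framework in a smooth $\ell_p$ space possessing a full-rank equilibrium stress is globally rigid) — to conclude $(G,p)$ is globally rigid in $\ell_s^d$ for all large even $s$. Choosing $(s_n)$ to be the even integers beyond this threshold gives the sequence required by Theorem~\ref{t:lpapprox}.

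The main obstacle, and the step I would spend the most care on, is item~(b): producing the global rigidity certificate in $\ell_s^d$ from the $2$-connectivity of the monochromatic subgraphs. One has to be careful that the $\ell_\infty$ colouring $\phi_p$ really does control the limiting behaviour of the $\ell_s^d$ frameworks — i.e. that for generic $p$ the directions $p(v)-p(w)$ sit in the interiors of the facets picked out by $\phi_p$, so that as $s \to \infty$ the $\ell_s$ unit sphere's outward normals at $(p(v)-p(w))/\|p(v)-p(w)\|_s$ converge to $\phi_p(vw)$ — and then that the global rigidity theory in smooth $\ell_p$ spaces (which requires a redundantly rigid framework plus, roughly, an equilibrium stress whose support meets every edge and whose associated stress matrix has maximal rank) can be applied with the stress built coordinate-by-coordinate from cycles living inside each $2$-connected colour class. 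If the smooth-$\ell_p$ global rigidity criterion one needs is not literally in the cited references in the exact form required, the honest fallback is to prove it directly for these highly structured frameworks: each colour class being $2$-connected lets one build, for every edge $e$, an equilibrium stress supported on $e$ within its monochromatic subgraph, and genericity ensures these combine to a stress certifying that no nearby $\ell_s^d$ realisation with the same edge lengths is a non-congruent copy.
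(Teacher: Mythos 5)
Your step (a) is fine: connectedness of each monochromatic subgraph gives $\rank M(G,\phi_p)=d|V|-d$ and hence rigidity in $\ell_\infty^d$ (\Cref{t:colour,t:dk}). The genuine gap is step (b). To feed \Cref{t:lpapprox} you must certify that $(G,p)$ is globally rigid in $\ell_{s}^d$ for infinitely many even $s$, and nothing in the hypotheses delivers this when $d\geq 3$. The only generic global rigidity results available for smooth $\ell_q$ spaces are \Cref{t:shin}, which is restricted to $d=2$, and \Cref{t:shin2}, which requires infinitesimal rigidity one dimension up in $\ell_q^{d+1}$ --- a condition that does not follow from $2$-connectivity of the monochromatic subgraphs of $\phi_p$ in $\ell_\infty^d$. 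Your proposed substitute, ``redundant rigidity plus an equilibrium stress with a maximal-rank stress matrix,'' is exactly the hard Connelly-type sufficient condition of Sugiyama--Tanigawa, and the sketch of building such a stress ``coordinate-by-coordinate from cycles'' does not establish the required rank of the stress matrix: the cycle-space vectors of the monochromatic incidence matrices are cokernel elements of the \emph{limiting} matrix $M(G,\phi_p)$, not of the $\ell_s^d$ rigidity matrix, and even granting a correspondence, the resulting stress matrices are block-Laplacian-like objects whose rank you never compute. In short, (b) is asserted, not proved, and it is the entire content of the theorem. (For $d=2$ your route does close up, since $2$-connected monochromatic subgraphs force $G$ to be $\mathcal{M}(2,2)$-connected and then \Cref{t:shin} applies; but the theorem is claimed for all $d$.)

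The paper avoids this obstacle entirely by staying inside $\ell_\infty^d$: it introduces \emph{strong} directed colourings (\Cref{def:strong}) and shows (\Cref{l:linfstrongcol}) that $2$-connectivity of every monochromatic subgraph forces $\phi_p$ to be strong, via a purely combinatorial analysis of the cycle spaces of the oriented incidence matrices $I(G_i(\phi))$. Strongness means that any directed colouring $\phi'$ with $\col M(G,\phi_p)\subseteq\col M(G,\phi')$ is isometric to $\phi_p$; combined with \Cref{l:generic} and the exact characterisation \Cref{thm:compute}, this immediately yields global rigidity (\Cref{t:main1}). If you want to repair your argument, the most economical fix is to abandon the $\ell_s$ approximation for this theorem and instead prove that the column-space condition in \Cref{thm:compute} can only be met by colourings isometric to $\phi_p$ --- which is precisely what the $2$-connectivity hypothesis buys you.
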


Every every monochromatic subgraph of a well-positioned framework is 2-connected if and only if the framework is \emph{vertex-redundantly rigid in $\ell_\infty^d$} (i.e., deleting any vertex from the framework always produces a rigid framework).
Because of this,
\Cref{t:main2} is an analogue of \cite[Theorem 4.2]{t15} for global rigidity in $\ell_\infty^d$.

\subsection{Direct application: network location in grid-system urban areas}

A commonly cited application for Euclidean rigidity theory is network localisation:
given a set of nodes where the position of some nodes is known and some pairwise distances between nodes are known,
determine the positions of all the nodes.
A motivating example of this would be a network of radio transceivers where some transceivers have fixed and known locations (such as radio towers), while others are portable with unknown locations. Here pairwise distances can be determined between sufficiently close radio transceivers by measuring the time it takes for a signal to travel from one transceiver to the another and back again.
This motivating example runs into issues for urban environments due to Non-line-of-sight (NLOS) radio signal propagation. In layman's terms, the radio signals are ``funnelled'' down the various streets of the urban area, and so the distance being measured is distorted.

To address the NLOS radio signal propagation issue,
Chiu and Chen \cite{chiuchen} studied network localisation in urban areas with grid street plans -- such as New York City or (for an example closer to the author's home) Milton Keynes -- using a mixture of $\ell_1$ and $\ell_2$ metric measurements.
By combining their model with \Cref{t:linfplane},
we can reconstruct radio transceiver networks in urban environments if the following criteria are met:
\begin{enumerate}
	\item all nodes are assumed to have generic positions;
	\item the environment is an urban environment with a suitably fine and consistent grid system;
	\item at least one node's location is known;
	\item the framework generated from the network is rigid in $\ell_1^2$;
	\item the graph generated from the network is $\mathcal{M}(2,2)$-connected;
	\item if exactly one node's location is known, we additionally need to know the general direction between two pairs of nodes (for example, roughly knowing if one node is north/east/south/west of another node) so as to remove reflections and 90 degree rotations.
\end{enumerate}

\subsection{Layout of paper}

In \Cref{sec:prelim},
we describe some important background results in the study of normed space rigidity,
as well as describing the $(d,d)$-sparsity matroid.
In \Cref{sec:poly},
we introduce polyhedral normed spaces and describe the various tools we shall be using throughout.
We also prove some important results regarding framework rigidity and redundant rigidity, particularly in $\ell_\infty$ normed spaces.
In \Cref{sec:equiv},
we first prove \Cref{thm:compute},
then we prove that determining if an arbitrary framework in $\ell_\infty^d$ is globally rigid is NP-Hard.
In \Cref{sec:ness},
we prove that, if a framework $(G,p)$ is globally rigid and well-positioned in a polyhedral normed space $(\mathbb{R}^d,\|\cdot\|_\mathcal{P})$,
then $(G,p)$ is also globally rigid and well-positioned in some analytic normed space $(\mathbb{R}^d,\|\cdot \|)$ which shares similar properties to $\| \cdot\|_{\mathcal{P}}$.
From this, we use previously known results regarding global rigidity in analytic normed spaces to prove \Cref{t:globness}.
In \Cref{sec:linfplane},
we prove \Cref{t:lpapprox}. We use this result to first prove \Cref{t:linfplane}, then we use it to show that that projecting any generic rigid framework in $\ell_\infty^{d+1}$ into $\ell_\infty^d$ creates a generic globally rigid framework in $\ell_\infty^d$ (\Cref{t:project}).
This latter result is in turn applied to prove that all complete graphs with at least $2d+2$ vertices are globally rigid in $\ell_\infty^d$ (\Cref{cor:k2d2}).
In \Cref{sec:suff},
we describe a different sufficient condition for global rigidity involving strong directed colourings.
Using this, we then prove \Cref{t:main2}.

\subsection{Notation}

We use the shorthand notation $[n] := \{1,\ldots,n\}$.
If the vertex/edge set of a graph $G$ is not explicitly stated, they are assumed to be $V$ and $E$ respectively.
For a given realisation $p:V \rightarrow \mathbb{R}^d$,
we denote the vector value at the vertex $v \in V$ by $p(v) = (p_1(v),\ldots,p_d(v)$.
We apply an affine transformation $g : \mathbb{R}^d \rightarrow \mathbb{R}^d$ to the realisation $p$, here denoted $g \circ p$, by setting $(g \circ p)(v) = g(p(v))$ for each $v \in V$.
Throughout the paper, we fix the dot product $x\cdot y = \sum_{i=1}^d x_i y_i$ for any vectors $x=(x_1,\ldots,x_d)$ and $y=(y_1,\ldots,y_d)$ in $\mathbb{R}^d$.
Furthermore, all normed spaces throughout are both finite dimensional and real, i.e., of the form $(\mathbb{R}^d,\|\cdot\|)$.
We reserve $\mathbf{0}$ to represent any vector with all coordinates being zero.
We set $\leq$ be the coordinate-wise partial ordering on any linear space $\mathbb{R}^n$;
i.e.~$x \leq y$ if and only if $x_i \leq y_i$ for each $i \in [n]$.

\section{Preliminaries on normed space rigidity}
\label{sec:prelim}

We begin by outlining some of the basic results in normed space rigidity theory we require throughout the paper.

\subsection{Supports, smooth points and isometries for normed spaces}
Given a normed space $(\mathbb{R}^d,\|\cdot\|)$ and a vector $x \in \mathbb{R}^d \setminus \{0\}$,
we define a vector $y$ to be a \emph{support vector} of $x$ (with respect to the norm $\| \cdot\|$) if $x \cdot y = \|x\|$ and $x' \cdot y \leq \|x\|$ for all $x' \in \mathbb{R}^d$ with $\|x'\|\leq \|x\|$.
It follows from the Hahn-Banach theorem that every non-zero vector has a support vector.
If a non-zero point $x$ has a unique support vector (which we denote by $\varphi(x)$) then we say it is \emph{smooth} (with respect to $\|\cdot\|$);
in fact, the support vector of a smooth point is exactly the gradient of the norm $\|\cdot\|$ at the point $x$ (see, for example, \cite[Lemma 1]{ks15}).

As every norm is a convex function,
the set of smooth points of a normed space $(\mathbb{R}^d,\|\cdot\|)$ --
here denoted $\smooth (\mathbb{R}^d,\|\cdot\|)$ -- has a Lesbegue measure zero complement and the \emph{dual map}
\begin{equation*}
	\varphi : \smooth (\mathbb{R}^d,\|\cdot\|) \rightarrow \mathbb{R}^d, ~ x \mapsto \varphi(x),
\end{equation*}
is the continuous derivative of the norm $\| \cdot\|$ over the set $\smooth (\mathbb{R}^d,\|\cdot\|)$ \cite[Theorem 25.5]{rockafellar}.
If every non-zero point of a normed space is smooth (i.e., $\smooth (\mathbb{R}^d,\|\cdot\|) = \mathbb{R}^d \setminus \{0\}$), then we say that the normed space itself is \emph{smooth}.
Examples of smooth spaces include any $\ell_p$ normed space with $p \in (1,\infty)$.

It was shown by Mazur and Ulam \cite{mazurulam} that every isometry from a normed space to itself is a bijective affine transformation.
Thus the set of isometries for a normed space $(\mathbb{R}^d,\|\cdot\|)$ --
here denoted $\Isom (\mathbb{R}^d,\|\cdot\|)$ -- is a closed subgroup of the group of bijective affine transformations of $\mathbb{R}^d$ containing the family of translations of $\mathbb{R}^d$.
Since this latter group is a Lie group,
it follows from the Closed Subgroup Theorem \cite{cartan} that the group $\Isom (\mathbb{R}^d,\|\cdot\|)$ is a Lie group.
This in itself is not something we need to concern ourselves about too much (especially if you are somewhat allergic to Lie group terminology);
the main takeaway from this is that the tangent space of $\Isom (\mathbb{R}^d,\|\cdot\|)$ at the identity map $I_d$ -- here denoted $\mathcal{T}(\mathbb{R}^d,\|\cdot\|)$ -- is a well-defined linear subspace of bijective affine transformations,
and $d \leq \dim \mathcal{T}(\mathbb{R}^d,\|\cdot\|) \leq \binom{d+1}{2}$,
with $\dim \mathcal{T}(\mathbb{R}^d,\|\cdot\|)  = d$ if and only if $(\mathbb{R}^d,\|\cdot\|)$ has finitely many linear isometries.

We say that our normed space $(\mathbb{R}^d,\|\cdot\|)$ is \emph{Euclidean} if the norm $\|\cdot\|$ is an inner product norm.
This is equivalent to the condition that $\dim \mathcal{T}(\mathbb{R}^d,\|\cdot\|) = \binom{d+1}{2}$ \cite[Lemma 4]{ms43}.
Since all Euclidean spaces of the same dimension are isometrically isomorphic (i.e., there exists a linear isometry from one space to another),
we say that the norm
\begin{equation*}
	\|x\| = \sqrt{ \sum_{i=1}^d x_i^2} = \sqrt{x \cdot x}
\end{equation*}
is the \emph{standard Euclidean norm}.

\subsection{Rigidity for normed spaces}\label{subsec:normrigid}

Given a graph $G=(V,E)$ and a normed space $(\mathbb{R}^d,\|\cdot\|)$, we say that two frameworks $(G,p)$ and $(G,q)$ in $(\mathbb{R}^d,\|\cdot\|)$ are \emph{equivalent} if they have the same length edges with respect to the norm $\|\cdot\|$, which is the same as the equality $f_G(p) = f_G(q)$ holding.
If there exists an isometry $g \in \Isom (\mathbb{R}^d,\|\cdot\|)$ where $g \circ p = q$,
then we say that $(G,p)$ and $(G,q)$ are \emph{congruent}.
Note that any pair of congruent frameworks are also equivalent.

A framework $(G,p)$ is said to be \emph{well-positioned} if every vector $p(v) - p(w)$, $vw \in E$, is a smooth point in $(\mathbb{R}^d,\|\cdot\|)$;
equivalently, $(G,p)$ is well-positioned if and only $p$ is a smooth point of the rigidity map $f_G$.
Given a well-positioned framework $(G,p)$,
the \emph{rigidity matrix} of $(G,p)$ is the $|E| \times d|V|$ Jacobian matrix $df_G(p)$ of $f_G$ at the point $p$.
Specifically, the matrix $df_G(p)$ has an entry $df_G(p)_{e,(u,i)}$ for each $e=vw \in E$ and each pair $(u,i) \in V \times [d]$,
where
\begin{align*}
    df_G(p)_{e,(u,i)} = 
    \begin{cases}
        \varphi(p(v) - p(w))_i &\text{if } u =v\\
        \varphi(p(w) - p(v))_i &\text{if } u = w\\
        0 &\text{otherwise.}
    \end{cases}
\end{align*}
An \emph{infinitesimal flex} of $(G,p)$ is any element of $\ker df_G(p)$.
For every $g \in \mathcal{T}(\mathbb{R}^d,\|\cdot\|)$,
the vector $g \circ p$ is an infinitesimal flex (see, for example, \cite[Lemma 2.1]{KP14});
any such infinitesimal flex is then called a \emph{trivial infinitesimal flex} of $(G,p)$.

For a framework $(G,p)$ in $(\mathbb{R}^d,\|\cdot\|)$,
we define the following properties:
\begin{enumerate}
	\item $(G,p)$ is \emph{rigid} (also known as locally rigid) if there exists $\varepsilon >0$ such that any equivalent framework $(G,q)$ where $\|p(v)-q(v)\|<\varepsilon$ for each $v \in V$ is congruent to $(G,p)$.
	\item $(G,p)$ is \emph{globally rigid} if every equivalent framework $(G,q)$ is congruent to $(G,p)$.
	\item If $(G,p)$ is well-positioned\footnote{It is possible to define infinitesimal rigidity without requiring a framework is well-positioned; see, for example, \cite[Section 1.1]{Dewar22} or \cite[Section 2]{KP14}. Since the conditions is more complex and unnecessary for our goals, we have opted for the more streamlined definition.}, we say $(G,p)$ is \emph{infinitesimally rigid} if every infinitesimal flex is trivial.
	\item If $(G,p)$ is well-positioned, we say $(G,p)$ is \emph{redundantly rigid} if $(G-e,p)$ is infinitesimally rigid for every edge $e \in E$.
\end{enumerate}
It is clear that global rigidity implies rigidity.
What is less clear (but indeed still true) is that infinitesimal rigidity implies rigidity.

\begin{theorem}[Dewar {\cite[Theorem 3.7]{Dewar22}}]
	Every infinitesimally rigid framework is rigid.
\end{theorem}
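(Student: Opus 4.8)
The plan is to combine the Lie group structure of the isometry group, recalled in \Cref{sec:prelim}, with the inverse function theorem applied to the restriction of $f_G$ to a transversal slice. Throughout, write $t := \dim \bigl\{ g \circ p : g \in \mathcal{T}(\mathbb{R}^d,\|\cdot\|)\bigr\}$ for the dimension of the space of trivial infinitesimal flexes of $(G,p)$. Since $(G,p)$ is infinitesimally rigid it is well-positioned, and $\ker df_G(p)$ is exactly this space of trivial flexes; hence $\rank df_G(p) = d|V| - t$.

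First I would record that the set $W \subseteq (\mathbb{R}^d)^V$ of well-positioned realisations of $G$ is open and that $f_G$ is continuously differentiable on $W$. Continuous differentiability holds because $\varphi$ is the continuous derivative of $\|\cdot\|$ on $\smooth(\mathbb{R}^d,\|\cdot\|)$, so each entry of the rigidity matrix depends continuously on the realisation as it ranges over $W$; openness of $W$ reduces to openness of $\smooth(\mathbb{R}^d,\|\cdot\|)$, which for a polyhedral norm holds because $x \neq \mathbf 0$ is smooth precisely when $x/\|x\|$ lies in the relative interior of a facet cone of the unit ball. In particular $f_G$ is $C^1$ on a neighbourhood of $p$. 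Next I set up the local orbit and a complementary slice: the orbit map $\theta : \Isom(\mathbb{R}^d,\|\cdot\|) \to (\mathbb{R}^d)^V$, $g \mapsto g\circ p$, factors through $\Isom(\mathbb{R}^d,\|\cdot\|)/\mathrm{Stab}(p)$, and since $\Isom(\mathbb{R}^d,\|\cdot\|)$ is a Lie group and $\mathrm{Stab}(p)$ a closed subgroup, the image of $\theta$ near $p$ is an embedded $C^1$ submanifold $N \ni p$ with $T_p N = \{g \circ p : g \in \mathcal{T}(\mathbb{R}^d,\|\cdot\|)\}$, a space of dimension exactly $t$. I then fix an affine subspace $S \ni p$ of dimension $d|V| - t$ transverse to $N$ at $p$ (concretely, by pinning down $t$ suitably chosen coordinates of the realisation); by transversality and a standard slice argument, every realisation $q$ sufficiently close to $p$ is congruent to a unique realisation $\sigma(q) \in S$ close to $p$, and $\sigma$ is continuous with $\sigma(p)=p$.

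The key step is injectivity of $f_G$ on $S$ near $p$. Since $T_pS$ is complementary to $\ker df_G(p)$ and $\rank df_G(p) = d|V|-t = \dim T_pS$, the restriction $df_G(p)|_{T_pS}$ is injective, so there is $E' \subseteq E$ with $|E'| = d|V|-t$ such that $\pi_{E'}\circ \bigl(df_G(p)|_{T_pS}\bigr)$ is invertible, where $\pi_{E'}:\mathbb{R}^E \to \mathbb{R}^{E'}$ is the coordinate projection. The map $F := \pi_{E'}\circ f_G|_S$ is then $C^1$ near $p$ with $dF(p)$ invertible, so by the inverse function theorem $F$ is injective on a neighbourhood of $p$ in $S$. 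To conclude, take any framework $(G,q)$ equivalent to $(G,p)$ with $q$ close to $p$ and set $q' := \sigma(q) \in S$; then $(G,q')$ is congruent to $(G,q)$, so $f_G(q') = f_G(q) = f_G(p)$, whence $F(q') = F(p)$ and therefore $q' = p$. Thus $(G,q)$ is congruent to $(G,p)$, which is the definition of $(G,p)$ being rigid.

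I expect the main obstacle to be the two places where the normed-space setting departs from the Euclidean one: establishing that $f_G$ is genuinely $C^1$ on a neighbourhood of $p$ — that is, that well-positionedness is an open condition and that the merely continuous (not smooth) dual map $\varphi$ still makes the rigidity matrix vary continuously — and correctly identifying the local orbit $N$ as a $C^1$ submanifold through the Lie group structure of $\Isom(\mathbb{R}^d,\|\cdot\|)$. Once these are in place, the inverse function theorem argument is routine, and for polyhedral norms both points are transparent since the smooth locus is the open union of the relative interiors of the facet cones of the unit ball.
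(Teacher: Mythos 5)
The paper does not actually prove this statement; it quotes it from the cited reference, so there is no internal proof to compare against. Your strategy --- quotient out the isometry orbit with a local slice, then apply an inverse function theorem to $f_G$ restricted to the slice --- is the natural one, and the orbit/slice half is sound: $\Isom(\mathbb{R}^d,\|\cdot\|)$ is the semidirect product of the translations with the (compact) group of linear isometries, so it acts smoothly and properly on $(\mathbb{R}^d)^V$, the orbit of $p$ is an embedded submanifold with the tangent space you describe, and every realisation near $p$ is congruent to one in your transversal $S$ near $p$. The identification of $\ker df_G(p)$ with the trivial flexes and the choice of the coordinate set $E'$ are also fine.

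The genuine gap is the claim that the set $W$ of well-positioned realisations is open and that $f_G$ is $C^1$ on it. The theorem is stated for an arbitrary normed space, and for a general norm the set $\smooth(\mathbb{R}^d,\|\cdot\|)$ need not be open: a convex function of one variable of the form $\sum_n c_n|x-q_n|$, with $(q_n)$ an enumeration of the rationals in $[0,1]$ and $\sum_n c_n<\infty$, fails to be differentiable precisely on a dense set, and one can build norms whose unit sphere has a dense set of non-smooth directions in the same spirit. For such a norm, $p$ being well-positioned gives no neighbourhood of $p$ on which $f_G$ is differentiable, so the classical inverse function theorem cannot be applied to $F=\pi_{E'}\circ f_G|_S$. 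Your own justification of this step (``which for a polyhedral norm holds because\dots'') silently restricts to the polyhedral case, where the norm is locally linear on each facet cone and everything you say is correct --- but that proves a special case (admittedly the only case this paper uses, together with smooth norms), not the stated theorem. The standard repair is to observe that $f_G$ is globally Lipschitz and, since each coordinate is built from a convex function differentiable at the relevant point, it is in fact \emph{strictly} differentiable at $p$ (upper semicontinuity of the subdifferential gives $f_G(x)-f_G(y)-df_G(p)(x-y)=o(\|x-y\|)$ as $x,y\to p$), and then to invoke a Lipschitz inverse function theorem in place of the $C^1$ version. Mere differentiability at the single point $p$ would not suffice --- $x\mapsto x+2x^2\sin(1/x)$ has derivative $1$ at $0$ yet is not injective on any neighbourhood of $0$ --- so this is a substantive step rather than a cosmetic one.
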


Given integers $d,k$ with $k \leq \binom{d+1}{2}$, we say that a graph $G=(V,E)$ is \emph{$(d,k)$-sparse} if for every subset $X \subset V$ with $|X| \geq d$,
the induced subgraph $G[X]$ has at most $d|X| - k$ edges.
If $G$ is both $(d,k)$-sparse and $|E|=d|V|-k$, then $G$ is said to be \emph{$(d,k)$-tight}.
For example, a graph is $(1,1)$-sparse if and only if it is a forest, and $(1,1)$-tight if and only if it is a tree.
As can be seen by the following result, $(d,k)$-sparsity is closely tied to rigidity.

\begin{corollary}[{\cite[Corollary 4.13]{Dewar21}}]\label{t:maxwell}
	Let $(\mathbb{R}^d,\|\cdot\|)$ be a normed space and let $k$ be the dimension of its isometry group.
	If a framework $(G,p)$ in $(\mathbb{R}^d,\|\cdot\|)$ is infinitesimally rigid, then $G$ contains a $(d,k)$-tight spanning subgraph.
\end{corollary}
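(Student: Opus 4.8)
The plan is to prove \Cref{t:maxwell} by combining the dimension count forced by infinitesimal rigidity with a Maxwell-type sparsity argument applied to every vertex-induced subframework. First I would recall that, since $(G,p)$ is well-positioned and infinitesimally rigid, every infinitesimal flex is trivial, so $\ker df_G(p) = \mathcal{T}(\mathbb{R}^d,\|\cdot\|) \circ p$; because $k = \dim \mathcal{T}(\mathbb{R}^d,\|\cdot\|)$ and the evaluation map $g \mapsto g\circ p$ is injective on $\mathcal{T}(\mathbb{R}^d,\|\cdot\|)$ whenever $p$ affinely spans $\mathbb{R}^d$ (which we may assume, as otherwise we work in the affine hull and the statement still follows, or this reduction is handled in \cite{Dewar21}), we get $\dim \ker df_G(p) = k$. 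Hence the rank of the rigidity matrix is exactly $d|V| - k$, which forces $|E| \geq d|V| - k$.

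Next I would extract a spanning subgraph $G' = (V, E')$ with $E' \subseteq E$, $|E'| = d|V| - k$, chosen so that the corresponding rows of $df_G(p)$ are linearly independent; this is possible precisely because $\rank df_G(p) = d|V|-k$. It remains to check that $G'$ is $(d,k)$-sparse, i.e.\ that every induced subgraph $G'[X]$ with $|X| \geq d$ has at most $d|X| - k$ edges. The key observation is that the submatrix of $df_G(p)$ indexed by the edges of $G[X]$ and the coordinates $X \times [d]$ is itself (up to the zero columns coming from vertices outside $X$) the rigidity matrix of the subframework $(G[X], p|_X)$, and the rows coming from $E' \cap E(G[X])$ are still independent in it. Therefore $|E' \cap E(G[X])| \leq \rank df_{G[X]}(p|_X) \leq d|X| - \dim\ker df_{G[X]}(p|_X)$. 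Since $p|_X$ affinely spans a space of dimension at least $d$ (using $|X|\ge d$ together with genericity-type position, or again the affine-hull reduction), the trivial flexes give $\dim \ker df_{G[X]}(p|_X) \geq k$, so $|E'\cap E(G[X])| \leq d|X| - k$. Combined with $|E'| = d|V|-k$, this shows $G'$ is $(d,k)$-tight.

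The step I expect to be the main obstacle is the clean handling of the lower bound $\dim \ker df_{G[X]}(p|_X) \geq k$ for the induced subframeworks: a priori the vertices of $X$ need not affinely span $\mathbb{R}^d$, in which case the space of trivial flexes restricted to $X$ could be larger than $k$ (more degrees of freedom from isometries acting on a lower-dimensional flat), which only helps the inequality, or the bookkeeping of which affine maps descend to genuine infinitesimal flexes becomes delicate. I would handle this by invoking the fact, established in the cited work \cite{Dewar21}, that for any well-positioned framework the trivial infinitesimal flexes always form a space of dimension at least $d$ (indeed at least the dimension of the isometry group acting effectively on the affine span), so the bound $d|X|-k$ is never violated; alternatively, one restricts attention to $X$ that affinely span $\mathbb{R}^d$ and then notes that the sparsity condition for smaller $X$ follows by a standard monotonicity argument. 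All the remaining verifications — that deleting columns for vertices outside $X$ does not affect independence of the rows supported on $G[X]$, and that row independence is inherited by submatrices — are routine linear algebra, so I would state them briefly rather than belabour them.
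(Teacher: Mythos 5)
The paper does not prove this statement at all --- it is imported verbatim as \cite[Corollary 4.13]{Dewar21} --- so there is no in-paper proof to compare against. Your proposal is the standard Maxwell-count argument (rank of the rigidity matrix equals $d|V|-k$, extract an independent row basis, verify $(d,k)$-sparsity of the resulting spanning subgraph via the induced subframeworks), which is certainly the same route as the cited source. The linear-algebra bookkeeping you describe (rows supported on $E(G[X])$ vanish outside the columns $X\times[d]$, independence passes to the submatrix) is correct and routine.

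The one genuine gap is exactly the step you flag, and your two proposed patches do not close it in general. You need $\dim\ker df_{G[X]}(p|_X)\geq k$ for \emph{every} $X$ with $|X|\geq d$, including sub-configurations $p|_X$ that do not affinely span $\mathbb{R}^d$. The space of trivial flexes of the subframework is the image of the evaluation map $g\mapsto g\circ p|_X$ on $\mathcal{T}(\mathbb{R}^d,\|\cdot\|)$, and when $p|_X$ is degenerate this map can have a nontrivial kernel, so the trivial flexes alone can have dimension strictly \emph{less} than $k$; the missing dimensions must be recovered from non-trivial flexes of the degenerate sub-configuration, which is a real argument rather than an observation (this is where the cited work does its work). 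Your first fallback only yields a kernel of dimension at least $d$, which is weaker than $k$ whenever $k>d$; your second fallback (restrict to spanning $X$ and appeal to monotonicity) does not obviously handle non-spanning $X$ with many vertices. For the normed spaces actually used in this paper the gap is harmless: polyhedral norms have finitely many linear isometries, so $k=d$, $\mathcal{T}$ consists only of translations, and the constant flexes $(x)_{v\in X}$ give $\dim\ker df_{G[X]}(p|_X)\geq d=k$ unconditionally. But as a proof of the statement for general $(\mathbb{R}^d,\|\cdot\|)$ with $k>d$, the argument is incomplete without the degenerate-configuration lemma, and deferring that lemma back to \cite{Dewar21} makes the proof circular as a self-contained derivation.
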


As all 1-dimensional normed spaces are Euclidean, it is easy to see that the necessary condition given in \Cref{t:maxwell} is actually sufficient when $d=1$.
In fact, this is also true when $d=2$.

\begin{theorem}[{Pollaczek-Geiringer \cite{PollaczekGeiringer}}]\label{t:laman}
	Let $(\mathbb{R}^2,\|\cdot\|)$ be Euclidean.
	Then the following properties are equivalent for any graph $G$:
	\begin{enumerate}
		\item there exists an infinitesimally rigid framework $(G,p)$ in $(\mathbb{R}^2,\|\cdot\|)$;
		\item $G$ contains a $(2,3)$-tight spanning subgraph, or $G$ is a single vertex.
	\end{enumerate}
\end{theorem}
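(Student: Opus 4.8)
The plan is to prove the two implications separately; (i) $\Rightarrow$ (ii) is immediate and (ii) $\Rightarrow$ (i) is where the work lies. For (i) $\Rightarrow$ (ii): if $G$ is a single vertex the second alternative of (ii) holds, so assume $(G,p)$ is infinitesimally rigid with $E\neq\emptyset$. A $2$-dimensional Euclidean space has isometry group of dimension $\binom{3}{2}=3$, so \Cref{t:maxwell} applied with $d=2$, $k=3$ immediately gives a $(2,3)$-tight spanning subgraph of $G$.

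For (ii) $\Rightarrow$ (i): a single vertex admits only trivial infinitesimal flexes, so assume $G$ contains a $(2,3)$-tight spanning subgraph $H$; since adding edges to a framework never destroys infinitesimal rigidity (the trivial flex space is unchanged and the kernel can only shrink), it suffices to realise $H$ infinitesimally rigidly, so we may assume $G=H$ is $(2,3)$-tight. I would then prove by induction on $|V|$ that \emph{every generic realisation of a $(2,3)$-tight graph is infinitesimally rigid in $(\mathbb{R}^2,\|\cdot\|)$}, using that at each step it is enough to exhibit one infinitesimally rigid realisation: the infinitesimally rigid realisations of a fixed $(2,3)$-tight graph form the non-vanishing locus of a maximal minor of the rigidity matrix, hence a Zariski-open set, so if it is non-empty it contains every generic realisation. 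The base case $|V|=2$ is $G=K_2$, where any realisation with $p(a)\neq p(b)$ has rank-$1$ rigidity matrix and hence a $3$-dimensional (fully trivial) flex space. For $|V|\geq 3$ a $(2,3)$-tight graph has degree sum $4|V|-6$ and no vertex of degree $\leq 1$ (deleting such a vertex would leave an induced subgraph violating $(2,3)$-sparsity), so it has a vertex $v$ of degree $2$ or $3$: deleting $v$ in the degree-$2$ case, or deleting $v$ and adding a suitable non-edge among its three neighbours in the degree-$3$ case, produces a $(2,3)$-tight graph $G'$ on $|V|-1$ vertices, which by induction has a generic infinitesimally rigid realisation $p'$.

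It then remains to extend $p'$ to an infinitesimally rigid realisation of $G$. In the degree-$2$ case I would put $p(v)$ off the line through $p'(x),p'(y)$, where $x,y$ are the neighbours of $v$; any infinitesimal flex $u$ of $(G,p)$ restricts to a flex of $(G',p')$, hence agrees on $V(G')$ with a trivial flex $\bar u$, and the two independent edge equations at $v$ then force $u(v)=\bar u(v)$. In the degree-$3$ case, writing $x,y,z$ for the neighbours of $v$ and $xy$ for the non-edge added to $G-v$, I would place $p(v)$ on the line through $p'(x),p'(y)$, distinct from both, with $p'(z)$ off that line; since $p(v)-p'(x)$ and $p(v)-p'(y)$ are then parallel, the two edge equations at $v$ for $x$ and $y$ combine to recover exactly the equation of the removed edge $xy$, so every flex of $(G,p)$ restricts to a flex of $(G',p')$ and agrees on $V(G')$ with some trivial $\bar u$; as $p'(x),p'(y),p'(z)$ are not collinear, the equations at $v$ coming from $z$ and from the parallel pair $x,y$ pin $u(v)=\bar u(v)$. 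In either case $(G,p)$ is infinitesimally rigid at this particular $p$, hence — by the Zariski-openness noted above — at every generic realisation of $G$, completing the induction.

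The main obstacle is twofold and classical: first, the combinatorial lemma underpinning the inverse Henneberg $1$-extension, namely that a degree-$3$ vertex of a $(2,3)$-tight graph always has a pair of neighbours that can be joined by an edge while preserving $(2,3)$-sparsity; and second, the non-genericity subtlety in the $1$-extension step — the collinear placement of $p(v)$ is the natural way to make the removed-edge constraint reappear, but the resulting realisation is not generic, so one must separately appeal to lower semicontinuity of the rigidity-matrix rank (equivalently, Zariski-openness of the infinitesimally rigid locus) to transfer rigidity to generic realisations.
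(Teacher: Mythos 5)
The paper does not actually prove this statement: it is imported verbatim as a classical result of Pollaczek-Geiringer, with only a citation, so there is no in-paper argument to compare yours against. Your sketch is the standard Henneberg-construction proof and the logic is sound where you carry it out: the forward implication via \Cref{t:maxwell} with $k=3$ is exactly right; the base case, the $0$-extension step, and the collinear placement in the $1$-extension step (where the two parallel edge rows at $v$ subtract to reproduce the row of the removed edge $xy$, after which independence of $p'(y)-p'(x)$ and $p(v)-p'(z)$ pins $u(v)$) are all correct; and the appeal to lower semicontinuity of the rank of the rigidity matrix to pass from one special infinitesimally rigid realisation to all generic ones is both necessary and correctly invoked. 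Note that the rigidity matrix in this paper has rows built from the normalised support vectors $\varphi(p(v)-p(w))=(p(v)-p(w))/\|p(v)-p(w)\|$, but rescaling rows by nonzero scalars does not change the kernel, so your computations transfer.

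The one genuine outstanding obligation is the combinatorial lemma you name but do not prove: that a degree-$3$ vertex $v$ of a $(2,3)$-tight graph with neighbours $x,y,z$ admits a pair, say $\{x,y\}$, with $xy\notin E$ and $G-v+xy$ still $(2,3)$-tight. This is not a formality and is where most of the combinatorial content of the theorem lives. The first half is easy (if all three pairs were edges, $\{v,x,y,z\}$ would induce $6>2\cdot 4-3$ edges, contradicting sparsity), but the second half requires showing that not every admissible non-edge pair can be ``blocked'': a blocked pair $\{a,b\}$ forces a set $X\subseteq V\setminus\{v\}$ with $a,b\in X$ and $i_{G-v}(X)=2|X|-3$, and one must run the standard supermodularity argument ($i(X)+i(Y)\le i(X\cup Y)+i(X\cap Y)$ when $X\cap Y$ spans an edge, or the three-set version when it does not) to derive a set containing all of $x,y,z$ whose union with $\{v\}$ violates $(2,3)$-sparsity. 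Until that argument is written out, the induction does not close; everything else in your proposal is complete or routine.
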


\begin{theorem}[{Dewar \cite{Dewar20}}]\label{t:dewar}
	Let $(\mathbb{R}^2,\|\cdot\|)$ be non-Euclidean.
	Then the following properties are equivalent for any graph $G$:
	\begin{enumerate}
		\item there exists an infinitesimally rigid framework $(G,p)$ in $(\mathbb{R}^2,\|\cdot\|)$;
		\item $G$ contains a $(2,2)$-tight spanning subgraph.
	\end{enumerate}
\end{theorem}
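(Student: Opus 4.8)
The plan is to prove the two implications separately; the substance lies in $(ii)\Rightarrow(i)$.

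For $(i)\Rightarrow(ii)$ I appeal to \Cref{t:maxwell}. A non-Euclidean normed plane has only finitely many linear isometries: a positive-dimensional connected group of linear automorphisms of a centrally symmetric planar convex body must be conjugate to $SO(2)$, which forces the unit ball to be an ellipse. Hence $\dim\mathcal{T}(\mathbb{R}^2,\|\cdot\|)=2$, so the relevant count in \Cref{t:maxwell} is $k=2$, and an infinitesimally rigid $(G,p)$ forces $G$ to contain a $(2,2)$-tight spanning subgraph.

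For $(ii)\Rightarrow(i)$ I first reduce to the case that $G$ itself is $(2,2)$-tight: if $(H,p)$ is infinitesimally rigid for a $(2,2)$-tight spanning subgraph $H\subseteq G$, then a small perturbation of $p$ makes $(G,p)$ well-positioned (well-positionedness is a dense condition) while keeping $\ker df_G(p)\subseteq\ker df_H(p)$ equal to the space of trivial flexes. Next I use a recursive construction of $(2,2)$-tight simple graphs: decompose $G$ along cut vertices into $2$-connected $(2,2)$-tight blocks, and reduce each block by inverse $0$-extensions (deleting a degree-$2$ vertex) and inverse $1$-extensions down to copies of $K_4$, the latter resting on the combinatorial lemma that a degree-$3$ vertex of a $2$-connected $(2,2)$-tight graph has an edge among its neighbours whose addition preserves $(2,2)$-sparsity. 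The proof is then an induction along this construction, building an infinitesimally rigid realisation at each stage. Re-gluing two blocks at a cut vertex is automatic, since the trivial flexes are exactly the translations, so an infinitesimal flex of the union restricts to one translation on each block and these agree at the shared vertex. For a $0$-extension adding a vertex $w$ adjacent to $u_1,u_2$, one places $w$ so that $p(w)-p(u_1)$ and $p(w)-p(u_2)$ are smooth with linearly independent support vectors (possible because the image of the dual map spans $\mathbb{R}^2$), which pins $w$. For a $1$-extension adding $w$ adjacent to $u_1,u_2,u_3$, with $u_1u_2$ the edge deleted on reduction, one places $w$ at a generic point of the open segment $(p(u_1),p(u_2))$: the rows of the rigidity matrix for $wu_1$ and $wu_2$ then sum to the row for $u_1u_2$ in the smaller framework, so any infinitesimal flex restricts to a flex — hence a translation $c$ — there, and the equations from $wu_1$ and $wu_3$ force $\varphi(p(w)-p(u_3))\cdot(\dot p(w)-c)=\varphi(p(u_1)-p(u_2))\cdot(\dot p(w)-c)=0$, which gives $\dot p(w)=c$ since the two functionals are independent for a generic choice of $w$; a short rank bookkeeping confirms the enlarged rigidity matrix has full row rank.

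The step I expect to be the main obstacle is the base case: exhibiting an infinitesimally rigid realisation of $K_4$ in \emph{every} non-Euclidean normed plane. The usual escape is blocked because, as stressed in the introduction, infinitesimal rigidity is not a generic property in polyhedral normed spaces, so one cannot simply invoke a generic realisation. For a polyhedral norm the rigidity matrix of $(K_4,p)$ is piecewise constant in $p$ and realises only finitely many matrices $M(K_4,\phi)$ as $\phi$ ranges over realisable directed colourings, so the claim reduces to producing one such $\phi$ of rank $6$; for a smooth non-Euclidean norm one argues instead via continuity and density of the dual map. Carrying this out uniformly over all non-Euclidean norms, together with proving the inverse-$1$-extension lemma for $2$-connected $(2,2)$-tight graphs, is where the real work sits; the $0$- and $1$-extension induction steps are comparatively routine once the rank bookkeeping is set up.
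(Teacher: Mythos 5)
This theorem is quoted in the paper from \cite{Dewar20} without proof, so there is no in-paper argument to compare against; I am therefore assessing your reconstruction on its own terms. Your direction $(i)\Rightarrow(ii)$ is fine: a non-Euclidean normed plane has finitely many linear isometries, so $k=2$ in \Cref{t:maxwell} and the count follows. The block decomposition at cut vertices and the $0$-extension step are also sound.

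The genuine gap is in your recursive construction for $(ii)\Rightarrow(i)$. The combinatorial lemma you rest on --- that a degree-$3$ vertex of a $2$-connected $(2,2)$-tight graph always admits an inverse $1$-extension --- is false: if the degree-$3$ vertex lies in a copy of $K_4$, its three neighbours already induce a triangle and no edge can be added. Worse, there are $2$-connected $(2,2)$-tight graphs that admit \emph{no} inverse $0$- or $1$-extension at all. For instance, take $K_4$ and replace each of its four vertices by a $K_4$ gadget, distributing the three external edges of each gadget to three distinct gadget vertices: the result has $16$ vertices and $30=2\cdot 16-2$ edges, is $2$-connected and $(2,2)$-sparse, has minimum degree $3$, and every degree-$3$ vertex sits inside a $K_4$. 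So $0$- and $1$-extensions from $K_4$ do not generate the class; the known recursive characterisation (Nixon--Owen--Power) needs additional moves, in particular vertex-to-$K_4$ and vertex splitting, and the geometric heart of the proof is showing that \emph{these} moves preserve infinitesimal rigidity in an arbitrary non-Euclidean normed plane. That work is absent from your outline. Two further points need shoring up even within your framework: in the $1$-extension step, placing $w$ on the segment $(p(u_1),p(u_2))$ does not by itself guarantee that $\varphi(p(w)-p(u_3))$ is independent of $\varphi(p(u_1)-p(u_2))$ --- in a polyhedral norm an entire cone of directions shares one support vector, so ``generic choice of $w$ on the segment'' can fail and a genuine perturbation argument is required; and the base case (an infinitesimally rigid $K_4$ in every non-Euclidean normed plane), which you correctly flag as the main analytic obstacle, is left unproved rather than sketched.
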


\begin{remark}
	\Cref{t:dewar} was first proven by Kitson and Power for each space $\ell_q^2$ with $q \in [1,\infty] \setminus \{2\}$ \cite{KP14} and then by Kitson for polyhedral normed spaces by Kitson \cite{Kitson15}.
\end{remark}

It is known that the converse of \Cref{t:maxwell} is false when $d \geq 3$ and $\|\cdot\|$ is either a Euclidean norm (due to the infamous double-banana graph seen in \Cref{fig:db}), or a cylindrical norm \cite[Section 6]{kl20};
in the latter case, an alternative exact combinatorial is known however \cite{dk23}.
In most other families of norms, it is an open problem to determine an exact combinatorial condition for rigidity.

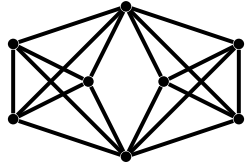
\begin{figure}[htp]
	\begin{center}
        \begin{tikzpicture}[scale=0.5]
			\node[vertex] (top) at (0,2) {};
			\node[vertex] (bottom) at (0,-2) {};

			\node[vertex] (11) at (-1,0) {};
			\node[vertex] (21) at (-3,1) {};
			\node[vertex] (31) at (-3,-1) {};
			
			\node[vertex] (12) at (1,0) {};
			\node[vertex] (22) at (3,1) {};
			\node[vertex] (32) at (3,-1) {};
			
			\draw[edge] (11)edge(top);
			\draw[edge] (21)edge(top);
			\draw[edge] (31)edge(top);
			
			\draw[edge] (top)edge(12);
			\draw[edge] (top)edge(22);
			\draw[edge] (top)edge(32);
   
			\draw[edge] (11)edge(bottom);
			\draw[edge] (21)edge(bottom);
			\draw[edge] (31)edge(bottom);
			
			\draw[edge] (bottom)edge(12);
			\draw[edge] (bottom)edge(22);
			\draw[edge] (bottom)edge(32);
			
			\draw[edge] (11)edge(21);
			\draw[edge] (21)edge(31);
			\draw[edge] (31)edge(11);
			
			\draw[edge] (12)edge(22);
			\draw[edge] (22)edge(32);
			\draw[edge] (32)edge(12);
		\end{tikzpicture}
	\end{center}
	\caption{The double-banana graph. The graph is $(3,6)$-tight, but clearly has no infinitesimally rigid realisations in 3-dimensional Euclidean space due to the vertex cut pair acting like a hinge.}\label{fig:db}
\end{figure}

\subsection{Matroidal connectivity and redundant rigidity}
\label{subsec:matroid}

For any integers $d,k$ where $d \geq 1$ and $0 \leq k \leq 2d-1$, and for any set $V$,
take $\binom{V}{2} := \{vw :v,w \in V, v \neq w\}$ to be the base set and define the set of subsets
\begin{equation*}
	\mathcal{E} = \left\{ E \subset \binom{V}{2} : (V,E) \text{ is a $(d,k)$-sparse graph} \right\}.
\end{equation*}
It was proven by Lee and Streinu \cite{LS08} that the pair $(\binom{V}{2},\mathcal{E})$ always forms a matroid.
We call this matroid the \emph{$(d,k)$-sparsity matroid} (with vertex set $V$), and we denote it by $\mathcal{M}(d,k)$.
So long as $\binom{|V|}{2} \geq d|V| - k$,
the bases of $\mathcal{M}(d,k)$ are exactly the edge sets that induce $(d,k)$-tight subgraphs.
It is possible to determine the rank of an edge set in the $(d,k)$-sparsity matroid with vertex set $V$ in $O(|V|^2)$ time using the pebble game algorithm defined in \cite{LS08}.

For a graph $G=(V,E)$ and integers $d,k$ where $d \geq 1$ and $0 \leq k \leq 2d-1$,
we define the following properties:
\begin{enumerate}
	\item $G$ is a \emph{$(d,k)$-circuit} if $E$ is a circuit of $\mathcal{M}(d,k)$.
	\item $G$ is \emph{$(d,k)$-redundant} if every edge is contained in a $(d,k)$-circuit.
	\item $G$ is \emph{$\mathcal{M}(d,k)$-connected} if every pair of edges are contained within a $(d,k)$-circuit.
\end{enumerate}
It is easy to see that any $\mathcal{M}(d,k)$-connected graph is $(d,k)$-redundant,
however the converse is not always true; indeed, $(d,k)$-redundant graphs do not even need to be connected.
Using ideas of Kir\'{a}ly and Mih\'{a}lyk\'{o} \cite{KM22},
we can link the two concepts when $d=k$ by introducing 2-connectivity as a required condition. 

\begin{lemma}\label{l:km}
	For any positive integer $d$,
	a graph is $\mathcal{M}(d,d)$-connected if and only if it is $(d,d)$-redundant and 2-connected.
\end{lemma}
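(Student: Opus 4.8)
The plan is to prove the two implications separately; the forward direction reduces to elementary counting, while the converse carries the real content and is where the ideas of \cite{KM22} enter.

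\textbf{Necessity.} Suppose $G$ is $\mathcal{M}(d,d)$-connected. That $G$ is $(d,d)$-redundant is immediate: $\mathcal{M}$-connectivity forces each edge to lie in a common circuit with some other edge, hence in a circuit. For 2-connectivity I would first record the auxiliary fact that \emph{every $(d,d)$-circuit $C$ induces a 2-connected subgraph on its vertex set $V(C)$}. This is a counting argument: were $w$ a cut vertex of $(V(C),C)$, write $C = C_1 \sqcup C_2$ with $V(C_1)\cap V(C_2) = \{w\}$ and both $C_i$ nonempty; since $C$ is a circuit, each $C_i$ is a proper independent subset, hence $(d,d)$-sparse, so $|C_i| \le d|V(C_i)| - d$ (or $|C_i| \le \binom{|V(C_i)|}{2}$ when $|V(C_i)| < d$), and combining these with $|C| = d|V(C)| - d + 1$ yields $|C| \le d|V(C)| - d$, a contradiction (the small-cardinality cases are checked directly). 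Granting this, a cut vertex of $G$ would separate two edges $e_1, e_2$ which, by $\mathcal{M}$-connectivity, lie in a common circuit $C$; that same vertex would then be a cut vertex of $C$, contradicting the auxiliary fact. The same argument, with ``disconnects'' in place of ``is a cut vertex of'', shows $G$ is connected.

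\textbf{Sufficiency.} Here I would reformulate the matroid. By the Nash--Williams arboricity theorem, a set of edges is $(d,d)$-sparse if and only if it decomposes into $d$ forests, so $\mathcal{M}(d,d)(G)$ is the matroid union $M(G)^{\vee d}$ of $d$ copies of the cycle matroid of $G$. Since $G$ is 2-connected, $M(G)$ is a connected matroid, and since $G$ is $(d,d)$-redundant, $M(G)^{\vee d}$ has no coloops. The task is thus to show that the $d$-fold union of a connected (graphic) matroid is itself connected as soon as it is coloop-free -- this is exactly the statement that can be extracted from \cite{KM22}. Concretely one argues by induction on $|E|$: if $E$ is a circuit we are done; otherwise find an edge $e$ with $G-e$ still $(d,d)$-redundant and 2-connected, apply induction to get $G-e$ $\mathcal{M}(d,d)$-connected, and then absorb $e$: a circuit $C \ni e$ has an edge $f \neq e$, so $e$ and $f$ lie in the common circuit $C$, and since $E - e$ is a single $\mathcal{M}$-component of $G$ (components only grow under adding elements), $e$ joins it too.

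\textbf{Main obstacle.} The crux is the inductive step just sketched: producing an edge $e$ whose deletion preserves both $(d,d)$-redundancy and 2-connectivity. When no such $e$ exists the graph is ``critically'' $(d,d)$-redundant, and one must instead appeal to a structural description of such graphs -- or to contraction/edge-replacement moves on the union $M(G)^{\vee d}$ -- and it is precisely here that the machinery of Kir\'{a}ly and Mih\'{a}lyk\'{o} does the work. I expect the final write-up either to invoke their characterisation of $(d,d)$-redundancy versus $\mathcal{M}$-connectivity as a black box, or to adapt their ear-decomposition-style argument for unions of the cycle matroid; the necessity direction and the reduction above are routine by comparison.
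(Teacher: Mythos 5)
Your proof is correct and takes essentially the same route as the paper: both directions ultimately rest on the two facts from Kir\'{a}ly and Mih\'{a}lyk\'{o} that the paper cites, namely \cite[Lemma 3.2]{KM22} for the sufficiency of ``$(d,d)$-redundant and 2-connected'' and the 2-connectivity of $(d,d)$-circuits (\cite[Proposition 2.3]{KM22}) for the converse. The only difference is cosmetic: you supply the counting proof that circuits are 2-connected (using $|C|=d|V(C)|-d+1$) rather than citing it, and you frame the hard direction via matroid union before deferring to \cite{KM22} as a black box, exactly as the paper does.
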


\begin{proof}
	It was shown in \cite[Lemma 3.2]{KM22} that any graph that is $(d,d)$-redundant and 2-connected is $\mathcal{M}(d,d)$-connected.
	Every $(d,d)$-circuit is 2-connected (see, for example, \cite[Proposition 2.3]{KM22}),
	and so every $\mathcal{M}(d,d)$-connected graph is 2-connected.
	Finally, every $\mathcal{M}(d,d)$-connected graph is $(d,d)$-redundant by definition.
\end{proof}

We now wish to link $(d,d)$-redundancy and redundant rigidity.
It is important to note here that \textbf{a graph $G$ having redundantly rigid realisation is a stronger condition than $G-e$ having an infinitesimally rigid realisation for every edge $e$}.
While these two properties are the same in Euclidean spaces and other ``well-behaved'' normed spaces,
they are not the same for polyhedral normed spaces (see \Cref{ex:k5} later).

\begin{proposition}\label{p:connequiv}
	If a well-positioned framework $(G,p)$ is redundantly rigid in a normed space $(\mathbb{R}^d,\|\cdot\|)$ with a finite number of linear isometries, 
	then $G$ is $(d,d)$-redundant.
	If $G$ is also 2-connected, then $G$ is $\mathcal{M}(d,d)$-connected.
\end{proposition}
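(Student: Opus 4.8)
The plan is to combine the Maxwell-type count of \Cref{t:maxwell} with a fundamental-circuit argument in the $(d,d)$-sparsity matroid.

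First I would pin down how the hypothesis on isometries enters. Since $(\mathbb{R}^d,\|\cdot\|)$ has only finitely many linear isometries, the identity component of $\Isom(\mathbb{R}^d,\|\cdot\|)$ consists solely of translations, so $\dim\mathcal{T}(\mathbb{R}^d,\|\cdot\|)=d$; in the language of \Cref{t:maxwell} this says the dimension of the isometry group is $k=d$. Next, fix an arbitrary edge $e\in E$. Deleting an edge only removes smoothness constraints, so $(G-e,p)$ is still well-positioned, and redundant rigidity of $(G,p)$ means $(G-e,p)$ is infinitesimally rigid. Feeding this into \Cref{t:maxwell} with $k=d$ produces a $(d,d)$-tight spanning subgraph $G_e=(V,E_e)$ of $G-e$, so that $E_e\subseteq E\setminus\{e\}$.

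Then I would pass to the matroid. Because $|E_e|=d|V|-d\le\binom{|V|}{2}$, the edge set $E_e$ is a basis of $\mathcal{M}(d,d)$ on vertex set $V$, and since $e\notin E_e$ the set $E_e\cup\{e\}$ is dependent and contains a unique circuit $C_e$ with $e\in C_e\subseteq E_e\cup\{e\}\subseteq E$. By the definition of a $(d,d)$-circuit this means $e$ lies in a $(d,d)$-circuit that is a subgraph of $G$. As $e$ was arbitrary, $G$ is $(d,d)$-redundant, which is the first conclusion. If in addition $G$ is $2$-connected, then \Cref{l:km} immediately upgrades this to $G$ being $\mathcal{M}(d,d)$-connected, giving the second conclusion.

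There is no serious obstacle here beyond marshalling the right prior results; the only points that genuinely need attention are (i) checking that each $(G-e,p)$ remains well-positioned, so that infinitesimal rigidity is defined for it, and (ii) using the hypothesis on linear isometries to ensure the sparsity parameter in \Cref{t:maxwell} is exactly $k=d$ -- if $\|\cdot\|$ admitted a positive-dimensional family of linear isometries (such as a Euclidean norm) we would instead obtain $(d,k)$-tight subgraphs with $k>d$ and the conclusion would change accordingly.
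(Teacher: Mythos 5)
Your proof is correct and follows essentially the same route as the paper: apply \Cref{t:maxwell} with $k=d$ to each $(G-e,p)$ to get a $(d,d)$-tight spanning subgraph avoiding $e$, then use the fundamental circuit of $e$ with respect to that basis to conclude $(d,d)$-redundancy, and finish with \Cref{l:km}. The extra care you take over well-positionedness of $(G-e,p)$ and the value of $k$ is sensible but does not change the argument.
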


\begin{proof}
	For every edge $e \in E$, the framework $(G-e,p)$ is infinitesimally rigid.
	By \Cref{t:maxwell},
	each graph $G-e$ contains a spanning $(d,d)$-tight subgraph $H_e$.
	Note that $H_e +e$ is not $(d,d)$-sparse in $\mathcal{M}(d,d)$ while $H_e$ is $(d,d)$-sparse,
	and so there exists a $(d,d)$-circuit contained in $G$ that contains the edge $e$.
	Thus $G$ is $(d,d)$-redundant.
	The second part of the result now follows from \Cref{l:km}.
\end{proof}

The converse of \Cref{p:connequiv} is not true however;
the complete graph $K_5$ is $\mathcal{M}(2,2)$-connected, but (as we show later in \Cref{ex:k5}) not redundantly rigid in $\ell_\infty^2$.

\section{Rigidity for polyhedral normed spaces}\label{sec:poly}

In this section, we now focus on a specific family of normed spaces: polyhedral normed spaces.
Many of the results covered in this section were originally proven by Kitson in \cite{Kitson15}.
Our terminology is similar to Kitson's, but we do introduce new concepts such as directed colourings.

\subsection{Polyhedral normed spaces}\label{subsec:poly}

Let $\mathcal{P}$ be a compact convex polytope in $\mathbb{R}^d$ with non-empty interior.
This is equivalent to the following construction:
take a finite linearly spanning set $\mathcal{F} \subset \mathbb{R}^d \setminus \{\mathbf{0}\}$ and define
\begin{align*}
	\mathcal{P} := \left\{ x \in \mathbb{R}^d : ~ f \cdot x \leq 1 \text{ for all } f \in \mathcal{F} \right\},
\end{align*}
where $\cdot$ is the dot product of $\mathbb{R}^d$.
If the set $\mathcal{F}$ is minimal (in the sense that no subset can generate the same polytope),
then $\mathcal{F}$ is the set of normal vectors for the faces of $\mathcal{P}$ of dimension $d-1$.
Because of this we shall always assume $\mathcal{F}$ (now referred to as the \emph{faces} of $\mathcal{P}$) is minimal.
We are particularly interested in the case where $\mathcal{P}$ is \emph{centrally symmetric},
i.e., $-\mathcal{P} = \mathcal{P}$.
A convex polytope is centrally symmetric if and only if $-\mathcal{F} = \mathcal{F}$.

Given a centrally symmetric polytope $\mathcal{P}$,
we define the norm $\|\cdot \|_\mathcal{P}$ of $\mathbb{R}^d$ by setting
\begin{align*}
	\|\cdot \|_\mathcal{P} := \max_{f \in \mathcal{F}} f \cdot x
\end{align*}
for each $x \in \mathbb{R}^d$.
Any such pair $(\mathbb{R}^d,\|\cdot \|_{\mathcal{P}})$ is called a \emph{polyhedral normed space}.
Since any linear isometry of a normed space must send the extremal points of the unit ball to the extremal points,
it follows that every polyhedral normed space has finitely many linear isometries.

\begin{example}
	Two classic examples of polyhedral norms are $\ell_1^d := (\mathbb{R}^d,\|\cdot\|_1)$ and $\ell_\infty^d := (\mathbb{R}^d,\|\cdot\|_\infty)$,
	where
	\begin{equation*}
		\|(x_1,\ldots,x_d)\|_1 := \sum_{i=1}^d |x_i| , \qquad \|(x_1,\ldots,x_d)\|_\infty := \max_{1\leq i \leq d} |x_i|.
	\end{equation*}
	The unit balls for $\ell_1^d$ and $\ell_\infty^d$ are the $d$-dimensional cross-polytope and the $d$-dimensional hypercube respectively.
\end{example}

Now fix a polyhedral normed space $(\mathbb{R}^d,\|\cdot\|_\mathcal{P})$ with faces $\mathcal{F}$.
Let $G=(V,E)$ be a graph and define 
\begin{equation*}
	\Phi_G = \{ \phi = (\phi(e))_{e \in E} : \phi(e) \in \mathcal{F} \cup \{\mathbf{0}\} \text{ for all } e \in E \}.
\end{equation*}
We shall refer to $\phi \in \Phi_G$ as a \emph{directed colouring} of $G$ (with respect to $(\mathbb{R}^d,\|\cdot\|_\mathcal{P})$).

Fix an ordering on the vertex set of $G=(V,E)$.
For each $\phi \in \Phi_G$,
define the $|E| \times d|V|$ matrix $M(G,\phi)$ where for every $e=vw \in E$ with $v < w$, every $u \in V$ and every $i \in \{1,\ldots,d\}$ we have (using the coordinate notation $\phi(e) = (\phi(e)_1,\ldots,\phi(e)_d)$)
\begin{equation*}
    M(G,\phi)_{e,(u,i)} = 
    \begin{cases}
        \phi(e)_i &\text{if } u =v\\
        -\phi(e)_i &\text{if } u = w\\
        0 &\text{otherwise.} \\
    \end{cases}
\end{equation*}
Kitson \cite{Kitson15} proved that these matrices can be used to directly define the rigidity map.
Before we state his result using directed colourings,
we make the following observation.
For a framework $(G,p)$ in $(\mathbb{R}^d,\|\cdot\|_\mathcal{P})$,
fix $S := \{ M(G,\phi)p : \phi \in \Phi_G\}$.
We now note that if we choose $x,y \in S$ such that $x = M(G,\phi)p$ and $y = M(G,\phi')p$,
then there exists $z \in S$ such that $z \geq x$ and $z \geq y$.
Here we have $z = M(G,\phi'')p$ where $\phi'' \in \Phi_G$ is the directed colouring where for each edge $e=vw$, $v <w$, we set $\phi''(e) = \phi(e)$ if $\phi(e) (p(v)-p(w)) \geq \phi'(e) (p(v)-p(w))$,
and $\phi''(e) = \phi'(e)$ otherwise.
Hence the set $S$ has a unique maximal element under the coordinate-wise partial ordering.

\begin{proposition}[Kitson \cite{Kitson15}]
	Let $(G,p)$ be a framework in a polyhedral normed space $(\mathbb{R}^d,\|\cdot\|_\mathcal{P})$.
	Then
	\begin{equation*}
		f_G(p) = \max_{\phi \in \Phi_G} M(G,\phi)p.
	\end{equation*}
	Furthermore,
	$(G,p)$ is well-positioned if and only if the set
	\begin{equation*}
		\Phi_p := \left\{ \phi \in \Phi_G : M(G,\phi)p = f_G(p) \right\}
	\end{equation*}
	contains exactly one directed colouring (now denoted $\phi_p$),
	in which case the Jacobian of $f_G$ at $p$ is exactly the matrix $M(G,\phi_p)$.
\end{proposition}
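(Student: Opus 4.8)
The plan is to unwind the definitions of the matrices $M(G,\phi)$ and of the polyhedral norm, and to observe that the whole problem decouples edge by edge. First I would record that, for $\phi \in \Phi_G$ and an edge $e = vw$ with $v < w$, the defining formula for $M(G,\phi)$ gives immediately that the $e$-th coordinate of $M(G,\phi)p$ equals $\phi(e) \cdot (p(v) - p(w))$. Since $\|x\|_{\mathcal P} = \max_{f \in \mathcal F} f \cdot x$ and $\mathcal F$ is finite, for each edge $e = vw$ I can choose a face normal $\psi(e) \in \mathcal F$ attaining this maximum at $x = p(v) - p(w)$ (when $p(v) = p(w)$ every face normal attains it, the value being $0$); assembling the $\psi(e)$ yields a single $\psi \in \Phi_G$ with $M(G,\psi)p = f_G(p)$. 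On the other hand, for every $\phi \in \Phi_G$ and every edge $e = vw$ we have $(M(G,\phi)p)_e = \phi(e)\cdot(p(v) - p(w)) \le \|p(v) - p(w)\|_{\mathcal P} = f_G(p)_e$, using that $\phi(e) \in \mathcal F \cup \{\mathbf 0\}$ and that the norm is non-negative. Hence $f_G(p)$ lies in $\{ M(G,\phi)p : \phi \in \Phi_G \}$ and dominates every element of this set coordinate-wise, so $f_G(p) = \max_{\phi \in \Phi_G} M(G,\phi)p$ (the unique maximal element of this set observed just before the statement), which is the first claim.

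For the second claim I would analyse the set $\Phi_p$ edge by edge. By the computation above, $\phi \in \Phi_p$ if and only if, for every edge $e = vw$ with $v < w$, one has $\phi(e) \in \mathcal F \cup \{\mathbf 0\}$ and $\phi(e)\cdot(p(v) - p(w)) = \|p(v) - p(w)\|_{\mathcal P}$; therefore $|\Phi_p|$ equals the product over the edges of the number of admissible values of $\phi(e)$. If $p(v) = p(w)$, the condition reads $\phi(e)\cdot\mathbf 0 = 0$, so every element of $\mathcal F \cup \{\mathbf 0\}$ is admissible and the edge contributes a factor $|\mathcal F| + 1 \ge 2$. If $p(v) \ne p(w)$, then $\mathbf 0$ is not admissible, and the admissible values are exactly the face normals $f \in \mathcal F$ with $f\cdot(p(v) - p(w)) = \|p(v) - p(w)\|_{\mathcal P}$. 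Consequently $|\Phi_p| = 1$ holds precisely when, for every edge, $p(v) \ne p(w)$ and the maximiser of $f \cdot (p(v) - p(w))$ over $\mathcal F$ is unique.

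It remains to match this with well-positionedness and to identify $M(G,\phi_p)$ with the rigidity matrix. For $x \ne 0$, the support vectors of $x$ in $(\mathbb{R}^d, \|\cdot\|_{\mathcal P})$ are exactly the subgradients of $\|\cdot\|_{\mathcal P}$ at $x$, and since $\|\cdot\|_{\mathcal P}$ is the pointwise maximum of the linear functionals $f\cdot(\cdot)$, $f \in \mathcal F$, this subdifferential equals $\operatorname{conv}\{ f \in \mathcal F : f\cdot x = \|x\|_{\mathcal P} \}$ (a standard convex-analysis fact; see \cite{rockafellar}). A convex hull is a single point if and only if the underlying set is a single point, so $x$ is smooth exactly when the maximiser of $f\cdot x$ over $\mathcal F$ is unique, in which case that maximiser is the support vector $\varphi(x)$. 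Combined with the previous paragraph, this shows $|\Phi_p| = 1$ if and only if every edge vector $p(v) - p(w)$ is smooth, i.e.\ if and only if $(G,p)$ is well-positioned. In that case $\phi_p(vw) = \varphi(p(v) - p(w))$ for every edge $vw$ with $v < w$; invoking central symmetry $-\mathcal F = \mathcal F$, which forces $\varphi(-x) = -\varphi(x)$, a direct entry-by-entry comparison then gives $M(G,\phi_p) = df_G(p)$, the rigidity matrix of \Cref{subsec:normrigid}, which is by definition the Jacobian of $f_G$ at $p$.

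I expect the only delicate point to be the smoothness characterisation in the third paragraph — in particular, getting the orientation right so that $\varphi(p(v) - p(w))$ lines up with the $v$-block of $M(G,\phi_p)$ and $\varphi(p(w) - p(v))$ with the $w$-block, and being precise about the identification of support vectors with subgradients of the norm. Everything else is a routine comparison of the definitions of $M(G,\phi)$, of $\|\cdot\|_{\mathcal P}$, and of well-positionedness.
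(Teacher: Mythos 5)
The paper does not prove this proposition itself---it is quoted from Kitson \cite{Kitson15}, with only the preceding remark (that the set $S=\{M(G,\phi)p:\phi\in\Phi_G\}$ has a unique coordinate-wise maximal element) offered as context---so there is no in-paper proof to compare against. Your argument is correct and is exactly the expected one: the edge-by-edge identity $(M(G,\phi)p)_e=\phi(e)\cdot(p(v)-p(w))$ reduces everything to the definition of $\|\cdot\|_{\mathcal P}$ as a finite maximum, and the identification of the support vectors of $x\neq\mathbf 0$ with $\operatorname{conv}\{f\in\mathcal F: f\cdot x=\|x\|_{\mathcal P}\}$ (so smoothness is equivalent to uniqueness of the active face) together with $-\mathcal F=\mathcal F$ gives the equivalence with well-positionedness and the identification of $M(G,\phi_p)$ with the Jacobian. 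I see no gaps.
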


We can restate another result of Kitson regarding how infinitesimal rigidity and rigidity coincide in polyhedral normed spaces using directed colourings.

\begin{theorem}[Kitson \cite{Kitson15}]\label{t:dk}
	Let $(G,p)$ be a well-positioned framework in $(\mathbb{R}^d,\|\cdot\|_\mathcal{P})$ with $G=(V,E)$.
	Then the following conditions are equivalent:
	\begin{enumerate}
		\item $(G,p)$ is rigid.
		\item $(G,p)$ is infinitesimally rigid.
		\item $\rank M(G,\phi_p) = d|V|-d$.
		\item $\ker M(G,\phi_p) = \{ (x)_{v \in V} : x \in \mathbb{R}^d\}$.
	\end{enumerate}
\end{theorem}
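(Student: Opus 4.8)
The plan is to separate the three ``linear-algebraic'' conditions from the rigidity condition. For $(2)\Leftrightarrow(3)\Leftrightarrow(4)$ I would first record that a polyhedral normed space has only finitely many linear isometries, so $\dim\mathcal{T}(\mathbb{R}^d,\|\cdot\|_{\mathcal P})=d$ and the trivial infinitesimal flexes of $(G,p)$ are exactly the constant fields $\{(x)_{v\in V}:x\in\mathbb{R}^d\}$. By Kitson's proposition above, $df_G(p)=M(G,\phi_p)$ for a well-positioned framework, so an infinitesimal flex is precisely an element of $\ker M(G,\phi_p)$; thus $(2)$ --- every infinitesimal flex is trivial --- is literally the statement $(4)$. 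Since the constant fields always lie in $\ker M(G,\phi_p)$ and form a $d$-dimensional subspace, $(4)$ holds precisely when $\dim\ker M(G,\phi_p)=d$, i.e. $\rank M(G,\phi_p)=d|V|-d$, which gives $(4)\Leftrightarrow(3)$. The implication $(2)\Rightarrow(1)$ is then immediate from the earlier cited theorem that infinitesimal rigidity implies rigidity.

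The substance is in $(1)\Rightarrow(2)$, which I would prove contrapositively: if $(G,p)$ is well-positioned but not infinitesimally rigid, I produce equivalent but non-congruent frameworks arbitrarily close to $p$. The key point --- and where the polyhedral structure is used essentially --- is that $f_G$ is affine-linear near $p$. Indeed, well-positionedness forces $\phi_p$ to be the strict coordinatewise maximiser: for each edge $e=vw$ and each $f\in\mathcal F\cup\{\mathbf 0\}$ with $f\neq\phi_p(e)$ one has $f\cdot(p(v)-p(w))<\|p(v)-p(w)\|_{\mathcal P}=\phi_p(e)\cdot(p(v)-p(w))$, since otherwise the colouring agreeing with $\phi_p$ except at $e$ would also lie in $\Phi_p$, contradicting $|\Phi_p|=1$. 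These strict inequalities persist on a neighbourhood $U$ of $p$ by continuity, so $f_G(q)=M(G,\phi_p)\,q$ for all $q\in U$.

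Now pick a non-trivial $u\in\ker M(G,\phi_p)$ and set $q_t:=p+tu$. For small $|t|$ we have $q_t\in U$ and $f_G(q_t)=M(G,\phi_p)(p+tu)=f_G(p)$, so $(G,q_t)$ is equivalent to $(G,p)$. To see it is not congruent to $(G,p)$ for suitable small $t\neq0$: write an isometry as $x\mapsto Tx+c$ with $T$ in the finite linear isometry group; fixing a base vertex $v_0$ and eliminating $c$, congruence of $(G,q_t)$ to $(G,p)$ via such a map amounts to $T(p(v)-p(v_0))+t\,T(u(v)-u(v_0))=p(v)-p(v_0)$ for all $v$. For each fixed $T$ the set of solutions $t$ is empty, a single point, or all of $\mathbb{R}$; the last possibility forces $T(u(v)-u(v_0))=\mathbf 0$ for all $v$, hence $u(v)=u(v_0)$ for all $v$ as $T$ is invertible, i.e. $u$ is trivial --- a contradiction. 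As the linear isometry group is finite, only finitely many values of $t$ give a congruent copy, so arbitrarily small $t\neq0$ yield equivalent but non-congruent frameworks, and $(G,p)$ is not rigid.

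I expect $(1)\Rightarrow(2)$ to be the main obstacle: it is the direction that genuinely fails outside polyhedral (e.g. Euclidean) spaces, and it requires the two non-formal inputs --- local affine-linearity of $f_G$ at a well-positioned point, and discreteness of the isometry group modulo translations --- whereas everything else is essentially bookkeeping around Kitson's proposition.
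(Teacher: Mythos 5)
Your proposal is correct, but note that the paper does not actually prove this theorem --- it is imported from Kitson \cite{Kitson15} as a black box. Your argument is essentially the standard (and Kitson's original) one: the equivalences $(2)\Leftrightarrow(3)\Leftrightarrow(4)$ are bookkeeping once one knows that polyhedral normed spaces have only translations as their infinitesimal isometries, $(2)\Rightarrow(1)$ is the cited general theorem, and $(1)\Rightarrow(2)$ rests on exactly the two inputs the paper itself isolates elsewhere --- local affine-linearity of $f_G$ at a well-positioned point (the well-positioned case of \Cref{l:local}) and finiteness of the linear isometry group, which correctly reduces the congruent perturbations $p+tu$ to finitely many values of $t$. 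I see no gap; your identification of $(1)\Rightarrow(2)$ as the only non-formal step, and of where the polyhedral structure enters, is accurate.
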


Sufficient conditions for rigidity are still possible for not well-positioned frameworks.

\begin{lemma}\label{l:local}
	Let $(G,p)$ be a framework in $(\mathbb{R}^d,\| \cdot\|_{\mathcal{P}})$.
	Then there exists an open neighbourhood $U$ of $p$ such that
	\begin{equation*}
		f_G^{-1}(f_G(p)) \cap U \subset \left( \bigcup_{\phi \in \Phi_p} (p + \ker M(G,\phi)) \right) \cap U.
	\end{equation*}
\end{lemma}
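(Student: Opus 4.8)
The plan is to leverage the piecewise-linear structure of the rigidity map $f_G$ established by Kitson's proposition. Recall that $f_G(p) = \max_{\phi \in \Phi_G} M(G,\phi)p$ coordinate-wise, so on each edge $e=vw$ the $e$-th coordinate of $f_G$ is $(f_G(p))_e = \max_{\phi} (M(G,\phi)p)_e$, a maximum of finitely many linear functionals of $p$. The set $\Phi_p$ records exactly which $\phi$ achieve this maximum simultaneously on every coordinate at the given point $p$. The key observation is that being in $\Phi_p$ is an \emph{open} condition in the following sense: if $\phi \notin \Phi_p$, then there is some edge $e$ where $(M(G,\phi)p)_e < (f_G(p))_e$ strictly, and by continuity of linear maps this strict inequality persists on a neighbourhood of $p$.

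First I would fix, for each $\phi \in \Phi_G \setminus \Phi_p$, an edge $e(\phi)$ witnessing the strict inequality $(M(G,\phi)p)_{e(\phi)} < (f_G(p))_{e(\phi)}$, and choose an open neighbourhood $U_\phi$ of $p$ on which $(M(G,\phi)q)_{e(\phi)} < (M(G,\phi_{\max})q)_{e(\phi)}$ holds for all $q \in U_\phi$, where $\phi_{\max}$ is any fixed element of $\Phi_p$ (or, to be safe, I would just ensure $(M(G,\phi)q)_{e(\phi)}$ stays strictly below $(f_G(q))_{e(\phi)}$, using that $f_G(q) \geq M(G,\phi')q$ for every $\phi'$). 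Then I would set $U := \bigcap_{\phi \in \Phi_G \setminus \Phi_p} U_\phi$, a finite intersection hence open, containing $p$. On $U$, no directed colouring outside $\Phi_p$ can attain the maximum on all coordinates, so $\Phi_q \subseteq \Phi_p$ for every $q \in U$.

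Next, suppose $q \in f_G^{-1}(f_G(p)) \cap U$. Then $f_G(q) = f_G(p)$, and since $\Phi_q \subseteq \Phi_p$ is nonempty (well-positioned or not, $\Phi_q$ is always nonempty), pick any $\phi \in \Phi_q \subseteq \Phi_p$. We have $M(G,\phi)q = f_G(q) = f_G(p)$. I claim $M(G,\phi)p = f_G(p)$ as well: since $\phi \in \Phi_p$, this holds by the definition of $\Phi_p$. Therefore $M(G,\phi)(q - p) = f_G(p) - f_G(p) = \mathbf{0}$, i.e. $q - p \in \ker M(G,\phi)$, so $q \in p + \ker M(G,\phi) \subseteq \bigcup_{\phi' \in \Phi_p} (p + \ker M(G,\phi'))$. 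Intersecting with $U$ gives the claimed containment.

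The main obstacle — really the only subtle point — is making sure the finite intersection $U$ genuinely forces $\Phi_q \subseteq \Phi_p$ and not merely $\Phi_q \cap \Phi_p \neq \emptyset$; this is handled by witnessing the strict inequality coordinate-by-coordinate for \emph{every} $\phi \notin \Phi_p$ and using that $f_G(q) \geq M(G,\phi')q$ for all $\phi'$ (Kitson's proposition) to convert "strictly below some competitor" into "strictly below $f_G(q)$". One should also note that $\Phi_G$ is a finite set, so the intersection defining $U$ is finite and the argument is valid; no uniformity or compactness beyond that is needed. Everything else is routine continuity.
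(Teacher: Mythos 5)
Your proposal is correct and follows essentially the same route as the paper's proof: isolate the strict gap $f_G(p) - M(G,\phi)p$ for each $\phi \notin \Phi_p$, use continuity to preserve it on a neighbourhood $U$ so that $\Phi_q \subseteq \Phi_p$ for $q \in U$, and then conclude $q - p \in \ker M(G,\phi)$ for some $\phi \in \Phi_q \subseteq \Phi_p$. Your version is in fact slightly more careful than the paper's, since you track the witnessing coordinate $e(\phi)$ explicitly rather than writing the gap as a single vector inequality.
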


\begin{proof}
	We first note that there exists some $\varepsilon >0$ such that $f_G(p) - M(G,\phi)p >\varepsilon$ for all $\phi \in \Phi_G \setminus \Phi_p$.
	Since $f_G$ is continuous, we can choose a small neighbourhood $U$ of $p$ such that $f_G(q) - M(G,\phi)q >\varepsilon/2$ for all $\phi \in \Phi_G \setminus \Phi_p$ and $q \in U$.
	Hence for all $q \in f_G^{-1}(f_G(p)) \cap U$ we have that $\Phi_q \subset \Phi_p$,
	and so $q \in p+ \ker M(G,\phi)$ for some $\phi \in \Phi_p$.
\end{proof}

\begin{proposition}\label{p:phirigid}
	Let $(G,p)$ be a framework in $(\mathbb{R}^d,\| \cdot\|_{\mathcal{P}})$.
	If $\rank M(G,\phi) = d|V| -d$ for all $\phi \in \Phi_p$,
	then $(G,p)$ is rigid.
\end{proposition}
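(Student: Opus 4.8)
The plan is to show that $p$ is isolated in its fibre $f_G^{-1}(f_G(p))$ modulo isometries, i.e.\ that every realisation $q$ sufficiently close to $p$ with $f_G(q) = f_G(p)$ is congruent to $(G,p)$; this is exactly rigidity by definition. The starting point is \Cref{l:local}, which gives an open neighbourhood $U$ of $p$ with
\begin{equation*}
	f_G^{-1}(f_G(p)) \cap U \subseteq \left( \bigcup_{\phi \in \Phi_p} (p + \ker M(G,\phi)) \right) \cap U,
\end{equation*}
so it suffices to analyse each affine subspace $p + \ker M(G,\phi)$ for $\phi \in \Phi_p$ individually. This reduces the problem from the genuinely nonlinear fibre of $f_G$ to a finite union of linear pieces, each controlled by one of the matrices $M(G,\phi)$.

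Next I would use the hypothesis $\rank M(G,\phi) = d|V| - d$ for every $\phi \in \Phi_p$. By the rank–nullity theorem this forces $\dim \ker M(G,\phi) = d$ for each such $\phi$. On the other hand, for any $\phi$ the kernel of $M(G,\phi)$ always contains the $d$-dimensional ``translation'' subspace $\{(x)_{v\in V} : x \in \mathbb{R}^d\}$: indeed $M(G,\phi)$ has the form of a (weighted, signed) incidence structure where each row sums its two nonzero blocks to zero, so assigning the same vector $x$ to every vertex annihilates it (this is the same observation underlying condition (iv) of \Cref{t:dk}). Comparing dimensions, we get $\ker M(G,\phi) = \{(x)_{v\in V} : x \in \mathbb{R}^d\}$ for every $\phi \in \Phi_p$. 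Consequently the union appearing in \Cref{l:local} collapses: each $p + \ker M(G,\phi)$ is the single affine subspace $p + \{(x)_{v\in V} : x \in \mathbb{R}^d\}$, i.e.\ the set of translates of $p$. Hence $f_G^{-1}(f_G(p)) \cap U$ consists only of translates of $p$, all of which are congruent to $(G,p)$ since translations are isometries of any normed space. This establishes rigidity with the neighbourhood $U$ furnished by \Cref{l:local}.

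The one point requiring a little care — and the step I would flag as the main (minor) obstacle — is verifying that the translation subspace really lies in $\ker M(G,\phi)$ for \emph{every} $\phi \in \Phi_G$, including those where some $\phi(e) = \mathbf{0}$; this is immediate from the explicit formula for $M(G,\phi)_{e,(u,i)}$, since the $v$-block and $w$-block of row $e$ are $\phi(e)_i$ and $-\phi(e)_i$, which cancel when the same $x$ is placed at both endpoints. Everything else is dimension counting and the already-proved \Cref{l:local}; no genericity or well-positionedness of $(G,p)$ is needed, which is precisely the added generality over \Cref{t:dk}. It is worth remarking that this argument also shows the conclusion cannot be strengthened to \emph{infinitesimal} rigidity, since $(G,p)$ need not be well-positioned and so $df_G(p)$ may not even be defined; rigidity is the right target here.
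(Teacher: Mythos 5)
Your argument is correct and is essentially the paper's own proof: both invoke \Cref{l:local} and then identify $\ker M(G,\phi)$ with the translation space $\{(x)_{v\in V}: x\in\mathbb{R}^d\}$ for every $\phi\in\Phi_p$ via the rank hypothesis, so that the fibre near $p$ consists only of translates of $p$. Your extra remarks (the rank--nullity count, the check that translations lie in the kernel even when some $\phi(e)=\mathbf{0}$) just make explicit what the paper asserts in one line.
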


\begin{proof}
	Given $X := \{ (x)_{v \in V} : x \in \mathbb{R}^d\}$,
	we have that $\ker M(G,\phi) = X$ for each $\phi \in \Phi_p$.
	By \Cref{l:local},
	there exists an open neighbourhood $U$ of $p$ where $f_G^{-1}(f_G(p)) \cap U \subset (p + X) \cap U$,
	hence $(G,p)$ is rigid.
\end{proof}

It should be noted that the converse of \Cref{p:phirigid} is not true.
See \cite[Example 6.3]{Dewar22} for a specific example of a framework in $\ell_\infty^2$ that is rigid but does not satisfy the rank condition stipulated in \Cref{p:phirigid}.

\subsection{Generic frameworks}\label{subsec:generic}

A framework $(G,p)$ in $(\mathbb{R}^d,\|\cdot\|_\mathcal{P})$ is \emph{generic} if the coordinates of $p$ are distinct and algebraically independent over the field $\mathbb{Q}(\mathcal{P})$ (i.e.~the smallest field that contains $\mathbb{Q}$ and the coordinates of the vectors in $\mathcal{F}$).
Every generic framework is also well-positioned:
if there exist distinct $\phi,\phi' \in \Phi_p$ where $M(G,\phi)p=M(G,\phi')p$,
then $p$ is a zero of the non-zero matrix $(M(G,\phi)-M(G,\phi'))$ with entries in $\mathbb{Q}( \mathcal{P})$.

\begin{remark}
	By abuse of terminology, we say that a framework $(G,p)$ in $\mathbb{R}^d$ (with no discussion over the norm being implemented) is \emph{generic} if the coordinates of $p$ are distinct and algebraically independent over $\mathbb{Q}$.
	If the faces of a polytope $\mathcal{P}$ are rational (i.e., $\mathbb{Q}(\mathcal{P})=\mathbb{Q}$), then $(G,p)$ is generic as a framework in $(\mathbb{R}^d, \| \cdot\|_\mathcal{P})$ if and only if it is generic as a framework in $\mathbb{R}^d$.
\end{remark}

For an $m \times n$ matrix $M$,
we define $\col M \subset \mathbb{R}^m$ to be the column space of $M$.
The column space of any matrix is equal to both its image and the orthogonal complement of the matrix's cokernel (i.e.~the left null space $\ker M^T$).

\begin{lemma}\label{l:generic}
    Let $(G,p)$ be a generic framework in $(\mathbb{R}^d,\|\cdot\|_\mathcal{P})$.
    If $f_G(p) \in \col M(G,\phi)$ then $\col M(G,\phi_p) \subseteq \col M(G,\phi)$.
    Furthermore,
    if $(G,p)$ is rigid then $\col M(G,\phi_p) = \col M(G,\phi)$.
\end{lemma}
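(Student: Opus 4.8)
The plan is to exploit the fact that $p$ is generic over $\mathbb{Q}(\mathcal{P})$ together with the explicit description $f_G(p) = \max_{\psi \in \Phi_G} M(G,\psi)p$ from Kitson's proposition. First I would set up the linear-algebraic reformulation: $f_G(p) \in \col M(G,\phi)$ is equivalent to $u^T f_G(p) = 0$ for every $u \in \ker M(G,\phi)^T$ (the cokernel of $M(G,\phi)$). So fix any $u$ in this cokernel; the goal is to show $u^T M(G,\phi_p)p = 0$, which would force $u \perp \col M(G,\phi_p)$, and hence $\col M(G,\phi_p) \subseteq \col M(G,\phi)$. Since $u^T M(G,\phi)p = 0$ automatically, it suffices to show $u^T M(G,\phi_p)p = u^T M(G,\phi)p$.

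The key step is to upgrade the equality $u^T f_G(p) = 0$, which concerns the specific point $p$, into an identity that holds on a neighbourhood, using genericity. Write $g(x) := u^T f_G(x)$. For $x$ ranging over a small neighbourhood $U$ of $p$, we have $\Phi_x \subseteq \Phi_p$ by \Cref{l:local} (or more directly, by the continuity argument in its proof), so $g(x) = u^T M(G,\phi_x) x$, and on the (Zariski-dense, by genericity) subset of $U$ on which $\phi_x$ is a fixed colouring $\psi$, $g$ agrees with the polynomial $x \mapsto u^T M(G,\psi)x$. Because $p$ is generic over $\mathbb{Q}(\mathcal{P})$ and the entries of $u$ and $M(G,\psi)$ lie in $\mathbb{Q}(\mathcal{P})$ (indeed the entries of $u$ can be chosen in $\mathbb{Q}(\mathcal{P})$ since $M(G,\phi)$ has entries there), the vanishing of the $\mathbb{Q}(\mathcal{P})$-polynomial $u^T M(G,\psi)x$ at $x = p$ forces it to vanish identically, hence on all of $\mathbb{R}^d$. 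In particular it vanishes at every point of $U$, and since $\phi_p \in \Phi_p$ is realised as $\phi_x$ for $x$ near $p$ on a set of positive measure (take $x = p + \varepsilon v$ for a generic direction $v$), the polynomial $u^T M(G,\phi_p)x$ vanishes identically, so $u^T M(G,\phi_p)p = 0$. This gives the first assertion.

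For the second assertion, suppose $(G,p)$ is rigid. Then by \Cref{t:dk}, $\rank M(G,\phi_p) = d|V| - d$, so $\dim \col M(G,\phi_p) = d|V| - d$. It remains to argue $\dim \col M(G,\phi) \le d|V| - d$ as well, whence the inclusion $\col M(G,\phi_p) \subseteq \col M(G,\phi)$ from the first part becomes an equality. Equivalently, $\ker M(G,\phi) \supseteq$ a $d$-dimensional space; but the "translation" vectors $\{(x)_{v\in V} : x \in \mathbb{R}^d\}$ lie in $\ker M(G,\psi)$ for every directed colouring $\psi \in \Phi_G$ (each row of $M(G,\psi)$ has the form $\phi(e)$ in the $v$-block and $-\phi(e)$ in the $w$-block, summing to zero on constant vectors), so $\rank M(G,\phi) \le d|V| - d$ always. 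Combining, $\dim \col M(G,\phi) \le d|V| - d = \dim \col M(G,\phi_p)$, and together with the inclusion this yields equality.

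The main obstacle is the middle step: converting the pointwise condition $f_G(p) \in \col M(G,\phi)$ into an identity valid on a whole neighbourhood so that the genericity of $p$ can be brought to bear. The care needed is that $f_G$ is only piecewise-linear, so one must track which colouring governs $f_G$ near $p$ and verify that $\phi_p$ itself is attained on a positive-measure set of nearby points — this is where one uses that for a generic perturbation direction $v$, the colouring of $p + \varepsilon v$ equals $\phi_p$ for all small $\varepsilon > 0$ of one sign (since $M(G,\phi_p)p$ strictly dominates $M(G,\psi)p$ for $\psi \notin \Phi_p = \{\phi_p\}$, and strict inequalities persist under small perturbation). Everything else is routine linear algebra plus the standard "a $\mathbb{Q}(\mathcal{P})$-polynomial vanishing at a generic point vanishes identically" argument.
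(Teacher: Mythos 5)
Your proof is correct and follows essentially the same route as the paper: choose a $\mathbb{Q}(\mathcal{P})$-rational basis of $\ker M(G,\phi)^T$, observe that each such $u$ annihilates $f_G(p) = M(G,\phi_p)p$, use the algebraic independence of the coordinates of $p$ over $\mathbb{Q}(\mathcal{P})$ to upgrade $u^T M(G,\phi_p)p = 0$ to $u^T M(G,\phi_p) = \mathbf{0}$, and for the second assertion combine $\rank M(G,\phi_p) = d|V|-d$ (from \Cref{t:dk}) with $\rank M(G,\phi) \leq d|V|-d$. The only remark is that your neighbourhood/piecewise-linearity detour is superfluous: the equality $f_G(p) = M(G,\phi_p)p$ holds at the single point $p$ by the definition of $\phi_p$, so $u^T M(G,\phi_p)p = u^T f_G(p) = 0$ is immediate, and genericity is needed only for the final step of promoting vanishing at $p$ to identical vanishing of the linear form.
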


\begin{proof}
    Since every coordinate of $M(G,\phi)$ lies in $\mathbb{Q}(\mathcal{P})$,
    there exists a basis $b_1,\ldots, b_n$ of the linear space $\ker M(G,\phi)^T$ such that the coordinates of each $b_i$ lie in the field $\mathbb{Q}(\mathcal{P})$.
    Since $f_G(p) \in \col M(G,\phi)$, it follows that $b_i^T M(G,\phi_p) p = b_i^T f_G(p) = 0$ for each $i \in \{1,\ldots,n\}$.
    By our choice of $b_1,\ldots, b_n$,
    each matrix $b_i^T M(G,\phi_p)$ has coefficients in $\mathbb{Q}(\mathcal{P})$.
    As the coordinates of $p$ are algebraically independent over $\mathbb{Q}(\mathcal{P})$,
    it follows that $ M(G,\phi_p)^T b_i = \mathbf{0}$ for each $i \in \{1,\ldots,n\}$.
    Hence $\ker M(G,\phi)^T \subset \ker M(G,\phi_p)^T$,
    and so $\col M(G,\phi_p) \subseteq \col M(G,\phi)$.
    
    Suppose that $(G,p)$ is also rigid.
    By \Cref{t:dk} we have $\dim \col M(G,\phi_p) = d|V|-d$.
    Since $\dim \col M(G,\phi) = \rank M(G,\phi) \leq d|V|-d$,
    it follows that $\col M(G,\phi_p) = \col M(G,\phi)$.
\end{proof}

\begin{lemma}\label{l:generic2}
    Let $(G,p)$ be a generic rigid framework in $(\mathbb{R}^d,\|\cdot\|_\mathcal{P})$.
    Let $\Phi \subset \Phi_G$ be the set of directed colourings where $\col M(G,\phi_p) \subseteq \col M(G,\phi)$.
    If $q$ is a realisation such that $M(G,\phi)q = f_G(p)$ for some $\phi \in \Phi$,
    then $(G,q)$ is well-positioned.
\end{lemma}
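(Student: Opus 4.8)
The plan is to prove the contrapositive: if $(G,q)$ is not well-positioned, then no $\phi \in \Phi$ satisfies $M(G,\phi)q = f_G(p)$. So suppose $(G,q)$ fails to be well-positioned, meaning $\Phi_q$ contains at least two distinct directed colourings, say $\psi \neq \psi'$. Fix an edge $e = vw$ on which $\psi$ and $\psi'$ disagree; then the vector $q(v)-q(w)$ lies on a face of lower dimension (a proper subface where two facets meet), so $\psi(e)\cdot(q(v)-q(w)) = \psi'(e)\cdot(q(v)-q(w)) = \|q(v)-q(w)\|_{\mathcal P}$. The key point is that this forces a nontrivial algebraic relation among the coordinates of $q$ over $\mathbb{Q}(\mathcal P)$, namely $(\psi(e)-\psi'(e))\cdot(q(v)-q(w)) = 0$, with $\psi(e)-\psi'(e)$ a nonzero vector with entries in $\mathbb{Q}(\mathcal P)$.

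The next step is to transfer this constraint on $q$ into a constraint on $p$ using the hypothesis. Since $\phi \in \Phi$ means $\col M(G,\phi_p) \subseteq \col M(G,\phi)$, and since $(G,p)$ is generic and rigid, Lemma~\ref{l:generic} (applied with the roles understood via $f_G(p) \in \col M(G,\phi_p) \subseteq \col M(G,\phi)$) gives $\col M(G,\phi_p) = \col M(G,\phi)$, so $M(G,\phi)$ and $M(G,\phi_p)$ have the same cokernel. Taking a basis $b_1,\dots,b_n$ of $\ker M(G,\phi)^T = \ker M(G,\phi_p)^T$ with coordinates in $\mathbb{Q}(\mathcal P)$, the assumption $M(G,\phi)q = f_G(p) = M(G,\phi_p)p$ yields $b_i^T M(G,\phi_p) p = b_i^T f_G(p) = 0$. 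Running the genericity argument of Lemma~\ref{l:generic} in reverse, I would show the components of $q$ are "as free as $p$" relative to $\mathbb{Q}(\mathcal P)$: more precisely, I would argue that the coordinates of $q$ satisfy exactly the same polynomial equations over $\mathbb{Q}(\mathcal P)$ as the (algebraically independent) coordinates of $p$ — since $q$ lies in the affine solution set of a linear system $M(G,\phi)x = f_G(p)$ defined over $\mathbb{Q}(\mathcal P)(f_G(p))$, and $f_G(p) = M(G,\phi_p)p$ has entries algebraically independent in the appropriate sense. Hence any polynomial relation over $\mathbb{Q}(\mathcal P)$ vanishing on $q$ must vanish identically on the generic fibre, and in particular cannot be the nonzero relation $(\psi(e)-\psi'(e))\cdot(x(v)-x(w)) = 0$ unless $\psi(e) = \psi'(e)$, a contradiction.

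The main obstacle is making the phrase "$q$ is as generic as $p$ permits" rigorous. The cleanest route is: the set $W = \{x \in (\mathbb{R}^d)^V : M(G,\phi)x = f_G(p)\}$ is an affine subspace defined over $\mathbb{Q}(\mathcal P)$ together with the point $f_G(p)$; since $(G,p)$ is rigid, $M(G,\phi)$ has corank $d$ (by Lemma~\ref{l:generic}), so $W$ is a coset of $\ker M(G,\phi) = \{(x)_{v\in V}: x \in \mathbb{R}^d\}$, i.e. $W = p + \{(x)_{v\in V}\}$ contains $q$ iff $q = p + (t,\dots,t)$ for some $t \in \mathbb{R}^d$. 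Then $q(v) - q(w) = p(v) - p(w)$ for every edge, so the putative relation $(\psi(e)-\psi'(e))\cdot(q(v)-q(w)) = 0$ becomes $(\psi(e)-\psi'(e))\cdot(p(v)-p(w)) = 0$, a nontrivial polynomial over $\mathbb{Q}(\mathcal P)$ in the coordinates of $p$, contradicting genericity of $(G,p)$. This is in fact a short argument once one observes that rigidity pins down $W$ to a translate of the trivial-flex space, which is the step I expect to carry the proof; the remaining work is just unwinding definitions.
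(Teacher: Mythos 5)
The final paragraph, which you call the ``cleanest route'' and which carries the entire rigorous content of your argument, contains a genuine error. The solution set $W = \{x \in (\mathbb{R}^d)^V : M(G,\phi)x = f_G(p)\}$ is indeed a coset of $\ker M(G,\phi) = \{(t)_{v\in V} : t \in \mathbb{R}^d\}$, but it is the coset $q + \ker M(G,\phi)$, not $p + \ker M(G,\phi)$: the point $p$ lies in $W$ only if $M(G,\phi)p = f_G(p)$, i.e.\ only if $\phi \in \Phi_p = \{\phi_p\}$, whereas the lemma must handle arbitrary $\phi$ with $\col M(G,\phi_p) \subseteq \col M(G,\phi)$. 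This generality is exactly how the lemma is used in \Cref{p:generic}, where $\phi$ ranges over $\Phi_q$ for an equivalent but possibly non-congruent framework $(G,q)$. If your conclusion $q = p + (t,\ldots,t)$ were correct, every equivalent framework would be a translate of $p$ and every generic rigid framework would be globally rigid, contradicting \Cref{t:globness} (minimally rigid graphs are rigid but not globally rigid). So the reduction to $q(v)-q(w) = p(v)-p(w)$ fails, and with it the contradiction you derive.

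Your middle paragraph does gesture at the correct argument, which is essentially the paper's: after translating so that $q(u) = p(u) = \mathbf{0}$ for a fixed vertex $u$, the realisation $q$ is the unique preimage of $f_G(p)$ under the invertible, $\mathbb{Q}(\mathcal{P})$-rational linear map $\tilde p \mapsto M(G,\phi)\tilde p$ restricted to $Z = \{\tilde p : \tilde p(u) = \mathbf{0}\}$ (invertibility uses $\rank M(G,\phi) = d|V|-d$, which follows from \Cref{l:generic} and rigidity). Since $f_G(p) = M(G,\phi_p)p$ is a generic point of $\col M(G,\phi_p)$, its image $q$ under a $\mathbb{Q}(\mathcal{P})$-rational linear isomorphism cannot lie on any of the finitely many proper linear subspaces of $Z$ defined over $\mathbb{Q}(\mathcal{P})$ that constitute the not-well-positioned locus --- which includes precisely the relation $(\psi(e)-\psi'(e))\cdot(q(v)-q(w)) = 0$ you identify in your first paragraph, as well as the degenerate case $q(v)=q(w)$. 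The work you still need to do is to transfer genericity through this inverse linear map rather than identify $q$ with a translate of $p$.
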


\begin{proof}
	By applying translations, we may suppose that $q(u) = p(u) = \mathbf{0}$ for some vertex $u \in V$. 
	We note here that this implies that the $d|V|-d$ coordinates of $p$ with the vertex $u$ removed are distinct and algebraically independent over $\mathbb{Q}(\mathcal{P})$.
	We now fix $Z := \{ \tilde{p} \in (\mathbb{R}^d)^V : \tilde{p}(u) = \mathbf{0}\}$,
	and observe that $p$ is a generic point of $Z$.
	
	By \Cref{l:generic},
	$\rank M(G,\phi) = \rank M(G,\phi_p) = d|V|-d$ for each $\phi \in \Phi$.
	Hence,
	$\ker M(G,\phi) = \{(x_v)_{v \in V} : x \in \mathbb{R}^d\}$ for each $\phi \in \Phi$.
	Fix $X \subset Z$ to be the set of not well-positioned realisations of $G$.
	The set $X$ is a union of linear spaces $X_1,\ldots,X_n$ of codimension 1 or greater contained in $Z$,
	each defined over $\mathbb{Q}(\mathcal{P})$.
	For each $\phi \in \Phi$,
	fix the invertible linear map
	\begin{equation*}
		f_\phi : Z \rightarrow \col M(G,\phi_p), ~ \tilde{p} \mapsto M(G,\phi)\tilde{p}.
	\end{equation*}
	Importantly, every $f_\phi$ map is a $\mathbb{Q}(\mathcal{P})$-rational linear map.
	As $p$ is a generic point of $Z$ and $f_{\phi_p}$ is an invertible linear map,
	$f_G(p) = f_{\phi_p}(p)$ is a generic point in $\col M(G,\phi_p)$;
	specifically, $f_G(p)$ does not satisfy any additional $\mathbb{Q}(\mathcal{P})$-rational polynomial constraints except what is required to be a point of $\col M(G,\phi_p)$.
	As each linear space $f_\phi(X_i)$ (with $\phi \in \Phi$, $i \in [n]$) is a linear subspace of $\col M(G,\phi_p)$ with codimension at least 1 defined by $\mathbb{Q}(\mathcal{P})$-rational linear constraints,
	we have that $f_G(p) \notin f_\phi(X_i)$.
	Hence, $q \notin X$ as required.
\end{proof}

\Cref{l:generic,l:generic2} now gifts us information about all frameworks equivalent to a generic framework.

\begin{proposition}\label{p:generic}
    Let $(G,p)$ be a generic framework in $(\mathbb{R}^d,\|\cdot\|_\mathcal{P})$.
    If $(G,q)$ is equivalent to $(G,p)$ then the following properties hold.
    \begin{enumerate}
    	\item If $(G,p)$ is rigid, then $(G,q)$ is well-positioned and infinitesimally rigid.
    	\item If $(G,p)$ is redundantly rigid, then $(G,q)$ is redundantly rigid.
    \end{enumerate}
\end{proposition}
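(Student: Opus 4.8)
The plan is to derive both parts directly from \Cref{l:generic} and \Cref{l:generic2}, since those lemmas already isolate the required structural facts about realisations $q$ equivalent to a generic $(G,p)$. The starting observation is that since $(G,q)$ is equivalent to $(G,p)$ we have $f_G(q) = f_G(p)$, and by the Kitson characterisation $f_G(q) = \max_{\phi \in \Phi_G} M(G,\phi)q$, so there is some $\phi \in \Phi_q$ with $M(G,\phi)q = f_G(q) = f_G(p)$. In particular $f_G(p) \in \col M(G,\phi)$.

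For part (i): assume $(G,p)$ is rigid. Applying \Cref{l:generic} to this $\phi$ gives $\col M(G,\phi_p) = \col M(G,\phi)$, and by \Cref{t:dk} this common column space has dimension $d|V|-d$, so $\rank M(G,\phi) = d|V|-d$ and hence $\ker M(G,\phi) = \{(x)_{v\in V} : x \in \mathbb{R}^d\}$. Now this $\phi$ lies in the set $\Phi$ of \Cref{l:generic2} (it satisfies $\col M(G,\phi_p) \subseteq \col M(G,\phi)$), and $M(G,\phi)q = f_G(p)$, so \Cref{l:generic2} yields that $(G,q)$ is well-positioned; thus $\phi_q = \phi$ and the rigidity matrix of $(G,q)$ is $M(G,\phi_q) = M(G,\phi)$, which has rank $d|V|-d$. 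By \Cref{t:dk} again, $(G,q)$ is infinitesimally rigid.

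For part (ii): assume $(G,p)$ is redundantly rigid, i.e.\ $(G-e,p)$ is infinitesimally rigid for every edge $e \in E$. First I would note that $(G,p)$ itself is rigid (infinitesimal rigidity of $(G-e,p)$ for any $e$ forces infinitesimal rigidity of $(G,p)$, as adding an edge cannot decrease the rank of the rigidity matrix), so by part (i) $(G,q)$ is well-positioned with $\phi_q = \phi$ as above. Then for each edge $e$, the framework $(G-e,p)$ is a generic framework in $(\mathbb{R}^d,\|\cdot\|_\mathcal{P})$ (the genericity of $p$ is inherited by any spanning subgraph) and it is rigid; and $(G-e,q)$ is equivalent to $(G-e,p)$, since deleting a coordinate from the equality $f_G(q) = f_G(p)$ gives $f_{G-e}(q) = f_{G-e}(p)$. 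So I can apply part (i) to the framework $(G-e,p)$ and the equivalent realisation $q$ to conclude that $(G-e,q)$ is infinitesimally rigid. As this holds for every $e \in E$, and we have already shown $(G,q)$ is well-positioned, $(G,q)$ is redundantly rigid by definition.

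The only mild subtlety — and the step I would be most careful about — is confirming in part (ii) that the directed colouring $\phi$ witnessing $M(G,\phi)q = f_G(p)$, after restriction to $E \setminus \{e\}$, is exactly the well-positioning colouring $\phi_q$ restricted to $G-e$, so that the rigidity matrix of $(G-e,q)$ is the corresponding submatrix of $M(G,\phi)$; this follows because $\Phi_{q}$ is a singleton and restriction is compatible with deleting the row indexed by $e$, but it is worth spelling out. Everything else is a routine bookkeeping application of the two preceding lemmas together with \Cref{t:dk}.
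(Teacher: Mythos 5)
Your proposal is correct and follows essentially the same route as the paper: part (i) via \Cref{l:generic}, \Cref{l:generic2} and \Cref{t:dk} applied to a colouring $\phi \in \Phi_q$ with $M(G,\phi)q = f_G(p)$, and part (ii) by applying part (i) to each generic rigid framework $(G-e,p)$ and its equivalent realisation $q$. The subtlety you flag at the end is avoided exactly as you suggest, since applying part (i) directly to $(G-e,p)$ already delivers well-positionedness and infinitesimal rigidity of $(G-e,q)$ without tracking the restricted colouring.
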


\begin{proof}       
    Since $(G,p)$ and $(G,q)$ are equivalent,
    the equality 
    \begin{equation*}
    		M(G,\phi)q = f_G(q) = f_G(p)    
    \end{equation*}    
    holds for each $\phi \in \Phi_q$,
    and so $f_G(p) \in \col M(G,\phi)$ for each $\phi \in \Phi_q$.
    Hence by \Cref{l:generic},
    $\rank M(G,\phi) = d|V|-d$ for all $\phi \in \Phi_q$.
    Furthermore,
    $(G,q)$ is well-positioned by \Cref{l:generic2}.
    Thus $(G,q)$ is infinitesimally rigid by \Cref{t:dk}.
    
    Now suppose $(G,p)$ is redundantly rigid.
    For each $e \in E$,
    the framework $(G-e,p)$ is generic and rigid,
    and so the framework $(G-e,q)$ is also infinitesimally rigid as was previously shown.
    Hence $(G,q)$ is redundantly rigid as required.
\end{proof}

We must stress here that, while genericity does simplify some of the conditions associated with rigidity in polyhedral normed spaces,
it is nowhere near as powerful a property as it is in Euclidean spaces.
One such weakness is presented more formally in the following result.

\begin{proposition}\label{prop:genericflexible}
	Let $G=(V,E)$ be a graph with $|V| \geq 2$.
	Then for any polyhedral normed space $(\mathbb{R}^d,\| \cdot \|_{\mathcal{P}})$ with $d \geq 2$,
	there exists a non-empty open subset $U \subset (\mathbb{R}^d)^V$ of realisations $p$ of $G$ where the framework $(G,p)$ is well-positioned and flexible in $(\mathbb{R}^d,\| \cdot \|_{\mathcal{P}})$.
	In particular,
	there exists a generic framework $(G,p)$ in $(\mathbb{R}^d,\| \cdot \|_{\mathcal{P}})$ that is flexible.
\end{proposition}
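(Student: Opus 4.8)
The plan is to exhibit, for every graph $G$ with $|V|\geq 2$, an explicit open set of well-positioned flexible realisations by reducing to the single-edge case illustrated in \Cref{fig:square}. First I would fix a vertex $a \in V$ and an adjacent vertex $b$ (if no such edge exists the claim is trivial — a framework with no edges has every realisation flexible — so assume $ab \in E$, or more generally argue on a connected component). Using \Cref{t:dk} and \Cref{p:phirigid}, a well-positioned framework $(G,p)$ is rigid if and only if $\rank M(G,\phi_p) = d|V|-d$, so to produce a flexible framework it suffices to produce a well-positioned realisation $p$ for which $M(G,\phi_p)$ has a nontrivial kernel beyond the translations. The cleanest way to do this: choose all the differences $p(v)-p(w)$, $vw\in E$, to lie in the relative interior of faces of $\mathcal{P}$ whose normal vectors $\phi_p(vw)$ span only a proper subspace $W \subsetneq \mathbb{R}^d$ of dimension $\le d-1$. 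Then any vector $x \in W^\perp \setminus\{0\}$ gives a non-translational infinitesimal flex obtained by moving a single vertex in direction $x$ while holding the rest fixed — more precisely, $M(G,\phi_p)$ has a kernel vector supported on one vertex, since $\phi_p(e)\cdot x = 0$ for all $e$.

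The key steps, in order: (i) pick a hyperplane $H = \{y : c\cdot y = 0\}$ with $c \in \mathbb{R}^d$ such that $c$ is not orthogonal to any face normal of $\mathcal{P}$ — such $c$ exists since $\mathcal{F}$ is finite — and arrange the realisation $p$ so that every edge difference $p(v)-p(w)$ lies in a small generic neighbourhood within an affine copy of $H$; concretely, place all vertices in a generic $(d-1)$-dimensional affine subspace parallel to $H$, except allow full-dimensional perturbation to recover genericity (see step (iv)). (ii) Show this $p$ is well-positioned: each nonzero difference lands in the relative interior of a unique facet because the differences avoid the lower-dimensional faces, which is an open dense condition; this uses the description of $\Phi_p$ and the same avoidance-of-algebraic-sets argument as in \Cref{subsec:generic}. (iii) Observe that all the resulting face normals $\phi_p(e)$ lie in the proper subspace spanned by $\{f \in \mathcal{F} : f \text{ is a normal of a facet meeting } H^{\text{(direction)}}\}$ — here one must choose $H$ so that this span is proper, which holds because we can steer all edge directions into a pencil of facets whose normals avoid spanning $\mathbb{R}^d$ (e.g. take all edge directions close to a single generic direction $u$, so all $\phi_p(e)$ equal a single facet normal $f_u$, and then the kernel of $M(G,\phi_p)$ contains everything orthogonal to $f_u$ at every vertex, giving dimension $d|V| - \rank$, which exceeds $d$ as soon as $|V|\ge 2$). (iv) Finally, upgrade "open set of flexible well-positioned realisations" to "there exists a generic flexible framework": the set $U$ constructed is open and nonempty, well-positionedness and the rank deficiency of $M(G,\phi_p)$ are both preserved under small perturbations (the rank of $M(G,\phi_p)$ cannot jump up while $\phi_p$ stays constant, which it does on an open set), so $U$ may be shrunk to an open set on which $\phi_p$ is constant and rigidity fails; since generic points are dense in any nonempty open set, $U$ contains a generic framework, which is then flexible.

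The main obstacle is step (iii): verifying that one can genuinely force the span of $\{\phi_p(e) : e \in E\}$ to be a proper subspace of $\mathbb{R}^d$ simultaneously for all edges while keeping the framework well-positioned. The slick route is to make every edge direction nearly parallel to one fixed generic vector $u$, so that every $p(v)-p(w)$ lies in the interior of the \emph{same} facet with normal $f_u$; then $M(G,\phi_p)$ is (a scalar multiple of) the incidence-type matrix with a single repeated row-direction $f_u$, its rank is at most $|V|-1$ (treating $f_u$ as fixed) $\le d|V|-d-1$ when $d\ge 2$, hence rigidity fails by \Cref{t:dk}. Realising "all edge directions nearly parallel to $u$" with a connected graph forces the vertices to be nearly collinear along $u$, which is fine — collinear-up-to-perturbation realisations form an open set once we allow the generic full-dimensional wiggle that does not leave the facet $f_u$. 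One should double-check well-positionedness survives: a generic small perturbation keeps each difference in the open facet, so $\Phi_p$ stays a singleton, by the same "avoid a proper algebraic subset" reasoning used throughout \Cref{subsec:generic}. With these pieces the proposition follows.
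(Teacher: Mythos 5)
Your final ``slick route'' is exactly the paper's argument: place the vertices (nearly) collinearly along a smooth direction $x$ so that every edge difference keeps the single support vector $x^*$ on an open neighbourhood, whence $(G,q)$ is well-positioned with $\rank M(G,\phi_q)\leq |V|-1< d|V|-d$ and \Cref{t:dk} gives flexibility, with genericity obtained by density in the open set. The proposal is correct and essentially identical to the paper's proof (the earlier hyperplane detour is superfluous).
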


\begin{proof}
	Choose a smooth point $x \in \mathbb{R}^d$ and label the $n=|V|$ vertices of $G$ by $v_1,\ldots,v_n$.
	Let $x^*$ be the support vector of $x$.
	As $(\mathbb{R}^d,\| \cdot \|_{\mathcal{P}})$ is a polyhedral normed space,
	there exists $\varepsilon > 0$ such that for every $y \in \mathbb{R}^d$ where $\|x-y\|_\mathcal{P} < \varepsilon$,
	$y$ is smooth with support vector $x^*$.
	Fix $p:V \rightarrow \mathbb{R}^d$ by setting $p(v_j) = j x$ for each $j \in \{1,\ldots,n\}$.
	Using $p$,
	we define the following open set:
	\begin{equation*}
		U : = \left\{ q \in (\mathbb{R}^d)^V : \|p(v) - q(v)\|_{\mathcal{P}} < \varepsilon/2 \text{ for all } v \in V \right\}.
	\end{equation*}
	We now note that for any $q \in U$ and any edge $v_i v_j \in E$ ($i<j$),
	we have
	\begin{align*}
		\left\| x - \frac{q(v_j) - q(v_i)}{j-i} \right\|_{\mathcal{P}} &= 	\frac{\|(j-i) x - (q(v_j) - q(v_i))\|_{\mathcal{P}} }{j-i}\\
		&= \frac{\|(p(v_j) - q(v_j)) - (p(v_i) -q(v_i))\|_{\mathcal{P}}}{j-i} \\
		&\leq \frac{\|p(v_j) - q(v_j)\|_{\mathcal{P}} + \|p(v_i) -q(v_i)\|_{\mathcal{P}}}{j-i} \\
		&< \frac{(\varepsilon/2)+ (\varepsilon/2)}{j-i} \\
		& < \varepsilon,
	\end{align*}
	and hence, the support vector of $q(v_j) - q(v_i)$ is $x^*$.
	It follows that every $q \in U$ is a well-positioned realisation of $G$ with $\phi_q = \phi_p$.
	Since $\phi_p(vw) = x^*$ for every edge $vw \in E$,
	it is easy to check that $\rank M(G,\phi_p) \leq |V| - 1 < d|V| - d$.
	Hence, $(G,q)$ is flexible by \Cref{t:dk}.
\end{proof}

\subsection{\texorpdfstring{$\ell_\infty$}{L-infinity} normed spaces}\label{subsec:linf}

We conclude the section with some more specific results for each of the $\ell_\infty$ normed spaces.
For this subsection we now fix $b_1, \ldots , b_d$ to be the basis of $\mathbb{R}^d$ where the $j$-th coordinate of $b_i$ is 1 if $j=i$ and 0 otherwise.
With this,
the faces for $\ell_\infty^d$ are the vectors $\pm b_1,\ldots,\pm b_d$.

Fix an ordering on the vertex set of $G=(V,E)$ and choose any $\phi \in \Phi_G$.
We require two new functions;
the \emph{colouring} $|\phi| : E \rightarrow \{1,\ldots,d\}$ where $|\phi|(e) = i$ if and only if $\phi(e) \in \{b_i,-b_i\}$,
and the \emph{sign map} $\sgn(\phi) :E \rightarrow \{1,-1\}$ where $\sgn(\phi)(e) = \pm 1$ if and only if $\phi(e) = \pm b_i$ for some $i \in \{1,\ldots,d\}$.
It is immediate that $\phi(e) = \sgn(\phi)(e) b_{|\phi(e)|}$ for every edge $e \in E$.
With this we now define \emph{monochromatic subgraph} $G_i(\phi) = (V,E_i(\phi))$ where $E_i(\phi) := \{ e \in E : |\phi|(e) = i \}$.

\begin{theorem}[Kitson \cite{Kitson15}]\label{t:colour}
    Let $G=(V,E)$ be a graph with a well-positioned realisation $p$ in $\ell_\infty^d$.
    Then the framework $(G,p)$ is infinitesimally rigid if and only if each monochromatic subgraph $G_i(\phi_p)$ is connected.
\end{theorem}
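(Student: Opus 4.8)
The plan is to reduce the rank condition of \Cref{t:dk} to a statement about incidence matrices of the monochromatic subgraphs. By \Cref{t:dk}, $(G,p)$ is infinitesimally rigid if and only if $\rank M(G,\phi_p) = d|V|-d$, so it suffices to compute this rank in terms of the colouring $|\phi_p|$. (We may assume $V \neq \emptyset$; if $|V|=1$ then $d|V|-d=0$ and every monochromatic subgraph is a single vertex, so the equivalence holds trivially.)

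First I would observe that $M(G,\phi_p)$ is block-diagonal after permuting rows and columns. Group the columns of $M(G,\phi_p)$ by the coordinate index $i \in [d]$, so that the $i$-th column block consists of the columns indexed by $(v,i)$ for $v \in V$; and group the rows by the value of $|\phi_p|$, so that the $i$-th row block consists of the edges $e$ with $|\phi_p|(e)=i$, i.e.\ the edges of $G_i(\phi_p)$. Since $\phi_p(e) = \sgn(\phi_p)(e)\, b_{|\phi_p|(e)}$, a row in the $i$-th row block has all of its nonzero entries inside the $i$-th column block. Hence $M(G,\phi_p)$ is a direct sum $B_1 \oplus \cdots \oplus B_d$, where $B_i$ is an $|E_i(\phi_p)| \times |V|$ matrix.

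Next I would identify each $B_i$ with a row-rescaling of the oriented incidence matrix of $G_i(\phi_p)$: the row of $B_i$ corresponding to an edge $e=vw$ (with $v<w$) has entry $\sgn(\phi_p)(e)$ in column $v$, entry $-\sgn(\phi_p)(e)$ in column $w$, and $0$ elsewhere, which is $\sgn(\phi_p)(e) \in \{1,-1\}$ times the corresponding row of the incidence matrix of $G_i(\phi_p)$ oriented by the vertex ordering. Since rescaling rows by nonzero scalars preserves rank, and since the oriented incidence matrix of a graph $H$ on vertex set $V$ has rank $|V| - c(H)$ (a standard fact, with $c(H)$ the number of connected components of $H$), I obtain
\[
\rank M(G,\phi_p) \;=\; \sum_{i=1}^d \rank B_i \;=\; \sum_{i=1}^d \big(|V| - c(G_i(\phi_p))\big) \;=\; d|V| - \sum_{i=1}^d c(G_i(\phi_p)).
\]

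Finally, each $G_i(\phi_p)$ has vertex set $V \neq \emptyset$, so $c(G_i(\phi_p)) \ge 1$ for every $i$, whence $\sum_{i=1}^d c(G_i(\phi_p)) \ge d$ with equality if and only if $c(G_i(\phi_p)) = 1$ for all $i$ --- that is, if and only if every monochromatic subgraph is connected. Combining this with the displayed rank formula and \Cref{t:dk} yields the claim. There is no serious obstacle; the one point needing care is the sign/orientation bookkeeping when identifying $B_i$ with an incidence matrix, together with noting the trivial case $|V|=1$.
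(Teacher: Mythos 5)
Your proof is correct. The paper does not supply its own proof of this statement (it is cited from Kitson), but your argument is the natural one and in fact matches the machinery the paper itself sets up later: in \Cref{sec:suff} the author reorders rows and columns so that $M(G,\phi) = I(G_1(\phi)) \oplus \cdots \oplus I(G_d(\phi))$, which is exactly your block decomposition into ($\pm 1$-row-rescaled) oriented incidence matrices. Combining that decomposition with the rank criterion of \Cref{t:dk} and the standard fact $\rank I(H) = |V| - c(H)$ gives precisely your computation $\rank M(G,\phi_p) = d|V| - \sum_i c(G_i(\phi_p))$, and the sign bookkeeping and the $|V|=1$ edge case are handled correctly (note that well-positionedness rules out zero-length edges, so $\sgn(\phi_p)$ and $|\phi_p|$ are defined on every edge, as you implicitly use).
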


The following is the first (as far as the author is aware) construction of a (well-positioned) infinitesimally rigid framework in $\ell_\infty^d$ for $d \geq 3$.
It was previously known that infinitesimally rigid frameworks could be found in higher-dimensional $\ell_\infty$ normed spaces by relaxing our requirement for such frameworks to be well-positioned;
see, for example, \cite[Example 3.13]{polyold}.

\begin{proposition}\label{p:k2d}
    For every $n \geq 2d$, the complete graph $K_{n}$ has an infinitesimally rigid realisation in $\ell_\infty^d$.
\end{proposition}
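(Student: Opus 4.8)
The plan is to construct an explicit well-positioned realisation $p$ of $K_n$ in $\ell_\infty^d$ and then verify infinitesimal rigidity by applying \Cref{t:colour}: it suffices to produce a directed colouring $\phi$ of $K_n$ such that every monochromatic subgraph $G_i(\phi)$ is connected, and then to realise $p$ so that $\phi_p = \phi$. Since $\phi$ must be achievable as $\phi_p$ for a well-positioned framework, the construction of $p$ and the construction of $\phi$ have to be carried out together, so the real content is choosing coordinates cleverly.

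First I would handle the colouring combinatorially. Label the vertices $v_1,\dots,v_n$ with $n \geq 2d$. For each colour $i \in \{1,\dots,d\}$, I want the edge set $E_i(\phi)$ to span a connected subgraph on $V$; the simplest choice is to let $E_i(\phi)$ contain the path (or star) on the vertices $v_{2i-1}, v_{2i}$ together with enough other edges so that all $n$ vertices lie in one component — e.g. assign colour $i$ to the edge $v_{2i-1}v_{2i}$ and also to all edges from $v_{2i-1}$ to every vertex $v_j$ with $j \geq 2d+1$ (and, say, route the remaining "low-index" vertices into some colour class so nothing is isolated). The point of $n \geq 2d$ is precisely that one can carve out $d$ pairwise-disjoint-ish pieces, one per coordinate direction, each of which can be extended to a spanning connected subgraph. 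I would make this assignment so that each $G_i(\phi)$ is connected and every edge of $K_n$ receives exactly one colour.

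Next I would realise this colouring geometrically. The condition $\phi_p(e) = \pm b_{|\phi|(e)}$ for an edge $e = v_jv_k$ is exactly that the $i$-th coordinate, where $i = |\phi|(e)$, strictly dominates all other coordinates in $|p(v_j) - p(v_k)|$; well-positionedness is the requirement that this domination be strict for every edge simultaneously. I would choose the coordinates of $p$ to be generic (algebraically independent over $\mathbb{Q}$) inside a small product of boxes: roughly, place $p(v_j)$ so that the $i$-th coordinates of vertices incident to colour-$i$ edges are spread far apart while their other coordinates are clustered tightly, and use genericity to break ties. Concretely, one can take $p_i(v_j)$ to be a huge generic number when $v_j$ is an endpoint of the "long" colour-$i$ edges and a tiny generic number otherwise; scaling the colour-$i$ block of coordinates by a parameter $N_i$ with $N_1 \gg N_2 \gg \cdots \gg N_d$ forces every colour-$i$ edge to have its $\ell_\infty$ norm attained in coordinate $i$. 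A short check then shows $\phi_p$ equals the prescribed $\phi$ and the framework is well-positioned, after which \Cref{t:colour} gives infinitesimal rigidity.

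The main obstacle I anticipate is bookkeeping the simultaneous inequalities: one needs a single realisation for which, for every one of the $\binom{n}{2}$ edges at once, the designated coordinate strictly dominates. A hierarchy of scales $N_1 \gg \cdots \gg N_d$ makes the domination work for each colour class internally, but one must also ensure edges of different colours don't accidentally have their norms attained in a wrong coordinate — this is where genericity (no exact coordinate coincidences) plus the scale separation must be combined carefully. Once the "separation of scales + genericity" argument is set up correctly the verification is routine, and I would phrase it as: pick the colouring, pick the scales, invoke genericity to rule out ties, conclude $\phi_p = \phi$, apply \Cref{t:colour}.
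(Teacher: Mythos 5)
Your high-level strategy (build a well-positioned realisation whose monochromatic subgraphs are all connected, then invoke \Cref{t:colour}) is the same as the paper's, but the construction is the entire content of the proposition and the one you sketch does not work. The central problem is the ``hierarchy of scales'' $N_1 \gg N_2 \gg \cdots \gg N_d$: it is incompatible with every colour class being connected. Indeed, if $G_i(\phi_p)$ is connected, then any two vertices $u,v$ are joined by a path of at most $n-1$ colour-$i$ edges, and since each colour-$i$ edge $xy$ satisfies $|p_j(x)-p_j(y)| \le \|p(x)-p(y)\|_\infty = |p_i(x)-p_i(y)|$ for every coordinate $j$, the triangle inequality gives $|p_j(u)-p_j(v)| \le (n-1)\max_{e \in E_i(\phi_p)} \|p(x)-p(y)\|_\infty$ for \emph{all} $j$ and all pairs $u,v$. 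Applying this for each colour $i$ forces the spread of every coordinate, and the lengths of all colour classes, to agree up to a factor of $n-1$; there is no room for $N_1 \gg N_d$. Two further gaps: (a) the colouring you describe leaves the vertices $v_{2k-1},v_{2k}$ ($k\neq i$) isolated in $G_i(\phi)$ --- for connectivity every vertex needs an incident edge of \emph{every} colour, which your bookkeeping does not arrange; and (b) even a combinatorially valid connected colouring need not be realisable as $\phi_p$, since the induced colourings of complete graphs satisfy nontrivial geometric constraints (this is exactly the phenomenon exploited in \Cref{ex:k5}, where a parity obstruction rules out a colouring of $K_5$ in which both colour classes are $5$-cycles). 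So ``pick the colouring, pick the scales, invoke genericity'' skips the step that carries all the difficulty.

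The paper's proof instead writes down explicit coordinates at a single scale: the $2d$ vertices are the antipodal pairs $\pm(0,\dots,0,1,\varepsilon,\dots,\varepsilon)$ with the $1$ in position $i$ and $0<\varepsilon<1/2$, so that for each pair of vertices the dominant coordinate of the difference can be read off directly, and connectivity of each $G_i(\phi_p)$ is then verified by hand; the case $n>2d$ is handled by appending extra vertices joined to $\{1,\dots,d\}$. If you want to salvage your outline you would need to replace the scale-separation step with an explicit single-scale configuration of this kind (or some other device that simultaneously certifies realisability and connectivity of all $d$ colour classes).
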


\begin{proof}
    Fix $G=(V,E)$ to be a graph that is isomorphic to $K_{2d}$ with $V = \{1,\ldots,d, -1, \ldots, -d\}$.
    Fix a scalar $0 < \varepsilon < 1/2$.
    Define $p:V \rightarrow \mathbb{R}^d$ to be the realisation where for each $1 \leq i \leq d$ we have $p(-i) = - p(i)$ and
    \begin{equation*}
        p(i) := ( ~ 0 ~ , ~ \ldots ~ ,  ~ 0  ~ ,  ~ \overbrace{1}^{i} ~ , ~ \varepsilon ~ , ~ \ldots ~ , ~ \varepsilon ~ ).
    \end{equation*}
    For every pair of integer $1 \leq i < j \leq d$,
    we note that
    \begin{align*}
        p(i) - p(j) &= ( ~ 0 ~ , ~ \ldots ~ , ~ 0 ~ , ~ \overbrace{1}^{i} ~ , ~ \varepsilon ~ , ~ \ldots ~ , ~ \varepsilon ~ , ~ \overbrace{-1 + \varepsilon}^{j} ~ , ~ 0 ~ , ~ \ldots ~ , ~ 0 ~ ), \\
        p(i) - p(-j) &= ( ~ 0 ~ , ~ \ldots ~ , ~ 0 ~ , ~ \overbrace{1}^{i} ~ , ~ \varepsilon ~ , ~ \ldots ~ , ~ \varepsilon ~ , ~ \overbrace{1 + \varepsilon}^{j} ~ , ~ 2\varepsilon ~ , ~ \ldots ~ , ~ 2\varepsilon ~ ).
    \end{align*}
    Furthermore, for each $1 \leq i \leq d$ we have $p(i) - p(-i) = 2p(i)$.
    Hence the framework $(G,p)$ is well-positioned in $\ell^d_\infty$, and for every pair $1 \leq i < j \leq d$ we have
    \begin{align*}
        |\phi_p|(ij) = |\phi_p|((-i)(-j)) = i,  \qquad |\phi_p|(i(-j)) = |\phi_p|((-i)j) = j,  \qquad |\phi_p|(i(-i))= i.
    \end{align*}
    
    Now fix an integer $1 \leq i \leq d$. 
    As seen above,
    $i$ is adjacent in $G_i(\phi_p)$ to every vertex $j$ with $j > i$ and every vertex $-k$ with $0 < k \leq i$.
    Similarly,
    $-i$ is adjacent in $G_i(\phi_p)$ to every vertex $-j$ with $j > i$ and every vertex $k$ with $0 < k \leq i$.
    Hence the graph $G_i(\phi_p)$ is connected.
    As this holds for each $1 \leq i \leq d$,
    the framework $(G,p)$ is infinitesimally rigid in $\ell_\infty^d$ by \Cref{t:colour}.
    
    Now fix $n > 2d$.
    Let $G' = (V',E')$ be the graph with $V' = V \cup \{d+1,\ldots,n-d\}$ and
    \begin{equation}
    	E' = E \cup \{ v w : 1 \leq v \leq d, ~ d+1 \leq w \leq n-d \}.
    \end{equation}
    We now define $q$ to be the well-positioned realisation of $G$ where $q(v) = p(v)$ if $v \in V$ and $q(v) = \mathbf{0}$ if $v \in V' \setminus V$.
    With this,
    we see that $G'_i(\phi_q) = G_i(\phi_p) + \{i j : d+1 \leq j \leq n-d \}$ for each $i \in [d]$,
    and so $(G',q)$ is infinitesimally rigid in $\ell_\infty^d$.
    From this it follows that $K_n$ is rigid in $\ell_\infty^d$.
\end{proof}

We end the section with the following example that illustrates some of the difficulties with working with $\ell_\infty$ normed spaces.
In particular, just because a graph is $(d,d)$-redundant does not imply that it is has a redundantly rigid realisation in $\ell_\infty^d$.

\begin{example}\label{ex:k5}
	By \Cref{t:dewar}, the graph $K_5-e$ has an infinitesimally rigid realisation in $\ell_\infty^2$ for every edge $e \in  E$.
However,
$K_5$ has no redundantly rigid realisation in $\ell_\infty^2$.
	To see this, let $G=(V,E)$ be a complete graph with vertex set $V = \{v_1,\ldots,v_5\}$ and let $(G,p)$ be an infinitesimally rigid framework in $\ell_\infty^2$.
	Suppose for contradiction that $(G,p)$ is also redundantly rigid.
	It follows from \Cref{t:colour} that the monochromatic subgraphs $G_1(\phi_p), G_2(\phi_p)$ are both 2-edge-connected with 5 edges, and so both are cycles.
	By relabelling the vertices of $G$ if necessary,
	we may suppose that the vertices of $G_1(\phi)$ are ordered so that $v_i v_{i+1} \in E_1(\phi_p)$ for each $i \in [5]$ (from here on we consider $v_6=v_1$ and $v_0 = v_5$).
	Now fix $D$ to be directed graph formed from $G_1(\phi)$ by directing the edge $v_iv_{i+1}$ as $v_i \rightarrow v_{i+1}$ if and only if $\phi_p(v_i v_{i+1}) = (1,0)$.
	We now observe that for $i \in [5]$, we cannot have either of the directed paths $v_{i-1} \rightarrow v_i \rightarrow v_{i+1}$ or $v_{i+1} \rightarrow v_i \rightarrow v_{i-1}$.
	For example, if the former were true, then we would have $\phi_p(v_{i-1} v_{i+1}) = (1,0)$ since
	\begin{align*}
		p_1(v_{i-1}) - p_1(v_{i+1}) &= \big(p_1(v_{i-1}) - p_1(v_{i}) \big) + \big(p_1(v_{i}) - p_1(v_{i+1}) \big) \\
		&> \big | p_2(v_{i-1}) - p_2(v_{i}) \big| + \big|p_2(v_{i}) - p_2(v_{i+1}) \big| \\
		&= \big | p_2(v_{i-1}) - p_2(v_{i+1}) \big | ,
	\end{align*}
	which contradicts that $v_{i-1}v_{i+1}$ is not an edge of $G_1(\phi_p)$.
	This implies $D$ is an alternating directed cycle, but this is impossible as it has an odd number of vertices.	
\end{example}

We observe here that \Cref{ex:k5} provides an answer in the negative to an open problem set by Clinch and Kitson \cite[Open problem 7]{ck20}.

\section{Equivalent conditions for global rigidity}\label{sec:equiv}

In this section we identify an exact condition for a well-positioned framework in a polyhedral normed space to be global rigidity.
After doing so, we explore the complexity limitations of this characterisation for non-generic frameworks.

\subsection{General polyhedral normed spaces}

We define two directed colourings $\phi$ and $\phi'$ to be \emph{isometric} if there exists a linear isometry $T$ of $(\mathbb{R}^d,\|\cdot\|_\mathcal{P})$ where $\phi = T \circ \phi' := (T(\phi'(e)))_{e \in E}$.
If $\phi \in \Phi_p$, then the isometric directed colouring $\phi' = T \circ \phi$ is contained in $\Phi_{p'}$ for the realisation $p'=T \circ p := (T(p(v)))_{v \in V}$;
this follows from the observation that $M(G,T \circ \phi)(T \circ p) = M(G,\phi)p$.
Using the language of isometric directed colourings,
we can restate what it means for a framework to be globally rigid in a polyhedral normed space.

\begin{proof}[Proof of \Cref{thm:compute}]
    First,
    suppose that $(G,p)$ is globally rigid.
    Choose any $\phi \in \Phi_G$ where $f_G(p) \in \col M(G,\phi)$.
    If $\phi$ is isometric then we are done.
    Suppose that $f_G(p)$ is contained in $f_G\left(M(G,\phi)^{-1}(f_G(p)) \right)$.
    Then there exists a realisation $q$ where $f_G(p) = f_G(q) = M(G,\phi)q$.
    Since $(G,p)$ is globally rigid,
    $(G,q)$ is congruent to $(G,q)$.
    As $\phi \in \Phi_q$,
    it follows that $\phi$ is isometric to $\phi_p$.
    Thus property \ref{thm:compute2} holds.    
    
    Now suppose that \ref{thm:compute2} holds.
    Let $(G,q)$ be another framework where $f_G(q) = f_G(p)$.
    Since $f_G(p) = f_G(q) = M(G,\phi)q$ for all $\phi \in \Phi_q$ (i.e., $f_G(p) \in \col M(G,\phi)$ for all $\phi \in \Phi_q$),
    every element of $\Phi_q$ is isometric to $\phi_p$.
    Choose any $\phi \in \Phi_q$ and let $T$ be a linear isometry of $(\mathbb{R}^d,\|\cdot\|_\mathcal{P})$ where $\phi_p = T \circ \phi$.
    Then 
    \begin{equation*}
    	M(G,\phi_p)p = f_G(p) = f_G( q)= M(G, \phi)( q) = M(G,T \circ \phi)(T \circ q)= M(G,\phi_p)(T \circ q),
    \end{equation*}
    and hence $(G,T \circ q)$ lies in the affine space $p + \ker M(G,\phi_p)$.
    As $(G,p)$ is rigid we have that $\ker M(G,\phi_p) = \{ (x)_{v \in V} : x \in \mathbb{R}^d\}$ by \Cref{t:dk},
    and hence $(G,T \circ q)$ is a translation of $(G,p)$.
    It now follows that $(G,q)$ is congruent to $(G,p)$.
    This now concludes the proof.
\end{proof}

As discussed in \Cref{subsec:main},
\Cref{thm:compute} provides an algorithm for determining whether or not a well-positioned framework $(G,p)$ is globally rigid in a polyhedral normed space $(\mathbb{R}^d,\|\cdot\|_\mathcal{P})$:
\begin{itemize}
	\item[1.] Determine $\phi_p$ and construct $M(G,\phi_p)$.
	\item[2.] Compute the rank of $M(G,\phi_p)$.
	If $\rank M(G,\phi_p) < d|V| - d$, terminate the algorithm with the answer ``not globally rigid''; otherwise, proceed.
	\item[3.] Fix a vertex $v_0 \in V$. Construct the set $\Phi_G' = \Phi_G \setminus \{\phi \text{ isometric to } \phi_p \}$ and use this to construct the set $A = \{ M'(G,\phi) : \phi \in \Phi_G'\}$, where the matrix $M'(G,\phi)$ is constructed from $M(G,\phi)$ by deleting the columns associated to the vertex $v_0$.
	\item[4.] Create the set $B = \emptyset$.
	For each matrix $M'(G,\phi) \in S$,
	compute the rank of $M'(G,\phi) $ and compute the rank of the augmented matrix $[M'(G,\phi) | f_G(p)]$.
	If the rank of $[M'(G,\phi) | f_G(p)]$ is greater than the rank of $M'(G,\phi)$, remove $M'(G,\phi)$ from $A$.
	If not,
	compute the affine solution space $X_\phi \subset (\mathbb{R}^d)^{V \setminus \{v_0\}}$ and append to the set $B$.
	\item[5.] For each $X_\phi \in B$,
	run the linear program
	\begin{mini*}|l|
		{q \in X_\phi}{0}
		{}{}
		\addConstraint{M'(G,\psi)q - f_G(p) \leq 0, ~ M'(G,\psi) \in A}
	\end{mini*}
	If this terminates with $0$ then terminate the algorithm with the answer ``not globally rigid''; otherwise, remove $X_\phi$ from $B$.
	\item[6.] If the set $B$ is empty, terminate the algorithm with the answer ``globally rigid''.	
\end{itemize}
Since linear programs and linear equation can be solved using deterministic algorithms which are subexponential in run time (assuming that the polytope $\mathcal{P}$ has rational faces), the above algorithm can be implemented as an exponential-time deterministic algorithm.
Hence one can always determine if any given well-positioned framework in a polyhedral normed space is globally rigid or not if given enough time.

The author stresses here that the above algorithm is in some sense a ``proof of concept'' algorithm, and no attempt has been made to optimise it.
In its current form,
the algorithm has an extremely long run time for even small examples due to the sheer size of $\Phi_G$.
For example, given a well-positioned framework with $m$ edges in $(\mathbb{R}^d, \| \cdot\|_\infty)$,
the set $\Phi_G$ contains $(2d)^m$ elements.
This implies that checking if a given realisation of $K_9$ in $(\mathbb{R}^4,\| \cdot\|_\infty)$ is globally rigid would require checking over $3.2 \times 10^{32}$ different $36 \times 36$ matrices, possibly checking each matrix multiple times;
to put this into perspective, if every computer on Earth (estimated to be roughly 3 billion) was ran in parallel, each checking one directed colouring per nanosecond, it would still take over 3.4 million years to determine if a framework $(K_9,p)$ was globally rigid in $\ell_\infty^4$.
What one should instead do is limit themselves to ``sensible'' directed colourings;
for example,
there is no need to check any directed colouring that is isometric to a previous directed colouring.
Some of these issues can be solved by requiring stronger sufficient conditions for global rigidity.
This idea is explored further in \Cref{sec:suff}.

An interesting alternative application of \Cref{thm:compute} is the following.

\begin{proposition}\label{p:singleexample}
	Let $(G,p)$ be a well-positioned and globally rigid framework in a polyhedral normed space $(\mathbb{R}^d,\|\cdot \|_\mathcal{P})$.
	Then there exists an open neighbourhood $U$ of $p$ such that every framework $(G,q)$ with $q \in U$ is globally rigid in $(\mathbb{R}^d,\|\cdot \|_\mathcal{P})$.	
\end{proposition}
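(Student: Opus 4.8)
The plan is to show that the conditions characterising global rigidity in \Cref{thm:compute} are \emph{stable} under small perturbations of $p$. The starting point is that $(G,p)$ is well-positioned, so $p$ is a smooth point of $f_G$; since $\smooth(\mathbb{R}^d,\|\cdot\|_\mathcal{P})$ contains an open neighbourhood of each of its points (the faces of $\mathcal{P}$ are full-dimensional cones over a neighbourhood), there is an open neighbourhood $U_0$ of $p$ on which every framework $(G,q)$ is well-positioned with $\phi_q = \phi_p$. In particular $M(G,\phi_q) = M(G,\phi_p)$ for $q \in U_0$, so $\rank M(G,\phi_q) = d|V|-d$ and $(G,q)$ remains rigid by \Cref{t:dk}. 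Thus, shrinking $U$, we may assume every $(G,q)$, $q\in U$, satisfies the hypotheses of \Cref{thm:compute}.

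Next I would run through the finite set $\Phi_G$ and handle each $\phi$ that could witness a failure of global rigidity, i.e.\ each non-isometric $\phi$. Fix such a $\phi$. There are two cases depending on $p$. If $f_G(p) \notin \col M(G,\phi)$, then since $\col M(G,\phi)$ is a fixed linear subspace and $f_G$ is continuous, $f_G(q) \notin \col M(G,\phi)$ for all $q$ in a neighbourhood of $p$; so $\phi$ does not even enter the test for nearby $q$. If $f_G(p) \in \col M(G,\phi)$, then by global rigidity of $(G,p)$ and \Cref{thm:compute} we must have $f_G(p) \notin f_G(M(G,\phi)^{-1}(f_G(p)))$: no realisation $q'$ with $M(G,\phi)q' = f_G(p)$ is equivalent to $(G,p)$. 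Equivalently, writing $q'$ for the (affine) solution set of $M(G,\phi)q' = f_G(p)$, the rigidity map $f_G$ never returns to the value $f_G(p)$ on this affine set except, possibly, at points which are themselves congruent copies of $(G,p)$ — but those are excluded because $\phi$ is not isometric to $\phi_p$. The goal is to show the same holds with $f_G(p)$ replaced by $f_G(q)$ for $q$ near $p$.

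For this I would exploit the structure from \Cref{thm:compute}'s algorithm: the set $M(G,\phi)^{-1}(f_G(p))$ is an affine subspace of dimension $d$ (after fixing a vertex, it is a single point, since $\rank M(G,\phi) = d|V|-d$ is forced whenever $f_G(p)\in\col M(G,\phi)$ by \Cref{l:generic}-type reasoning — though here, without genericity, I instead split into the subcase $\rank M(G,\phi) < d|V|-d$, where $f_G$ restricted to the affine solution set has image of dimension $< d|V|-d$ and so, by a dimension/semialgebraic argument, meeting the fixed point $f_G(p)$ transversally fails robustly, and the subcase $\rank M(G,\phi) = d|V|-d$, where the solution set is a single congruence class of a realisation $q_\phi'$ and the condition $f_G(p) \neq f_G(q_\phi')$ — a strict inequality of continuous functions of $p$ — persists under perturbation). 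In the first subcase one observes that the condition "$f_G(p)$ lies in the image of $f_G$ on the $d$-dimensional affine solution set" is a closed condition; since it fails at $p$ and the whole picture varies continuously (indeed semialgebraically) with $p$, it fails on an open neighbourhood. Taking $U$ to be the intersection of $U_0$ with the finitely many neighbourhoods produced, one per non-isometric $\phi \in \Phi_G$ with $f_G(p)\in\col M(G,\phi)$, together with the neighbourhoods that keep $f_G(q)\notin\col M(G,\phi)$ for the remaining $\phi$, we conclude that every $(G,q)$, $q\in U$, satisfies condition \ref{thm:compute2} of \Cref{thm:compute} and is therefore globally rigid.

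The main obstacle is the first subcase above: when $\rank M(G,\phi) < d|V|-d$, the "bad set" $f_G(M(G,\phi)^{-1}(f_G(p)))$ is the image of a positive-dimensional affine space under the (piecewise-linear-on-cones) map $f_G$, and one must argue that its failure to contain $f_G(p)$ is an open condition in $p$. The cleanest route is probably to note that $M(G,\phi)^{-1}(f_G(p))$ depends affinely (hence continuously, with locally constant dimension once $\rank[M(G,\phi)\mid f_G(p)]$ stabilises) on $p$, that $f_G$ is continuous, and that "$y \notin f_G(A)$ for a \emph{compact} piece $A$" is open in both $y$ and $A$; compactness is recovered because we only care about the part of the affine solution set lying in a bounded region (realisations far from $p$ have an edge whose length differs from $f_G(p)$, so cannot be equivalent). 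Alternatively, and perhaps more robustly, one can invoke that all the sets involved are semialgebraic and that the complement of a semialgebraic family's image is semialgebraic, so the locus of $p$ for which the condition holds is itself semialgebraic and contains an open neighbourhood of $p$ since it contains $p$ as a point where the relevant strict/open conditions hold.
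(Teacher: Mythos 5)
Your proposal is correct and follows essentially the same route as the paper: stabilise well-positionedness and $\phi_q=\phi_p$ on a neighbourhood, observe that $f_G(q)\notin\col M(G,\phi)$ persists for the colourings where it holds at $p$, and for the remaining non-isometric colourings use continuity of $f_G$ together with compactness of the (translation-normalised) witness set to show that the condition $f_G(q)\notin f_G\left(M(G,\phi)^{-1}(f_G(q))\right)$ cannot fail along a sequence converging to $p$ --- exactly the paper's sequential-compactness contradiction. The rank case-split and the semialgebraic alternative you sketch are unnecessary; the compactness argument you call the ``cleanest route'' already covers both subcases and is what the paper does.
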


\begin{proof}	
	For every realisation $q$ of $G$ in $(\mathbb{R}^d,\|\cdot \|_\mathcal{P})$,
	fix $S(q) \subset \Phi_G$ to be the set of directed colourings $\phi$ for which $f_G(q) \in \col M(G,\phi)$ but $\phi$ is not isometric to a directed colouring contained in $\Phi_q$.
	As the map $f_G$ is continuous and $p$ is well-positioned,
	it follows that there an open neighbourhood $U'$ of $p$ where for each $q \in U'$, the following holds: (i) $(G,q)$ is well-positioned, (ii) $\phi_q = \phi_p$, and (iii) $S(q) \subset S(p)$.
	By \Cref{thm:compute},
	we have that $f_G(p)$ is not contained in $f_G\left(M(G,\phi)^{-1}(f_G(p)) \right)$ for each $\phi \in S(p)$.
	
	Suppose for contradiction that the set $U$ as described in the statement of the result does not exist.
	Then there exists a sequence of realisations $(p_n)_{n \in \mathbb{N}}$ contained in $U'$ which converge to $p$ where each framework $(G,p_n)$ is not globally rigid.
	By applying \Cref{thm:compute} and replacing $(p_n)_{n \in \mathbb{N}}$ with one of its own subsequences if necessary,
	we may assume without loss of generality that there exists $\phi \in S(p)$ such that
	\begin{equation*}
		f_G(p_n) \in f_G\left(M(G,\phi)^{-1}\left(f_G(p_n)\right) \right).
	\end{equation*}	
	for all $n \in \mathbb{N}$.
	Since $G$ is connected,
	there exists a compact set $K \subset (\mathbb{R}^d)^V$ that contains a sequence $(q_n)_{n \in \mathbb{N}}$ such that $M(G,\phi)$ $f_G(p_n) = f_G(q_n) = M(G,\phi)(q_n)$ for all $n \in \mathbb{N}$.
	Using the compactness of $K$,
	we can replace $(p_n)_{n \in \mathbb{N}}$ with one of its subsequences so as to guarantee that $(q_n)_{n \in \mathbb{N}}$ converges to some realisation $q$.
	Hence, $f_G(p) = f_G(q) = M(G,\phi)(q)$.
	However,
	this now contradicts that $f_G(p)$ is not contained in $f_G\left(M(G,\phi)^{-1}(f_G(p)) \right)$.
	This contradiction now concludes the proof.
\end{proof}

Compare \Cref{p:singleexample} to global rigidity in a Euclidean space with dimension $d$.
Here finding a single well-positioned (i.e., no zero-length edges) globally rigid realisation is not sufficient for a graph have a non-empty open set of globally rigid realisations:
indeed, every cycle graph has a well-positioned globally rigid realisation in $d$-dimensional Euclidean space\footnote{Let $G$ be the cycle graph with vertices $\{1,\ldots,n\}$ and edges $\{12, \ldots, (n-1)n, n1\}$. Given the realisation $p:V \rightarrow \mathbb{R}^d$ with $p(i) = (i, 0, \ldots,0)$, the framework $(G,p)$ is globally rigid in $d$-dimensional Euclidean space.},
but every generic realisation of $G$ is not even rigid by \Cref{t:laman}.

The condition in \Cref{p:singleexample} that the framework is well-positioned is, however, required for the result.
Take for example the complete graph $K_4$.
As a consequence of \Cref{t:globness}, the graph $K_4$ has no globally rigid realisation in $\ell_\infty^2$.
However, the not well-positioned realisation $p: \{1,2,3,4\} \rightarrow \ell_\infty^2$ with
\begin{equation*}
	p(1) = (1,1), \qquad p(2) = (-1,1), \qquad p(3) = (1,-1), \qquad p(4) = (-1,-1)
\end{equation*}
is globally rigid in $\ell_\infty^2$ (see \cite[Theorem 4]{petty}).

\subsection{Exploits and limitations for \texorpdfstring{$\ell_\infty^d$}{L-infinity}}\label{subsec:linfnp}

Somewhat surprisingly,
our previous algorithms can be significantly improved for complete graphs in the case of $\ell_1^2$, and hence also the isometrically isomorphic space $\ell_\infty^2$.

\begin{theorem}[\cite{ccv11,epp11}]\label{thm:fastalgcomplete}
		There exists an $O(n^2)$ time algorithm that finds all the isometric embeddings of an $n$-point metric space in either $\ell_1^2$ or $\ell_\infty^2$.
		Hence,
		there exists an $O(n^2)$ time algorithm that determines if a framework $(K_n,p)$ in either $\ell_1^2$ or $\ell_\infty^2$ is globally rigid.
\end{theorem}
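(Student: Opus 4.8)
The plan is to reduce both assertions to the planar rectilinear embedding algorithms of Catusse, Chepoi and Vax\`es \cite{ccv11} and of Eppstein \cite{epp11}, and then to bolt a congruence test onto the output in order to extract the global rigidity conclusion. First I would record that $\ell_1^2$ and $\ell_\infty^2$ are isometrically isomorphic: the linear map $T(x,y) = (x+y,x-y)$ satisfies $\|T(x,y)\|_\infty = \max(|x+y|,|x-y|) = |x|+|y| = \|(x,y)\|_1$, so composing with $T$ (respectively $T^{-1}$) converts any isometric embedding of a metric space into $\ell_1^2$ into one into $\ell_\infty^2$ and vice versa, at a cost of $O(n)$ per embedding. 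Hence it suffices to establish both statements for $\ell_\infty^2$.

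For the first statement I would invoke the $O(n^2)$-time algorithms of \cite{ccv11,epp11}, which decide whether a given $n$-point metric space $(\{1,\dots,n\},d)$ admits an isometric embedding into the rectilinear (equivalently Chebyshev) plane and, when one exists, output a description of the entire family of such embeddings. The structural fact I would highlight in a more self-contained treatment is that, since the underlying graph is $K_n$, an embedding $i \mapsto (x_i,y_i)$ is governed by a directed colouring $\phi \in \Phi_{K_n}$ in the sense of \Cref{sec:poly} — recording, for each pair, whether $|x_i-x_j| = d_{ij}$ or $|y_i-y_j| = d_{ij}$ — and that, once the colouring is fixed, the embeddings realising it form the solution set of the linear system $M(K_n,\phi)q = d$ intersected with the inequalities $M(K_n,\psi)q \le d$ over the remaining colourings $\psi$. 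Naively there are exponentially many colourings; the content of \cite{ccv11,epp11} is to show that the feasible family can instead be built incrementally, inserting the points $1,2,\dots,n$ one at a time while maintaining a description of bounded combinatorial complexity of the partial placements still consistent with all distances seen so far, keeping the total running time at $O(n^2)$. I expect this incremental bounded-complexity argument to be the main obstacle, and it is precisely what the cited papers supply.

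For the second statement, given a framework $(K_n,p)$ in $\ell_\infty^2$ I would compute the distance matrix $d = f_{K_n}(p)$ in $O(n^2)$ time, run the embedding algorithm on $(\{1,\dots,n\},d)$, and compare its output with the congruence orbit of $p$. The simplifying point is that $\Isom(\ell_\infty^2)$ is generated by the translations together with the linear isometry group of the square, which is the dihedral group of order $8$; therefore, after normalising $p(1)$ to the origin, the congruence orbit of $p$ is a finite set of at most $8$ realisations, each obtained by applying one of these linear isometries to $p$ and comparable to a candidate embedding in $O(n)$ time. Since an equivalent framework $(K_n,q)$ is exactly an isometric embedding of the metric $(\{1,\dots,n\},d)$, the framework $(K_n,p)$ is globally rigid if and only if the embedding family returned by the algorithm consists, modulo translation, of precisely this finite orbit; in particular, because $\ell_\infty^2$ has only finitely many linear isometries, any embedding family exhibiting continuous freedom immediately certifies a non-congruent equivalent framework, so the test never gives a false positive. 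This final comparison fits comfortably within the stated $O(n^2)$ budget, which completes the argument.
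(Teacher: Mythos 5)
The paper gives no proof of this theorem beyond citing \cite{ccv11,epp11} for the embedding algorithm and treating the ``Hence'' clause as immediate, and your proposal does essentially the same thing: it defers the $O(n^2)$ enumeration of isometric embeddings to those references and then supplies the routine deduction that global rigidity amounts to the returned family coinciding, modulo the finite linear isometry group of the square and translations, with the congruence orbit of $p$. Your filling-in of that last step (including the $\ell_1^2 \leftrightarrow \ell_\infty^2$ identification via $T(x,y)=(x+y,x-y)$) is correct and consistent with the paper's intent.
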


The analogue of \Cref{thm:fastalgcomplete} for $\ell_\infty^d$ with $d \geq 3$ is not true: specifically, determining if a $n$-point metric space can be isometrically embedded into $\ell_\infty^d$, $d \geq 3$, is NP-complete \cite[Theorem 2]{Edmonds2008}.
It is not clear if this necessarily implies determining if a complete framework is globally rigid in $\ell_\infty^d$ for $d \geq 3$ is NP-Hard.
Similar to \Cref{thm:fastalgcomplete},
there exists a polynomial-time algorithm for determining if a framework $(K_n,p)$ in $d$-dimensional Euclidean space is globally rigid for any positive integer $d$.
However, it was shown by Saxe \cite{saxe1980} that determining if a general framework is globally rigid in $d$-dimensional Euclidean space is NP-Hard for all $d \geq 1$.
It is likely that this is also true for any normed space.
We provide a proof below for $\ell_\infty$ spaces.
We first state the following useful result.

\begin{theorem}[Danzer and Gr{\"u}nbaum {\cite[II b$\alpha$]{Danzer1962}}]\label{t:danzer}
	Let $(G,p)$ be a framework in $\ell_\infty^d$ where $G=(V,E)$ is a complete graph.
	If $\|p(v)-p(w)\|_\infty = 1$ for every edge $vw \in E$,
	then $|V| \leq 2^d$.
	Furthermore,
	if $|V|=2^d$ then 
	\begin{equation*}
		\{p(v) : v \in V\} = \{-1,1\}^d + x
	\end{equation*}
	for some $x \in \mathbb{R}^d$,
	and so $(G,p)$ is globally rigid.
\end{theorem}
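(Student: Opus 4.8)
The plan is to run the classical Danzer--Gr\"unbaum packing argument and then read off global rigidity as a formality. Rescale so that every edge of $(G,p)$ has $\ell_\infty$-length $1$; since $G$ is complete, $\{p(v):v\in V\}$ is an equilateral set and $p$ is injective. For each $v\in V$ let $B_v:=p(v)+(-\tfrac12,\tfrac12)^d$ be the open unit box centred at $p(v)$. If $B_u\cap B_v\neq\varnothing$ then $|p_k(u)-p_k(v)|<1$ in every coordinate $k$, i.e.\ $\|p(u)-p(v)\|_\infty<1$, contradicting that $uv$ is an edge of length $1$; hence the boxes $B_v$ are pairwise disjoint. Writing $a_k:=\min_v p_k(v)$ and $b_k:=\max_v p_k(v)$, the number $b_k-a_k$ is realised as $|p_k(u)-p_k(v)|\le\|p(u)-p(v)\|_\infty=1$ for some pair $u,v$, so $b_k-a_k\le 1$; thus every $B_v$ lies inside $W:=\prod_{k=1}^d(a_k-\tfrac12,\,b_k+\tfrac12)$, a box of volume $\prod_{k=1}^d(b_k-a_k+1)\le 2^d$. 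Comparing the total volume $|V|$ of the disjoint boxes with the volume of $W$ gives $|V|\le 2^d$.

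Now suppose $|V|=2^d$. Then both estimates are tight: $b_k-a_k=1$ for every $k$, and the $B_v$ fill $W$ up to a Lebesgue-null set, i.e.\ they tile $W$. After a translation we may take $a_k=0$, $b_k=1$, so $W=(-\tfrac12,\tfrac32)^d$, every $p(v)\in[0,1]^d$, and in each coordinate both the value $0$ and the value $1$ occur. I claim every $p(v)\in\{0,1\}^d$; fix a coordinate $k$. For $s\in(-\tfrac12,\tfrac32)$ the hyperplane $\{x_k=s\}$ meets $B_v$ precisely when $|p_k(v)-s|<\tfrac12$, in which case the slice $B_v\cap\{x_k=s\}$ is a full $(d-1)$-dimensional unit box; since the $B_v$ tile $W$, Fubini's theorem shows that for almost every $s$ these slices tile the $(d-1)$-box $(-\tfrac12,\tfrac32)^{d-1}$ up to a null set, so there are exactly $2^{d-1}$ of them. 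Hence the step function $g(s):=\#\{v:|p_k(v)-s|<\tfrac12\}$ equals $2^{d-1}$ for almost every $s\in(-\tfrac12,\tfrac32)$. On $(-\tfrac12,\tfrac12)$, however, $g(s)=\#\{v:p_k(v)<s+\tfrac12\}$ (the constraint $p_k(v)>s-\tfrac12$ being automatic as $p_k(v)\ge 0$), which is non-decreasing with a jump at $s=c-\tfrac12$ for every value $c\in(0,1)$ attained in coordinate $k$; being almost everywhere constant, $g$ admits no such jump, so no attained coordinate value lies in $(0,1)$. Therefore every $p(v)\in\{0,1\}^d$, and as there are $2^d$ distinct such points, $\{p(v):v\in V\}=\{0,1\}^d$; undoing the rescaling and translation, this is exactly the claimed description of $\{p(v):v\in V\}$ as a translate of $\{-1,1\}^d$. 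I expect the step from ``no wasted volume'' to this rigidity of the configuration to be the only real work; the bound $|V|\le 2^d$ is immediate from the packing picture.

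Finally, global rigidity when $|V|=2^d$ follows at once: any framework $(G,q)$ with $f_G(q)=f_G(p)$ is again a $2^d$-point equilateral set in $\ell_\infty^d$, so by the above $q(V)$ is a translate of $\{-1,1\}^d$, hence of $p(V)$; as a translation is an isometry of $\ell_\infty^d$ and $G=K_n$ is complete, this exhibits $(G,q)$ as an isometric copy of $(G,p)$, so $(G,p)$ is globally rigid.
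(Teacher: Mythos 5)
The paper does not prove this statement at all --- it is quoted from Danzer and Gr\"unbaum --- so there is nothing to compare your argument against except the statement itself. Your packing argument for $|V|\le 2^d$ and your tiling/slicing analysis of the equality case are both correct: the disjointness of the open boxes, the volume bound, the Fubini slice count $g(s)=2^{d-1}$ a.e., and the monotone-step-function argument forcing every coordinate into $\{0,1\}$ all hold up. One small point of hygiene: with unit edge lengths you correctly obtain $\{p(v):v\in V\}=\{0,1\}^d+x$, which is a translate of $\tfrac12\{-1,1\}^d$, not of $\{-1,1\}^d$; your ``undoing the rescaling'' remark is vacuous because you never rescaled. (The discrepancy is really in the paper's statement, which is applied elsewhere to edge length $2$, but you should flag it rather than absorb it silently.)

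The genuine gap is in the last step. Knowing that $q(V)$ and $p(V)$ are translates of the same vertex set of a cube gives you an isometry matching the two \emph{point sets}; global rigidity as defined in this paper requires an isometry $h$ with $h\circ q=p$ as \emph{labelled} maps. Because the configuration is equilateral, \emph{every} bijection of $V$ onto the cube's vertex set yields an equivalent framework, so congruence of all of them would force every permutation of the $2^d$ cube vertices to be induced by an isometry of $\ell_\infty^d$. But the stabiliser of the cube's vertex set in $\Isom(\ell_\infty^d)$ is the hyperoctahedral group of order $2^d\,d!$, which is strictly smaller than $(2^d)!$ for $d\ge 2$: concretely, for $d=2$ no isometry of $\ell_\infty^2$ swaps $(0,0)$ and $(0,1)$ while fixing $(1,0)$ and $(1,1)$, since its linear part would have to send $(0,1)$ to $(-1,1)$, which no signed permutation matrix does. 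So the sentence ``hence $(G,q)$ is an isometric copy of $(G,p)$'' does not follow from what you proved, and indeed the labelled global rigidity claim in the statement is problematic for $d\ge 2$; any honest proof of that final clause would have to engage with which permutations of the extremal set are realised by isometries, which your argument never does.
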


\begin{theorem}\label{thm:nphard}
	For every positive integer $d$,
	determining if a framework $(G,p)$ in $\ell_\infty^d$ is globally rigid is NP-Hard.
\end{theorem}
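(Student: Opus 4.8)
The plan is to reduce from a known NP-hard problem, with the most natural candidate being a variant of set partition / subset sum, or alternatively to piggyback on Saxe's original reduction for Euclidean global rigidity. Given the tools available in the excerpt---particularly \Cref{t:danzer}, which pins down the unique (up to isometry) realisation of a complete graph whose every edge has length $1$ in $\ell_\infty^d$---the cleanest route is to encode a combinatorial decision problem into edge lengths so that the resulting framework is globally rigid precisely when the instance is a ``no''-instance (or ``yes''-instance) of that problem. First I would treat the base case $d=1$ separately: $\ell_\infty^1$ is just $\mathbb{R}$ with the usual metric, and determining global rigidity of a framework on the line is already NP-hard by essentially Saxe's argument (it is equivalent to asking whether a system of $\pm$ sign choices on edge displacements summing consistently has a unique solution up to reflection), so I would cite or adapt that. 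The bulk of the work is then $d \geq 2$.

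For $d \geq 2$ the strategy is to build a gadget graph $G$ and a target framework $(G,p)$ such that a NP-hard question---say, whether a given partial assignment of coordinates extends uniquely---translates into the global rigidity of $(G,p)$. Concretely, I would use \Cref{t:danzer}: take a complete graph $K_{2^d}$ with all edge lengths equal to $1$; by \Cref{t:danzer} its only realisation (up to isometry) is the vertex set of a translated cube $\{-1,1\}^d$, so this gadget is rigidly ``frozen''. I would then attach extra vertices whose distances to the cube vertices are prescribed by the NP-hard instance, in such a way that each such vertex has a unique valid position iff a corresponding boolean/numeric constraint is satisfiable uniquely. The reduction source I would aim for is something like \textsc{1-in-3 SAT} or \textsc{Exact Cover} or a subset-sum-flavoured problem: each clause/element becomes a vertex, and the two (or more) candidate positions consistent with its distance constraints to the frozen cube correspond to the possible truth values; global rigidity then fails exactly when two distinct global assignments exist, so uniqueness of satisfying assignment $\Leftrightarrow$ global rigidity. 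I would need to check that the constructed distances are realisable by at least one framework (so the instance is not vacuously globally rigid by being infeasible-but-we-don't-care), that all numbers used are rational and polynomially bounded in size, and that the ``no other realisation'' direction genuinely uses the $\ell_\infty$ geometry rather than accidentally reducing to the Euclidean case.

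The main obstacle I anticipate is the \emph{flatness/faux-rotation} phenomenon peculiar to polyhedral norms, emphasised in the introduction and in \Cref{fig:square}: in $\ell_\infty^d$ a single edge never pins a vertex, and a vertex can slide along a whole face of a sphere without changing any of its recorded distances. This means a naive gadget where a vertex is attached by only a few edges will have a continuum of equivalent positions, destroying any hope of encoding a clean yes/no answer; I will have to over-constrain each auxiliary vertex with enough edges (to vertices of the frozen cube, or to other auxiliary vertices) so that the intersection of the relevant $\ell_\infty$-spheres is a discrete set of exactly the intended size. Controlling this intersection---showing it is finite, computing its cardinality, and ensuring it is robust to the algebraic independence issues that plague well-positionedness---is the delicate part. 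A secondary subtlety is handling isometries: $\ell_\infty^d$ has a finite but nontrivial linear isometry group (signed permutation matrices), so I must make sure the gadget is rigid enough that the only ambiguities are either the honest isometries or the combinatorially meaningful second realisation, not spurious reflections of sub-gadgets; breaking these symmetries by perturbing a few edge lengths (while keeping rationality) should suffice, but needs care. Once the gadget is in hand, verifying the reduction runs in polynomial time and that $(G,p)$ is globally rigid $\iff$ the instance has the desired uniqueness property is routine.
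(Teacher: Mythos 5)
You have correctly identified the two load-bearing ingredients --- Saxe's result for the base case $d=1$ and the Danzer--Gr\"unbaum cube (\Cref{t:danzer}) as a frozen reference frame --- but the actual reduction for $d\geq 2$ is never constructed, and the route you sketch runs head-on into the obstacle you yourself flag without resolving it. Your plan is to attach auxiliary vertices to the frozen cube so that each has a \emph{finite} set (ideally of size two) of candidate positions encoding a truth value, and to reduce from 1-in-3 SAT, Exact Cover, or subset sum. The difficulty is that in $\ell_\infty^d$ the intersection of unit spheres centred at cube vertices is generically a face or a segment, not a discrete set: if you constrain a vertex $v$ to be at distance $1$ from every point of the half-cube $S_+=\{x\in\{-1,1\}^d: x_1=1\}$, you force all coordinates of $v$ except the first to vanish but leave the first coordinate free in $[0,2]$. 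Designing additional distance constraints that cut this residual segment down to exactly two points per variable, while keeping the whole instance simultaneously realisable and while not accidentally admitting spurious sub-gadget reflections, is the entire content of the proof, and your proposal leaves it as ``the delicate part.'' As written, there is no theorem.

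The paper resolves this by \emph{not} fighting the residual one-dimensional freedom but exploiting it: it reduces from global rigidity in $\ell_\infty^1=(\mathbb{R},|\cdot|)$ (NP-hard by Saxe), taking an arbitrary connected one-dimensional instance $(G,p)$, rescaling it into a short interval around $1$, and embedding it as the first coordinate of a framework $(G',p')$ in $\ell_\infty^d$ whose extra vertices form the cube $S=\{-1,1\}^d$. Joining every $v\in V$ to all of $S_+$ at distance $1$ confines $q'(v)$ to the segment $\{(t,0,\ldots,0):t\in[0,2]\}$, and joining two designated vertices $v_0,v_1$ to the whole cube pins them exactly and kills the reflection of the line inside the cube; the surviving degrees of freedom are then precisely the one-dimensional instance, so $(G',p')$ is globally rigid if and only if $(G,p)$ is. This sidesteps all gadget design for SAT-style problems and trivially keeps the numbers rational and the blow-up polynomial (for fixed $d$, only $2^d$ extra vertices). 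If you want to salvage your direct-encoding route you would need to supply, at minimum, an explicit set of distances realising exactly two candidate positions per variable vertex together with clause gadgets coupling them, which is a substantially harder construction than the one the paper uses.
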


\begin{proof}
	The result was previously shown to be true when $d=1$ in \cite{saxe1980}.
	Because of this, we are now going to prove the following sufficient result:
	if $(G,p)$ is a framework in $\ell_{\infty}^{1} = (\mathbb{R},|\cdot|)$ where $G$ is connected,
	then there exists a framework $(G',p')$ in $\ell_{\infty}^{d}$ which is globally rigid if and only if $(G,p)$ is globally rigid.
	
	Fix a framework $(G,p)$ in $\ell_{\infty}^{1}$ with a connected graph $G$.
	To avoid trivial cases, we may suppose that $(G,p)$ has an edge $v_0v_1$ with non-zero length.
	By applying translations and reflections,
	we may assume that $p(v_0) = 1$ and $p(v_1) = 1 + \lambda$ for some $\lambda > 0$.
	By scaling $(G,p)$ about the point 1 by a positive scalar if necessary,
	we also may assume that $|p(v) - p(v_0)|  = |p(v) - 1| < \frac{1}{2|E|}$ for each $v \in V$.
	Now fix the sets
	\begin{equation*}
		S := \{-1,1\}^d, \qquad S_+ := \{ x \in S : x_1 = 1\}.
	\end{equation*}
	Set $G'=(V',E')$ to be the graph with $V' := V \cup S$ and
	\begin{equation*}
		E' := E \cup \{xy : x,y \in S ,~ x \neq y\} \cup \{v_0x, v_1 x : x \in S\} \cup \{ v x : v \in V, ~ x \in S_+\}.
	\end{equation*}
	Define $p'$ to be the realisation of $G'$ where $p'(v) = (p(v),\mathbf{0})$ for each $v\in V$ and $p'(x) = x$ for each $x \in S$.
	(See \Cref{fig:nphard} for any example of how $(G',p')$ is formed when $d=2$ and $G$ is the path $(v_0,v_1,v_2)$.)
	We observe here that the restriction of $(G',p')$ to the set of vertices $S$ is globally rigid by \Cref{t:danzer}.
	
	First suppose that $(G,p)$ is globally rigid.
	Let $(G',q')$ be a framework in $\ell_\infty^d$ that is equivalent to $(G',p')$.
	As the restriction of $(G',p')$ to the set of vertices $S$ is globally rigid,
	we may suppose (by applying isometries of $\ell_\infty^d$ if necessary) that $q'(x) = x$ for each $x \in S$.
	Choose any $v \in V$.
	For any vertex $v \in V$ and any $x \in S_+$,
	we have 
	\begin{equation}\label{eq:nphard}
		\max \Big\{ |q'_i(v) - x_i| : i \in [d] \Big\} = \|q'(v) - x \|_\infty = \|p'(v) - x \|_\infty = \max \Big\{ |p(v) - 1| , 1 \Big\} = 1.
	\end{equation}
	If there exist $i\in \{2,\ldots,d\}$ such that $q'_i(v) > 0$,
	then $q'_i(v) + 1 > 1$.
	However, this now contradicts \cref{eq:nphard}: just choose $x \in S_+$ such that $x_i =-1$.
	Following a similar reason, we see that $q'_i(v)$ cannot be negative either,
	hence $q'_i(v) = 0$ for each $i \geq 2$ and each $v \in V$.
	Furthermore,
	for each $v \in V$ we also have that $q'_1(v) \in [0,2]$:
	indeed if this were not true, we would have
	\begin{equation*}
		\|q'(v) - x\|_\infty = \max \Big\{ |q'_1(v) - 1|, 1\Big\} > 1 \qquad \text{ for any } x \in S_+,
	\end{equation*}
	which would contradict that $(G',q')$ is equivalent to $(G',p')$.
	
	Using that $q'_i(v_0) = 0$ for $i \geq 2$ and $q'_1(v_0) \in [0,2]$,
	it follows that for each edge $v_0 x$ with $x \in S \setminus S_+$,
	we have
	\begin{equation*}
		\max \Big\{ q'_1(v_0) +1 , 1 \Big\} = \|q'(v_0)-x\|_\infty = \|p'(v_0)-x\|_\infty = 2,
	\end{equation*}
	and hence $q'(v_0 ) = (1,0,\ldots,0)$.
	There are two possible positions for $q'(v_1)$:
	$(1 + \lambda, \mathbf{0})$ or $(1 + \lambda, \mathbf{0})$.
	If the latter were to hold then using the additional edge constraints $v_1 x$ for each $x \in S \setminus S_+$ and the observation that $0 < \lambda < \frac{1}{|E|} < 1$,
	we would see that
	\begin{equation*}
		2 - \lambda = \max \Big\{ (1 - \lambda) +1 , 1 \Big\} = \|q'(v_1) - x \|_\infty = \|p'(v_1)-x\|_\infty = 2 + \lambda,
	\end{equation*}
	which is a contradiction.
	Hence,
	$q'(v_1 ) = (1 + \lambda,0,\ldots,0)$.
	Now let $q$ be the realisation of $G$ in $\ell_\infty^1$ where $q(v) = q'_1(v)$ for each $v \in V$.
	Then $(G,p)$ and $(G,q)$ are equivalent.
	As $(G,p)$ is globally rigid, $q(v_0) = 1$ and $q(v_1)  = 1 + \lambda$,
	we have that $q = p$.
	Hence $q'=p'$,
	which implies $(G',p')$ is globally rigid.
	
	Now suppose that $(G',p')$ is globally rigid.
	Choose a framework $(G,q)$ in $\ell_\infty^1$ which is equivalent to $(G,p)$.
	By applying translations and reflections if needed,
	we may suppose that $q(v_0) = 1$ and $q(v_1) = 1 + \lambda$.
	Choose any vertex $v \in V$.
	Given $v_0 = a_0, a_1 , \ldots , a_{n-1},a_n = v$ is a path from $v_0$ to $v$ with no repeated vertices (and hence $n \leq |E|$),
	we see that
	\begin{align*}
		|q(v) - q(v_0)| &\leq \sum_{i=0}^{n-1} |q(a_{i+1})-q(a_i)| \leq \sum_{i=0}^{n-1} \Big( |q(a_{i+1})-q(v_0)| + |q(a_{i})-q(v_0)| \Big) \\
		 &= \sum_{i=0}^{n-1} \Big( |p(a_{i+1})-p(v_0)| + |p(a_{i})-p(v_0)| \Big)< \sum_{i=0}^{n-1}\frac{1}{|E|}  \leq 1.
	\end{align*}
	Hence, $q(v) \in (0,2)$ for each $v \in V$.
	We now fix $q'$ to be the realisation of $G'$ in $\ell_\infty^d$ where $q'(v) = (q(v),0,\ldots,0)$ for each $v \in V$ and $q'(x) = x$ for each $x \in S$.
	Since $q(v) \in (0,2)$ for each $v \in V$,
	we have for any $x \in S_+$ that
	\begin{equation*}
		\|q'(v) - x\|_\infty = \max\Big\{ |q(v) - 1| , 1 \Big\} = 1 = \|p'(v) - x\|_\infty.
	\end{equation*}
	By checking the remaining edge constraint equations,
	we see that $(G',q')$ is equivalent to $(G',p')$.
	Since the latter is globally rigid,
	we have $q' = p'$.
	Hence $q=p$ and $(G,p)$ is globally rigid.
	This now concludes the proof.
\end{proof}

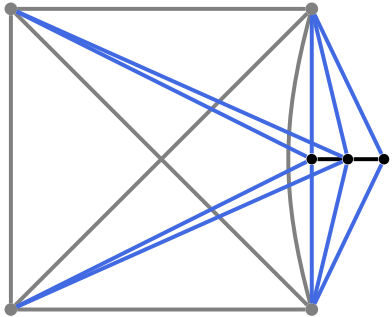
\begin{figure}[htp]
	\begin{center}
        \begin{tikzpicture}[scale=2]
			\node[hollow] (11) at (1,1) {};
			\node[hollow] (1-1) at (1,-1) {};
			\node[hollow] (-11) at (-1,1) {};
			\node[hollow] (-1-1) at (-1,-1) {};

			\draw[gedge] (11) to [bend right = 15] (1-1);
			\draw[gedge] (11)edge(-11);
			\draw[gedge] (11)edge(-1-1);
			\draw[gedge] (1-1)edge(-11);
			\draw[gedge] (1-1)edge(-1-1);
			\draw[gedge] (-11)edge(-1-1);

			\node[vertex] (v0) at (1,0) {};
			\node[vertex] (v1) at (1.24,0) {};
			\node[vertex] (v2) at (1.48,0) {};
			
			\draw[edge] (v0)edge(v1);
			\draw[edge] (v1)edge(v2);

			\draw[bedge] (v0)edge(11);
			\draw[bedge] (v0)edge(1-1);
			\draw[bedge] (v0)edge(-11);
			\draw[bedge] (v0)edge(-1-1);
			
			\draw[bedge] (v1)edge(11);
			\draw[bedge] (v1)edge(1-1);
			\draw[bedge] (v1)edge(-11);
			\draw[bedge] (v1)edge(-1-1);
			
			\draw[bedge] (v2)edge(11);
			\draw[bedge] (v2)edge(1-1);
		\end{tikzpicture}
	\end{center}
	\caption{Construction described in \Cref{thm:nphard} when $d=2$ and $G$ is the graph with vertices $v_0,v_1,v_2$ and edges $v_0v_1,v_1v_2$. The original framework $(G,p)$ is given in black (vertices from left to right: $v_0,v_1,v_2$), and the subframework of $(G',p')$ induced on $S$ is given in grey. Edges between vertices in $V$ and $S$ are given in blue.}\label{fig:nphard}
\end{figure}

\section{Necessary conditions for global rigidity}\label{sec:ness}

In this section we prove the necessary condition for global rigidity in polyhedral normed spaces that is described in \Cref{t:globness}.
The rough approach to proving \Cref{t:globness} is to approximate each polyhedral normed space by a sequence of normed spaces of the same dimension where global rigidity is well understood.
Specifically, we will approximate our polyhedral normed spaces using normed spaces that are \emph{analytic}: any normed space $(\mathbb{R}^d,\|\cdot\|)$ where the restriction of the norm $\|\cdot\|$ to the set $\mathbb{R}^d \setminus \{0\}$ forms a real analytic function.
An important family of analytic normed spaces are the $\ell_q$ normed spaces when $q$ is an even integer.
Using our approximation methods, we prove that any globally and infinitesimally rigid framework in our fixed polyhedral normed space must also be globally and infinitesimally rigid in some analytic normed space.
From this, we can then use various known properties regarding global rigidity in analytic normed spaces to prove \Cref{t:globness}.


Before we begin, we require a few specific results regarding analytic normed spaces.
The first is a combination of multiple results from \cite{DN22} and \cite{DHN22}),
as well as \Cref{l:km}.

\begin{theorem}\label{t:analyticsuff}
	Let $(G,p)$ be an infinitesimally rigid and globally rigid framework in an analytic normed space $(\mathbb{R}^d,\|\cdot\|)$ with finitely many linear isometries.
	Then $G$ is $\mathcal{M}(d,d)$-connected.
\end{theorem}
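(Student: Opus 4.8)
The plan is to verify the two combinatorial conditions of \Cref{l:km} — that $G$ is $(d,d)$-redundant and that $G$ is $2$-connected — and then invoke that lemma. Since $(\mathbb{R}^d,\|\cdot\|)$ has finitely many linear isometries, its isometry group has dimension $k=d$, so $(d,d)$ is the relevant sparsity parameter and \Cref{p:connequiv} applies. The first move is to reduce to the generic case: by the results of \cite{DN22} establishing that global rigidity is a generic property of analytic normed spaces (the analogue of Gortler--Healy--Thurston), the hypothesis that the infinitesimally rigid framework $(G,p)$ is globally rigid forces every generic realisation $(G,p_0)$ of $G$ in $(\mathbb{R}^d,\|\cdot\|)$ to be globally rigid, and hence (being generic and rigid in an analytic space) infinitesimally rigid. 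As $\mathcal{M}(d,d)$-connectivity is a property of the abstract graph $G$, it suffices to establish it for $(G,p_0)$.

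For $(d,d)$-redundancy, I would apply the analytic version of Hendrickson's condition, \cite[Theorem 3.8]{DHN22}, to the generic globally rigid framework $(G,p_0)$, concluding that $(G,p_0)$ is redundantly rigid. By \Cref{p:connequiv}, $G$ is then $(d,d)$-redundant.

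For $2$-connectivity, suppose for contradiction that $G$ has a cut vertex $v$ separating it into subgraphs $G_1,G_2$ with $V(G_1)\cap V(G_2)=\{v\}$. Since $-I$ is always a linear isometry of a normed space, the point reflection $\sigma:x\mapsto 2p_0(v)-x$ is an isometry, so the realisation $q$ obtained by keeping $p_0$ on $G_1$ and applying $\sigma$ on $G_2$ is equivalent to $(G,p_0)$: edges inside $G_1$ are untouched, edges inside $G_2$ have their lengths preserved by $\sigma$, and there are no edges between $V(G_1)\setminus\{v\}$ and $V(G_2)\setminus\{v\}$. Global rigidity then forces a congruence $h\circ q=p_0$; writing $h(x)=Lx+t$ with $L$ one of the finitely many linear isometries and using that the generic points of $G_1$ pin down $h=\mathrm{id}$, the equations on $V(G_2)$ force $p_0(u)=p_0(v)$ for all $u\in V(G_2)$, contradicting genericity. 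Hence $G$ is $2$-connected, and combining this with $(d,d)$-redundancy through \Cref{l:km} (equivalently, the second clause of \Cref{p:connequiv}) yields that $G$ is $\mathcal{M}(d,d)$-connected.

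I expect the genuine content to lie in correctly invoking and combining the external inputs rather than in new computation. The most delicate point is the genericity transfer: justifying that an arbitrary infinitesimally rigid, globally rigid framework certifies generic global rigidity of $G$, which is exactly where the generic-property result of \cite{DN22} does the heavy lifting and where one must check that infinitesimal rigidity places $p$ at a regular point of the rigidity map. A secondary subtlety is making the cut-vertex reflection argument rigorous when $|V(G_1)|$ is too small to pin $h$ directly; there one uses that there are only finitely many candidate linear parts $L$ and that a generic configuration satisfies none of the resulting over-determined affine relations.
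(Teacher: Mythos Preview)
Your proposal is correct and follows essentially the same route as the paper: pass from the given infinitesimally and globally rigid framework to a suitably generic one via the results of \cite{DN22}, invoke the analytic Hendrickson condition \cite[Theorem 3.8]{DHN22} for redundant rigidity (hence $(d,d)$-redundancy via \Cref{p:connequiv}), establish $2$-connectivity, and finish with \Cref{l:km}. The paper simply cites \cite[Theorem 3.1]{DHN22} for the $2$-connectivity step rather than writing out the cut-vertex reflection argument you sketch; your direct argument is fine and is presumably the content of that citation.

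One small caution: you phrase the transfer step as ``global rigidity is a generic property of analytic normed spaces (the analogue of Gortler--Healy--Thurston)'', which is stronger than what the paper actually uses. The paper only invokes \cite[Theorem 3.10]{DN22} to obtain a non-empty \emph{open set} of globally rigid realisations, then \cite[Proposition 3.6]{DN22} to find one that is regular. You only need existence of a single regular globally rigid realisation, so the weaker statement suffices and you should not lean on a full generic-property claim unless you can point to it explicitly.
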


\begin{proof}
	By \cite[Theorem 3.10]{DN22},
	there exists a non-empty open set of globally rigid realisations of $G$ in $\mathbb{R}^d$.
	By \cite[Proposition 3.6]{DN22} and \cite[Theorem 3.8]{DHN22},
	one such realisation $(G,q)$ is redundantly rigid in $(\mathbb{R}^d,\|\cdot\|)$,
	and by \cite[Theorem 3.1]{DHN22} the graph $G$ is 2-connected.
	The result now follows from \Cref{l:km}.
\end{proof}


\begin{proposition}[{\cite[Proposition 3.2]{DN22}}]\label{p:regular}
	For any graph $G=(V,E)$ and any analytic normed space $(\mathbb{R}^d,\|\cdot\|)$,
	there exists an open conull\footnote{A subset of a linear space $\mathbb{R}^n$ is \emph{null} if it has Lebesgue measure 0, and \emph{conull} if its complement is a null set. Importantly, all conull subsets are dense.} set $U \subset (\mathbb{R}^d)^V$ so that the rank of $df_G(p)$ if maximal for all $p \in U$.
\end{proposition}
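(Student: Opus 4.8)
The plan is to exploit that, for an analytic norm, the rigidity map is real analytic wherever it is defined, and that the zero set of a non-trivial real analytic function on a connected open set has Lebesgue measure zero. Everything then reduces to bookkeeping with ranks and minors.

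First I would fix $W := \{ p \in (\mathbb{R}^d)^V : p(v) \neq p(w) \text{ for all } vw \in E\}$. The complement of $W$ is a finite union of linear subspaces of codimension $d \geq 1$, hence null, so $W$ is an open conull subset of $(\mathbb{R}^d)^V$; moreover, since the norm is analytic (hence smooth) on $\mathbb{R}^d \setminus \{\mathbf{0}\}$, $W$ is precisely the set of well-positioned realisations of $G$, which is where $df_G$ is defined. The map $p \mapsto (p(v)-p(w))_{vw \in E}$ is linear, hence analytic, and carries $W$ into $(\mathbb{R}^d \setminus \{\mathbf{0}\})^E$; composing with the norm in each coordinate shows that $f_G|_W$ is real analytic, and therefore so is the matrix-valued map $p \mapsto df_G(p)$ on $W$ (real analytic functions have real analytic partial derivatives). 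In particular, every minor of $df_G(p)$ is a real analytic function of $p$ on $W$.

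Next I would set $r^* := \max_{p \in W} \rank df_G(p)$; this maximum is attained since the ranks are bounded above by $\min\{|E|, d|V|\}$ and, for each $r$, the set $\{p \in W : \rank df_G(p) \geq r\}$ is open — it is the union, over all $r \times r$ submatrices, of the open sets on which the corresponding minor is nonzero — so there is a largest $r$ for which it is nonempty. Put $U := \{p \in W : \rank df_G(p) = r^*\} = \{p \in W : \rank df_G(p) \geq r^*\}$, which is therefore open, and which is exactly the locus where $df_G$ attains its global maximal rank. It remains to check that $U$ is conull.

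To do this I would pick a single $r^* \times r^*$ submatrix whose minor $m$ is not identically zero on $W$ (one exists because $r^*$ is attained somewhere), and observe that $W \setminus U \subseteq \{p \in W : m(p) = 0\}$, since every $p$ with $\rank df_G(p) < r^*$ has all of its $r^* \times r^*$ minors vanishing, $m$ in particular. The zero set of a non-trivial real analytic function on a connected open set is Lebesgue-null, so it suffices that $W$ be connected: for $d \geq 2$ this holds because $(\mathbb{R}^d)^V = \mathbb{R}^{d|V|}$ with a finite union of codimension-$\geq 2$ subspaces removed is connected, and for $d = 1$ the statement is trivial anyway, since scaling rows by the nonzero signs $\sgn(p(v)-p(w))$ does not change rank and hence $\rank df_G(p) = |V| - c(G)$ for every $p \in W$ (with $c(G)$ the number of components of $G$), so $U = W$ already works. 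Thus $W \setminus U$ is null, and since $(\mathbb{R}^d)^V \setminus W$ is null as well, $U$ is the desired open conull set. The only genuinely nontrivial ingredient here is the measure-zero property of real analytic zero sets (plus the harmless connectedness check for $W$); I do not expect a serious obstacle, only the need to handle the low-dimensional edge case cleanly.
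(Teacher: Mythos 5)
Your argument is correct. Note that the paper does not prove this proposition itself but imports it from \cite{DN22}; the standard proof there is essentially the one you give, namely: the well-positioned configurations form an open conull set on which $f_G$ (hence $df_G$ and all its minors) is real analytic, the maximal-rank locus is open, and its complement lies in the zero set of a non-vanishing analytic minor, which is Lebesgue-null on a connected domain. Your explicit treatment of the connectedness of $W$ (codimension $d\geq 2$ for $d\geq 2$) and of the degenerate case $d=1$, where the rank is constant and equal to $|V|$ minus the number of components, correctly closes the one place where the argument could otherwise leak.
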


\begin{theorem}[see {\cite[Theorem 25.7]{rockafellar}}]\label{thm:rockafellar}
	Let $U \subset \mathbb{R}^d$ be an open convex set and let $(f_n)_{n \in \mathbb{N}}$ be a sequence of convex functions $f_n:U \rightarrow \mathbb{R}^d$ with pointwise limit $f:U \rightarrow \mathbb{R}^d$.
	Suppose that $f$ is differentiable, and each $f_n$ is also differentiable.
	Then $f_n \rightarrow f$ and $df_n \rightarrow  df$ uniformly on any compact set contained in $U$;
	specifically, for any compact set $K$ contained in $U$,
	we have
	\begin{equation*}
		\sup_{x \in K} |f(x) -f_n(x)| \rightarrow 0, \qquad \sup_{x \in K} \|df(x) - df_n(x) \| \rightarrow 0
	\end{equation*}
	as $n \rightarrow \infty$.
\end{theorem}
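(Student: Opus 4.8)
The statement is Rockafellar's classical theorem on the differentiation of pointwise limits of convex functions, so the plan is simply to reproduce the standard convex-analysis argument. First I would reduce to the scalar-valued case: writing $f_n=((f_n)_1,\dots,(f_n)_d)$ and $f=(f_1,\dots,f_d)$ componentwise, each $(f_n)_i:U\to\mathbb{R}$ is convex, differentiable, and converges pointwise to the convex differentiable function $f_i$, while $\sup_K|f-f_n|$ and $\sup_K\|df-df_n\|$ are both controlled by the corresponding coordinatewise suprema. So it suffices to prove the two uniform-convergence claims for scalar convex functions $g_n\to g$ on $U$.

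For the first half ($g_n\to g$ uniformly on compacts) I would establish a local equi-Lipschitz bound. Fix a compact $K\subset U$ and $\delta>0$ with $K_\delta:=\{x:\mathrm{dist}(x,K)\le\delta\}\subset U$ compact. Enclose $K_\delta$ in a slightly larger simplex (or box) $\Delta\subset U$; by convexity each $g_n$ is bounded above on $\Delta$ by the maximum of its values at the finitely many vertices of $\Delta$, and these converge, hence are uniformly bounded. A convex function bounded above on a neighbourhood of a point is bounded below and locally Lipschitz there, with Lipschitz constant controlled by the oscillation, so there is a single $L$ with all large-$n$ functions $g_n$ (and $g$) $L$-Lipschitz on $K_\delta$. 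Equi-Lipschitz continuity together with pointwise convergence on $K_\delta$ then upgrades to uniform convergence on $K$ by a routine $\varepsilon/3$ argument over a finite net.

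For the gradient statement I would first prove pointwise convergence $\nabla g_n(x)\to\nabla g(x)$: since the difference quotients of a convex function increase with the step size, for any unit vector $u$ and small $t$ one has $\nabla g_n(x)\cdot u\le t^{-1}(g_n(x+tu)-g_n(x))$; letting $n\to\infty$ (using the uniform convergence just obtained) and then $t\to0^+$ (using differentiability of $g$) gives $\limsup_n\nabla g_n(x)\cdot u\le\nabla g(x)\cdot u$, and applying this to $-u$ as well forces convergence for every $u$, hence $\nabla g_n(x)\to\nabla g(x)$. To promote this to uniform convergence on $K$ I would argue by contradiction: if it failed there would be $\varepsilon>0$, points $x_k\in K$ and indices $n_k\to\infty$ with $|\nabla g_{n_k}(x_k)-\nabla g(x_k)|\ge\varepsilon$; by compactness of $K$ and the uniform bound $|\nabla g_{n_k}(x_k)|\le L$ pass to subsequences with $x_k\to x_\ast\in K$ and $\nabla g_{n_k}(x_k)\to v$; passing to the limit in the subgradient inequality $g_{n_k}(y)\ge g_{n_k}(x_k)+\nabla g_{n_k}(x_k)\cdot(y-x_k)$ gives $g(y)\ge g(x_\ast)+v\cdot(y-x_\ast)$ for all $y\in U$, so $v$ is a subgradient of $g$ at $x_\ast$, hence $v=\nabla g(x_\ast)$ by differentiability, which contradicts $|\nabla g_{n_k}(x_k)-\nabla g(x_k)|\ge\varepsilon$ once one recalls that the gradient of a differentiable convex function is continuous. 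I expect this last step to be the main obstacle: gradients of convex functions are not equicontinuous in general, so Arzelà–Ascoli does not apply directly, and it is the compactness-plus-subgradient-inequality device (together with continuity of $\nabla g$) that carries the argument.
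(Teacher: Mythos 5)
The paper does not prove this statement; it is quoted as a known result with a pointer to Rockafellar's \emph{Convex Analysis} (Theorem 25.7, together with Theorem 10.8 for the uniform convergence of the function values), after the harmless coordinatewise reduction for vector-valued maps. Your argument is precisely the standard proof behind that citation --- equi-Lipschitz bounds from local upper bounds to upgrade pointwise to locally uniform convergence of the $g_n$, monotone difference quotients for pointwise gradient convergence, and the compactness-plus-subgradient-inequality device combined with continuity of $\nabla g$ for uniformity of the gradient convergence --- and it is correct and complete.
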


We also require the following small technical lemma.

\begin{lemma}\label{lem:expsums}
	Let $\mathcal{P}_1,\mathcal{P}_2 \subset \mathbb{R}^d$ be polytopes with faces $\mathcal{F}_1,\mathcal{F}_2$ respectively.
	For each $i=1,2$, let $h_i: \mathbb{R}^d \rightarrow \mathbb{R}$ be the map with $h_i(x) = \sum_{f \in \mathcal{F}_i} e^{f \cdot x}$.
	Then $h_1=h_2$ if and only if $\mathcal{P}_1=\mathcal{P}_2$.
\end{lemma}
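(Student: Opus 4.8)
The plan is to prove both directions of the equivalence $h_1 = h_2 \iff \mathcal{P}_1 = \mathcal{P}_2$, where the backward direction is trivial (if $\mathcal{P}_1 = \mathcal{P}_2$ then $\mathcal{F}_1 = \mathcal{F}_2$, since the facet normals of a polytope with nonempty interior are uniquely determined, and hence $h_1 = h_2$ term-by-term). The substance is the forward direction, which I would phrase contrapositively in a uniqueness form: the function $h(x) = \sum_{f \in \mathcal{F}} e^{f \cdot x}$ determines the finite set $\mathcal{F} \subset \mathbb{R}^d \setminus \{\mathbf{0}\}$ (and hence determines $\mathcal{P}$ via $\mathcal{P} = \{x : f \cdot x \le 1 \ \forall f \in \mathcal{F}\}$). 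So it suffices to show: if $\mathcal{F}_1, \mathcal{F}_2$ are finite subsets of $\mathbb{R}^d$ with $\sum_{f \in \mathcal{F}_1} e^{f \cdot x} = \sum_{f \in \mathcal{F}_2} e^{f \cdot x}$ for all $x \in \mathbb{R}^d$, then $\mathcal{F}_1 = \mathcal{F}_2$.

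The key step is the classical fact that the functions $\{x \mapsto e^{f \cdot x} : f \in \mathbb{R}^d\}$ are linearly independent over $\mathbb{R}$. Given that, write the hypothesis as $\sum_{f \in \mathcal{F}_1 \cup \mathcal{F}_2} c_f \, e^{f \cdot x} = 0$ where $c_f \in \{-1, 0, 1\}$ depending on whether $f$ lies in $\mathcal{F}_1$ only, $\mathcal{F}_2$ only, both (both terms cancel, $c_f = 0$ counts with multiplicity — but these are sets, so each appears once), or neither; linear independence forces every $c_f = 0$, hence $\mathcal{F}_1 = \mathcal{F}_2$. To prove the linear independence of the exponentials I would restrict to a line: pick a direction $v$ such that the values $f \cdot v$, $f \in \mathcal{F}_1 \cup \mathcal{F}_2$, are pairwise distinct (possible since the finitely many hyperplanes $\{v : (f - f') \cdot v = 0\}$ for $f \ne f'$ do not cover $\mathbb{R}^d$), and then invoke the standard one-variable result that $\{e^{\lambda t} : \lambda \in \mathbb{R}\}$ are linearly independent (e.g. via the Wronskian, or by repeatedly applying $\tfrac{d}{dt}$ and evaluating at $t=0$ to get a Vandermonde system, or by asymptotics as $t \to \infty$ picking off the largest exponent). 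Then apply this on the line $x = tv$.

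The main obstacle — really a minor one — is the bookkeeping around the minimality assumption: the paper assumes $\mathcal{F}$ is the minimal generating set, i.e. the genuine facet normals, so that the map $\mathcal{P} \mapsto \mathcal{F}$ is a bijection and no repeated or redundant normals appear. I would make explicit that under this convention $\mathcal{F}_1 = \mathcal{F}_2$ is literally equivalent to $\mathcal{P}_1 = \mathcal{P}_2$, so once we extract $\mathcal{F}_i$ from $h_i$ we are done. Everything else is routine; the only thing to be slightly careful about is that the $e^{f\cdot x}$ sum has no constant term ($\mathbf{0} \notin \mathcal{F}$), which is not even needed for the argument but is consistent with the setup.
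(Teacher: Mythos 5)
Your proof is correct, but it takes a genuinely different route from the paper's. You recover the full facet-normal set $\mathcal{F}_i$ from $h_i$ via linear independence of the functions $x \mapsto e^{f \cdot x}$ for distinct $f$ (reduced to the classical one-variable statement along a line $x = tv$ with the exponents $f \cdot v$ pairwise distinct), so that $h_1 = h_2$ forces $\mathcal{F}_1 = \mathcal{F}_2$ and hence $\mathcal{P}_1 = \mathcal{P}_2$. The paper instead recovers only the norm: for $x$ a smooth point of both $\|\cdot\|_{\mathcal{P}_1}$ and $\|\cdot\|_{\mathcal{P}_2}$ it observes that $h_i(tx) \sim e^{t(\varphi_i(x)\cdot x)} = e^{t\|x\|_{\mathcal{P}_i}}$ as $t \to \infty$, because the maximal exponent $\max_{f \in \mathcal{F}_i} f\cdot x = \|x\|_{\mathcal{P}_i}$ is attained uniquely (by the support vector $\varphi_i(x)$) and dominates the finite sum; equality of $h_1$ and $h_2$ then forces $\|x\|_{\mathcal{P}_1} = \|x\|_{\mathcal{P}_2}$ on the dense set of common smooth points, hence everywhere by continuity, hence $\mathcal{P}_1 = \mathcal{P}_2$. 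Your argument extracts strictly more information -- injectivity of $\mathcal{F} \mapsto h$ on arbitrary finite sets, with minimality of $\mathcal{F}$ used only in the final identification of $\mathcal{P}$ with its facet normals -- and is self-contained modulo the standard independence of exponentials. The paper's argument is shorter given the support-vector machinery already in place, and in passing shows that the a priori weaker hypothesis of asymptotic equivalence of $h_1(tx)$ and $h_2(tx)$ along rays already suffices. Both are complete proofs of the stated lemma.
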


\begin{proof}
	For this proof we use $f(t) \sim g(t)$ to denote the asymptotic equivalence of two functions $f,g:[0,\infty) \rightarrow \mathbb{R}$.
	Specifically, $f(t) \sim g(t)$ if and only if $\lim_{t \rightarrow \infty} f(t)/g(t) = 1$.

	Fix $X_i$ to be the well-positioned points in $\mathbb{R}^d$ with respect to $\|\cdot \|_{\mathcal{P}_i}$ and fix $\varphi_i$ to be the dual map of  $\|\cdot \|_{\mathcal{P}_i}$.
	For any $x \in X_1 \cap X_2$ and $t>0$, we have
	\begin{equation*}
		h_i(t x) = \sum_{f \in \mathcal{F}_i} e^{t(f \cdot x)} \sim e^{t(\varphi_i(x) \cdot x)} = e^{t \|x\|_{\mathcal{P}_i}}
	\end{equation*}
	for each $i=1,2$.
	Hence, $h_1(tx) \sim h_2(tx)$ for every $x \in X_1 \cap X_2$ if and only if $\|\cdot\|_{\mathcal{P}_1}=\|\cdot\|_{\mathcal{P}_2}$.
	The latter condition is equivalent to $\mathcal{P}_1= \mathcal{P}_2$,
	hence $h_1(tx) \sim h_2(tx)$ for every $x \in X_1 \cap X_2$ if and only if $\mathcal{P}_1= \mathcal{P}_2$.
	It is immediate that $h_1=h_2$ implies $h_1(tx) \sim h_2(tx)$ for every $x \in X_1 \cap X_2$ and $\mathcal{P}_1= \mathcal{P}_2$ implies $h_1=h_2$,
	and so $h_1=h_2$ if and only if $\mathcal{P}_1=\mathcal{P}_2$.
\end{proof}

Our next result tells us that we can always approximate a norm via a sequence of analytic norms.

\begin{lemma}\label{lem:convergeanalyticnorms}
	Let $(\mathbb{R}^d,\| \cdot\|_{\mathcal{P}})$ be a polyhedral norm.
	Then there exists a sequence of analytic norms $(\|\cdot\|_n)_{n \in \mathbb{N}}$ that uniformly converge to $\| \cdot\|$ such that each normed space $(\mathbb{R}^d,\| \cdot\|_{n})$ has the same isometries as $(\mathbb{R}^d,\| \cdot\|_{\mathcal{P}})$.
	Furthermore,
	if $\varphi$ is the dual map with respect to $\|\cdot\|_{\mathcal{P}}$ and $\varphi_n$ is the dual map with respect to $\|\cdot\|_n$,
	then $\varphi_n(x) \rightarrow \varphi(x)$ as $n \rightarrow \infty$ for each point $x \in \smooth (\mathbb{R}^d,\| \cdot\|_{\mathcal{P}})$.
\end{lemma}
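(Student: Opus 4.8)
The natural construction is the one foreshadowed by \Cref{lem:expsums}: for each $n \in \mathbb{N}$, define $\|x\|_n := \frac{1}{n} \log h(nx)$ where $h(x) = \sum_{f \in \mathcal{F}} e^{f \cdot x}$ and $\mathcal{F}$ is the (minimal, centrally symmetric) set of faces of $\mathcal{P}$. First I would check that each $\|\cdot\|_n$ is genuinely a norm: positive homogeneity is clear; the triangle inequality follows from the fact that $x \mapsto \log h(x)$ is convex (it is a log-sum-exp function) together with the standard argument that $\frac1n \log h(nx)$ inherits convexity and homogeneity-at-infinity; symmetry follows from $-\mathcal{F} = \mathcal{F}$. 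Each $\|\cdot\|_n$ is analytic on $\mathbb{R}^d \setminus \{0\}$ because $h$ is everywhere positive and analytic, $\log$ is analytic on $(0,\infty)$, and the scaling $x \mapsto nx$ and $\frac1n(\cdot)$ are analytic.

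The uniform convergence $\|\cdot\|_n \to \|\cdot\|_{\mathcal{P}}$ is the analytic heart of the argument. Pointwise, for fixed $x \neq 0$ with $m := \|x\|_{\mathcal{P}} = \max_{f \in \mathcal{F}} f \cdot x$, write $h(nx) = \sum_{f} e^{n (f\cdot x)}$; factoring out $e^{nm}$ gives $h(nx) = e^{nm}\big(k + \sum_{f : f\cdot x < m} e^{n(f\cdot x - m)}\big)$ where $k \geq 1$ is the number of faces attaining the maximum, so $\frac1n \log h(nx) = m + \frac1n \log k + \frac1n \log\big(1 + o(1)\big) \to m$. By homogeneity it suffices to prove convergence is uniform on the unit sphere of $\|\cdot\|_{\mathcal{P}}$, which is compact; since the $\|\cdot\|_n$ are convex and converge pointwise to the (finite, continuous) limit $\|\cdot\|_{\mathcal{P}}$, \Cref{thm:rockafellar} upgrades this automatically to uniform convergence on compact sets, and hence on all of $\mathbb{R}^d$ after rescaling. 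I expect this compactness-plus-convexity step to be the main obstacle only in the sense of bookkeeping — one must be slightly careful that the bound $|\frac1n\log h(nx) - \|x\|_{\mathcal{P}}| \leq \frac1n \log(\#\mathcal{F})$ holds uniformly for $x$ on the sphere, which gives an explicit rate and makes the appeal to \Cref{thm:rockafellar} essentially a convenience rather than a necessity.

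For the isometries: a linear map $T$ is an isometry of $(\mathbb{R}^d,\|\cdot\|_{\mathcal{P}})$ iff it permutes $\mathcal{F}$ (it must permute the facet normals of the unit ball). If $T$ permutes $\mathcal{F}$ then $h(Tx) = h(x)$ for all $x$, hence $\|Tx\|_n = \|x\|_n$, so every $\|\cdot\|_{\mathcal{P}}$-isometry is a $\|\cdot\|_n$-isometry. Conversely, if $T$ is a linear $\|\cdot\|_n$-isometry then $h(Tx) = h(x)$ identically (apply $e^{n(\cdot)}$ to the identity $\|Tx\|_n = \|x\|_n$ after multiplying by $n$); but $x \mapsto h(Tx) = \sum_{f} e^{(T^{\!\top}f)\cdot x}$ equals $\sum_{f} e^{f \cdot x}$, and by \Cref{lem:expsums} (applied to the polytopes $\mathcal{P}$ and $T^{-1}\mathcal{P}$, whose face sets are $\mathcal{F}$ and $T^{\!\top}\mathcal{F}$) this forces $T^{\!\top}\mathcal{F} = \mathcal{F}$, i.e. $T$ is a $\|\cdot\|_{\mathcal{P}}$-isometry. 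Since all isometries of a normed space are affine by Mazur--Ulam, and translations are isometries of every norm, the isometry groups coincide. Finally, for the dual maps: fix a smooth point $x$ of $\|\cdot\|_{\mathcal{P}}$, so $x$ lies in the relative interior of a unique facet with normal $f_0 = \varphi(x)$, and $f_0 \cdot x = m > f \cdot x$ for all other $f \in \mathcal{F}$. A direct computation gives
\begin{equation*}
	\nabla \|x\|_n = \nabla\Big(\tfrac1n \log h(nx)\Big) = \frac{\sum_{f \in \mathcal{F}} f \, e^{n(f\cdot x)}}{\sum_{f \in \mathcal{F}} e^{n(f\cdot x)}},
\end{equation*}
which is a convex combination of the $f \in \mathcal{F}$ with weights $e^{n(f\cdot x)}/h(nx)$; since $x$ is a smooth point, the weight on $f_0$ tends to $1$ and all others tend to $0$ as $n \to \infty$, so $\varphi_n(x) = \nabla\|x\|_n \to f_0 = \varphi(x)$, using that the gradient of a smooth point of an analytic (indeed any) norm is its unique support vector.
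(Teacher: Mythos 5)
Your construction has a fatal flaw at the very first step: the function $\|x\|_n := \tfrac{1}{n}\log h(nx)$ with $h(x)=\sum_{f\in\mathcal{F}}e^{f\cdot x}$ is \emph{not} a norm. Positive homogeneity, which you dismiss as ``clear'', fails: for $\lambda>0$ one has $\tfrac1n\log\sum_f e^{n\lambda (f\cdot x)} \neq \lambda\cdot\tfrac1n\log\sum_f e^{n(f\cdot x)}$ in general (compare $\log\sum_f e^{2na_f}$ with $\log\bigl(\sum_f e^{na_f}\bigr)^2$; the cross terms make the latter strictly larger whenever $|\mathcal{F}|>1$). Worse, $\|{\mathbf{0}}\|_n = \tfrac1n\log|\mathcal{F}|>0$, so the function does not even vanish at the origin. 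Your own phrase ``homogeneity-at-infinity'' concedes the point: log-sum-exp is only \emph{asymptotically} homogeneous. Since the lemma is later fed into \Cref{l:limit} and \Cref{t:analyticsuff}, which genuinely require analytic \emph{normed spaces} (Mazur--Ulam, isometry groups, etc.), you cannot get away with a convex function that merely converges uniformly to $\|\cdot\|_{\mathcal{P}}$; you need actual norms.

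The paper repairs exactly this defect: it sets $\phi_n(x)=\tfrac{1}{|\mathcal{F}|}\sum_{f\in\mathcal{F}}e^{n(f\cdot x-1)}$ and defines $\|x\|_n$ as the Minkowski gauge $\sup\{t\ge 0: \phi_n(x/t)\le 1\}$ of the centrally symmetric convex sublevel set $\{\phi_n\le 1\}$, which is positively homogeneous by construction; analyticity of this gauge is then a separate (cited) fact, not a consequence of $h$ being analytic. Your remaining ingredients are the right ones in spirit --- the sandwich $\|x\|_{\mathcal{P}}\le\tfrac1n\log h(nx)\le\|x\|_{\mathcal{P}}+\tfrac{\log|\mathcal{F}|}{n}$ does show the sublevel set is squeezed between $\mathcal{P}$ and $(1+\tfrac{\log|\mathcal{F}|}{n})\mathcal{P}$, giving uniform convergence of the gauges; the isometry argument via $L^T\mathcal{F}=\mathcal{F}$ and \Cref{lem:expsums} transfers because $\phi_n\circ L=\phi_n$ implies the gauges agree; and the softmax computation of $\nabla\bigl(\tfrac1n\log h(nx)\bigr)$ is a nice explicit alternative to the paper's appeal to \Cref{thm:rockafellar} --- but as written they are all statements about the wrong function, and the gradient computation in particular does not compute $\varphi_n$ (the dual map of the actual norm) without further work relating the gauge's gradient to that of $\phi_n$. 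You need to rebuild the argument around the Minkowski functional before any of it counts.
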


\begin{proof}
	Recall that $\mathcal{F}$ is the set of faces of $\mathcal{P}$.	
	For any integer $n > \log |\mathcal{F}|$ (where $\log$ has base $e$),
	we define the analytic convex function
	\begin{equation*}
		\phi_{n}: \mathbb{R}^d \rightarrow \mathbb{R}, ~x \mapsto \frac{1}{|\mathcal{F}|} \sum_{f \in \mathcal{F}} e^{n(f\cdot x-1)}.
	\end{equation*}
	Since the set $\{x \in \mathbb{R}^d : \phi_n(x) \leq 1\}$ is centrally symmetric and convex,
	each map
	\begin{equation*}
		x \mapsto \|x\|_n := \sup \{ t \geq 0 : \phi_n(x/t) \leq 1\}
	\end{equation*}
	is a norm of $\mathbb{R}^d$.
	It was shown in \cite[Lemma 4.10]{Dewar2022} that each norm $\| \cdot\|_n$ is analytic.
	
	Let $L: \mathbb{R}^d \rightarrow \mathbb{R}^d$ be a linear map.
	If $L$ is an isometry of $(\mathbb{R}^d,\| \cdot\|_{\mathcal{P}})$,
	then $L^T \mathcal{F} = \mathcal{F}$;
	hence, for any $n$ we have
	\begin{align*}
		\phi_n(Lx) &= \frac{1}{|\mathcal{F}|} \sum_{f \in \mathcal{F}} e^{n(f\cdot Lx-1)} = \frac{1}{|\mathcal{F}|} \sum_{f \in \mathcal{F}} e^{n(L^T f\cdot x-1)} \\
		&=  \frac{1}{|\mathcal{F}|} \sum_{g \in L^T\mathcal{F}} e^{n(g\cdot x-1)} =  \frac{1}{|\mathcal{F}|} \sum_{g \in \mathcal{F}} e^{n(g\cdot x-1)} = \phi_n(x),
	\end{align*}
	and so any linear isometry of $(\mathbb{R}^d,\| \cdot\|_{\mathcal{P}})$ is a linear isometry of each normed space $(\mathbb{R}^d,\| \cdot\|_{n})$.
	Now suppose that $L$ is not a linear isometry of $(\mathbb{R}^d,\| \cdot\|_{\mathcal{P}})$.
	Then $L^T \mathcal{F} \neq \mathcal{F}$.
	As 
	\begin{align*}
		\phi_n(Lx) \neq  \phi_n(x) \quad \iff \quad  \sum_{f \in \mathcal{F}} e^{nf\cdot Lx} \neq \sum_{f \in \mathcal{F}} e^{nf\cdot x},
	\end{align*}
	it follows from \Cref{lem:expsums} that $\phi_n \circ L \neq  \phi_n$.
	Hence, $L$ is not a linear isometry of $(\mathbb{R}^d,\| \cdot\|_n)$.
	By the Mazur-Ulam theorem,
	all isometries of a normed space are the composition of a linear isometry and a translation.
	Hence, 
	each normed space $(\mathbb{R}^d,\| \cdot\|_{n})$ has the same linear isometries as $(\mathbb{R}^d,\| \cdot\|_{\mathcal{P}})$.
	
	Since $\smooth (\mathbb{R}^d,\| \cdot\|_{\mathcal{P}})$ is the disjoint union of finitely many open convex sets,
	the final part of the result now follows from \Cref{thm:rockafellar}.
\end{proof}

Our next result shows how global rigidity can be preserved under converging norms.

\begin{lemma}\label{l:limit}
	Let $(\|\cdot\|_{n})_{n \in \mathbb{N}}$ be a sequence of smooth norms for $\mathbb{R}^d$ which uniformly converge to a norm $\|\cdot\|$.
	Further suppose that $(\mathbb{R}^d,\|\cdot\|)$ has finitely many linear isometries,
	and the isometries of $(\mathbb{R}^d,\|\cdot\|)$ and each space $(\mathbb{R}^d,\|\cdot\|_n)$ are the same.
	Then the following holds: 
	if $(G,p)$ is an infinitesimally rigid and globally rigid framework in $(\mathbb{R}^d,\|\cdot\|)$ and the set $\{p(v) :v \in V\}$ affinely spans $\mathbb{R}^d$,
	then $(G,p)$ is infinitesimally rigid and globally rigid framework in $(\mathbb{R}^d,\|\cdot\|_n)$ for all sufficiently large $n$.
\end{lemma}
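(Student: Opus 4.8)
The plan is to prove the two conclusions separately: infinitesimal rigidity of $(G,p)$ in $(\mathbb{R}^d,\|\cdot\|_n)$ for large $n$ by a direct continuity argument, and global rigidity by a compactness (subsequence) argument, with the convergence $df_G^{(n)}(p)\to df_G(p)$ of the rigidity matrix of $(G,p)$ in $(\mathbb{R}^d,\|\cdot\|_n)$ — which I denote $df_G^{(n)}(p)$ — to the rigidity matrix $df_G(p)$ in $(\mathbb{R}^d,\|\cdot\|)$ driving both. Since each $\|\cdot\|_n$ is smooth, every framework is well-positioned in $(\mathbb{R}^d,\|\cdot\|_n)$; and since $(G,p)$ is (by hypothesis) infinitesimally rigid, hence well-positioned, in $(\mathbb{R}^d,\|\cdot\|)$, every edge vector $u_{vw}:=p(v)-p(w)$ is a smooth point of $\|\cdot\|$. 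The entries of $df_G^{(n)}(p)$ are the coordinates of $\varphi_n(\pm u_{vw})$, and uniform convergence $\|\cdot\|_n\to\|\cdot\|$ forces $\varphi_n(u_{vw})\to\varphi(u_{vw})$ (convergence of gradients of convex functions at a point of differentiability of the limit; in the polyhedral setting it follows from \Cref{thm:rockafellar} applied on the cone over the face containing $u_{vw}$, exactly as in \Cref{lem:convergeanalyticnorms}). Hence $df_G^{(n)}(p)\to df_G(p)$ entrywise. As $(\mathbb{R}^d,\|\cdot\|)$ has finitely many linear isometries, infinitesimal rigidity there gives $\rank df_G(p)=d|V|-d$, so by lower semicontinuity of rank $\rank df_G^{(n)}(p)\geq d|V|-d$ for all large $n$; conversely, since the isometry groups coincide, the trivial flexes in $(\mathbb{R}^d,\|\cdot\|_n)$ are precisely the constant maps, a $d$-dimensional subspace of $\ker df_G^{(n)}(p)$, so $\rank df_G^{(n)}(p)\leq d|V|-d$. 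Thus $\ker df_G^{(n)}(p)$ is exactly the space of trivial flexes, i.e.\ $(G,p)$ is infinitesimally rigid in $(\mathbb{R}^d,\|\cdot\|_n)$ for all large $n$.

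The core of the argument is a uniform local rigidity statement: after fixing a base vertex $v_0\in V$, there exist $N\in\mathbb{N}$ and $\rho>0$ such that whenever $n\geq N$ and a realisation $r$ satisfies $r(v_0)=p(v_0)$, $\|r-p\|<\rho$ and $f_G^{(n)}(r)=f_G^{(n)}(p)$, then $r=p$. To establish this I would first upgrade the dual-map convergence to a locally uniform one near each $u_{vw}$: for any sequence $x_k\to u_{vw}$ one has $\varphi_n(x_k)\to\varphi(u_{vw})$ as $n,k\to\infty$, which is the graphical convergence of the subdifferentials of the convex functions $\|\cdot\|_n$ together with continuity of $\varphi$ near the smooth point $u_{vw}$ (again immediate from \Cref{thm:rockafellar} in the polyhedral case). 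Writing the first-order Taylor remainder of $\|\cdot\|_n$ as $\|u+\eta\|_n-\|u\|_n-\varphi_n(u)\cdot\eta=\int_0^1\big(\varphi_n(u+s\eta)-\varphi_n(u)\big)\cdot\eta\,ds$ and applying this convergence edgewise, I obtain, for each $\varepsilon>0$, constants $N(\varepsilon),\rho(\varepsilon)$ with $\|f_G^{(n)}(p+\delta)-f_G^{(n)}(p)-df_G^{(n)}(p)\delta\|\leq\varepsilon\|\delta\|$ for all $n\geq N(\varepsilon)$ and all $\|\delta\|<\rho(\varepsilon)$; that is, the remainder is $o(\|\delta\|)$ uniformly in $n$. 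Since $df_G(p)$ restricted to $\{\delta:\delta(v_0)=0\}$ is injective (its kernel on all of $(\mathbb{R}^d)^V$ is the space of constant maps, which meets $\{\delta:\delta(v_0)=0\}$ only in $0$) and $df_G^{(n)}(p)\to df_G(p)$, this restriction has a uniformly positive smallest singular value for large $n$; substituting $\delta=r-p$ into the estimate and taking $\varepsilon$ below that singular value forces $\delta=0$, which is the claim.

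For global rigidity, suppose the conclusion fails. After passing to a subsequence there are frameworks $(G,q_n)$ with $f_G^{(n)}(q_n)=f_G^{(n)}(p)$ that are not congruent to $(G,p)$ in $(\mathbb{R}^d,\|\cdot\|_n)$; applying translations (isometries of every $\|\cdot\|_n$) we may take $q_n(v_0)=p(v_0)$. Since $G$ is connected (every infinitesimally rigid framework on at least two vertices is), the edge lengths $f_G^{(n)}(p)$ are bounded, and the $\|\cdot\|_n$ are uniformly comparable to $\|\cdot\|$ for large $n$, the $q_n$ lie in a fixed compact set, so after a further subsequence $q_n\to q$. Uniform convergence of the norms gives $f_G(q)=\lim_n f_G^{(n)}(q_n)=\lim_n f_G^{(n)}(p)=f_G(p)$, so by global rigidity of $(G,p)$ in $(\mathbb{R}^d,\|\cdot\|)$ there is an isometry $h$ of that space with $h\circ q=p$, and $h$ is also an isometry of every $(\mathbb{R}^d,\|\cdot\|_n)$ since the isometry groups coincide. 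From $q(v_0)=p(v_0)$ we get $h(p(v_0))=p(v_0)$, so the realisations $p_n:=h\circ q_n$ satisfy $p_n(v_0)=p(v_0)$, $p_n\to h\circ q=p$, and $f_G^{(n)}(p_n)=f_G^{(n)}(q_n)=f_G^{(n)}(p)$. For $n$ large the uniform local rigidity statement forces $p_n=p$, hence $q_n=h^{-1}\circ p$ is congruent to $(G,p)$ in $(\mathbb{R}^d,\|\cdot\|_n)$ — contradicting the choice of $q_n$. (The hypothesis that $\{p(v):v\in V\}$ affinely spans $\mathbb{R}^d$ is what keeps the configuration non-degenerate, in particular ensuring $|V|>d$ and that the orbit map $g\mapsto g\circ p$ of the isometry group is injective.)

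I expect the genuine difficulty to lie entirely in the uniform local rigidity step, and specifically in showing the Taylor remainders of the $f_G^{(n)}$ at $p$ are $o(\|\delta\|)$ uniformly in $n$ — equivalently, in promoting pointwise convergence of the dual maps at the edge vectors $u_{vw}$ to convergence that is uniform on a neighbourhood of each $u_{vw}$, using smoothness of the limit norm at those well-positioned points. The remaining ingredients — semicontinuity of rank, the compactness extraction $q_n\to q$, and pushing the limiting congruence back to the $q_n$ through the common isometry $h$ — are routine once that uniform estimate is in hand.
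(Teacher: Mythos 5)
Your proposal is correct, and its overall architecture matches the paper's: entrywise convergence of the rigidity matrices (via convergence of the dual maps, justified by \Cref{thm:rockafellar}) handles infinitesimal rigidity, and global rigidity is obtained by trapping all equivalent pinned realisations in a common compact set, extracting a convergent subsequence of putative counterexamples, identifying the limit with an isometric copy of $p$, and then contradicting a local uniqueness statement near $p$. The one place where you genuinely diverge is that local uniqueness step. The paper gets it qualitatively: the constant rank theorem gives injectivity of the pinned map $h$ on a neighbourhood $U'$ of $p$, and it then asserts that local injectivity of each $h_n$ together with uniform convergence $h_n \to h$ yields a single neighbourhood $U$ and an $N$ with $h_n|_U$ injective for all $n \geq N$. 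You instead prove a quantitative ``uniform local rigidity'' statement directly: the integral form of the first-order Taylor remainder of $\|\cdot\|_n$, combined with joint convergence of the subgradients $\varphi_n(x) \to \varphi(u_{vw})$ as $n \to \infty$ and $x \to u_{vw}$, makes the remainder of $f_G^{(n)}$ at $p$ an $o(\|\delta\|)$ \emph{uniformly in $n$}, and a uniform lower bound on the smallest singular value of the pinned Jacobians then forces $\delta = 0$. This buys an explicit pair $(N,\rho)$ and sidesteps the paper's transfer of injectivity from $h$ to the $h_n$, which is the least self-evident step of the published argument; the cost is that you must verify the subgradient convergence is locally uniform around each edge vector, which is exactly where you (correctly) locate the real work --- in the polyhedral application this is immediate because the limit norm is affine, hence differentiable, on an open cone around each smooth point, so \Cref{thm:rockafellar} applies there verbatim. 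The remaining ingredients (rank semicontinuity, the path-length bound giving the compact set, pulling the limiting congruence back through the common isometry) coincide with the paper's.
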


\begin{proof}
	By applying translations to $(G,p)$, we may assume that $p(v_0) = \mathbf{0}$ for a fixed vertex $v_0$.
	Let $S$ be the finite set of linear isometries of $(\mathbb{R}^d,\|\cdot\|)$ (and hence the set of linear isometries of $(\mathbb{R}^d,\|\cdot\|_n)$ for each $n$).
	Let $Z = \{q \in (\mathbb{R}^d)^V : q(v_0) = \mathbf{0}\}$.
	Define the maps
	\begin{align*}
		h &: Z \rightarrow \mathbb{R}^E, ~  q \mapsto \left( \|q(v) - q(w)\| \right)_{vw \in E}, \\
		h_n &: Z \rightarrow \mathbb{R}^E, ~  q \mapsto \left( \|q(v) - q(w)\|_n \right)_{vw \in E}.
	\end{align*}
	Our set-up now implies the following:
	\begin{enumerate}
		\item as $(G,p)$ is infinitesimally rigid in $(\mathbb{R}^d,\|\cdot\|)$,
	the map $h$ is differentiable at $p$ with an injective derivative;
	\item as $(G,p)$ is globally rigid in $(\mathbb{R}^d,\|\cdot\|)$ and the set $\{p(v) :v \in V\}$ affinely spans $\mathbb{R}^d$,
	the fibre $h^{-1}(h(p))$ contains $|S|$ points, and those points are exactly the realisations $S \circ p := \{ f \circ p : f \in S\}$;
	\item $(G,p)$ is infinitesimally rigid in $(\mathbb{R}^d,\|\cdot\|_n)$ if and only if $h_n$ is differentiable at $p$ with an injective derivative;
	\item $(G,p)$ is globally rigid in $(\mathbb{R}^d,\|\cdot\|_n)$ if and only if the fibre $h_n^{-1}(h_n(p))$ contains $|S|$ points.
	\end{enumerate}
	By \Cref{thm:rockafellar},
	$dh_n(p) \rightarrow dh(p)$ entry-wise as $n \rightarrow \infty$.
	As $dh(p)$ is injective,
	it follows that $dh_n(p)$ is injective (and hence $(G,p)$ is infinitesimally rigid in $(\mathbb{R}^d,\|\cdot\|_n)$) for sufficiently large $n$.
	Moreover, the entry-wise convergence for $dh_n(p) \rightarrow dh(p)$ implies that there exists an open neighbourhood $U_{\text{inj}}$ of $p$ where $dh_n(q)$ is injective for all $q \in U_{\text{inj}}$ and all sufficiently large $n$.
	We note here that this implies each map $h_n|_{U_{\text{inj}}}$ is locally injective for sufficiently large $n$.
	
	As all of the normed spaces in question have the same isometries,
	we have that $S \circ p = h^{-1}(h(p)) \subseteq h_n^{-1}(h_n(p))$ for each $n \in \mathbb{N}$.
	
	\begin{claim}\label{claim:compact}
		There exists a compact set $K \subset Z$ such that $h^{-1}(h(p)) \subset K$ and $h_n^{-1}(h_n(p)) \subset K$ for each $n \in \mathbb{N}$.
	\end{claim}
	
	\begin{proof}
		Throughout the clam, we set $h_0 :=h$ and $\|\cdot\|_0:= \|\cdot\|$.
		
		We note that, as $h_n \rightarrow h$ uniformly as $n \rightarrow \infty$,
		there exists $C>0$ such that $\|p(v) - p(w)\|_n \leq C$ for all $n \in \mathbb{Z}_{\geq 0}$.
		For each vertex $v \in V$,
		fix a path $P_v = (v_0 v_1, v_1v_2, \ldots, v_{n-1}v_n)$ from $v_0$ to $v_n = v$,
		and fix $\ell$ to be the maximum length (i.e., number of edges) of the paths $\{P_v : v \in V\}$.
		Pick any non-negative integer $n \in \mathbb{Z}_{\geq 0}$ and choose any $q \in h_n^{-1}(h_n(p))$.
		Then we have that
		\begin{equation*}
			\max_{v \in V} \|q(v)\|_n = \max_{v \in V} \|q(v) - q(v_0)\|_n \leq \max_{v \in V} \sum_{xy \in P_v} \|q(x)-q(y)\|_n  = \max_{v \in V} \sum_{xy \in P_v} \|p(x)-p(y)\|_n \leq C \ell.
		\end{equation*}
		We now fix $K$ to be the set of all realisations $q \in Z$ where $\sup_{n \in \mathbb{Z}_{\geq 0}}\|q(v)\|_n \leq C \ell$ for each $v \in V$.
	\end{proof}		
	
	Now suppose for the sake of contradiction that there exists a subsequence $(\|\cdot\|_{n_k})_{k \in \mathbb{N}}$ so that each $h_{n_k}^{-1}(h_{n_k}(p))$ contains some realisation $q_{k} \notin S \circ p$.
	As each $h_{n_k}^{-1}(h_{n_k}(p))$ is contained in the compact set $K$,
	we can suppose without loss of generality that our subsequence was chosen such that $(q_k)_{n \in \mathbb{N}}$ converges.
	Since $\|\cdot\|_{n_k} \rightarrow \|\cdot\|$ uniformly, we have that $(q_k)_{n \in \mathbb{N}}$ converges to some point $f \circ p \in S \circ p$;
	moreover, by replacing each $q_k$ with $f^{-1} \circ q_k$, we can suppose that $q_k \rightarrow p$ as $k \rightarrow \infty$.
	
	As $h$ is differentiable at $p$ with injective derivative,
	it follows from the constant rank theorem (see, for example, \cite[Theorem 9.32]{rudin}) that there exists an open neighbourhood $U' \subset Z$ of $p$ such that $h|_{U'}$ is injective.
	Moreover, as $h_n \rightarrow h$ uniformly as $n \rightarrow \infty$ and each map $h_n|_{U_{\text{inj}}}$ is locally injective for sufficiently large $n$,
	there exists an integer $N$ and an open set $U \subset U'$ containing $p$ such that each map $h_n|_U$ is injective for $n \geq N$.
	However,
	for sufficiently large $k$ we have $n_k \geq N$, $q_k \in U$ and $h_{n_k}(q_k) = h_{n_k}(p)$.
	Since $q_k \neq p$ for each $k$,
	this provides the desired contradiction.
\end{proof}

We are now ready to prove our main theorem for the section.

\begin{proof}[Proof of \Cref{t:globness}]
	As $(G,p)$ is generic and globally rigid in $(\mathbb{R}^d,\|\cdot\|_{\mathcal{P}})$,
	then it is infinitesimally rigid in $(\mathbb{R}^d,\|\cdot\|_{\mathcal{P}})$ by \Cref{t:dk}.
    Let $(\|\cdot\|_n)_{n \in \mathbb{N}}$ be the sequence of norms guaranteed by \Cref{lem:convergeanalyticnorms}.
    As every polyhedral normed space has finitely many linear isometries,
    each normed space $(\mathbb{R}^d,\|\cdot\|_n)$ has finitely many linear isometries.
    By \Cref{l:limit},
    there exists an integer $N$ so that $(G,p)$ is infinitesimally rigid and globally rigid in $(\mathbb{R}^d,\|\cdot\|_n)$ for all $n \geq N$.
    The result now follows from \Cref{t:analyticsuff}.
\end{proof}

\section{Approximating \texorpdfstring{$\ell_\infty$}{L-infinity} global rigidity using \texorpdfstring{$\ell_p$}{Lp} global rigidity}\label{sec:linfplane}

In this section we prove \Cref{t:lpapprox}, which we then use to prove an exact characterisation of graphs with generic globally rigid realisations in $\ell_\infty^2$ (\Cref{t:linfplane}).

\subsection{Rigidity and global rigidity in \texorpdfstring{$\ell_p$}{Lp} spaces}

We first cover some background material needed for $\ell_p$ spaces.
As we usually save ``$p$'' to denote a realisation, we instead opt to use $\ell_q$ spaces from now on.
We recall that for any $1 \leq q < \infty$,
the $\ell_q$ norm for $\mathbb{R}^d$ is given by
\begin{equation*}
	\|(x_1,\ldots,x_d)\|_q := \left( \sum_{i=1}^d |x_i|^q \right)^{1/q},
\end{equation*}
and we set $\ell_q^d := (\mathbb{R}^d,\|\cdot\|_q)$.
For $1 < q < \infty$, the space $\ell_q^d$ is smooth.
When $q$ is an even integer, $\ell_q^d$ is an analytic normed space; moreover, since the $q$-th power of the norm (i.e., $\sum_{i=1}^d |x_i|^q$) is a polynomial with rational coefficients, every framework $(G,p)$ that is generic in $\mathbb{R}^d$ is regular, in that $\rank df_G(p) \geq \rank df_G(\tilde{p})$ for all other realisations $\tilde{p}$ in $\ell_q^d$ (see, for example, \cite[Proposition 3.1]{Connelly2005}).

Study of rigidity in $\ell_q^d$ was originally investigated by Kitson and Power \cite{KP14}.
Sugiyama and Tanigawa \cite{st24} recently investigated generic global rigidity in $\ell_q^d$, where they described a stress condition for generic global rigidity in $\ell_q^d$ for even $q \geq 4$ that is analogous to Connelly's stress condition for generic global rigidity in Euclidean spaces \cite{Connelly2005}.
Using this, they proved the following two results that we require for this section.

\begin{theorem}[{\cite[Theorem 5.8]{st24}}]\label{t:shin}
	Let $q$ be an even integer and let $(G,p)$ be a generic framework in $\mathbb{R}^2$.
	Then $(G,p)$ is globally rigid in $\ell_q^2$ if and only if $G$ is $\mathcal{M}(2,2)$-connected.
\end{theorem}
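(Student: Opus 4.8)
The plan is to prove the two implications separately; the ``only if'' direction is essentially a consequence of the analytic-approximation machinery of \Cref{sec:ness}, while the ``if'' direction requires an $\ell_q$-analogue of Connelly's stress-matrix sufficient condition for global rigidity together with an inductive construction of $\mathcal{M}(2,2)$-connected graphs. The feature of $\ell_q^2$ I would use throughout is that for even $q \geq 4$ it is an analytic normed space whose linear isometry group is finite (the signed permutation matrices), so $\dim \mathcal{T}(\ell_q^2) = 2$ and the earlier results apply with $d = 2$; the hypothesis $q \neq 2$ is essential, since for $q = 2$ the answer is $\mathcal{M}(2,3)$-connectivity instead.

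\emph{Necessity.} Suppose $(G,p)$ is generic and globally rigid in $\ell_q^2$. Because $q$ is an even integer, the $q$-th power of the norm is a polynomial with rational coefficients, so every framework generic in $\mathbb{R}^2$ is a regular point of $f_G$ in $\ell_q^2$ (cf.\ \Cref{p:regular} and \cite[Proposition 3.1]{Connelly2005}). Global rigidity implies rigidity, and a rigid regular framework in an analytic normed space is infinitesimally rigid: were $df_G(p)$ to admit a non-trivial infinitesimal flex, the constant-rank theorem applied to $f_G$ would produce a non-constant analytic curve of equivalent, pairwise non-congruent realisations through $p$, contradicting rigidity. Hence $(G,p)$ is infinitesimally rigid, and \Cref{t:analyticsuff} with $d = 2$ yields that $G$ is $\mathcal{M}(2,2)$-connected.

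\emph{Sufficiency.} Now assume $G$ is $\mathcal{M}(2,2)$-connected. Then $G$ contains two edge-disjoint spanning trees, hence a spanning $(2,2)$-tight subgraph, so by \Cref{t:dewar} some realisation of $G$ in $\ell_q^2$ is infinitesimally rigid, and by regularity the generic realisation $(G,p)$ is infinitesimally rigid too. To pass from here to global rigidity I would invoke the $\ell_q$-analogue of Connelly's criterion proved by Sugiyama and Tanigawa: a generic framework in $\ell_q^d$ (even $q \geq 4$) carrying an equilibrium stress whose $|V| \times |V|$ stress matrix has the minimal possible co-rank $d+1$ is globally rigid. (The co-rank being exactly $d+1$ --- a constant vector and $d$ coordinate-type vectors --- is where discreteness of the isometry group of $\ell_q^d$, $q \neq 2$, enters, ruling out extra affine stresses.) Thus it suffices to establish the combinatorial-to-algebraic statement: if $G$ is $\mathcal{M}(2,2)$-connected then a generic realisation of $G$ in $\ell_q^2$ admits an equilibrium stress with co-rank-$3$ stress matrix.

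This I would prove by induction along a recursive construction of $\mathcal{M}(2,2)$-connected graphs: one starts from a small base family --- the $(2,2)$-circuit $K_5 - e$ (equivalently $K_5$), for which a generic co-rank-$3$ stress in $\ell_q^2$ is checked by a direct finite computation with the $\ell_q$ equilibrium equations --- and a generating set of operations (edge addition, $1$-extension, and gluing two smaller $\mathcal{M}(2,2)$-connected graphs along a shared edge), verifying that each operation preserves the property ``a generic realisation carries a co-rank-$3$ stress''. Edge addition is trivial (extend the stress by $0$; the stress matrix is unchanged) and gluing along an edge combines the two stress matrices without extra rank loss. The hard part will be the $1$-extension (vertex split): one must show that splitting a vertex of a generic $(G,p)$ with a co-rank-$3$ stress produces a generic $(G',p')$ that again has one, i.e.\ that the larger stress matrix does not drop rank below the forced value on the generic locus. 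In the Euclidean case this is Connelly's genericity argument \cite{Connelly2005}; carrying it over to $\ell_q^2$ requires re-deriving the equilibrium and stress-matrix identities in terms of the dual map $x \mapsto \big( \sgn(x_i)\,|x_i|^{q-1} \big)_i / \|x\|_q^{q-1}$ and checking that the relevant determinantal condition is not identically degenerate --- either via an explicit witness realisation or by a dimension count on the variety of $\ell_q$-equilibrium stresses. Pinning down the exact minimal co-rank $d+1$ and verifying the base case are the secondary, more routine points.
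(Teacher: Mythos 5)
First, a point of comparison: the paper offers no proof of this statement at all --- it is imported verbatim as \cite[Theorem 5.8]{st24} --- so there is no in-paper argument to measure yours against. Judged on its own terms, your necessity direction is sound and is exactly what the surrounding machinery supports: for even $q\neq 2$ the space $\ell_q^2$ is analytic with finitely many linear isometries, a generic globally rigid framework is regular and hence infinitesimally rigid, and \Cref{t:analyticsuff} with $d=2$ delivers $\mathcal{M}(2,2)$-connectivity.

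The sufficiency direction, however, contains a concrete error and leaves its hardest steps open. The error is the stress criterion you invoke: you ask for an equilibrium stress whose stress matrix has co-rank $d+1$, ``a constant vector and $d$ coordinate-type vectors''. That is the Euclidean kernel. In $\ell_q^d$ with $q\neq 2$ the equilibrium condition reads $\sum_u \omega_{vu}\,\varphi(p(v)-p(u))=\mathbf{0}$ with the nonlinear dual map $\varphi(x)=(\sgn(x_i)|x_i|^{q-1})_i$ (up to normalisation), so $\Omega$ applied to the coordinate vectors of $p$ does not vanish; the only forced kernel vector is the all-ones vector, and the Sugiyama--Tanigawa criterion accordingly requires $\rank\Omega = |V|-1$, i.e.\ co-rank $1$. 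Demanding co-rank $d+1$ imposes extra degeneracy rather than expressing genericity, so the criterion as you state it is not the one that holds (and your parenthetical explanation of where discreteness of the isometry group enters points the wrong way: the small isometry group is precisely why the kernel is smaller, not why $d$ extra vectors survive). Beyond this, the inductive scheme is a plan rather than a proof: the assertion that every $\mathcal{M}(2,2)$-connected graph is generated from $K_5-e$ by edge addition, $1$-extension and edge-gluing is itself a nontrivial combinatorial theorem that you neither prove nor cite, and the key analytic step --- that $1$-extension preserves the existence of a maximal-rank generic equilibrium stress in $\ell_q^2$ --- is exactly the hard content of \cite[Theorem 5.8]{st24} and is explicitly deferred in your write-up. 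As it stands the sufficiency half is not a proof.
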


\begin{theorem}[{\cite[Theorem 2.11]{st24}}]\label{t:shin2}
	Let $q$ be an even integer and let $(G,p)$ be a generic framework in $\mathbb{R}^d$.
	If $G$ is 2-connected and has an infinitesimally rigid realisation in $\ell_q^{d+1}$, then $(G,p)$ is globally rigid in $\ell_q^d$.
\end{theorem}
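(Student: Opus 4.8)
The plan is to reduce global rigidity to the existence of a maximal-rank equilibrium stress and then manufacture such a stress by adding one generic coordinate. Throughout I write $n=|V|$ and use that, since $q$ is even, the power $\sum_i|x_i|^q$ is a rational polynomial, so $\ell_q^m$ is analytic and generic realisations are regular (as recorded before \Cref{t:shin}). Consequently infinitesimal rigidity, being attainment of the maximal value of $\rank df_G$, holds at every generic realisation of $G$ in $\ell_q^m$ once it holds at one. The Sugiyama--Tanigawa stress framework \cite{st24} characterises generic global rigidity in $\ell_q^d$ (for even $q\ge 4$) by the existence of an equilibrium stress whose stress matrix has maximal rank, and it is this criterion I would verify for $(G,p)$.

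First I would record the structural feature that drives everything. After scaling each row (which changes neither kernel nor rank), the dual map for even $q$ has $i$-th entry $\psi(x)_i=x_i|x_i|^{q-2}$, depending only on $x_i$, so the rigidity matrix of $(G,p)$ in $\ell_q^d$ splits as the horizontal concatenation $[\,C(p_1)\mid\cdots\mid C(p_d)\,]$ of weighted incidence blocks, where each $C(p_i)$ is an $|E|\times n$ matrix depending only on the single-coordinate vector $p_i\in\mathbb{R}^n$. An equilibrium stress $\omega\in\mathbb{R}^E$ is a common left-kernel element, $\omega^{T}C(p_i)=\mathbf{0}$ for all $i$, and a short computation gives $\omega^{T}C(p_i)=(\Omega^{(i)}p_i)^{T}$, where $\Omega^{(i)}=\Omega^{(i)}(\omega,p)$ is the weighted graph Laplacian with edge weights $\omega_{vw}|p_i(v)-p_i(w)|^{q-2}$. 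Hence for every equilibrium stress and every coordinate $i$ the kernel of $\Omega^{(i)}$ contains both $\mathbf{1}$ (Laplacian) and $p_i$ (equilibrium), so $\Omega^{(i)}$ has nullity at least $2$; maximal rank means nullity exactly $2$, i.e.\ $\ker\Omega^{(i)}=\langle\mathbf{1},p_i\rangle$. Producing one stress achieving this in every coordinate suffices: any framework equivalent to $(G,p)$ then has each coordinate lying in $\langle\mathbf{1},p_i\rangle$, and together with the edge-length equations and $2$-connectivity of $G$ (which rules out a cut-vertex reflection of part of the framework, exactly as $2$-connectivity is used in \Cref{l:km}) this forces it to be a signed-permutation-plus-translation image of $(G,p)$, an isometry of $\ell_q^d$.

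To build the stress I would lift: set $\hat p(v)=(p(v),t_v)$ with $t=(t_v)_v$ chosen so that $\hat p$ is generic in $\mathbb{R}^{d+1}$ (possible since $p$ is generic in $\mathbb{R}^d$). Because $G$ has an infinitesimally rigid realisation in $\ell_q^{d+1}$ and infinitesimal rigidity is generic there, $(G,\hat p)$ is infinitesimally rigid in $\ell_q^{d+1}$, so $\rank[\,C(p_1)\mid\cdots\mid C(p_d)\mid C(t)\,]=(d+1)(n-1)$. Since the single extra block satisfies $C(t)\mathbf{1}=\mathbf{0}$ and hence contributes rank at most $n-1$, the first $d$ blocks already have rank at least $(d+1)(n-1)-(n-1)=d(n-1)$; as the reverse inequality always holds, $(G,p)$ is itself infinitesimally rigid in $\ell_q^d$ and its stress space has dimension $|E|-d(n-1)\ge n-1$. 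The role of the generic extra coordinate is then to certify maximal rank of the coordinate Laplacians by a transpose (rank--nullity) argument relating $\ker[\,C(p_1)\mid\cdots\mid C(t)\,]$ to the nullities of the $\Omega^{(i)}$: I would show that if, for a fixed coordinate $i$, the entire stress space left $\Omega^{(i)}$ with nullity $\ge 3$, then the surplus kernel direction could be planted in the $(d+1)$-st coordinate to produce a nontrivial infinitesimal flex of $(G,\hat p)$ in $\ell_q^{d+1}$, contradicting infinitesimal rigidity of the lift; genericity of $t$ is what decouples the extra coordinate from the first $d$ and makes this transfer clean.

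The main obstacle is exactly this last step: simultaneously forcing every coordinate Laplacian $\Omega^{(i)}$ to have nullity precisely $2$. The subtleties are that a single stress $\omega$ must work in all $d$ coordinates at once, that the weights $\omega_{vw}|p_i(v)-p_i(w)|^{q-2}$ vary with $i$ so the $\Omega^{(i)}$ are genuinely different matrices sharing only $\omega$, and that nullity can jump if the support of $\omega$ disconnects in some coordinate, which is precisely where $2$-connectivity of $G$ is needed to prevent kernel growth from cut vertices. I would handle it by a genericity argument: the stresses for which some $\Omega^{(i)}$ degenerates beyond nullity $2$ form a proper $\mathbb{Q}$-algebraic subset of the (large, by the edge count above) stress space, which the infinitesimal rigidity of the lift forces a generic stress to avoid. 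As a consistency check, for $d=2$ the conclusion must agree with \Cref{t:shin}, since a $2$-connected graph with an infinitesimally rigid realisation in $\ell_q^3$ is then $\mathcal{M}(2,2)$-connected.
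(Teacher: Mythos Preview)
This statement is not proved in the paper: it is quoted verbatim as \cite[Theorem~2.11]{st24} and used as a black box (in the proof of \Cref{t:project}). There is therefore no ``paper's own proof'' to compare your attempt against.

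Your sketch does follow the shape of the Euclidean projection argument (Connelly's sufficient stress condition plus a one-dimension lift), adapted to the separable structure of the $\ell_q$ rigidity matrix, and the decomposition $df_G(p)=[\,C(p_1)\mid\cdots\mid C(p_d)\,]$ together with the coordinate Laplacians $\Omega^{(i)}$ is the right bookkeeping. However, the argument is genuinely incomplete at exactly the point you flag. You assert that the stresses $\omega$ for which some $\Omega^{(i)}(\omega,p)$ has nullity $\ge 3$ form a \emph{proper} algebraic subset of the stress space, and that infinitesimal rigidity of the generic lift $(G,\hat p)$ forces a generic stress to avoid it; neither claim is justified. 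Properness requires exhibiting at least one stress with all $\Omega^{(i)}$ of nullity exactly $2$, which is precisely what you are trying to prove, so the argument is circular as written. The proposed mechanism---``plant the surplus kernel direction in the $(d{+}1)$-st coordinate to get a nontrivial flex of $(G,\hat p)$''---is suggestive but does not go through directly: a vector $u\in\ker\Omega^{(i)}$ does not obviously yield an element of $\ker C(t)$ for an independent generic $t$, since $C(t)$ depends on $t$ and not on $p_i$. Making this transfer precise is the substance of the Sugiyama--Tanigawa argument and requires their stress-matrix framework; your outline does not supply a substitute.

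A second issue: your claim that maximal rank of all $\Omega^{(i)}$ already forces any equivalent framework to have each coordinate in $\langle\mathbf{1},p_i\rangle$, and that $2$-connectivity then finishes the job, skips the nontrivial step of showing that an equivalent realisation $q$ produces the \emph{same} stress (so that $\Omega^{(i)}(\omega,q)q_i=0$ with the same $\omega$). In the Euclidean case this follows because equivalent frameworks share equilibrium stresses; in $\ell_q^d$ the weights $|q_i(v)-q_i(w)|^{q-2}$ change with $q$, so one needs the analogue of the Gortler--Healy--Thurston/Connelly argument developed in \cite{st24}, not a one-line appeal.
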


\subsection{Approximating the \texorpdfstring{$\ell_\infty$}{L-infinity} norm via \texorpdfstring{$\ell_p$}{Lp} norms}

Our key lemma for proving \Cref{t:linfplane} is the following technical result.

\begin{lemma}\label{l:fmapdiff}
	Let $U \subset \mathbb{R}^d$ be the open dense set of points $x=(x_1,\ldots,x_d)$ where the maximum of the set $\{|x_1|, \ldots, |x_d|\}$ occurs exactly once.
	Then the map
	\begin{equation*}
		f: \mathbb{R}^d \times (-1,1)  \rightarrow \mathbb{R} , ~ (q,t) \mapsto
		\begin{cases}
			 \|x \|_{1/t}  &\text{if } t > 0, \\
			 \|x \|_{\infty} &\text{if } t \leq 0
		\end{cases}
	\end{equation*}
	is continuously differentiable on the set $U \times (-1,1)$.
	Moreover, the partial derivative of $f$ with respect to $t$ at $t=0$ is 0.
\end{lemma}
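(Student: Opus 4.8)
The plan is to treat the three regions $t<0$, $t>0$ and the slice $t=0$ separately, and then glue with a standard $C^1$-extension principle. On $U\times(-1,0)$ we have $f(x,t)=\|x\|_\infty$, which does not depend on $t$ and is locally linear (hence $C^\infty$) on $U$: near any $x_0\in U$ the coordinate of maximal absolute value keeps its index and sign, so $\|x\|_\infty=\pm x_j$ there. On $U\times(0,1)$ we have $f(x,t)=\big(\sum_{i=1}^d|x_i|^{1/t}\big)^t$; each map $(x_i,t)\mapsto|x_i|^{1/t}$ is continuously differentiable on $\mathbb{R}\times(0,1)$ (at $x_i=0$ this uses $1/t>1$, so both partials vanish or tend to $0$), so $h(x,t):=\sum_i|x_i|^{1/t}$ is $C^1$ there, and $h>0$ on $U\times(0,1)$ since the maximal coordinate is nonzero for $x\in U$. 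Consequently $f=e^{t\ln h}$ is $C^1$ on $U\times(0,1)$. Thus the only real issue is the behaviour of $f$ as $t\to0^+$.

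Fix $x_0\in U$. By the permutation and sign symmetries of the norms $\|\cdot\|_q$ we may assume the unique maximal coordinate of $x_0$ is the first one and is positive, so there is a neighbourhood $N$ of $x_0$ on which $x_1>0$ and $|x_i|/x_1\le\rho<1$ for all $i\ge2$. On $N\times(0,1)$ write $f(x,t)=x_1(1+g)^t$, where $g=g(x,t):=\sum_{i\ge2}(|x_i|/x_1)^{1/t}$; the point of this factorisation is that $(|x_i|/x_1)^{1/t}=e^{-c_i/t}$ with $c_i=\ln(x_1/|x_i|)\ge\ln(1/\rho)>0$ (and the $i$-th term, together with all its derivatives, is simply $0$ when $x_i=0$), so both $g$ and $t\,\partial_t g=\sum_{i\ge2}e^{-c_i/t}c_i/t$ tend to $0$ \textbf{uniformly on $N$} as $t\to0^+$, faster than any power of $t$. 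A direct computation gives, on $N\times(0,1)$,
\begin{align*}
\partial_t f&=x_1(1+g)^t\Big(\ln(1+g)+\tfrac{t\,\partial_t g}{1+g}\Big),\\
\partial_{x_1}f&=(1+g)^{t-1},\\
\partial_{x_j}f&=(1+g)^{t-1}\,(|x_j|/x_1)^{1/t-1}\sgn(x_j)\quad(j\ge2),
\end{align*}
and the uniform decay of $g$ and $t\,\partial_t g$ shows that, uniformly on $N$ as $t\to0^+$, we have $\partial_t f\to0$, $\partial_{x_1}f\to1$ and $\partial_{x_j}f\to0$ for $j\ge2$. Since $\|x\|_\infty=x_1$ on $N$, this says exactly that the gradient of $f$ on $N\times(0,1)$ extends continuously to $N\times\{0\}$ with limit $(\nabla_x\|\cdot\|_\infty,0)$, which agrees with the gradient coming from the $t\le0$ side.

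It remains to glue. Joint continuity of $f$ across $t=0$ follows from $\|x\|_\infty\le\|x\|_{1/t}\le d^{\,t}\|x\|_\infty$ together with continuity of $\|\cdot\|_\infty$. One then invokes the standard fact that a continuous function on an open set which is $C^1$ off a hyperplane and whose gradient extends continuously across that hyperplane is $C^1$ on the whole set (apply this with hyperplane $\{t=0\}$; it is proved by writing $f(x,t)-f(x_0,0)=\big(f(x,0)-f(x_0,0)\big)+\int_0^t\partial_t f(x,s)\,ds$, using $C^1$-ness of $\|\cdot\|_\infty$ within the slice for the first term and the continuous extension of $\partial_t f$ for the second). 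This yields that $f$ is continuously differentiable on $U\times(-1,1)$, and the continuous extension computed above shows $\partial_t f=0$ at $t=0$. The main obstacle is the second paragraph: one must check that the limits as $t\to0^+$ are uniform in $x$ near $x_0$, not merely pointwise --- this is precisely where $x_0\in U$ (a strict maximum, hence $\rho<1$ on a whole neighbourhood) is used --- and one must handle the coordinates $x_i$ that may vanish, where the relevant terms and all their derivatives are identically $0$.
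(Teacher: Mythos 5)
Your proposal is correct and follows essentially the same route as the paper: both arguments rest on the local factorisation $\|x\|_{1/t}=x_\ell\bigl(1+\sum_{i\ne\ell}|x_i/x_\ell|^{1/t}\bigr)^t$ near a point whose maximal coordinate is unique, and on the super-polynomial decay of the terms $|x_i/x_\ell|^{1/t}=e^{-c_i/t}$ and $e^{-c_i/t}c_i/t$ as $t\to0^+$. The one genuine difference is organisational: you compute all partials from that factorisation, make their convergence uniform on a neighbourhood of $x_0$, and glue with an explicit $C^1$-extension lemma across $\{t=0\}$, whereas the paper fixes $x$, evaluates the one-sided difference quotient at $t=0$ separately, and leaves the joint continuity of the partials in $(x,t)$ largely implicit --- your version is the more airtight on that point.
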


\begin{proof}
Every point $x \in U$ is a differentiable point for any $\ell_p$ norm, including $\ell_\infty$,
hence the partial derivative of $f$ with respect to $x$ exists and is continuous on the set $U \times (-1,1)$.

Fix a point $x \in U$.
It now remains to prove that the map $g: t \mapsto f(x, t)$ is continuous differentiable on the set $(-1,1)$ with $g'(0)=0$.
It is clear that if $t < 0$ then the derivative of $g$ at $t$ is 0.
If $t > 0$,
the derivative of $g$ at $t$ is
\begin{align*}
	g'(t) &= \frac{d}{dt} \left( \sum_{i=1}^d |x_i|^{1/t} \right)^t \\
	&=  \left( \sum_{i=1}^d |x_i|^{1/t} \right)^t \frac{d}{dt} t \log \left( \sum_{i=1}^d |x_i|^{1/t} \right) \\
	&= \|x\|_{1/t} \left(\log \left( \sum_{i=1}^d |x_i|^{1/t} \right) +   t \frac{d}{dt}\log \left( \sum_{i=1}^d |x_i|^{1/t} \right) \right) \\
	&= \|x\|_{1/t} \left(\log \left( \sum_{i=1}^d |x_i|^{1/t} \right) +   t \frac{\sum_{i=1}^d \frac{d}{dt} |x_i|^{1/t} }{\sum_{i=1}^d |x_i|^{1/t} }  \right) \\
	&= \|x\|_{1/t} \left(\log \left( \sum_{i=1}^d |x_i|^{1/t} \right) -    \frac{\sum_{i=1}^d  |x_i|^{1/t}  \log (|x_i|) }{t\sum_{i=1}^d |x_i|^{1/t} }  \right) \\
	&= \|x\|_{1/t} \left( \frac{\sum_{i=1}^d  |x_i|^{1/t} \left(t\log\left( \sum_{j=1}^d |x_j|^{1/t}\right) -  \log (|x_i|) \right)}{t\sum_{i=1}^d |x_i|^{1/t} }  \right) \\
	&= \|x\|_{1/t} \left( \frac{\sum_{i=1}^d  |x_i|^{1/t} \left(\log\left( \sum_{j=1}^d |x_j/x_i|^{1/t} \right) \right)}{\sum_{i=1}^d |x_i|^{1/t} }  \right),
\end{align*}
which is continuous on the interval $(0,1)$.
Furthermore,
it follows from the next claim that $g'(t) \rightarrow 0$ as $t \rightarrow 0+$.

\begin{claim}
For each $i \in \{1,\ldots,d\}$,
	\begin{equation*}
		\frac{|x_i|^{1/t} \log\left( \sum_{j=1}^d |x_j/x_i|^{1/t} \right) }{\sum_{j=1}^d |x_j|^{1/t} } \rightarrow 0 \qquad \text{ as } \qquad  t \rightarrow 0+.
	\end{equation*}
\end{claim}

\begin{proof}
	Since $x \in U$,
	there exists an index $\ell$ such that $|x_\ell| > |x_i|$ for each $i \neq \ell$.
	If $i \neq \ell$ then, since $|x_\ell/x_i| >1$,
	we have
	\begin{equation*}
		\frac{|x_i|^{1/t} \log\left( \sum_{j=1}^d |x_j/x_i|^{1/t} \right) }{\sum_{j=1}^d |x_j|^{1/t} } \leq \frac{|x_i|^{1/t} \log\left( d |x_\ell/x_i|^{1/t} \right) }{ |x_\ell|^{1/t} } = \frac{\log\left(  |x_\ell/x_i|^{1/t} \right) + \log(d) }{ |x_\ell/x_i|^{1/t} } \rightarrow 0
	\end{equation*}
	as $t \rightarrow 0+$.
	If $i=\ell$,
	then, since $|x_\ell/x_j| < 1$ for each $j \neq \ell$,
	we have
	\begin{equation*}
		\frac{|x_\ell|^{1/t} \log\left( \sum_{j=1}^d |x_j/x_\ell|^{1/t} \right) }{\sum_{j=1}^d |x_j|^{1/t} } \leq \frac{|x_\ell|^{1/t} \log\left( 1 + \sum_{j\neq \ell}^d |x_j/x_\ell|^{1/t} \right) }{ |x_\ell|^{1/t} } = \log\left( 1 + \sum_{j\neq \ell}^d |x_j/x_\ell|^{1/t} \right) \rightarrow 0
	\end{equation*}
	as $t \rightarrow 0+$.
\end{proof}

We now need to show that $g'(0) = 0$.
Fix the index $\ell$ so that $|x_\ell| > |x_i|$ for all $i \neq \ell$,
and fix the map $h:[0,1) \rightarrow \mathbb{R}$ defined by
\begin{align*}
	h(t) :=
	\begin{cases} 
		\sum_{i \neq \ell} |x_i/x_\ell|^{1/t} &\text{if } t > 0,\\
		0 &\text{if } t=0.
	\end{cases}
\end{align*} 
We now note three important properties for $h$:
(i) $h(t) \rightarrow 0$ as $t \rightarrow 0+$,
(ii) for $t>0$,
\begin{equation*}
	h'(t) = -\frac{\sum_{i \neq \ell} |x_i/x_\ell|^{1/t} \log \left(  |x_i/x_\ell| \right)}{t^2},
\end{equation*}
and
(iii) since $\lim_{y \rightarrow \infty} y/e^{\lambda y} = 0$ for any $\lambda >1$ (a consequence of L'H\^{o}pital's rule), we have
\begin{equation*}
	h'(0) = \lim_{t\rightarrow 0+}\frac{\sum_{i \neq \ell} |x_i/x_\ell|^{1/t} }{t} = \sum_{i \neq \ell}  \lim_{s\rightarrow \infty} \frac{s}{ e^{s \log(|x_\ell/x_i|)} }  = 0.
\end{equation*}
A similar application of L'H\^{o}pital's rule gives that $h'(t) \rightarrow 0$ as $t\rightarrow 0+$,
and so $h$ is a continuous differentiable map.
Using this,
we can prove the following claim.

\begin{claim}
	The continuous map
	\begin{equation*}
		\phi : [0,1) \rightarrow \mathbb{R}, ~ t \mapsto (1 + h(t))^t
	\end{equation*}
	is continuously differentiable with $\phi'(0)=0$.
\end{claim}

\begin{proof}
	Since $h$ is continuously differentiable and (by the chain rule) the composition of continuously differentiable functions is continuously differentiable, $\phi$ is continuously differentiable. We now see that
	\begin{equation*}
		\phi'(0) = \left( \phi(t) \frac{d}{dt} t \log (1+h(t)) \right)\Bigr|_{t=0} =
		\left( \phi(t)\left(\log (1+h(t)) + \frac{t h'(t)}{1+h(t)}\right)\right)\Bigr|_{t=0} 
		 =0.\qedhere
	\end{equation*}
\end{proof}

We now see that
\begin{align*}
	\lim_{t \rightarrow 0+} \frac{g(t) - g(0)}{t} = \lim_{t \rightarrow 0+} \frac{\left( \sum_{i=1}^d |x_i|^{1/t} \right)^t - |x_\ell|}{t}	= |x_\ell|\left(\lim_{t \rightarrow 0+} \frac{\phi(t) - \phi(0)}{t} \right)= |x_\ell| \phi'(0) = 0.
\end{align*}
Since $g(t)$ is constant on the interval $(-1,0]$, it is clear that $\frac{g(t) - g(0)}{t}\rightarrow 0$ as $t \rightarrow 0-$.
Hence, $g'(0) = 0$.
This now concludes the proof.
\end{proof}

\subsection{Proof of \texorpdfstring{\Cref{t:lpapprox,t:linfplane}}{two main theorems}}

The next lemma is a standard tool used when studying generic fibres of continuous maps.

\begin{lemma}\label{l:boundingfibres}
	Let $U \subset \mathbb{R}^m$ be an open set and let $f : U \rightarrow \mathbb{R}^n$ be a continuously differentiable map.
	Suppose that the following conditions hold for a point $x \in U$:
	\begin{enumerate}
		\item the set $f^{-1}(f(x))$ is finite;
		\item for every point $y\in f^{-1}(f(x))$, the derivative $df(y)$ is injective with $\ker df(y)^T = \ker df(x)^T$.
	\end{enumerate}
	Then there exists $\varepsilon >0$ such that the following holds: for all $z \in U$ with $\|x-z\| < \varepsilon$,
	the fibre $f^{-1}(f(z))$ contains at least as many elements as the fibre $f^{-1}(f(x))$.
\end{lemma}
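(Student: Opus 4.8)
The plan is to show that each point $y$ in the finite fibre $f^{-1}(f(x))$ contributes at least one nearby preimage of $f(z)$ for all $z$ close to $x$, and that these contributions are disjoint for distinct $y$. Since $df(y)$ is injective for every $y\in f^{-1}(f(x))$, the map $f$ is locally injective near each such $y$ (e.g.\ by the constant rank theorem, as used in the proof of \Cref{l:limit}); pick disjoint open neighbourhoods $U_y$ of the points $y\in f^{-1}(f(x))$ on which $f$ is injective, small enough that their closures are compact and contained in $U$. The key structural fact to extract is that, because $df(y)$ is injective with a \emph{common} cokernel $W:=\ker df(x)^T$, the image $f(U_y)$ locally looks like an open patch of the affine-translate of the $n-m$-codimensional subspace $W^\perp$, up to higher-order terms — so a small perturbation of $f(x)$ in the direction $W^\perp$ still meets $f(U_y)$.

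First I would make this precise. Fix $y\in f^{-1}(f(x))$. After a linear change of coordinates on $\mathbb{R}^n$, write $\mathbb{R}^n = W^\perp \oplus W$ and decompose $f = (f_1,f_2)$ accordingly. Since $\operatorname{im} df(y) = W^\perp$ (using $\ker df(y)^T = W$), we have $df_2(y) = 0$, while $df_1(y) : \mathbb{R}^m \to W^\perp$ is injective, hence (as $\dim W^\perp = m$) a linear isomorphism. By the inverse function theorem, after shrinking $U_y$, the map $f_1|_{U_y}$ is a diffeomorphism onto an open neighbourhood $V_y \subset W^\perp$ of $f_1(x)$. Now the crucial observation: for $z$ near $x$, I want a preimage of $f(z)$ in $U_y$. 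Define $g_y := f_2 \circ (f_1|_{U_y})^{-1} : V_y \to W$; then $w\in U_y$ satisfies $f(w)=f(z)$ iff $f_1(w) = P_{W^\perp}f(z)$ and $g_y(P_{W^\perp}f(z)) = P_W f(z)$, where $P$ denotes orthogonal projection. The first equation has a unique solution $w = (f_1|_{U_y})^{-1}(P_{W^\perp}f(z))$ provided $P_{W^\perp}f(z)\in V_y$, which holds for $z$ close enough to $x$ by continuity. The second equation is the obstruction: it need not hold for arbitrary $z$. But here is where finiteness of the whole fibre and the common-cokernel hypothesis are used \emph{together} — we do not need each individual $U_y$ to capture $f^{-1}(f(z))$; rather, I will instead run a degree-theoretic / counting argument globally.

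Let me reorganise: the cleanest route is a covering-space–style argument. Let $K$ be a compact neighbourhood of $x$ in $U$ such that $f^{-1}(f(x)) \subset \operatorname{int} K$; by properness of $f$ on $K$ (continuous map on a compact set) and finiteness of the fibre, one can choose $K$ and an open set $O \ni f(x)$ in $\mathbb{R}^n$ so that $f^{-1}(O)\cap K$ is a disjoint union $\bigsqcup_{y} U_y$ with $f|_{U_y}$ injective as above, and moreover $f(\partial K)\cap O = \emptyset$. Now for $z$ near $x$ we have $f(z)\in O$, and each $U_y$ is a graph over $V_y\subset W^\perp$ via $g_y$ as above. The set of $z$ for which \emph{all} of the finitely many graph-equations $g_y(P_{W^\perp}f(z)) = P_W f(z)$ fail could in principle be large; so a naive "one preimage per $y$" count is not automatic. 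The main obstacle is exactly this: translating the hypothesis $\ker df(y)^T = \ker df(x)^T$ into a statement that forces $f(z)$ to lie in $f(U_y)$. The resolution I expect the authors (and I) to use: the common cokernel means the graphs $g_y$ all have the \emph{same} tangent plane $W^\perp$ at $f_1(x)$, and since $f(x)$ itself lies on every graph $f(U_y)$, the function $z \mapsto \big(g_y(P_{W^\perp}f(z)) - P_W f(z)\big)$ vanishes at $z=x$ with vanishing derivative in the $W^\perp$-directions — but one still needs it to vanish on a full neighbourhood. This cannot hold in general, so I believe the correct reading is that the lemma is applied with $f(z) = f(x)$ on the relevant component, i.e.\ the intended use is where $z$ ranges over the fibre's own defining data; re-examining, the honest proof is:

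\textbf{Honest proof.} Choose $K$, $O$, and the decomposition $f^{-1}(O)\cap K = \bigsqcup_{y\in f^{-1}(f(x))} U_y$ as above, with $f|_{U_y}$ a homeomorphism onto $f(U_y)$, and with $f(\partial K)\cap O=\emptyset$. Shrink $O$ to an open ball if needed. Since each $f(U_y)$ is a relatively open subset of $f(K)$ containing $f(x)$, and all of them share the tangent plane $W^\perp$ at $f(x)$ (because $\operatorname{im} df(y) = (\ker df(y)^T)^\perp = W^\perp$ for all $y$), there is a single open set $O' \subset O$ with $f(x) \in O'$ such that $O' \cap f(K) \subseteq f(U_y)$ for every $y$: indeed each $f(U_y)$ contains $f(x)$ as an interior point of the $m$-manifold $f(K)\cap O$, and there are finitely many of them, so their intersection is again a neighbourhood of $f(x)$ within $f(K)\cap O$; take $O'$ to be an open subset of $\mathbb{R}^n$ realising this. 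Now pick $\varepsilon>0$ small enough that $\|x-z\|<\varepsilon$ implies $z\in \operatorname{int} K$ and $f(z)\in O'$. For such $z$, and for each $y\in f^{-1}(f(x))$, since $f(z)\in O'\cap f(K)\subseteq f(U_y)$ and $f|_{U_y}$ is a homeomorphism onto its image, there is a unique $w_y \in U_y$ with $f(w_y)=f(z)$. The points $\{w_y : y \in f^{-1}(f(x))\}$ are pairwise distinct since the $U_y$ are pairwise disjoint, and all lie in $f^{-1}(f(z))$. Hence $|f^{-1}(f(z))| \ge |f^{-1}(f(x))|$, as required. The one genuinely delicate point, which I would write out carefully, is the existence of the common neighbourhood $O'$ with $O'\cap f(K)\subseteq \bigcap_y f(U_y)$; this is where both the finiteness of the fibre and the shared-cokernel condition (giving each $f(U_y)$ full-dimensional-in-$f(K)$ status at $f(x)$, with matching tangent space) are essential, and is the step I expect to be the main obstacle to make fully rigorous.
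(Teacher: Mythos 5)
Your ``honest proof'' stalls at exactly the right place, but the fix you propose for that step does not work, and in fact no fix can: the step is a genuine gap, and the statement read literally is false. The crux you correctly isolate is that for $z$ near $x$ one must show $f(z)\in f(U_y)$ for \emph{every} $y$ in the fibre, not just for the branch containing $x$. You argue that, since each $f(U_y)$ is an $m$-manifold through $f(x)$ with tangent space $W^\perp$ and there are finitely many of them, their intersection is a neighbourhood of $f(x)$ inside $f(K)\cap O$. This presumes that $f(K)\cap O$ is a single $m$-manifold in which each $f(U_y)$ sits as an open subset. It is not: it is a union of finitely many $m$-manifolds through $f(x)$ which the common-cokernel hypothesis forces to be \emph{tangent} at $f(x)$, and tangency is a first-order condition that does not make them coincide on any neighbourhood; two submanifolds tangent at a point may meet only at that point, in which case $\bigcap_y f(U_y)=\{f(x)\}$ and no $O'$ exists. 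Concretely, take $m=1$, $n=2$, $U=(-\tfrac14,\tfrac14)\cup(\tfrac34,\tfrac54)$, and set $f(t)=(t,0)$ on the first interval and $f(t)=(t-1,(t-1)^2)$ on the second. With $x=0$ the fibre $f^{-1}(f(x))=\{0,1\}$ is finite, both Jacobians at the fibre points equal the column vector $(1,0)$, so both are injective with the same cokernel; yet for every $z\neq 0$ with $|z|$ small the fibre $f^{-1}(f(z))$ is the single point $\{z\}$, because the line $\{y_2=0\}$ meets the parabola $\{y_2=y_1^2\}$ only at the origin. All hypotheses of \Cref{l:boundingfibres} hold and the conclusion fails.

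For comparison, the paper's own proof of this lemma is a one-line appeal to the constant rank theorem. That theorem delivers your disjoint neighbourhoods $U_y$ and the local manifold structure of each image $f(U_y)$, but it says nothing about the containment $f(z)\in f(U_y)$ for branches other than the one through $x$ --- that is, it does not engage with the point you identified as the main obstacle. Your write-up is therefore a more faithful account of what is actually at stake: to obtain the conclusion one needs either a stronger hypothesis (for instance that the germs of the images $f(U_y)$ at $f(x)$ all coincide, not merely share a tangent space) or additional structure of the particular map to which the lemma is applied. As written, neither your argument nor the paper's closes this gap.
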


\begin{proof}
	This is an immediate consequence of the constant rank theorem (see, for example, \cite[Theorem 9.32]{rudin}).
\end{proof}

We are now ready to prove \Cref{t:lpapprox}.

\begin{proof}[Proof of \Cref{t:lpapprox}]
	Throughout the proof, we fix $f_{G,q}$ to be the rigidity map with respect to $\ell_q^d$ for any $q \in [1,\infty]$.
	
	Fix a vertex $v_0 \in V$.
	Let $X \subset  (\mathbb{R}^d)^{V}$ be the affine space of all realisations $\tilde{p}$ where $\tilde{p}(v_0) = p(v_0)$,
	and let $Y$ be the open dense subset of well-positioned realisations (with respect to the ambient space $\ell_\infty^d$).
	Given $f$ is the map described in \Cref{l:fmapdiff},
	define the map
	\begin{equation*}
		F: X \times (-1,1)  \rightarrow \mathbb{R}^E \times (-1,1) , ~ (\tilde{p},t) \mapsto \left( \Big( f(\tilde{p}(v) - \tilde{p}(w), t) \Big)_{vw \in E}~ , ~ t \right).
	\end{equation*}
	By \Cref{l:fmapdiff},
	the map $F$ is continuously differentiable over the open dense set $Y \times (-1,1)$.
	Furthermore, the Jacobian of $F$ at any point $(\tilde{p},t) \in Y \times (-1,1)$ is the $(|E|+1) \times (d|V| +1)$ matrix
	\begin{equation*}
		dF(\tilde{p},t) =
		\begin{bmatrix}
			M_t(\tilde{p}) & a_t(\tilde{p}) \\
			\mathbf{0}^T & 1
		\end{bmatrix}
	\end{equation*}
	where:
	\begin{itemize}
		\item $M_t(\tilde{p})$ is formed from $df_{G,1/t}(\tilde{p})$ (or $df_{G,\infty}(\tilde{p})$ if $t=0$) by removing the $d$ columns associated to the vertex $v_0$,
		\item $\mathbf{0}$ is the all-zeroes vector of length $d|V|-d$, and
		\item $a_t(\tilde{p})$ is a vector of length $|E|$, with $a_t(\tilde{p})$ being the all-zeroes vector when $t = 0$.
	\end{itemize}
	Here we note that for any $\tilde{p} \in Y$ and any $t \in (-1,1)$,
	\begin{align*}
		\ker M_t(\tilde{p}) &=
		\begin{cases}
			\{ u \in \ker df_{G,1/t}(\tilde{p}) : u(v_0) = \mathbf{0}\} &\text{if } t >0\\
			\{ u \in \ker df_{G,\infty}(\tilde{p}) : u(v_0) = \mathbf{0}\}&\text{if } t =0,
		\end{cases}\\
		\ker M_t(\tilde{p})^T &=
		\begin{cases}
			\ker df_{G,1/t}(\tilde{p})^T &\text{if } t >0\\
			\ker df_{G,\infty}(\tilde{p})^T &\text{if } t =0.
		\end{cases}		
	\end{align*}
	Moreover, $\ker F(\tilde{p} ,0) = \ker M_0(\tilde{p}) \times \{0\}$ and $\ker F(\tilde{p} ,0)^T = \ker df_{G,\infty}(\tilde{p})^T \times \{0\}$.
	As $(G,p)$ is generic in $\ell_\infty^d$,
	it follows from \Cref{l:generic} that any framework $(G,\tilde{p})$ that is equivalent to $(G,p)$ in $\ell_\infty^d$ with $\tilde{p}(v_0) = p(v_0)$ satisfies $\ker dF(\tilde{p},0)^T = \ker dF(p,0)^T$.
	Since $(G,p)$ is infinitesimally rigid in $\ell_\infty^d$,
	we have that $dF(\tilde{p},0)$ is injective for each element of the fibre $F^{-1}(F(p,0))$.
	Using that $G$ is connected (a consequence of $(G,p)$ being rigid), it follows from a similar argument to that given in \Cref{claim:compact} that the fibre $F^{-1}(F(p,0))$ is contained in a compact set.
	Combining these last two facts implies that the fibre $F^{-1}(F(p,0))$ is finite.
	Hence, the pair $F$ and $(p,0)$ satisfy the necessary criteria for \Cref{l:boundingfibres}.
	
	For every $q \in [1,\infty]$ with $q \neq 2$, the normed space $\ell_q^d$ has exactly $2^d\binom{d}{2}$ linear isometries.
	Hence,
	each fibre $F^{-1}(F(p,s_n))$ for $n \in \mathbb{N}$ contains exactly $2^d \binom{d}{2}$ points, and $(G,p)$ is globally rigid in $\ell_\infty^d$ if and only if the fibre $F^{-1}(F(p,0))$ contains at most $2^d\binom{d}{2}$ points.
	By \Cref{l:boundingfibres}, the fibre $F^{-1}(F(p,0))$ contains at most $2^d\binom{d}{2}$ points, which concludes the result.
\end{proof}

\subsection{Applications of \texorpdfstring{\Cref{t:lpapprox}}{approximation theorem}}

When \Cref{t:lpapprox} is combined with \Cref{t:shin},
we obtain a powerful method for determining when generic rigid frameworks in the $\ell_\infty$ plane are globally rigid.

\begin{proof}[Proof of \Cref{t:linfplane}]
	If $G$ is $\mathcal{M}(2,2)$-connected then $(G,p)$ is globally rigid in $\ell_\infty^2$ by \Cref{t:shin} and \Cref{t:lpapprox}.
	If $(G,p)$ is globally rigid in $\ell_\infty^2$,
	then $G$ is $\mathcal{M}(2,2)$-connected by \Cref{t:globness}.
\end{proof}

We can also use \Cref{t:lpapprox} to show that global rigidity is preserved when projecting from $\ell_\infty^{d+1}$ to $\ell_\infty^d$.
We begin with the following useful lemma.

\begin{lemma}\label{lem:approxlpinfrigid}
	Let $(G,p)$ be an infinitesimally rigid framework in $\ell_\infty^{d}$.
	Then $(G,p)$ is infinitesimally rigid in $\ell_q^{d}$ for sufficiently large $q$.
\end{lemma}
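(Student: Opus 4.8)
The plan is to show that the rigidity matrix of $(G,p)$ taken with respect to $\ell_q^d$ converges, entrywise as $q\to\infty$, to the rigidity matrix of $(G,p)$ with respect to $\ell_\infty^d$, and then to invoke lower semicontinuity of matrix rank. First I would record that, since $(G,p)$ is infinitesimally rigid in $\ell_\infty^d$, it is well-positioned there; hence every edge vector $p(v)-p(w)$ with $vw\in E$ is a smooth point of $\ell_\infty^d$, which is precisely the statement that $p(v)-p(w)$ lies in the open dense set $U$ of \Cref{l:fmapdiff}. In particular no edge of $(G,p)$ has zero length, so for every $q\in(1,\infty)$ the framework $(G,p)$ is automatically well-positioned in the smooth space $\ell_q^d$.

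Next comes the key point. By \Cref{l:fmapdiff}, the map $f$ is continuously differentiable on $U\times(-1,1)$, and its partial derivative in the first variable at a point $(x,t)$ equals the support (dual) vector $\varphi_{1/t}(x)$ of $x$ in $\ell_{1/t}^d$ when $t>0$ (the gradient of the norm at a smooth point is its support vector), and equals $\varphi_\infty(x)$ when $t\le 0$. Continuity of this partial derivative at the points $(x,0)$ with $x\in U$ therefore gives $\varphi_q(x)\to\varphi_\infty(x)$ as $q\to\infty$, for every such $x$. Since each entry of the rigidity matrix $df_{G,q}(p)$ of $(G,p)$ in $\ell_q^d$ is, up to sign, a coordinate of some vector $\varphi_q(p(v)-p(w))$ with $vw\in E$, and each such edge vector lies in $U$, and there are only finitely many edges, it follows that $df_{G,q}(p)\to df_{G,\infty}(p)$ entrywise as $q\to\infty$.

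To conclude I would argue with ranks. As $\ell_\infty^d$ has finitely many linear isometries, its space of trivial infinitesimal flexes is the $d$-dimensional space of translations, so infinitesimal rigidity of $(G,p)$ in $\ell_\infty^d$ means $\rank df_{G,\infty}(p)=d|V|-d$ (e.g.\ by \Cref{t:dk} with $\mathcal{P}$ the hypercube). By lower semicontinuity of matrix rank under entrywise convergence, $\rank df_{G,q}(p)\ge d|V|-d$ for all sufficiently large $q$; since the kernel of every rigidity matrix contains the $d$-dimensional translation space $\{(x)_{v\in V}:x\in\mathbb{R}^d\}$, we also have $\rank df_{G,q}(p)\le d|V|-d$ always, so $\rank df_{G,q}(p)=d|V|-d$ for large $q$. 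Finally, for $q>2$ the space $\ell_q^d$ again has finitely many linear isometries, so its trivial flex space is exactly the $d$-dimensional translation space; as $\ker df_{G,q}(p)$ has dimension $d$ and contains this space, the two coincide, i.e.\ every infinitesimal flex of $(G,p)$ in $\ell_q^d$ is trivial, and $(G,p)$ is infinitesimally rigid in $\ell_q^d$. The only mild obstacle is making sure the abstract $C^1$-statement of \Cref{l:fmapdiff} is correctly matched to the dual maps $\varphi_q$ so that it yields entrywise convergence of the concrete rigidity matrices; once that identification is in place, the rest is lower semicontinuity of rank together with the universal translation kernel.
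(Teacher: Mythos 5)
Your proposal is correct and follows essentially the same route as the paper: the paper's proof also rests on \Cref{l:fmapdiff}, invoking the continuous differentiability of the bundled map $F(\tilde p,t)$ from the proof of \Cref{t:lpapprox} at $t=0$ and the openness of injectivity of $dF(p,t)$, which is exactly your entrywise convergence of $df_{G,q}(p)$ to $df_{G,\infty}(p)$ plus lower semicontinuity of rank. The only cosmetic difference is that you unpack the block structure of $dF$ and argue directly with the dual maps $\varphi_q$, whereas the paper works with $F$ wholesale.
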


\begin{proof}
	Let $F,f_{G,q}$ be the functions and $X,Y$ the sets described in \Cref{t:lpapprox} that are defined with respect to $(G,p)$.
	Hence, $dF(p,0)$ is injective.
	As $F$ is continuously rigid on $Y \times (-1,1)$,
	it follows that for sufficiently small $t>0$ we have $dF(p,t)$ is injective also.
	We now observe that for sufficiently large $q>1$ we have
	\begin{equation*}
		0 \leq \dim \ker df_{G,q}(p) -d \leq \dim \ker F(p,1/q) =0. \qedhere
	\end{equation*}
\end{proof}

\begin{theorem}\label{t:project}
	Let $(G,p)$ be a generic rigid framework in $\ell_\infty^{d+1}$,
	and let $(G,p^*)$ be the framework in $\ell_\infty^{d}$ formed by deleting the $(d+1)$-th coordinate from each vertex.
	Then $(G,p^*)$ is globally rigid in $\ell_\infty^{d}$.
\end{theorem}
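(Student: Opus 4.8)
The plan is to descend from $\ell_\infty^{d+1}$ to the smooth analytic spaces $\ell_q^d$ with $q$ even, where the stress-type characterisation \Cref{t:shin2} is available, and then let $q\to\infty$ via \Cref{t:lpapprox}.

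First I would check that the projected framework $(G,p^*)$ is rigid in $\ell_\infty^d$. Since $(G,p)$ is generic and rigid in $\ell_\infty^{d+1}$ it is infinitesimally rigid by \Cref{t:dk}, so each monochromatic subgraph $G_i(\phi_p)$ with $i\in[d+1]$ is connected by \Cref{t:colour}. Now if $e=vw$ has $|\phi_p|(e)=i\le d$, then $|p_i(v)-p_i(w)|>|p_j(v)-p_j(w)|$ for every $j\in[d+1]\setminus\{i\}$, hence $i$ is still the unique maximiser of $|p_j(v)-p_j(w)|$ over $j\in[d]$; thus $E_i(\phi_p)\subseteq E_i(\phi_{p^*})$ for each $i\in[d]$, so every monochromatic subgraph $G_i(\phi_{p^*})$ of $(G,p^*)$ contains the connected spanning subgraph $G_i(\phi_p)$ and is therefore connected and spanning. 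As $(G,p^*)$ is generic in $\ell_\infty^d$ (the hypercube has rational faces) it is well-positioned, and so \Cref{t:colour} together with \Cref{t:dk} give that $(G,p^*)$ is infinitesimally rigid, hence rigid, in $\ell_\infty^d$.

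Next I would produce the approximating sequence. By \Cref{lem:approxlpinfrigid} applied in dimension $d+1$, the framework $(G,p)$ is infinitesimally rigid in $\ell_q^{d+1}$ for all sufficiently large $q$, and $(G,p)$ is well-positioned there since $\ell_q^{d+1}$ is smooth and $(G,p)$ has no zero-length edges; hence there is an increasing sequence $(s_n)_{n\in\mathbb{N}}$ of even integers for which $G$ admits an infinitesimally rigid realisation in $\ell_{s_n}^{d+1}$ (namely $(G,p)$ itself). Since $(G,p^*)$ is generic in $\mathbb{R}^d$ and $G$ is $2$-connected, \Cref{t:shin2} then yields that $(G,p^*)$ is globally rigid in $\ell_{s_n}^d$ for every $n$. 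Finally, $(G,p^*)$ is a generic framework in $\mathbb{R}^d$ that is rigid in $\ell_\infty^d$ and globally rigid in $\ell_{s_n}^d$ along the increasing sequence of even integers $(s_n)$, so \Cref{t:lpapprox} gives that $(G,p^*)$ is globally rigid in $\ell_\infty^d$, as required.

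Almost all of this is a bookkeeping chain through \Cref{t:dk,t:colour,lem:approxlpinfrigid,t:shin2,t:lpapprox}. The two places needing genuine care are the colour comparison $E_i(\phi_p)\subseteq E_i(\phi_{p^*})$, which must be made at the level of the generic colourings (deleting a coordinate only shrinks the index set over which the maximum is taken, so it cannot dislodge a winning coordinate $i\le d$), and the hypothesis that $G$ is $2$-connected, which is what allows \Cref{t:shin2} to be invoked. I regard the $2$-connectedness point as the main obstacle to a clean statement: a generic framework whose graph has a cut vertex $v$ admits a reflection of one side through a coordinate hyperplane at $p^*(v)$, producing an equivalent non-congruent realisation, so $2$-connectedness is in any case necessary for the conclusion and should be carried as a hypothesis.
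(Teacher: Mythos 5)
Your proposal follows the paper's proof essentially step for step: infinitesimal rigidity of $(G,p^*)$ via \Cref{t:dk} and \Cref{t:colour} (the paper phrases the colour comparison as ``$\phi_{p^*}$ is formed from $\phi_p$ by recolouring only the edges of colour $(0,\ldots,0,\pm1)$'', which is exactly your inclusion $E_i(\phi_p)\subseteq E_i(\phi_{p^*})$ for $i\le d$), then \Cref{lem:approxlpinfrigid} applied in dimension $d+1$, then \Cref{t:shin2}, then \Cref{t:lpapprox}. No divergence of method.

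The one substantive point is your $2$-connectedness caveat, and you are right to raise it: \Cref{t:shin2} carries $2$-connectivity as a hypothesis, the statement of \Cref{t:project} does not, and the paper's proof invokes \Cref{t:shin2} without verifying it. The hypothesis cannot in fact be derived from the others: glue two copies of the infinitesimally rigid framework $(K_{2d+2},p)$ of \Cref{p:k2d} at a single vertex (translating one copy so the identified vertices coincide). Every monochromatic subgraph of the glued framework is the union of two connected spanning subgraphs sharing the glued vertex, hence connected, so the glued framework is well-positioned and infinitesimally rigid in $\ell_\infty^{d+1}$; perturbing to a nearby generic realisation with the same directed colouring yields a generic rigid framework in $\ell_\infty^{d+1}$ whose graph has a cut vertex. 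By \Cref{t:globness} and \Cref{l:km}, no generic realisation of a graph with a cut vertex is globally rigid in $\ell_\infty^d$, so the projected framework cannot be globally rigid. Thus \Cref{t:project} does need $G$ to be $2$-connected as an additional hypothesis (harmless for \Cref{cor:k2d2}, since complete graphs are $2$-connected), and your proof, with that hypothesis added, is correct.
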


\begin{proof}
	As $(G,p)$ is generic (and hence well-positioned) and rigid in $\ell_\infty^{d+1}$,
	it is infinitesimally rigid in $\ell_\infty^{d+1}$ by \Cref{t:dk}.
	By \Cref{t:colour},
	every monochromatic subgraph of $(G,p)$ is connected.
	We first note that $\phi_{p^*}$ is formed from $\phi_p$ by replacing each edge with colour $(0,\ldots,0,\pm 1)$ with another colour.
	Hence every monochromatic subgraph of $(G,p^*)$ is connected,
	and so $(G,p^*)$ is infinitesimally rigid in $\ell_\infty^{d}$ by \Cref{t:colour}.
	
	By \Cref{lem:approxlpinfrigid},
	there exists an even integer $N$ such that $(G,p)$ is infinitesimally rigid in $\ell_q^d$ for each even integer $q \geq N$.
	As $(G,p^*)$ is generic in $\mathbb{R}^{d}$,
	the framework $(G,p^*)$ is globally rigid in $\ell_q^{d}$ for $q \geq N$ by \Cref{t:shin2}.
	We now apply \Cref{t:lpapprox} to $(G,p^*)$.
\end{proof}

Using \Cref{p:k2d} and \Cref{t:project}, it is immediate that every sufficiently large complete graph is globally rigid in $\ell_\infty^d$.

\begin{corollary}\label{cor:k2d2}
	For every $d \geq 1$ and every $n \geq 2d+2$,
	the complete graph $K_{n}$ has a generic globally rigid realisation in $\ell_\infty^d$.
\end{corollary}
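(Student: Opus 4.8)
The plan is to combine \Cref{p:k2d}, which supplies a well-positioned infinitesimally rigid realisation of a complete graph in one higher dimension, with the projection result \Cref{t:project}, after first upgrading the realisation produced by \Cref{p:k2d} to a generic one.

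First I would apply \Cref{p:k2d} with $d$ replaced by $d+1$: since $n \geq 2d+2 = 2(d+1)$, the complete graph $K_n$ admits a well-positioned, infinitesimally rigid realisation $p$ in $\ell_\infty^{d+1}$. This realisation is far from generic, so the next step is to perturb it. The key observation is that the matrix $M(K_n,\phi)$ depends only on the directed colouring $\phi$ and not on the realisation, and that the set of realisations of $K_n$ that are well-positioned in $\ell_\infty^{d+1}$ with directed colouring exactly $\phi_p$ is open (each edge vector lies in the relative interior of a facet cone of the cube, a condition preserved under small perturbations) and contains $p$. Intersecting this open set with the dense set of realisations whose coordinates are algebraically independent over $\mathbb{Q}$, I obtain a generic realisation $q$ of $K_n$ in $\ell_\infty^{d+1}$ with $\phi_q=\phi_p$, and therefore with $\rank M(K_n,\phi_q)=\rank M(K_n,\phi_p)=(d+1)|V|-(d+1)$. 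By \Cref{t:dk}, $(K_n,q)$ is rigid (equivalently infinitesimally rigid) in $\ell_\infty^{d+1}$.

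Then I would apply \Cref{t:project} to the generic rigid framework $(K_n,q)$ in $\ell_\infty^{d+1}$: deleting the $(d+1)$-th coordinate of every vertex produces a framework $(K_n,q^*)$ that is globally rigid in $\ell_\infty^{d}$. Finally, since a subtuple of a tuple that is algebraically independent over $\mathbb{Q}$ is again algebraically independent over $\mathbb{Q}$, and since the faces of the $d$-cube are rational (so $\mathbb{Q}(\mathcal{P})=\mathbb{Q}$), the realisation $q^*$ is generic as a framework in $\ell_\infty^d$. Hence $(K_n,q^*)$ is the desired generic globally rigid realisation.

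There is really only one point that needs care, and it is the mildly technical one indicated above: \Cref{p:k2d} hands us a concrete non-generic realisation, whereas both \Cref{t:project} and the statement to be proved demand genericity. The resolution --- that infinitesimal rigidity in a polyhedral normed space is an open condition among well-positioned frameworks sharing a fixed directed colouring, because $M(G,\phi)$ is a function of $\phi$ alone --- is short, but it is the only step that is not a direct invocation of an earlier result. Everything else is immediate bookkeeping about dimensions ($2(d+1)=2d+2$) and about genericity being inherited under coordinate projections.
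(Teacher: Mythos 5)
Your proposal is correct and follows the paper's own route exactly: the paper proves this corollary by combining \Cref{p:k2d} (applied in dimension $d+1$) with \Cref{t:project}, declaring the conclusion immediate. The only addition you make is to spell out the perturbation step upgrading the realisation from \Cref{p:k2d} to a generic one before invoking \Cref{t:project} --- a detail the paper leaves implicit --- and your justification of it (openness of the fixed-colouring well-positioned locus, constancy of $M(K_n,\phi)$ in the realisation, density of generic points, and genericity being inherited by coordinate projections) is sound.
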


As a consequence of \Cref{t:globness}, any complete graph with $1 < n \leq 2d$ vertices has no generic globally rigid realisations in $\ell_\infty^d$. 
It is not clear what happens whether or not $K_n$ is globally rigid in $\ell_\infty^d$ when $n=2d+1$. It follows from \Cref{cor:linfplane} that global rigidity holds when $d=2$; i.e., $K_5$ has generic globally rigid realisations in $\ell_\infty^2$. 
The author conjectures that this is also true for all dimensions.

\begin{conjecture}\label{con:k2d2}
	The complete graph $K_{2d+1}$ has a generic globally rigid realisation in $\ell_\infty^d$.
\end{conjecture}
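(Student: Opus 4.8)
The plan is to prove global rigidity in $\ell_\infty^d$ by the same route used for \Cref{t:linfplane} — namely via \Cref{t:lpapprox} — with the two-dimensional input \Cref{t:shin} replaced by a global rigidity statement for $K_{2d+1}$ in the analytic spaces $\ell_q^d$ with $q$ even. (For $d=2$ this is already \Cref{cor:linfplane}, so the content is in $d\geq 3$.) First I would fix a generic rigid realisation of $K_{2d+1}$ in $\ell_\infty^d$: by \Cref{p:k2d} there is a well-positioned infinitesimally rigid realisation $p_0$ of $K_{2d+1}$ in $\ell_\infty^d$, and since well-positionedness is open and $\phi_{(\cdot)}$ is locally constant on the well-positioned realisations, any sufficiently small generic perturbation $p$ of $p_0$ satisfies $\phi_p=\phi_{p_0}$; its monochromatic subgraphs are then the same connected graphs, so $(K_{2d+1},p)$ is infinitesimally rigid in $\ell_\infty^d$ by \Cref{t:colour}. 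By \Cref{lem:approxlpinfrigid} this same $p$ is infinitesimally rigid, hence rigid, in $\ell_q^d$ for every even $q\geq N$ and some threshold $N$; and being generic in $\mathbb{R}^d$ it is moreover a regular point of the rigidity map in each analytic space $\ell_q^d$.

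The main step is to show that $(K_{2d+1},p)$ is globally rigid in $\ell_q^d$ for all sufficiently large even $q$. The combinatorial criterion \Cref{t:shin2} does not apply here, because $K_{2d+1}$ is too sparse to be rigid in $\ell_q^{d+1}$: it has $\binom{2d+1}{2}=2d^2+d$ edges, whereas a $(d+1,d+1)$-tight spanning subgraph would need $(d+1)(2d+1)-(d+1)=2d^2+2d$. The strong-colouring condition \Cref{t:main2} is equally useless: by edge-counting, every monochromatic $2$-connected spanning subgraph of $K_{2d+1}$ would have to be a Hamilton cycle, and the parity argument of \Cref{ex:k5} extends to show that no realisation of $K_{2d+1}$ in $\ell_\infty^d$ (for $d\geq 2$) can have all $d$ of its monochromatic subgraphs be Hamilton cycles — aligning two consecutive edges of such a cycle forces one of its chords to receive the same colour, so each vertex of the cycle is a source or a sink, which an odd cycle cannot accommodate. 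Instead I would work directly with the stress-matrix characterisation of generic global rigidity in $\ell_q^d$ ($q$ even, $q\geq 4$) of Sugiyama and Tanigawa \cite{st24}: it suffices, for each large even $q$, to exhibit a generic realisation of $K_{2d+1}$ in $\ell_q^d$ carrying an equilibrium $\ell_q$-stress whose associated stress matrix attains the maximal possible rank $|V|-d-1=d$.

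The delicate point inside the main step is that the equilibrium stress space of $K_{2d+1}$ in $\ell_q^d$ has dimension exactly $\binom{2d+1}{2}-\bigl(d(2d+1)-d\bigr)=d$, so one must show that a \emph{generic} element of this $d$-dimensional space already has stress-matrix rank $d$. Since the maximal rank of an equilibrium stress matrix is attained at generic realisations, it is enough to produce one concrete configuration of $K_{2d+1}$ — not necessarily generic — together with an explicit rank-$d$ equilibrium $\ell_q$-stress. Natural candidates for this computation are a variant of the \Cref{p:k2d} construction, a configuration adapted to a Walecki decomposition of $K_{2d+1}$ into $d$ Hamilton cycles, or a small perturbation of a Euclidean configuration of $K_{2d+1}$ (which is generically globally rigid in $\ell_2^d$ because $2d+1\geq d+2$, carrying Connelly's maximal-rank stress). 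I expect this explicit stress-matrix computation to be the genuine obstacle.

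Granting the main step, genericity of the Sugiyama–Tanigawa condition shows that \emph{every} generic realisation of $K_{2d+1}$ in $\ell_q^d$ is globally rigid for all even $q\geq N'$ (some $N'\geq N$); in particular the realisation $p$ from the first step is globally rigid in $\ell_{s_n}^d$ along the increasing even sequence $s_n=N'+2n$. Combined with rigidity of $(K_{2d+1},p)$ in $\ell_\infty^d$, \Cref{t:lpapprox} then yields that $(K_{2d+1},p)$ is globally rigid in $\ell_\infty^d$, which is the conjecture. In summary, the entire difficulty is to certify generic global rigidity of $K_{2d+1}$ in $\ell_q^d$ for large even $q$: either by computing a maximal-rank equilibrium $\ell_q$-stress for some configuration of $K_{2d+1}$, or by developing a new combinatorial sufficient condition for generic global rigidity in $\ell_q^d$ — or directly in $\ell_\infty^d$ — that is strong enough to cover the complete graphs $K_{2d+1}$.
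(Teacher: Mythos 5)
The statement you are addressing is stated in the paper as \Cref{con:k2d2}, an \emph{open conjecture}: the paper proves the cases $n \geq 2d+2$ (\Cref{cor:k2d2}) and the case $n=2d+1$, $d=2$ (via \Cref{cor:linfplane}), and explicitly remarks that it is not clear whether $K_{2d+1}$ is globally rigid in $\ell_\infty^d$ in general. So there is no proof in the paper to compare against, and your proposal does not close the conjecture either: it is a reduction, not a proof. The reduction itself is sound and mirrors the paper's own template for \Cref{t:linfplane} and \Cref{t:project} --- obtain a generic infinitesimally rigid realisation from \Cref{p:k2d} by a small perturbation preserving $\phi_p$ (using \Cref{t:colour}), pass to $\ell_q^d$ for large even $q$ via \Cref{lem:approxlpinfrigid}, and finish with \Cref{t:lpapprox}. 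Your observations that \Cref{t:shin2} cannot supply the $\ell_q^d$ input (since $\binom{2d+1}{2}=2d^2+d<2d^2+2d$, the graph $K_{2d+1}$ has no infinitesimally rigid realisation in $\ell_q^{d+1}$ by \Cref{t:maxwell}) and that \Cref{t:main2} cannot apply (all $d$ monochromatic subgraphs would have to be Hamilton cycles, which the parity argument of \Cref{ex:k5} forbids) are correct, and they correctly diagnose why the paper's machinery stops at $2d+2$.

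The genuine gap is exactly the step you yourself flag as ``the main step'': certifying that some fixed generic realisation of $K_{2d+1}$ is globally rigid in $\ell_{s_n}^d$ for an increasing sequence of even integers $s_n$. You propose to do this by exhibiting a maximal-rank equilibrium $\ell_q$-stress, but no such stress is constructed and none of the three candidate configurations you list is analysed; the phrase ``granting the main step'' concedes that the argument is incomplete precisely where the difficulty lies. Two further points would need care even once a candidate configuration is found: (i) the exact form of the Sugiyama--Tanigawa stress condition in $\ell_q^d$ --- including the correct target rank of the stress matrix and whether maximality of that rank propagates to generic configurations in their non-Euclidean setting --- must be verified against \cite{st24} rather than transplanted from Connelly's Euclidean statement; and (ii) for \Cref{t:lpapprox} you need a single generic $p$ that is simultaneously rigid in $\ell_\infty^d$ and globally rigid in $\ell_{s_n}^d$ for every $n$, which your plan does arrange, but only conditionally on the main step holding for \emph{all} sufficiently large even $q$ at \emph{every} generic realisation. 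As written, the proposal is a reasonable and well-motivated plan of attack, but the conjecture remains open.
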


\section{Sufficient conditions for global rigidity}\label{sec:suff}

Unfortunately, our previous results (outside of dimension 2) can be rather difficult to work with: \Cref{thm:compute} requires large amounts of computations, and \Cref{t:lpapprox} requires a strong understanding of how global rigidity behaves in infinitely-many $\ell_q$ spaces.
Our aim in this section is to obtain a more workable sufficient condition for global rigidity in higher dimensions.
In particular, we prove \Cref{t:main2},
which describes an easy-to-check sufficient condition for a generic framework to be global rigidity in $\ell_\infty^d$.

\subsection{Strong directed colourings}

We begin with the following definition.

\begin{definition}\label{def:strong}
	Let $\phi$ be a directed colouring of $G$ with respect to the polyhedral normed space $(\mathbb{R}^d,\|\cdot\|_\mathcal{P})$.
	We say $\phi$ is \emph{strong} if for every $\phi' \in \Phi_G$,
	the inclusion $ \col M(G,\phi) \subseteq \col M(G,\phi')$ holds if and only if $\phi'$ is isometric to $\phi$.
\end{definition}

Our main interest in strong directed colourings is their application for determining global rigidity.

\begin{theorem}\label{t:main1}
    Let $(G,p)$ be a generic framework in $(\mathbb{R}^d,\|\cdot\|_\mathcal{P})$.
    If $\phi_p$ is a strong directed colouring,
    then $(G,p)$ is globally rigid in $(\mathbb{R}^d,\|\cdot\|_\mathcal{P})$.
\end{theorem}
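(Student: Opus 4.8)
The plan is to combine \Cref{thm:compute} with the genericity tools from \Cref{subsec:generic}. First I would check that a generic framework $(G,p)$ with $\phi_p$ strong is automatically rigid. Indeed, $\phi_p$ being strong means no directed colouring $\phi'$ with $\col M(G,\phi_p) \subsetneq \col M(G,\phi')$ exists unless it is isometric; but taking $\phi'$ to be a colouring where some edge $e$ is reassigned the zero vector (equivalently, deleting a column-block's contribution) would give a strictly smaller matrix, so the contrapositive direction of the definition forces $\col M(G,\phi_p)$ to be ``locally maximal''. More directly: if $(G,p)$ were flexible, then $\rank M(G,\phi_p) < d|V|-d$, and one can choose $\phi'$ by changing the colour of a single edge so that $\col M(G,\phi_p) \subsetneq \col M(G,\phi')$ without $\phi'$ being isometric to $\phi_p$ (the rank strictly increases), contradicting strongness. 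So $(G,p)$ is rigid, and \Cref{thm:compute} applies.

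Next, to invoke \Cref{thm:compute} I must verify condition \ref{thm:compute2}: for every $\phi \in \Phi_G$ with $f_G(p) \in \col M(G,\phi)$, either $\phi$ is isometric to $\phi_p$, or $f_G(p) \notin f_G(M(G,\phi)^{-1}(f_G(p)))$. So fix such a $\phi$ that is \emph{not} isometric to $\phi_p$. By \Cref{l:generic} (using genericity and $f_G(p) \in \col M(G,\phi)$), we get $\col M(G,\phi_p) \subseteq \col M(G,\phi)$, and since $(G,p)$ is rigid, in fact $\col M(G,\phi_p) = \col M(G,\phi)$. But now strongness of $\phi_p$ bites: $\col M(G,\phi_p) \subseteq \col M(G,\phi)$ with $\phi$ not isometric to $\phi_p$ is precisely what the definition of \emph{strong} forbids. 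Hence there is \emph{no} non-isometric $\phi$ with $f_G(p) \in \col M(G,\phi)$ at all, so condition \ref{thm:compute2} holds vacuously, and $(G,p)$ is globally rigid.

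Let me restructure to make the logic clean. The two steps are: (1) $\phi_p$ strong $\Rightarrow$ $(G,p)$ rigid (for generic $p$); (2) given rigidity, \Cref{thm:compute}\ref{thm:compute2} is satisfied because \Cref{l:generic} turns any dangerous $\phi$ (one with $f_G(p)\in\col M(G,\phi)$) into one satisfying $\col M(G,\phi_p) = \col M(G,\phi)$, which strongness then forces to be isometric to $\phi_p$. For step (1), the cleanest argument: suppose $(G,p)$ is not rigid, so by \Cref{t:dk}, $\rank M(G,\phi_p) \le d|V|-d-1$; pick any edge $e = vw$ and let $\phi'$ agree with $\phi_p$ except possibly we are free to alter $\phi'(e)$ among the faces of $\mathcal P$ — more carefully, extend to a colouring whose matrix strictly contains $\col M(G,\phi_p)$ in its column space (such a colouring exists since $\rank M(G,\phi_p) < d|V|-d$ and a single generic realisation attains rank $d|V|-d$ for \emph{some} colouring, because $G$ has an infinitesimally rigid realisation whenever it is $\mathcal M(d,d)$-tight — but we actually do not even need $(G,p)$ rigid a priori; we can instead argue that $\col M(G,\phi_p)$ is always contained in $\col M(G,\phi)$ for the colouring $\phi$ of any framework $(G,q)$ with $q$ in a suitable generic neighbourhood, and strongness + isometry-counting pins things down).

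\textbf{Main obstacle.} The delicate point is step (1): showing that a generic $(G,p)$ with $\phi_p$ strong must be rigid. The cheap route — "just change one edge's colour to increase the rank" — needs care because an arbitrary recolouring need not produce a column space \emph{containing} $\col M(G,\phi_p)$; one must choose the new colouring so that $\col M(G,\phi_p) \subseteq \col M(G,\phi')$ holds and the inclusion is strict. I expect the right way around this is to observe that if $(G,p)$ is flexible then the argument of \Cref{l:generic} shows that \emph{every} colouring $\phi$ with $f_G(p) \in \col M(G,\phi)$ satisfies $\col M(G,\phi_p) \subseteq \col M(G,\phi)$, and among these there must be one of strictly larger rank (otherwise $G$ would have no infinitesimally rigid realisation in $(\mathbb R^d,\|\cdot\|_{\mathcal P})$ at all, yet strong colourings would be impossible to exhibit in the flexible case) — contradicting strongness. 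Making this last sentence rigorous, rather than hand-wavy, is where the real work lies; everything after step (1) is a direct quotation of \Cref{thm:compute} and \Cref{l:generic}.
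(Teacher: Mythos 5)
Your second step is, almost word for word, the paper's entire proof: take any $\phi \in \Phi_G$ with $f_G(p) \in \col M(G,\phi)$, apply \Cref{l:generic} to obtain $\col M(G,\phi_p) \subseteq \col M(G,\phi)$, let strongness force $\phi$ to be isometric to $\phi_p$, and conclude with \Cref{thm:compute}. (You do not even need the equality $\col M(G,\phi_p) = \col M(G,\phi)$; the inclusion is already what \Cref{def:strong} tests.) So the heart of your argument is correct and is the intended one.

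Your step (1) --- establishing that $(G,p)$ is rigid so that \Cref{thm:compute} may legitimately be invoked --- is a point the paper does not address at all: its proof applies \Cref{thm:compute} without comment. Your instinct that something must be said here is sound, but none of the routes you sketch can be made to work, because the implication ``generic with $\phi_p$ strong $\Rightarrow$ rigid'' is false. Take $G = K_2$ in $\ell_\infty^2$ with $p$ generic. Then $M(G,\phi_p)$ is a nonzero $1 \times 4$ matrix, so $\col M(G,\phi_p) = \mathbb{R}^E$; the only directed colouring whose column space fails to contain this is the zero colouring, and every nonzero colouring is isometric to $\phi_p$ because the linear isometries of $\ell_\infty^2$ act transitively on the faces of the square. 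Hence $\phi_p$ is strong, yet $\rank M(G,\phi_p) = 1 < d|V| - d = 2$, so $(G,p)$ is flexible and certainly not globally rigid. In particular your proposed repairs (recolouring one edge to strictly enlarge the column space, or arguing that some colouring of larger rank must also contain $\col M(G,\phi_p)$) cannot succeed in general. The clean fix is to add rigidity of $(G,p)$ as a hypothesis (as \Cref{p:strcol2} does), or equivalently to note that in the paper's actual application, \Cref{t:main2}, the monochromatic subgraphs of $(G,\phi_p)$ are 2-connected, hence connected, so \Cref{t:colour} and \Cref{t:dk} supply the rigidity needed before \Cref{thm:compute} is applied. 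Do not try to derive rigidity from strongness.
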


\begin{proof}
    Choose any $\phi \in \Phi_G$ where $f_G(p) \in \col M(G,\phi)$.
    By \Cref{l:generic},
    $\col M(G,\phi_p) \subset \col M(G,\phi)$,
    and, since $\phi_p$ is strong, $\phi$ is isometric to $\phi_p$.
    Hence by \Cref{thm:compute},
    $(G,p)$ is globally rigid.
\end{proof}

The existence of a strong directed colouring for a graph does not, however, guarantee the existence of a framework with a strong directed colouring.
It does, however, guarantee that a framework satisfies Hendrickson's condition.

\begin{proposition}\label{p:strcol2}
    Let $(G,p)$ be a well-positioned and rigid framework in $(\mathbb{R}^d,\|\cdot\|_\mathcal{P})$.
    If $\phi_p$ is a strong directed colouring, then $(G,p)$ is redundantly rigid and $G$ is $\mathcal{M}(d,d)$-connected.
\end{proposition}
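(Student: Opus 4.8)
The plan is to show that a strong directed colouring $\phi_p$ forces redundant rigidity edge by edge, and then to deduce $\mathcal{M}(d,d)$-connectivity from this together with $2$-connectivity. First I would fix an edge $e \in E$ and consider the framework $(G-e,p)$; the goal is to show it is infinitesimally rigid in $(\mathbb{R}^d,\|\cdot\|_\mathcal{P})$, which by \Cref{t:dk} is equivalent to $\rank M(G-e,\phi_p|_{E \setminus e}) = d|V|-d$, i.e.\ that deleting the row of $M(G,\phi_p)$ indexed by $e$ does not drop the rank. Equivalently, the row of $M(G,\phi_p)$ corresponding to $e$ must lie in the span of the remaining rows, i.e.\ the cokernel $\ker M(G,\phi_p)^T$ has no nonzero vector supported on the single coordinate $e$.

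The key step is to argue by contradiction using strength. Suppose $(G-e,p)$ is \emph{not} infinitesimally rigid. Then deleting the $e$-row does drop the rank, so there is a cokernel vector $\omega \in \ker M(G,\phi_p)^T$ with $\omega_e \neq 0$; after scaling we may take $\omega_e = 1$. Now build a new directed colouring $\phi'$ by keeping $\phi'(f) = \phi_p(f)$ for all $f \neq e$ and setting $\phi'(e) = \phi''(e)$ for some face $\phi''(e) \in \mathcal{F}$ \emph{different} from $\phi_p(e)$ and not isometric-compatible with it — more precisely, I would choose $\phi'$ so that $\col M(G,\phi') \supseteq \col M(G,\phi_p)$ while $\phi'$ is not isometric to $\phi_p$, contradicting strength. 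The cleanest way to realise this: since the $e$-row of $M(G,\phi_p)$ is already in the span of the other rows (that is what the cokernel vector encodes, rewritten as $M(G,\phi_p)^T \omega = \mathbf{0}$ with $\omega_e \neq 0$ giving a linear dependence among the rows), changing the $e$-row to \emph{any} vector of the form $\pm b_{|\phi_p(e)|}$ replaced by another face normal $g$ supported appropriately cannot shrink the column space below $\col M(G-e, \phi_p|_{E\setminus e}) = \col M(G,\phi_p)$; hence $\col M(G,\phi') \supseteq \col M(G,\phi_p)$ for the modified colouring $\phi'$, while $\phi' \neq \phi_p$ on $e$ can be chosen non-isometric. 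This contradicts the definition of strong. Therefore $(G-e,p)$ is infinitesimally rigid for every $e$, i.e.\ $(G,p)$ is redundantly rigid.

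For the second conclusion, since $(G,p)$ is redundantly rigid in a polyhedral normed space (which has finitely many linear isometries), \Cref{p:connequiv} gives that $G$ is $(d,d)$-redundant, and moreover it yields $\mathcal{M}(d,d)$-connectivity provided $G$ is $2$-connected. So it remains to verify $2$-connectivity of $G$. I would obtain this either directly from strength — a strong directed colouring forces enough rigidity that a cut vertex would produce a "hinge" flex, contradicting rigidity of the two rigid sides combined, mirroring the analytic-normed-space argument behind \Cref{t:analyticsuff} — or, more cheaply, by invoking \Cref{l:km}: a graph that is $(d,d)$-redundant need not be $2$-connected in general, so the $2$-connectivity genuinely has to be extracted. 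The honest route is: if $v$ were a cut vertex separating $G$ into $G_1, G_2$ sharing only $v$, then any realisation has an isometry acting on the $G_2$-side (fixing $v$ and the image of $G_1$) that is not induced by a global isometry — since $(\mathbb{R}^d,\|\cdot\|_\mathcal{P})$ with $d \geq 2$ has spheres admitting nontrivial self-maps fixing a point (cf.\ \Cref{fig:square} and \Cref{prop:genericflexible}) — contradicting the rigidity forced by strength at generic $p$.

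The main obstacle I anticipate is the middle step: making precise the claim that modifying a single "redundant" row of $M(G,\phi_p)$ to another face-normal direction yields a colouring $\phi'$ with $\col M(G,\phi') \supseteq \col M(G,\phi_p)$ that is genuinely non-isometric. One must be careful that the replacement face normal is available (there are at least two non-antipodal-equivalent faces in each coordinate block when $d\geq 2$, or one can replace $\phi_p(e)$ by $\mathbf{0}$, which strictly enlarges nothing but is certainly non-isometric) and that "isometric" in \Cref{def:strong} really is avoided — i.e.\ that no linear isometry $T$ of $(\mathbb{R}^d,\|\cdot\|_\mathcal{P})$ carries $\phi_p$ to $\phi'$ when they agree on all but one edge. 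The latter is where I would spend the most care, possibly using that a linear isometry permuting $\mathcal{F}$ and fixing $\phi_p(f)$ for all $f$ in a rigid (hence spanning, since $\rank = d|V|-d$) subframework must be the identity, so $\phi' = \phi_p$, a contradiction.
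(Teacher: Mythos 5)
Your overall strategy for redundant rigidity (contradict strength by altering $\phi_p$ on a single edge) is the right one, and your closing remark about how to rule out a nontrivial isometry $T$ matches the paper's argument. But the central linear-algebra step is reversed, and this breaks the proof. If $(G-e,p)$ is \emph{not} infinitesimally rigid, then deleting the $e$-row of $M(G,\phi_p)$ drops the rank, which means the $e$-row is \emph{not} in the span of the remaining rows; equivalently, \emph{every} cokernel vector $\sigma \in \ker M(G,\phi_p)^T$ satisfies $\sigma(e)=0$. You assert the opposite (a cokernel vector with $\omega_e \neq 0$, hence ``the $e$-row is already in the span of the other rows''), which is the characterisation of $(G-e,p)$ \emph{being} infinitesimally rigid — the conclusion you are trying to reach, not its negation. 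Consequently your justification that replacing the $e$-row by an arbitrary face normal (or by $\mathbf{0}$) yields $\col M(G,\phi') \supseteq \col M(G,\phi_p)$ collapses: changing one row to a vector that is not a scalar multiple of the original changes the column space in general, and the displayed identity $\col M(G-e,\phi_p|_{E\setminus e}) = \col M(G,\phi_p)$ does not even typecheck (the two spaces live in $\mathbb{R}^{E\setminus e}$ and $\mathbb{R}^E$). The correct construction, which is what the paper does, is to set $\phi(e) = -\phi_p(e)$ (available by central symmetry of $\mathcal{P}$) and $\phi=\phi_p$ elsewhere: since every cokernel vector vanishes at $e$, negating that single row leaves $\ker M(G,\cdot)^T$, and hence the column space, unchanged, giving $\col M(G,\phi) = \col M(G,\phi_p)$ and the desired contradiction with \Cref{def:strong}.

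Your route to $\mathcal{M}(d,d)$-connectivity is also problematic. The ``hinge'' argument for $2$-connectivity is exactly the piece of Euclidean intuition that fails in polyhedral normed spaces: because there are no continuous rotations fixing a point (cf.\ \Cref{prop:genericflexible} and the discussion around Figure 1), gluing two rigid pieces at a cut vertex does \emph{not} produce a flex, and indeed rigid well-positioned frameworks with cut vertices exist (two $(2,2)$-tight graphs sharing a vertex form a $(2,2)$-tight graph). So $2$-connectivity cannot be extracted this way, and the \Cref{p:connequiv}/\Cref{l:km} route stalls. The paper avoids this entirely: it perturbs $(G,p)$ to a nearby \emph{generic} realisation $(G,\tilde p)$ with $\phi_{\tilde p}=\phi_p$, applies \Cref{t:main1} to conclude $(G,\tilde p)$ is globally rigid, and then invokes \Cref{t:globness} to get $\mathcal{M}(d,d)$-connectivity directly. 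You would need to adopt that argument (or supply a genuinely new proof of $2$-connectivity) for the second conclusion.
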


\begin{proof}
    ($G$ is $\mathcal{M}(d,d)$-connected):
    Perturb $(G,p)$ to a sufficiently close generic realisation $(G,\tilde{p})$ where $\phi_p=\phi_{\tilde{p}}$.
    By \Cref{t:main1},
    $(G,\tilde{p})$ is globally rigid.
    Hence, $G$ is $\mathcal{M}(d,d)$-connected by \Cref{t:globness}.
    
    ($(G,p)$ is redundantly rigid):
    Suppose for contradiction that $\phi_p$ is strong, $(G,p)$ is rigid, and there exists an edge $vw \in E$ such that $(G-vw,p)$ is not rigid.
    As $\rank df_{G-vw}(p) = \rank df_G(p)-1$ and $ df_G(p) =M(G,\phi_p)$,
    it follows that removing the row of $M(G,\phi_p)$ corresponding to $vw$ will change the rank.
    Hence for each $\sigma \in \ker M(G,\phi_p)^T$ we have $\sigma(vw) = 0$.
    Define $\phi$ to be the directed colouring where $\phi(vw) = -\phi_p(vw)$ and $\phi(e) = \phi_p(e)$ for every edge $e \neq vw$.
    As linear dependencies of the rows of $M(G,\phi)$ and $M(G,\phi_p)$ are identical,
    we have $\col M(G,\phi) = \col M(G,\phi_p)$.
    Since $\phi_p$ is strong, there exists a linear isometry $T$ so that $\phi = T \circ \phi_p$.
    Pick any vertex $u \notin \{v,w\}$ and let $F$ be all the edges with $u$ as an end.
    The set of vectors $\{ \phi_p(e) : e \in F \}$ must span $\mathbb{R}^d$, or else $(G,p)$ would be flexible.
    Since $T \circ \phi_p(e) = \phi(e)$ for each $e \in F$, it follows that $T = I_d$ and $\phi = \phi_p$.
    However this now leads to a contradiction,
    since $\phi_p(vw) \neq \phi(vw)$ and $\mathbf{0} \notin \mathcal{F}$.
\end{proof}

It follows from \Cref{cor:linfplane} and \Cref{p:strcol2} that the converse of \Cref{t:main1} is false for $\ell_\infty^2$;
for example, every $\mathcal{M}(2,2)$-circuit has a well-positioned globally rigid realisation in $\ell_\infty^2$, but has no redundantly rigid realisations in $\ell_\infty^2$ due to only having $2|V|-1$ edges.
Additionally assuming $|E| \geq d|V|$ does not help matters either; 
for example, $K_5$ has well-positioned globally rigid realisations in $\ell_\infty^2$, but has no redundantly rigid realisations in $\ell_\infty^2$ (\Cref{ex:k5}).

\subsection{Strongly directed colourings in \texorpdfstring{$\ell_\infty$}{L-infinity} normed spaces}

For this section we fix our normed space to be $\ell_\infty^d = (\mathbb{R}^d,\| \cdot\|_\infty)$, and we use the notation described in \Cref{subsec:linf} throughout. 

Our aim now is to prove \Cref{t:main2}.
To do so, we require the following key result.

\begin{lemma}\label{l:linfstrongcol}
    Let $\phi$ be a directed colouring of a graph $G=(V,E)$ where every monochromatic subgraph of $(G,\phi)$ is 2-connected.
    Then $\phi$ is a strong directed colouring.
\end{lemma}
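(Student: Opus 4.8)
The plan is to verify Definition~\ref{def:strong} directly: given a directed colouring $\phi$ of $G=(V,E)$ in $\ell_\infty^d$ all of whose monochromatic subgraphs $G_i(\phi)$ are $2$-connected, I must show that $\col M(G,\phi)\subseteq\col M(G,\phi')$ forces $\phi'$ to be isometric to $\phi$. The ``if'' direction is automatic, since $\phi' = T\circ\phi$ for a linear isometry $T$ gives $M(G,\phi') = M(G,\phi)(I_{|V|}\otimes T^{-1})$ (up to the obvious reindexing), so the column spaces coincide. The work is all in the ``only if'' direction.

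First I would pin down $\col M(G,\phi)^\perp = \ker M(G,\phi)^T$. A left-null vector $\sigma\in\mathbb{R}^E$ of $M(G,\phi)$ assigns a scalar $\sigma(e)$ to each edge so that, for every vertex $u$ and coordinate $i\in[d]$, $\sum_{e\ni u}\pm\sigma(e)\phi(e)_i = 0$. Since $\phi(e) = \sgn(\phi)(e)\,b_{|\phi|(e)}$, this decouples across colours: for each $i\in[d]$, the vector $(\sgn(\phi)(e)\sigma(e))_{e\in E_i(\phi)}$ lies in the left-null space of the (signed) incidence matrix of the monochromatic subgraph $G_i(\phi)$. Because $G_i(\phi)$ is $2$-connected, in particular connected, that signed incidence matrix has rank $|V|-1$, so its cokernel is $1$-dimensional, spanned by a single vector $\tau_i$ supported on $E_i(\phi)$. (Here ``signed incidence matrix'' means the matrix with a $+1/-1$ pattern matching the directions coming from $\phi$; its cokernel is one-dimensional precisely because $G_i(\phi)$ is connected, with the spanning vector determined by walking the graph.) Hence $\ker M(G,\phi)^T$ has a basis $\{\widehat\tau_i : i\in[d]\}$ where $\widehat\tau_i$ extends $\tau_i$ by zero, and $\col M(G,\phi)$ is the orthogonal complement of this span.

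Next, suppose $\col M(G,\phi)\subseteq\col M(G,\phi')$, equivalently $\ker M(G,\phi')^T\subseteq\ker M(G,\phi)^T = \operatorname{span}\{\widehat\tau_i\}$. Pick any edge $e$ with $|\phi|(e) = i$; I claim $|\phi'|(e) = i$ too. Since $G_i(\phi)$ is $2$-connected, $G_i(\phi)-e$ is still connected and spanning, so $G_i(\phi)$ contains a cycle $C$ through $e$; the vector $\tau_i$ is nonzero on every edge of a connected spanning monochromatic subgraph, so in particular $\widehat\tau_i(e)\neq 0$. Now $\widehat\tau_i\in\ker M(G,\phi)^T$ but I need a left-null vector of $M(G,\phi')$ that is nonzero on $e$; the cleanest route is to show that the column of $M(G,\phi')$ indexed by $(u,i)$ (for $u$ an endpoint of $e$) combined with the inclusion of cokernels pins down the colour of $e$ under $\phi'$. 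Concretely: were $|\phi'|(e) = j\neq i$, then every $\sigma\in\ker M(G,\phi')^T$ supported so as to detect $e$ would have to be built from $\tau_j$'s of $G$, but $\widehat\tau_i$ (which does see $e$) would then not lie in $\ker M(G,\phi')^T$ in a way compatible with the colour-$j$ decoupling — I'll derive a contradiction by evaluating $M(G,\phi')\widehat p$ against $\widehat\tau_i$ for a suitable realisation, or more combinatorially by comparing supports of the two cokernel bases. Once $|\phi'| = |\phi|$ as colourings, the monochromatic subgraphs agree, and on each colour class $i$ the one-dimensional cokernels $\tau_i$ (for $\phi$) and $\tau_i'$ (for $\phi'$) must be proportional — but both have entries in $\{+1,-1\}$ after dividing out the sign maps, so they encode the same orientation up to a global sign per colour; that global sign per colour is realised by the linear isometry $T$ of $\ell_\infty^d$ that flips the $i$-th coordinate exactly when the signs disagree. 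Such a $T$ is a genuine linear isometry of $\ell_\infty^d$ (a signed permutation fixing the coordinate axes), and by construction $\phi' = T\circ\phi$, so $\phi'$ is isometric to $\phi$.

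The main obstacle I expect is the middle step: showing that the containment of cokernels forces $|\phi'| = |\phi|$ edge by edge. The $2$-connectivity hypothesis is exactly what I need here — it guarantees each monochromatic subgraph is ``rigid enough'' that its cokernel vector $\tau_i$ has full support on its edge set and cannot be split off onto a smaller subgraph, which is what blocks $\phi'$ from recolouring an edge to a different class without enlarging the cokernel. I will likely phrase this via the observation that for a $2$-connected graph $H$, deleting any edge leaves it connected, so the cokernel of the incidence matrix of $H$ restricted to $H-e$ still has dimension $1$ and hence $\tau_i$ cannot vanish on $e$; combining this over all edges forces the colour classes of $\phi$ and $\phi'$ to coincide before any sign analysis is needed.
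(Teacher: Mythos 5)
Your proposal contains a genuine error at its foundation. You assert that, because $G_i(\phi)$ is connected, the cokernel (left null space) of its signed incidence matrix is $1$-dimensional, spanned by a single vector $\tau_i$ supported on $E_i(\phi)$. This confuses the kernel with the cokernel: the incidence matrix $I(G_i(\phi))$ is $|E_i(\phi)|\times|V|$ of rank $|V|-1$, so its \emph{right} null space is $1$-dimensional (spanned by a vector indexed by vertices), while its \emph{left} null space is the cycle space of $G_i(\phi)$, of dimension $|E_i(\phi)|-|V|+1$ --- which for a $2$-connected graph is at least $1$ but generally much larger (e.g.\ $3$ for $K_4$). Consequently $\ker M(G,\phi)^T$ is not spanned by $d$ vectors $\widehat\tau_i$; it is the direct sum of the $d$ monochromatic cycle spaces (this is \Cref{l:cyclespace}). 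Everything downstream of this claim collapses: there is no single ``full-support'' vector $\tau_i$ per colour, the full-support argument via $H-e$ is based on the same miscount, and the final step comparing ``the one-dimensional cokernels $\tau_i$ and $\tau_i'$'' and reading off a sign per colour has no meaning.

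The middle step is also both unproven and false as stated. You claim $|\phi'|(e)=|\phi|(e)$ for every edge, but if $\phi'=T\circ\phi$ for a coordinate-permuting isometry $T$ then the column spaces agree while the colours do not; at best the \emph{partition} into colour classes is preserved, up to a permutation of $[d]$ (this is \Cref{l:samemonosub}\ref{l:samemonosub.1}). Moreover your proposed derivation (``evaluating $M(G,\phi')\widehat p$ against $\widehat\tau_i$ \dots or more combinatorially by comparing supports'') is an admission that you do not yet have the argument. The paper's route is: first upgrade $\col M(G,\phi)\subseteq\col M(G,\lambda)$ to equality via the rank bound $\rank M(G,\lambda)\le d|V|-d=\rank M(G,\phi)$ (needed because the cycle vectors of $\phi$ live in $\ker M(G,\phi)^T$, which is the \emph{larger} side of your containment); then show that each monochromatic cycle vector $\delta_{C,\phi}$ lying in $\ker M(G,\lambda)^T$ forces $C$ to be monochromatic for $\lambda$ with $\delta_{C,\lambda}=\pm\delta_{C,\phi}$ (\Cref{l:cycles2}, using the block structure of $M(G,\lambda)$ and the fact that a cycle has no proper $2$-edge-connected subgraph); finally, $2$-connectivity of each $G_i(\phi)$ guarantees any two edges of the same colour lie on a common monochromatic cycle, which transports both the colour class and the relative sign, after which rotations and reflections normalise $\lambda$ to $\phi$. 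You would need to rebuild your proof around the cycle space rather than a phantom $1$-dimensional cokernel.
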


Before we prove \Cref{l:linfstrongcol}, we set out the following notation.

Let $G=(V,E)$ be a graph with a fixed vertex ordering and choose any directed colouring $\phi \in \Phi_G$.
With this we define the $|E_i(\phi)| \times |V|$ oriented incidence matrix $I(G_i(\phi))$ by setting every entry $(e,u) \in E \times V$ ($e = vw$ with $v <w$) to be
\begin{align*}
	I(G_i(\phi))_{e,u} =
	\begin{cases}
		1 &\text{if } u = v  \text{ and } \sgn(\phi(e)) = 1, \text{ or } u = w \text{ and } \sgn(\phi(e)) = -1,\\
		-1 &\text{if } u = v  \text{ and } \sgn(\phi(e)) = -1, \text{ or } u = w \text{ and } \sgn(\phi(e)) = 1,\\
		0 &\text{otherwise.}
	\end{cases}
\end{align*}
Throughout the remainder of the section we assume that the rows and columns of each matrix $M(G,\phi)$ have been reordered so that
\begin{align*}
	M(G,\phi) = I(G_1(\phi)) \oplus \cdots \oplus I(G_d(\phi)) =
	\begin{bmatrix}
		I(G_1(\phi)) & & \\
		& \ddots & \\
		& &  I(G_d(\phi))
	\end{bmatrix},
\end{align*}
where every off-diagonal block matrix is an all-zeroes matrix.

Let $C = \{v_1v_2, v_2v_3 ,\ldots , v_{k-1} v_k, v_k v_1\}$ be a cycle of $G$.
We now define the vector $\delta_{C,\phi} \in \mathbb{R}^E$ as follows:
\begin{align*}
    \delta_{C,\phi}(e)
    := 
    \begin{cases}
        \sgn(\phi)(e) &\text{if } e = v_i v_{i+1}, ~ v_i < v_{i+1} \text{ (with $v_{k+1} = v_1$)},  \\
        -\sgn(\phi)(e) &\text{if } e = v_i v_{i+1}, ~ v_i > v_{i+1} \text{ (with $v_{k+1} = v_1$)},  \\
        0 &\text{otherwise}.
    \end{cases}
\end{align*}
%
%
%
For each $i \in \{1,\ldots,d\}$,
we set $\mathcal{C}_i(\phi)$ to be the set of all cycles contain in $G_i(\phi)$.

\begin{lemma}\label{l:cyclespace}
    Let $G$ be a graph and $\phi \in \Phi_G$.
    Then the set $\bigcup_{i=1}^d \{\delta_{C,\phi} : C \in \mathcal{C}_i(\phi)\}$ forms a basis of $\ker M(G,\phi)^T$.
\end{lemma}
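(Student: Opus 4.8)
The plan is to exploit the block-diagonal structure $M(G,\phi) = I(G_1(\phi)) \oplus \cdots \oplus I(G_d(\phi))$ that the paper has just set up, which reduces everything to a well-known fact about oriented incidence matrices. First I would observe that, because $M(G,\phi)$ is block-diagonal with blocks indexed by colour, its left null space decomposes as a direct sum $\ker M(G,\phi)^T = \bigoplus_{i=1}^d \ker I(G_i(\phi))^T$, where each summand is the left null space of the $|E_i(\phi)| \times |V|$ oriented incidence matrix of the monochromatic subgraph $G_i(\phi)$. So it suffices to prove the $d=1$ case: for a single graph $H$ with an orientation (here encoded by $\sgn(\phi)$ on the edges of $G_i(\phi)$), the set $\{\delta_{C,\phi} : C \in \mathcal{C}_i(\phi)\}$ is a basis of $\ker I(H)^T$.

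For this single-colour statement I would proceed in two steps. (1) \emph{Each $\delta_{C,\phi}$ lies in $\ker I(G_i(\phi))^T$.} This is a direct computation: $\delta_{C,\phi}$ is supported on the edges of the cycle $C$, and for any vertex $u$ the entry of $I(G_i(\phi))^T \delta_{C,\phi}$ at $u$ sums $\pm\delta_{C,\phi}(e)$ over edges $e$ of $C$ incident to $u$; by the definition of $\delta_{C,\phi}$ (which is precisely $\sgn(\phi)(e)$ up to the sign correction for the fixed vertex ordering) the two cycle edges at $u$ contribute with opposite signs and cancel, while vertices not on $C$ contribute $0$. In other words $\delta_{C,\phi}$ is, up to the bookkeeping sign $\sgn(\phi)(e)$ relating $M$'s rows to $I(G_i(\phi))$'s rows, the usual signed indicator vector of a directed cycle, which is classically in the kernel of the transposed incidence matrix. (2) \emph{The $\delta_{C,\phi}$, as $C$ ranges over all cycles of $G_i(\phi)$, span $\ker I(G_i(\phi))^T$ and a maximal independent subset forms a basis.} Here I would invoke the standard fact from algebraic graph theory that for a connected graph on $n$ vertices with $m$ edges, the incidence matrix has rank $n-1$, hence its left null space (the \emph{cycle space}) has dimension $m-n+1$, and the indicator vectors of the cycles span it; choosing the fundamental cycles with respect to any spanning tree gives an explicit basis. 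For a disconnected graph one takes the direct sum over components. Since $\{\delta_{C,\phi} : C \in \mathcal{C}_i(\phi)\}$ contains these fundamental-cycle vectors and all its members lie in the cycle space by step (1), it spans $\ker I(G_i(\phi))^T$; any spanning subset of a vector space contains a basis, so the claim follows. Reassembling over $i = 1,\ldots,d$ via the direct sum decomposition then gives the lemma.

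The main obstacle — really the only nontrivial point — is getting the sign conventions to line up cleanly: the rows of $M(G,\phi)$ carry the factor $\phi(e) = \sgn(\phi)(e)\,b_{|\phi(e)|}$, whereas $I(G_i(\phi))$ is defined with its own sign rule tied to both $\sgn(\phi(e))$ \emph{and} the fixed vertex ordering $v<w$, and $\delta_{C,\phi}$ has a further sign flip depending on whether the cycle traverses an edge in the ``increasing'' or ``decreasing'' direction. I would handle this by fixing, once and for all, the dictionary ``row $e$ of $M(G,\phi)$ in block $i = |\phi(e)|$ equals $\sgn(\phi)(e)$ times row $e$ of $I(G_i(\phi))$'', checking that $\delta_{C,\phi}(e)$ is exactly defined so that $\sum_{e \in C} \delta_{C,\phi}(e) \cdot (\text{row } e \text{ of } I(G_i(\phi)))$ telescopes around the cycle, and then reducing to the orientation-free statement. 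Everything else is the textbook description of the cycle space of a graph, so beyond the sign bookkeeping the proof is short.
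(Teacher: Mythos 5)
Your proposal is correct and follows essentially the same route as the paper: the paper's proof is precisely the two-line observation that each set $\{\delta_{C,\phi} : C \in \mathcal{C}_i(\phi)\}$ spans the cycle space $\ker I(G_i(\phi))^T$ and that the block-diagonal structure of $M(G,\phi)$ assembles these kernels into $\ker M(G,\phi)^T$. (One small bookkeeping note: the dictionary between row $e$ of $M(G,\phi)$ and row $e$ of $I(G_{|\phi|(e)}(\phi))$ is the identity rather than multiplication by $\sgn(\phi)(e)$ --- the displayed decomposition $M(G,\phi)=I(G_1(\phi))\oplus\cdots\oplus I(G_d(\phi))$ already has the signs absorbed into the incidence matrices --- but this does not affect your argument.)
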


\begin{proof}
	Let $\phi \in \Phi_G$.    
    Then each set $\{\delta_{C,\phi} : C \in \mathcal{C}_i(\phi)\}$ forms a basis of $I(G_i(\phi))$.
    The result now follows from the structure of $M(G,\phi)$.
\end{proof}

\begin{lemma}\label{l:equivtypes}
    Let $\phi,\lambda \in \Phi_G$.
    Then the following properties are equivalent:
    \begin{enumerate}
        \item \label{l:equivtypes1} $\col M(G,\phi) = \col M(G,\lambda)$.
        \item \label{l:equivtypes2} $\ker M(G,\phi)^T = \ker M(G,\lambda)^T$.
        \item \label{l:equivtypes3} $\col M(G,\phi) \subseteq \col M(G,\lambda)$ and $\delta_{C,\phi} \in \ker M(G,\lambda)^T$ for every monochromatic cycle $C$ of $(G,\phi)$.
    \end{enumerate}
\end{lemma}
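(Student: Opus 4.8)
The plan is to reduce everything to two standard facts: the orthogonality duality $\col M = (\ker M^T)^\perp$ valid for any real matrix $M$, and the explicit description of $\ker M(G,\phi)^T$ in terms of monochromatic cycles given by \Cref{l:cyclespace}. Since two matrices of equal height have the same column space if and only if they have the same left null space (their column spaces being the orthogonal complements of their left null spaces), the equivalence of \ref{l:equivtypes1} and \ref{l:equivtypes2} is immediate and formal. It therefore remains to show \ref{l:equivtypes2}$\iff$\ref{l:equivtypes3}, and for this I would argue by a short bookkeeping of inclusions, feeding in the cycle basis of \Cref{l:cyclespace} at the two places where it is needed.

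For \ref{l:equivtypes2}$\Rightarrow$\ref{l:equivtypes3}: assuming $\ker M(G,\phi)^T = \ker M(G,\lambda)^T$, the duality above gives $\col M(G,\phi) = \col M(G,\lambda)$, so in particular $\col M(G,\phi)\subseteq \col M(G,\lambda)$. Moreover, by \Cref{l:cyclespace} each vector $\delta_{C,\phi}$ with $C$ a monochromatic cycle of $(G,\phi)$ lies in $\ker M(G,\phi)^T$, hence in $\ker M(G,\lambda)^T$; this is exactly the second requirement in \ref{l:equivtypes3}.

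For \ref{l:equivtypes3}$\Rightarrow$\ref{l:equivtypes2}: the inclusion $\col M(G,\phi)\subseteq \col M(G,\lambda)$ yields, upon taking orthogonal complements (which reverses inclusion), $\ker M(G,\lambda)^T \subseteq \ker M(G,\phi)^T$. For the reverse inclusion, \Cref{l:cyclespace} says that $\ker M(G,\phi)^T$ is spanned by $\{\delta_{C,\phi} : C \text{ a monochromatic cycle of }(G,\phi)\}$; by hypothesis every such $\delta_{C,\phi}$ lies in the subspace $\ker M(G,\lambda)^T$, so the whole span does too, giving $\ker M(G,\phi)^T \subseteq \ker M(G,\lambda)^T$. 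Combining the two inclusions yields \ref{l:equivtypes2}, closing the loop. I do not expect a genuine obstacle here: the argument is elementary linear algebra once \Cref{l:cyclespace} is available, and the only point needing care is to apply the column-space/left-null-space duality and the inclusion-reversal in the correct directions in the \ref{l:equivtypes3}$\Rightarrow$\ref{l:equivtypes2} step.
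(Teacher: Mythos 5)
Your proposal is correct and follows essentially the same route as the paper: the equivalence of \ref{l:equivtypes1} and \ref{l:equivtypes2} via column-space/left-null-space duality, and the use of \Cref{l:cyclespace} to pass from the cycle condition in \ref{l:equivtypes3} to the kernel inclusion $\ker M(G,\phi)^T \subseteq \ker M(G,\lambda)^T$. The only cosmetic difference is that you close the loop by establishing \ref{l:equivtypes2} directly, whereas the paper dualises back to conclude \ref{l:equivtypes1}; these are interchangeable.
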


\begin{proof}
    \ref{l:equivtypes1} and \ref{l:equivtypes2} are equivalent as $\ker M(G,\psi)^T$ is the space of annihilators of $\col M(G,\psi)$ for any $\psi \in \Phi_G$.
    It is immediate that both \ref{l:equivtypes1} and \ref{l:equivtypes2} combined imply \ref{l:equivtypes3}.
    Suppose that \ref{l:equivtypes3} holds.
    By \Cref{l:cyclespace},
    $\ker M(G,\phi)^T \subseteq \ker M(G,\lambda)^T$,
    and so $\col M(G,\phi) \supseteq \col M(G,\lambda)$.
    \ref{l:equivtypes1} now follows from our assumption that $\col M(G,\phi) \subseteq \col M(G,\lambda)$.
\end{proof}

For the next lemma, we use the following observation liberally:
if $I(G,\sigma)$ is an oriented incidence matrix and $z \in \ker I(G,\sigma)^T$,
then every connected component of the graph with vertex set $V$ and edge set $\{e \in E : z(e) \neq 0\}$ is either an isolated vertex or 2-edge-connected.

\begin{lemma}\label{l:cycles2}
    Given a graph $G=(V,E)$, let $\phi ,\lambda \in \Phi_G$ and let $C$ be a cycle of edges in $G$.
    If $\delta_{C,\phi} \in \ker M(G,\lambda)^T$ then $C$ is a monochromatic cycle of $(G,\lambda)$ and $\delta_{C,\phi} = \pm \delta_{C,\lambda}$.
\end{lemma}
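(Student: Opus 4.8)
The plan is to exploit the block-diagonal structure $M(G,\lambda) = I(G_1(\lambda)) \oplus \cdots \oplus I(G_d(\lambda))$ together with the standard fact recalled just before the lemma: the support of any vector in $\ker I(G,\sigma)^T$ induces a subgraph all of whose connected components are isolated vertices or $2$-edge-connected. First I would record that, writing $z := \delta_{C,\phi}$, the vector $z$ has support exactly $E(C)$, with every nonzero entry equal to $\pm 1$ (immediate from the definition of $\delta_{C,\phi}$ and the fact that $\sgn(\phi)(e) = \pm 1$ for each $e \in C$). The hypothesis $z \in \ker M(G,\lambda)^T$ then says precisely that, for each colour $i \in \{1,\dots,d\}$, the restriction $z|_{E_i(\lambda)}$ lies in $\ker I(G_i(\lambda))^T$; this restriction has support $E(C) \cap E_i(\lambda)$.

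Next I would apply the recalled observation colour by colour: every connected component of the subgraph $(V,\, E(C) \cap E_i(\lambda))$ is an isolated vertex or is $2$-edge-connected. But $E(C) \cap E_i(\lambda)$ is a set of edges of the cycle $C$, so this subgraph is a disjoint union of arcs of $C$ (plus isolated vertices), with the extreme case being all of $C$. A nontrivial arc of a cycle is a path with at least one edge and hence contains a bridge, so it is never $2$-edge-connected; therefore $E(C) \cap E_i(\lambda)$ is either empty or all of $E(C)$. Since $\{E(C) \cap E_i(\lambda)\}_{i=1}^{d}$ partitions the nonempty set $E(C)$, there is a unique colour $i_0$ with $E(C) \subseteq E_{i_0}(\lambda)$, i.e.\ $C$ is a monochromatic cycle of $(G,\lambda)$, of colour $i_0$. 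I expect this colour-class step to be the main point of the argument --- recognising that ``all components isolated or $2$-edge-connected'' forces a subgraph of a cycle to be empty or the whole cycle.

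It then remains to pin down the scalar. Since $C \in \mathcal{C}_{i_0}(\lambda)$, the vector $\delta_{C,\lambda}$ lies in $\ker I(G_{i_0}(\lambda))^T$ by \Cref{l:cyclespace} and is supported on $E(C)$; the same holds for $z$. I would argue that the subspace of $\ker I(G_{i_0}(\lambda))^T$ consisting of vectors supported within $E(C)$ is one-dimensional: flow conservation at each vertex of $C$ --- which meets $C$ in exactly two edges and on which such a vector vanishes off $C$ --- forces the values on consecutive cycle edges to agree up to the orientation sign, so the vector is determined by a single global scalar, and $\delta_{C,\lambda}$ spans this line. Hence $z = c\,\delta_{C,\lambda}$ for some scalar $c$, and comparing entries --- both $z$ and $\delta_{C,\lambda}$ take only the values $0,\pm 1$ and have the same support $E(C)$ --- gives $c = \pm 1$. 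Thus $\delta_{C,\phi} = \pm\delta_{C,\lambda}$, as required.
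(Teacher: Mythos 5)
Your proposal is correct, and its first half is essentially the paper's argument: both decompose $\delta_{C,\phi}$ along the block structure $M(G,\lambda)=I(G_1(\lambda))\oplus\cdots\oplus I(G_d(\lambda))$ and use the observation that the support of a vector in the cokernel of an oriented incidence matrix induces a subgraph whose components are isolated vertices or $2$-edge-connected, so that each colour class meets $C$ in either nothing or all of $C$. The only divergence is in how the scalar is pinned down at the end: the paper forms the vector $z:=(\delta_{C,\lambda}-\delta_{C,\phi})/2\in\{-1,0,1\}^E$ and applies the same $2$-edge-connectivity observation once more to conclude its support is $\emptyset$ or $C$, whereas you instead show that the slice of $\ker I(G_{i_0}(\lambda))^T$ supported inside $E(C)$ is one-dimensional and spanned by $\delta_{C,\lambda}$, then compare entries to get $c=\pm1$. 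Both routes are valid and of comparable length; the paper's difference-vector trick avoids having to articulate the flow-conservation/one-dimensionality claim, while yours makes the underlying cycle-space structure more explicit. No gaps.
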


\begin{proof}
	As $M(G,\lambda)$ is the direct product of $I(G_1(\lambda)), \ldots, I(G_d(\lambda))$,
	there exists unique vectors $z_i \in \ker I(G_i(\lambda))$ such that $\delta_{C,\phi} = \sum_{i=1}^d z_i$ and $C$ is the disjoint union of the sets $C_i := \{ e \in E : z_i(e)\}$.
	We note now that each set $C_i$ is 2-edge-connected and $C_i \cap C_j = \emptyset$ for any $i \neq j$. 
	Since each set $C_i$ is contained in $C$,
	it follows that there exists a unique $j \in [d]$ so that $\delta_{C,\phi}=z_j$ and $z_i=\mathbf{0}$ for each $i \neq j$.
	Hence, $C$ is a monochromatic cycle of $(G,\lambda)$; specifically, $C$ has colour $j$ with respect to $\lambda$.
	
	Let $z := (\delta_{C,\lambda} - \delta_{C,\phi})/2 \in \{-1,0,1\}^E$.
	Then the edge set $C_z := \{ e \in E : z(e) \neq 0\}$ is contained in $C$.
	Since $z \in \ker I(G_j(\lambda))$,
	each connected component of $(V,C_z)$ is either an isolated vertex or 2-edge-connected.
	As cycles have no proper 2-edge-connected subgraphs,
	either $C_z= C$ and $\delta_{C,\lambda} = -\delta_{C,\phi}$ or $C_z= \emptyset$ and $\delta_{C,\lambda} = \delta_{C,\phi}$.
\end{proof}

\begin{lemma}\label{l:samemonosub}
    Let $G=(V,E)$ be a graph with directed colourings $\phi,\lambda$.
    Suppose that every monochromatic subgraph of $(G,\phi)$ is 2-connected.
    If $\col M(G,\phi) \subset \col M(G,\lambda)$,
    then the following holds.
    \begin{enumerate}
    	\item\label{l:samemonosub.0} $\col M(G,\phi) = \col M(G,\lambda)$.
        \item\label{l:samemonosub.1} There exists a permutation $\sigma$ of $\{1,\ldots,d\}$ such that $G_{\sigma(i)}(\lambda) = G_{i}(\phi)$.
        \item\label{l:samemonosub.2} If two edges $e,f\in E$ are contained in a monochromatic cycle of $(G,\phi)$,
        then
        \begin{equation*}
            \sgn(\phi)(e) \sgn(\phi)(f) = \sgn(\lambda)(e) \sgn(\lambda)(f).
        \end{equation*}
    \end{enumerate}
\end{lemma}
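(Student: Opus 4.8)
The plan is to derive all three claims from the block decomposition $M(G,\psi)=I(G_1(\psi))\oplus\cdots\oplus I(G_d(\psi))$, a single rank count, and \Cref{l:cycles2}. For \ref{l:samemonosub.0} I would argue by dimension. Each $G_i(\phi)$ is $2$-connected, hence connected on the whole vertex set $V$, so $\rank I(G_i(\phi))=|V|-1$ and $\rank M(G,\phi)=d|V|-d$. On the other hand the $d$-dimensional space of constant realisations always lies in $\ker M(G,\lambda)$, so $\rank M(G,\lambda)\le d|V|-d$; together with the hypothesis $\col M(G,\phi)\subseteq\col M(G,\lambda)$ this forces $d|V|-d=\dim\col M(G,\phi)\le\dim\col M(G,\lambda)\le d|V|-d$, which gives \ref{l:samemonosub.0}. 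Two consequences will then be reused: writing $c_j$ for the number of connected components of $G_j(\lambda)$, the equality $\rank M(G,\lambda)=\sum_j(|V|-c_j)=d|V|-d$ forces every $c_j=1$, so each $G_j(\lambda)$ is also connected on $V$; and by \Cref{l:equivtypes} (the equivalence of \ref{l:equivtypes1} and \ref{l:equivtypes2}) we obtain $\ker M(G,\phi)^T=\ker M(G,\lambda)^T$.

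For \ref{l:samemonosub.1} I would proceed as follows. Whenever $C$ is a cycle contained in a single monochromatic subgraph $G_i(\phi)$, the vector $\delta_{C,\phi}$ lies in $\ker M(G,\phi)^T=\ker M(G,\lambda)^T$ by \Cref{l:cyclespace}, so by \Cref{l:cycles2} the cycle $C$ is also monochromatic with respect to $\lambda$; in particular all edges of $C$ receive a common $\lambda$-colour. Next I would invoke the standard fact that in a $2$-connected graph any two edges lie on a common cycle (subdivide both edges and use that any two vertices of a $2$-connected graph lie on a common cycle): fixing $i$, any two edges of $G_i(\phi)$ sit on a cycle $C\subseteq G_i(\phi)$, which is $\phi$-monochromatic of colour $i$ and therefore $\lambda$-monochromatic, so the two edges share a $\lambda$-colour. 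Hence $E_i(\phi)$ is entirely of one $\lambda$-colour $\sigma(i)$, that is, $E_i(\phi)\subseteq E_{\sigma(i)}(\lambda)$. Since each $G_j(\lambda)$ is connected on $V$ (and $|V|\ge 2$), each $E_j(\lambda)$ is non-empty, so $\sigma$ must be onto $\{1,\dots,d\}$, hence a permutation; then $\sum_i|E_i(\phi)|=|E|=\sum_j|E_j(\lambda)|$ combined with the inclusions $E_i(\phi)\subseteq E_{\sigma(i)}(\lambda)$ forces $E_i(\phi)=E_{\sigma(i)}(\lambda)$, which is \ref{l:samemonosub.1}.

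For \ref{l:samemonosub.2} I would take a $\phi$-monochromatic cycle $C$ through $e$ and $f$ and fix a cyclic orientation of $C$. Then $\delta_{C,\phi}(g)=\eta_C(g)\,\sgn(\phi)(g)$ and $\delta_{C,\lambda}(g)=\eta_C(g)\,\sgn(\lambda)(g)$ for every edge $g$ of $C$, where the sign $\eta_C(g)\in\{1,-1\}$ depends only on $C$ and the fixed vertex ordering (not on $\phi$ or $\lambda$). By \Cref{l:cycles2}, $\delta_{C,\phi}=\pm\delta_{C,\lambda}$ for a single global sign, so $\sgn(\phi)(g)=\pm\sgn(\lambda)(g)$ with that same sign for all $g\in C$; multiplying the identities obtained for $g=e$ and $g=f$ cancels the sign and yields $\sgn(\phi)(e)\sgn(\phi)(f)=\sgn(\lambda)(e)\sgn(\lambda)(f)$.

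The step I expect to be the real obstacle is \ref{l:samemonosub.1}: it is where one has to translate the linear-algebraic input ``$\delta_{C,\phi}\in\ker M(G,\lambda)^T$'' into genuine combinatorial control of the colour classes, and it is precisely there that $2$-connectivity, rather than mere $2$-edge-connectivity, is used — it is what makes all edges of each monochromatic subgraph lie on common monochromatic cycles, so that a single $\lambda$-colour propagates across the whole class. The remaining bookkeeping (the rank count for \ref{l:samemonosub.0}, and the sign computation for \ref{l:samemonosub.2}) is routine once \Cref{l:cycles2} and the block structure of $M(G,\psi)$ are in hand.
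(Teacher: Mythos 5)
Your proposal is correct and follows essentially the same route as the paper: a rank count for \ref{l:samemonosub.0}, then \Cref{l:cyclespace}, \Cref{l:equivtypes} and \Cref{l:cycles2} together with the fact that any two edges of a $2$-connected graph lie on a common cycle for \ref{l:samemonosub.1}, and the sign identity $\delta_{C,\phi}=\pm\delta_{C,\lambda}$ for \ref{l:samemonosub.2}. The only cosmetic differences are that you compute $\rank M(G,\phi)=d|V|-d$ directly from the incidence-matrix block structure rather than citing \Cref{t:colour} and \Cref{t:dk}, and you close part \ref{l:samemonosub.1} with a counting argument on the colour classes (using connectivity of each $G_j(\lambda)$) where the paper instead applies the cycle lemma symmetrically in both directions to show the monochromatic cycles of $(G,\phi)$ and $(G,\lambda)$ coincide.
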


\begin{proof}
    \ref{l:samemonosub.0}: 
    We note that $\rank M(G,\psi) = \dim \col M(G,\psi) \leq d|V| - d$ for any $\psi \in \Phi_G$.
    By \Cref{t:colour,t:dk},
    $\rank M(G,\phi) = d|V|-d$.
    \ref{l:samemonosub.0} now follows as $\col M(G,\phi) \subset \col M(G,\lambda)$.
    
    \ref{l:samemonosub.1}: 
    By combining \ref{l:samemonosub.0} and \Cref{l:equivtypes},
    we have that $\delta_{C,\phi} \in \ker M(G,\lambda)^T$ for every monochromatic cycle $C$ of $(G,\phi)$.
    Hence, by \Cref{l:cycles2},
    every monochromatic cycle of $(G,\phi)$ is a monochromatic cycle of $(G,\lambda)$.
    Using \ref{l:samemonosub.0}, we have $\col M(G,\lambda) \subset \col M(G,\phi)$ also.
    Hence, by applying \Cref{l:equivtypes,l:cycles2} again with $\phi,\lambda$ switched,
    we see that the monochromatic cycles of $(G,\phi)$ are exactly the monochromatic cycles of $(G,\lambda)$.
    Choose any two edges $e,f$ of $G$ where $|\phi|(e) = |\phi|(f) = i$.
    As every monochromatic subgraph of $(G,\phi)$ is 2-connected,
    there exists a monochromatic cycle $C$ of $(G,\phi)$ containing both $e$ and $f$.
    As $C$ is also a monochromatic cycle of $(G,\lambda)$,
    $|\lambda|(e) = |\lambda|(f)$ also.
    Hence, there exists a permutation $\sigma$ of $\{1,\ldots,d\}$ such that $G_{\sigma(i)}(\lambda) = G_{i}(\phi)$.
        
    \ref{l:samemonosub.2}: 
    This follows immediately from \Cref{l:cycles2}.
\end{proof}


We are now ready to prove our key lemma.

\begin{proof}[Proof of \Cref{l:linfstrongcol}]
    Choose any $\lambda \in \Phi_G$ where $\col M(G,\phi) \subseteq \col M(G,\lambda)$.
    By \Cref{l:samemonosub}\ref{l:samemonosub.1},
    we may suppose (by applying rotational isometries to $\lambda$ if necessary) that $G_i(\lambda) = G_i(\phi)$ for each $i \in \{1,\ldots,d\}$.
    For each $i \in \{1,\ldots,d\}$,
    choose an edge $e_i \in E_i(\phi)$.
    By applying reflections to $\lambda$ where needed, we may assume $\sgn(\phi)(e_i) = \sgn(\lambda)(e_i)$ for each $i \in \{1,\ldots,d\}$.
    Fix $i \in \{1,\ldots,d\}$ and choose any edge $e \in E_i(\lambda) = E_i(\phi)$.
    As $G_i(\lambda)$ is 2-connected, there exists a monochromatic cycle containing $e$ and $e_i$.
    By \Cref{l:samemonosub}\ref{l:samemonosub.2},
    we have $\sgn(\phi)(e) = \sgn(\lambda)(e)$.
    It now follows that $\lambda = \phi$, and thus $\phi$ is a strong directed colouring.
\end{proof}

With this, we are now ready to prove \Cref{t:main2}.

\begin{proof}[Proof of \Cref{t:main2}]
    The result follows immediately from \Cref{l:linfstrongcol} and \Cref{t:main1}.
\end{proof}

An immediate corollary from \Cref{t:main2} and \Cref{p:singleexample} is the following.

\begin{corollary}\label{cor:main2}
	Let $(G,p)$ be a well-positioned framework in $\mathbb{R}^d$.
    Suppose that every monochromatic subgraph of $(G,\phi_p)$ is 2-connected.
    Then there exists a generic globally rigid realisation of $G$ in $\ell_{\infty}^d$.
\end{corollary}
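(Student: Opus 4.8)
The plan is to reduce \Cref{cor:main2} to \Cref{t:main2} by a density argument, using that in $\ell_\infty^d$ both well-positionedness and the value of the directed colouring are locally stable.

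First I would record that the hypothesis already forces $(G,p)$ to be infinitesimally rigid: each monochromatic subgraph of $(G,\phi_p)$ is $2$-connected, hence connected, so \Cref{t:colour} applies. More importantly, I would observe that the set of realisations $q$ of $G$ that are well-positioned in $\ell_\infty^d$ is open, and that on this set the map $q \mapsto \phi_q$ is locally constant. This is immediate from the structure of $\ell_\infty^d$: a nonzero vector $x$ is smooth precisely when a single coordinate attains $\|x\|_\infty$, in which case its support vector is $\sgn(x_j)b_j$ for that index $j$, and this description is stable under small perturbations of $x$. Consequently there is an open neighbourhood $U$ of $p$ such that every $q \in U$ is well-positioned with $\phi_q = \phi_p$.

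Since generic realisations of $G$ form a dense subset of $(\mathbb{R}^d)^V$, I would then choose a generic $\tilde p \in U$. By construction $\phi_{\tilde p} = \phi_p$, so every monochromatic subgraph of $(G,\tilde p)$ equals the corresponding $2$-connected monochromatic subgraph of $(G,\phi_p)$. Now \Cref{t:main2} applies verbatim to $(G,\tilde p)$ and yields that $(G,\tilde p)$ is globally rigid in $\ell_\infty^d$; being generic, it is the desired realisation. Equivalently, one can first use \Cref{l:linfstrongcol} to see that $\phi_p = \phi_{\tilde p}$ is strong, deduce global rigidity of the well-positioned framework $(G,\tilde p)$ from \Cref{t:main1}, and then appeal to \Cref{p:singleexample} to obtain a whole open set of globally rigid realisations of $G$, any generic member of which works.

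There is no serious obstacle here; the only point requiring care — and the only place the polyhedral structure is really used — is the claim that $\phi_q$ is locally constant on the open set of well-positioned realisations, which is what allows the perturbation to a generic framework without destroying the monochromatic $2$-connectivity hypothesis. Everything else is a direct invocation of the results already established in this section.
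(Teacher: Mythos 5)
Your proposal is correct and follows essentially the same route the paper intends: perturb $p$ within the open set of well-positioned realisations (on which $q\mapsto\phi_q$ is locally constant) to a nearby generic $\tilde p$ with $\phi_{\tilde p}=\phi_p$, then invoke \Cref{t:main2} (equivalently \Cref{l:linfstrongcol} and \Cref{t:main1}, with \Cref{p:singleexample} available for the open-set variant). The paper records this only as an immediate consequence, and your write-up correctly supplies the one point needing care, namely the local constancy of the induced directed colouring.
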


\begin{example}\label{ex:oct}
	Fix $G=(V,E)$ to be the 1-skeleton of the octahedron, with vertex set $V = \{v_{-3},v_{-2},v_{-1},v_1,v_2.v_3\}$ and edge set $E = \{ v_i v_j : |i| \neq |j| \}$.
	Let $p$ be the well-positioned realisation of $G$ in $\ell_\infty^2$ where
	\begin{gather*}
		p(v_{-1}) = (0,0.9), \qquad p(v_{-2}) = (1,0), \qquad p(v_{-3}) = (0,0) \\
		p(v_1) = (1, 0.9), \qquad p(v_2) = (1,1.8), \qquad p(v_3) = (0,1.8).
	\end{gather*}
	As can be seen in \Cref{fig:oct},
	the monochromatic subgraphs of $(G,\phi_p)$ are both 2-connected,
	and so $(G,p)$ is redundantly rigid in $\ell_\infty^2$.
	(In fact, as a consequence of \Cref{ex:k5}, $G$ is the smallest graph with a redundantly rigid realisation in $\ell_\infty^2$.)
	By \Cref{cor:main2}, $G$ has a generic globally rigid realisation in $\ell_\infty^2$.
\end{example}

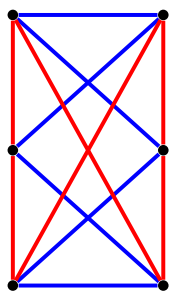
\begin{figure}[ht]
\begin{center}
\begin{tikzpicture}[scale=2]
		\node[vertex] (-v3) at (0,0) {};
		\node[vertex] (-v2) at (1,0) {};
		\node[vertex] (-v1) at (0,0.9) {};
		\node[vertex] (v3) at (0,1.8) {};
		\node[vertex] (v2) at (1,1.8) {};
		\node[vertex] (v1) at (1,0.9) {};

		\draw[edge, red] (v1)edge(v2);
		\draw[edge, blue] (v1)edge(v3);
		\draw[edge, red] (v1)edge(-v2);
		\draw[edge, blue] (v1)edge(-v3);
		
		\draw[edge, blue] (-v1)edge(v2);
		\draw[edge, red] (-v1)edge(v3);
		\draw[edge, blue] (-v1)edge(-v2);
		\draw[edge, red] (-v1)edge(-v3);

		\draw[edge, blue] (v2)edge(v3);
		\draw[edge, red] (v2)edge(-v3);

		\draw[edge, red] (-v2)edge(v3);
		\draw[edge, blue] (-v2)edge(-v3);
	\end{tikzpicture}
	\end{center}
	\caption{The monochromatic subgraphs of the framework $(G,p)$ given in \Cref{ex:oct}: blue represents an edge with corresponding face $(\pm1,0)$, red represents an edge with corresponding face $(0,\pm1)$.}
	\label{fig:oct}
\end{figure}

\subsection*{Acknowledgement}
The author was supported by the Heilbronn Institute for Mathematical Research.

\bibliographystyle{plainurl}
\bibliography{ref}

\end{document}